\documentclass[leqno,10pt,oneside]{amsart}

\usepackage[latin1]{inputenc}
\usepackage[english]{babel}

\usepackage{mathtools}  
\usepackage{amssymb, amsthm, mathrsfs}
\usepackage{braket}     
\usepackage{esint}      
\usepackage{xfrac, nicefrac}

\usepackage{graphicx}
\usepackage{color}

\usepackage{enumitem}   
\usepackage{orcidlink} 
\usepackage[toc]{appendix}
\usepackage{csquotes}   
\usepackage{hyperref}   

\allowdisplaybreaks
\numberwithin{equation}{section}

\newcommand{\rn}{\mathbb{R}^n}
\newcommand{\R}{\mathbb{R}} \newcommand{\N}{\mathbb{N}} 
\newcommand{\dx}{\,dx} \newcommand{\dy}{\,dy}
\newcommand{\tail}{\,\texttt{Tail}}
\newcommand{\stackleq}[1]{\stackrel{\eqref{#1}}{\leq}}
\newcommand{\vs}{\medskip}

\def\A{\mathcal A}  \def\H{\mathcal H}
 \def\M{\mathcal M}
\def\e{\varepsilon} \def\s{\sigma} 
\def\vphi{\varphi} 
\def\l{\lambda} \def\g{\gamma} \def\k{\kappa}
\def\omegau{\omega_{\mathsf{u}}}
\def\omegax{\omega_{\mathsf{x}}}

\def\a{\alpha} \def\b{\beta} 
\def\vrho{\varrho} \def\d{\delta}

\def\mint_#1{\mathchoice%
          {\mathop{\kern 0.2em\vrule width 0.6em height 0.69678ex depth -0.58065ex
                  \kern -0.8em \intop}\nolimits_{\kern -0.4em#1}}%
          {\mathop{\kern 0.1em\vrule width 0.5em height 0.69678ex depth -0.60387ex
                  \kern -0.6em \intop}\nolimits_{#1}}%
          {\mathop{\kern 0.1em\vrule width 0.5em height 0.69678ex
              depth -0.60387ex
                  \kern -0.6em \intop}\nolimits_{#1}}%
          {\mathop{\kern 0.1em\vrule width 0.5em height 0.69678ex depth -0.60387ex
                  \kern -0.6em \intop}\nolimits_{#1}}}
                  
\newcommand{\aveint}[2]{\mathchoice
  {\mathop{\kern 0.2em\vrule width 0.6em height 0.7ex depth -0.6ex \kern -0.8em \intop}\nolimits_{\kern -0.45em#1}^{#2}}
  {\mathop{\kern 0.1em\vrule width 0.5em height 0.7ex depth -0.6ex \kern -0.6em \intop}\nolimits_{#1}^{#2}}
  {\mathop{\kern 0.1em\vrule width 0.5em height 0.7ex depth -0.6ex \kern -0.6em \intop}\nolimits_{#1}^{#2}}
  {\mathop{\kern 0.1em\vrule width 0.5em height 0.7ex depth -0.6ex \kern -0.6em \intop}\nolimits_{#1}^{#2}}}

\makeatletter
\DeclareFontFamily{OMX}{MnSymbolE}{}
\DeclareSymbolFont{MnLargeSymbols}{OMX}{MnSymbolE}{m}{n}
\SetSymbolFont{MnLargeSymbols}{bold}{OMX}{MnSymbolE}{b}{n}
\DeclareFontShape{OMX}{MnSymbolE}{m}{n}{
    <-6>  MnSymbolE5
   <6-7>  MnSymbolE6
   <7-8>  MnSymbolE7
   <8-9>  MnSymbolE8
   <9-10> MnSymbolE9
  <10-12> MnSymbolE10
  <12->   MnSymbolE12
}{}
\DeclareFontShape{OMX}{MnSymbolE}{b}{n}{
    <-6>  MnSymbolE-Bold5
   <6-7>  MnSymbolE-Bold6
   <7-8>  MnSymbolE-Bold7
   <8-9>  MnSymbolE-Bold8
   <9-10> MnSymbolE-Bold9
  <10-12> MnSymbolE-Bold10
  <12->   MnSymbolE-Bold12
}{}

\let\llangle\@undefined
\let\rrangle\@undefined
\DeclareMathDelimiter{\llangle}{\mathopen}%
                     {MnLargeSymbols}{'164}{MnLargeSymbols}{'164}
\DeclareMathDelimiter{\rrangle}{\mathclose}%
                     {MnLargeSymbols}{'171}{MnLargeSymbols}{'171}
\makeatother

\newcommand{\linethrough}{\mathpalette\@thickbar}
\makeatletter
\newcommand{\@thickbar}[2]{{#1\mkern0mu\vbox{
  \sbox\z@{$#1#2\mkern-0.5mu$}%
  \dimen@=\dimexpr\ht\tw@-\ht\z@+2\p@\relax
  \hrule\@height0.5\p@
  \vskip\dimen@ \box\z@}}}
\makeatother

\newcommand{\data}{\textnormal{\texttt{data}}}

\newcommand{\smallzero}{{\scalebox{.8}{$\scriptscriptstyle 0$}}}

\makeatletter
\newcommand{\pushright}[1]{\ifmeasuring@#1\else\omit\hfill$\displaystyle#1$\fi\ignorespaces}
\newcommand{\pushleft}[1]{\ifmeasuring@#1\else\omit$\displaystyle#1$\hfill\fi\ignorespaces}
\makeatother

\newtheorem{theorem}{Theorem}[section]
\newtheorem*{theorem*}{Theorem}
\newtheorem{lemma}[theorem]{Lemma}
\newtheorem{proposition}[theorem]{Proposition}
\newtheorem*{proposition*}{Proposition}

\newtheorem{remark}[theorem]{Remark}
\newtheorem*{remark*}{Remark}

\newcommand{\todo}[1]{\text{\colorbox{yellow}{#1}}}

\title{Partial regularity for vectorial local-nonlocal problems}

\begin{document}

\begin{abstract}
    We establish sharp interior regularity for solutions to vectorial, nonlinear local-nonlocal systems.
\end{abstract}

\author[Antonini]{Carlo Alberto Antonini \orcidlink{0000-0002-7663-1090}}  \address{Carlo Alberto Antonini \\ 
Dipartimento di Matematica ``Federico Enriques'', 
Universit\`a degli studi di Milano, 
Viale Cesare Saldini 50, 20133,
Milan,
Italy\\ ORCID ID: 0000-0002-7663-1090}
\email{\url{carlo.antonini@unimi.it}\\ \url{antonini@altamatematica.it}}

\author[Baroni]{Paolo Baroni \orcidlink{0000-0002-2683-4025}}
\address{Paolo Baroni \\ Dipartimento di Scienze Matematiche, Fisiche e Informatiche, Universit\`a di Parma, Parco Area delle Scienze, 53/A, 43124 Parma, Italy\\ ORCID ID: 0000-0002-2683-4025}
\email{\url{paolo.baroni@unipr.it}}

\subjclass[2020]{Primary 35B65; Secondary 35R11, 35J47, 35M10}

\maketitle


\section{Introduction}

Operators that couple a local and a nonlocal diffusion term naturally arise in the modeling of processes combining continuous and jump-type behavior~\cite{garroni}, and in recent years they have been studied with increasing attention, both for their analytical structure and for this modeling relevance. The prototype of this class is the case where a local Laplacian diffusion is coupled with a fractional Laplacian diffusion
\[
-\Delta u+(-\Delta)^s u .
\] 

For this class of operators, several aspects of the elliptic and parabolic theory have by now been thoroughly investigated, including Green function estimates, boundary Harnack principles, maximum principles, and regularity properties, both in the elliptic and in the parabolic setting; since the literature on the linear theory is by now vast, we only refer  to~\cite{CKSV10,CKSV12,BDVV22,BDVV23b,das,SVWZ25} and the references therein for a detailed account. More recently, attention has turned to nonlinear counterparts: first to models preserving the linear local structure while allowing more general stochastic nonlocal terms~\cite{BM21}, and then, more substantially, to mixed quasilinear operators modeled on
\[
-\Delta_p u + (-\Delta_q)^s u, \qquad p,q\in(1,\infty), \ s\in(0,1);
\]
a number of existence and qualitative results have been obtained, covering both variational and nonvariational formulations; see, for instance,~\cite{AC25,apaza,BP23,CSYZ,dS20,ding,GK22,GL}.  The regularity theory for these operators is shaped by the balance between the local and nonlocal components, and this balance in turn depends on the relation between $p$, $q$ and $s$; the present paper adds to this general picture by showing how this interplay manifests itself for vector-valued solutions with quadratic growth.  In the scalar case, this balance has already been made precise: De Filippis \& Mingione indeed established in~\cite{dmmixed} the interior H\"older continuity of $Du$ via perturbation methods under the structural condition $p>sq$, as this assumption ensures that at small scales the local operator $-\Delta_p u$ dominates the nonlocal one $(-\Delta_q)^s u$,  so that the regularizing effect of the equation is essentially governed by its local part. Consequently, solutions inherit the same type of regularity of solutions to purely local operators, namely the $C^{1,\alpha}$-regularity for some non-explicit $\alpha\in (0,1)$, which does not exceed the optimal H\"older exponent of the gradient of $p$-harmonic functions \cite{A26,BBDS26,DB83}. The gradient H\"older regularity in the complementary regime $p\leq sq$ was recently treated by Biswas and Topp~\cite{BT26} through viscosity methods. 

\vs

We initiate, in this paper, the study of quasilinear vectorial mixed operators by treating the quadratic growth case, namely $p=q=2$, allowing for a class of local and nonlocal terms more general than the model operators, see \eqref{eq1}--\eqref{eqNP} and the related assumptions \eqref{phi:gr} and \eqref{sym:phi}. Although this is only the first step toward the more general theory we aim to develop, it already exhibits the essential difficulties of the vectorial local-nonlocal interaction, whose treatment requires a substantially new combination of classic local partial regularity methods, developed for the vectorial setting, and nonlocal tail estimates, adapted here to interact with them. We show that regularity of $u$ and of its gradient $Du$ hold on the very same regular set, whose singular complement has Hausdorff dimension explicitly bounded, and that on this set a full scale of results is available, improving as the assumptions on the local term are strengthened: from BMO and VMO regularity of $u$, up to its continuity and H\"older continuity, and finally to Lipschitz continuity, continuity and H\"older continuity of the gradient itself; none of these improvements requires any additional regularity on the nonlocal term, which is allowed to remain as irregular and non-coercive as in our basic assumptions throughout. 

\newpage

Our main novelties can be summarized as follows:
\begin{itemize}
\item we work in the vectorial setting and study the partial regularity of solutions (Theorems \ref{part.BMO} to \ref{thm:c1bfull}), obtaining everywhere regularity under additional structural assumptions on the local term (Theorem~\ref{thm:full-u-independent});
\item we allow for very general nonlocal operators, possibly irregular and non-coercive (see \eqref{phi:gr}--\eqref{sym:phi});
\item we obtain sharp regularity results in terms of the right-hand side of the equation, with explicit H\"older exponents together with quantitative local estimates that also keep track of the nonlocal tail of the solution (for instance, see \eqref{ass:fhold} and \eqref{f:1beta} and the related regularity results).
\end{itemize}

The present paper provides, to the best of our knowledge, the first systematic development of the vectorial regularity theory for mixed local-nonlocal systems. Its closest predecessor is the recent vectorial theory for purely nonlocal systems developed by De Filippis, Mingione \& Nowak in \cite{dmn}, where classical tools from the local partial regularity theory -- excess decay, tail estimates, blow-up procedures and harmonic approximation -- were implemented for the first time in a genuinely nonlocal vectorial setting. Here we show that, in the mixed regime, the local component can be exploited as the leading operator, while the nonlocal part can be treated as a lower-order perturbation even under minimal assumptions, namely without regularity or coercivity on the integrand, so that under appropriate assumptions on the local part solutions retain the highest regularity dictated by the local theory. This dominance, however, has a natural limit at the level of the gradient: under the sole structural assumptions on the nonlocal term, the H\"older exponent of $Du$ cannot in general exceed a threshold dictated by $s$, reflecting the fact that differentiating the equation makes the lack of regularity of the nonlocal term visible; remarkably, this threshold turns out to be sharp, matching the optimal boundary regularity known for nonlocal equations. When  the nonlocal term is assumed to have the natural structure of the fractional Laplacian, this limitation disappears entirely, and the full, sharp H\"older regularity of the gradient is recovered.

\bigskip

The precise setting we work in is the following: let $n,N\in\mathbb N$, with $n\ge2$ and $N\ge1$, let $\Omega\subset\mathbb R^n$ be open, let $u:\mathbb R^n\to\mathbb R^N$, and let $f:\Omega\to\mathbb R^N$ be a measurable function. We deal with systems of the form
\begin{equation}\label{eq1}
    Q^\mathrm{l}_A u+Q_\Phi^\mathrm{nl} u=f\qquad\text{in $\Omega$,}
\end{equation}
where $Q^\mathrm{l}_A$ is the local operator given by
\begin{equation*}
   Q_A^\mathrm{l} u(x)
   =-\mathrm{div}\big(A(x,u)Du \big)\,,
\end{equation*}
and $Q^\mathrm{nl}_\Phi$ is the nonlocal operator defined by 
\begin{equation}\label{eqNP}
    Q^{\mathrm{nl}}_\Phi u(x):=2\,\mathrm{P.V.}\int_{\rn}\Phi\big(x,y,u(x),u(y),u(x)-u(y)\big) \,\frac{dy}{|x-y|^{n+2s}},\qquad s\in(0,1)\,;
\end{equation}
here $\mathrm{P.V.}$ denotes the Cauchy principal value.

\vs
 
The coefficients $A(x,z)=\{A_{ij}^{\a\beta}(x,z)\}^{\a,\beta=1,\dots,N}_{i,j=1,\dots,n}:\Omega\times \R^N\to\R^{nN\times nN}$ are Carath\'eodory functions satisfying the Legendre and the  boundedness conditions
\begin{equation}\label{A:gr}
\langle A(x,z)\xi,\xi\rangle\geq \Lambda^{-1}\,|\xi|^2\,,\qquad  |A(x,z)|  \leq \Lambda\,,
\end{equation}
for almost all $x\in\Omega$, every $z\in \R^N$ and $\xi=\{\xi_i^\alpha\}_{i=1,\dots n}^{\alpha=1,\dots,N}\in \R^{nN}$, for some constant $\Lambda\geq 1$. We refer to Paragraph \ref{nota}  for the relevant notational conventions.

As for the nonlocal term $\Phi: \R^{2n+3N}\to \R^N$, $\Phi(x,y,u,v,t)={\{\Phi^\a(x,y,u,v,t)\}}_{\a=1,\dots,N}$, we assume that it has linear growth in the $t$-variable
\begin{equation}\label{phi:gr}
    |\Phi(x,y,u,v,t)|\leq \nu|t|
\end{equation}
for some $\nu>0$, and it satisfies the symmetry properties
\begin{equation}\label{sym:phi}
\begin{cases}
    \Phi(x,y,u,v,t) =\Phi(y,x,v,u,t)    \\
    \Phi(x,y,u,v,-t)=-\Phi(x,y,u,v,t)
    \end{cases},
\end{equation}
for almost every $x,y\in \R^n$, all $u,v\in \R^N$ and $t\in \R^N$; no additional regularity is required on the nonlocal term.

\vs

We consider weak solutions to equation~\eqref{eq1}, i.e., functions  $u\in W^{1,2}_{\rm loc}(\Omega;\R^N)\cap L^1_{2s}$ that satisfy the weak formulation of \eqref{eq1}:
\begin{multline}\label{weak.formulation}
    \int_\Omega \langle A(x,u)D u, D\vphi\rangle\dx    +  \int_{\R^n}\int_{\R^n}\Phi\big(x,y,u(x),u(y), u(x)-u(y)\big)\cdot \big(\vphi(x)-\vphi(y)\big)\,\frac{dx\dy}{|x-y|^{n+2s}}\\
     =\int_\Omega f\cdot \vphi\dx
\end{multline}
for all smooth test functions with compact support in $\Omega$, $\vphi\in C^\infty_c(\Omega;\R^N)$. Here, we employ the notation $L^1_{2s}$ for the tail space
\begin{equation*}
    L^1_{2s}\coloneqq \bigg\{w\in L^1_{\rm loc}(\R^n;\R^N):\int_{\rn}\frac{|w(x)|}{1+|x|^{n+2s}}\dx<\infty \bigg\}\,.
\end{equation*}

To better illustrate the main results of the paper, we state the following two model theorems, the first concerning the H\"older continuity of solutions.
\begin{theorem}[Partial H\"older continuity]\label{thm:model}
    Let $u\in  W^{1,2}_{\rm loc}(\Omega;\R^N)\cap L^1_{2s}$ be a weak solution to \eqref{eq1}, with \eqref{A:gr}--\eqref{sym:phi} in force. Suppose that $A(\cdot,\cdot\cdot)$ satisfies  the continuity assumption
\begin{equation}\label{es:cont}
        |A(x,u)-A(y,v)|\leq \Lambda\,\omega\big(|x-y|+|u-v|\big)\quad\text{for all $x,y\in \Omega$  and $u,v\in \R^N$,}
    \end{equation}
    for some modulus of continuity $\omega:[0,\infty)\to[0,1]$. Then  there exists an open set $\Omega_u\subseteq \Omega$ with Hausdorff dimension $\dim_{\mathcal{H}}(\Omega \setminus \Omega_u) \leq n-2-\zeta$ for some $\zeta>0$ if $n\geq3$, and $\Omega_u=\Omega$ if $n=2$ such that, if
    \begin{equation*}
        f\in L_{\rm loc}^{n/(2-\beta_0)}(\Omega;\R^N)\quad\text{for some $\beta_0\in (0,1)$}\,,
    \end{equation*} 
then
    \[
u\in C^{0,\beta_0}_{\rm loc}\big(\Omega_u;\R^N \big)\,.
    \]
    If in addition $A(x,u)\equiv A(x)$ is independent of $u$, then full regularity holds, i.e., $\Omega_u=\Omega$.
\end{theorem}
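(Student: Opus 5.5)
The plan is a Campanato-type excess decay argument in which the local operator is treated as the leading part, while the nonlocal term and the right-hand side are perturbations controlled by tail estimates.

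\emph{Step 1: Caccioppoli inequality with tail.} For $B_r(x_0)\Subset\Omega$ and $\lambda\in\R^N$, test \eqref{weak.formulation} with $\varphi=\eta^2(u-\lambda)$. By \eqref{phi:gr}--\eqref{sym:phi} the nonlocal contribution splits into two parts. The part with $x,y\in B_r$ is bounded by $\nu\iint|u(x)-u(y)|\,|\varphi(x)-\varphi(y)|\,|x-y|^{-n-2s}$. Since $W^{1,2}\hookrightarrow W^{s,2}$, this is at most $\varepsilon\|D(\eta(u-\lambda))\|_2^2$ plus $r^{-2}\|u-\lambda\|^2_{L^2(B_r)}$ times $r^{2-2s}$. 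The off-diagonal part is bounded by $r^{-2s}$ times the tail
\[
\mathrm{Tail}(u-\lambda;x_0,r)=r^{2s}\int_{\R^n\setminus B_r}\frac{|u-\lambda|}{|x-x_0|^{n+2s}}\,dx .
\]
No coercivity of $\Phi$ is needed, because the local term absorbs everything. The result is
\[
\int_{B_{r/2}}|Du|^2\lesssim r^{-2}\int_{B_r}|u-\lambda|^2+r^{n-2}\big(r^{-2s}r^{2}\,\mathrm{Tail}\big)^2\text{-type terms}+\|f\|^2_{L^{2n/(n+2)}}.
\]

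\emph{Step 2: comparison and decay.} On $B_r$, freeze the coefficients to $A_0=A(x_0,(u)_{r})$ and let $h$ solve $-\mathrm{div}(A_0Dh)=0$ with $h=u$ on $\partial B_{r/2}$. Testing the difference with $u-h$ gives
\[
\int|D(u-h)|^2\lesssim \int\omega(r+|u-(u)_r|)^2|Du|^2 + \text{(nonlocal term)} + \text{($f$ term)}.
\]
The nonlocal term is bounded by $r^{1-s}$ times the energy plus the tail; this smallness as $r\to0$ is the local-dominance mechanism. The $f$ term is bounded by $r^{n-2+2\beta_0}\|f\|^2_{L^{n/(2-\beta_0)}}$ via Hölder and Sobolev. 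The coefficient term is handled with reverse Hölder (Gehring) for $Du$, which follows from Step 1 plus a Gehring lemma with tail, and with $\omega\le1$ concave. Together with the standard decay of $h$, this yields for $\Psi(r)=r^{2-n}\int_{B_r}|Du|^2+(\text{normalized tail})^2$
\[
\Psi(\tau r)\le C\big(\tau^{2}+\tau^{-n}[\omega(r+E(r)^{1/2})^{\sigma}+r^{2-2s}]\big)\Psi(r)+C\tau^{-n}r^{2\beta_0}\|f\|^2.
\]
Here $E$ is the mean oscillation excess, and the tail at scale $\tau r$ is controlled by the tail at scale $r$ plus the excess, summed over dyadic annuli. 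I expect this tail propagation, which forces coupling the tail into the excess functional, to be the main obstacle.

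\emph{Step 3: regular set and iteration.} Define $\Omega_u$ as the set of points where $\liminf_{r\to0} r^{2-n}\int_{B_r}|Du|^2=0$ and $\sup_r|(u)_{x_0,r}|<\infty$, suitably combined with smallness of the tail. This set is open by absolute continuity of the quantities involved. For $n=2$, the Morrey-type smallness follows from $Du\in L^{2+\delta}$ (Gehring), so $\Omega_u=\Omega$. For $n\ge3$, Gehring gives $\int|Du|^{2+\delta}<\infty$, and the standard Giusti covering argument yields $\mathcal H^{n-2-\zeta}(\Omega\setminus\Omega_u)=0$ with $\zeta=\delta$. On $\Omega_u$, choose $\tau$ with $C\tau^2\le\tau^{2\beta'}$ for some $\beta'\in(\beta_0,1)$, and then $r_0$ small so that the bracketed perturbation is tiny. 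Iterate, keeping the excess small by induction. This gives $\Psi(\rho)\lesssim\rho^{2\beta_0}$ uniformly near $x_0$, and Campanato's characterization gives $u\in C^{0,\beta_0}$.

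\emph{Step 4: the case $A=A(x)$.} Here the smallness of $\omega$ depends only on $r$, not on $E$. Hence no excess-smallness condition is needed and every point is regular, so $\Omega_u=\Omega$.
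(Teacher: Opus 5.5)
Your overall architecture (Caccioppoli with tail, freezing at $(x_0,(u)_{B_r})$, reverse H\"older plus Jensen for the concave modulus, Campanato iteration, Giusti's lemma) is a legitimate alternative to the paper's route under the continuity assumption \eqref{es:cont}; the paper instead rescales by the excess and uses the $\mathsf{A}$-harmonic approximation of Lemma \ref{lem:harapprox}. However, the step you yourself flag as ``the main obstacle'' is not a technicality, and as written it fails.

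Suppose the ``normalized tail'' in $\Psi$ is the standard $\tail\big(u-(u)_{B_r};B_r\big)$. Then \eqref{tail:1} gives it only decay $\tau^{2s}$, and this rate is sharp. Take $u\in L^1_{2s}$ with $u\equiv z_0$ on $B_R(x_0)$ but $u\not\equiv z_0$ on $\R^n$. It solves \eqref{eq1} in $B_{R/2}(x_0)$ with the bounded datum $f:=Q^{\rm nl}_\Phi u$. For $\rho<R$ the local excess vanishes, while
$\tail\big(u-(u)_{B_\rho};B_\rho\big)=\rho^{2s}\int_{\R^n\setminus B_R(x_0)}|u-z_0|\,|x-x_0|^{-n-2s}\,dx$ is a positive multiple of $\rho^{2s}$.
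Hence $\Psi(\rho)\approx\rho^{4s}$. For every $s<1/2$ and $\beta_0\in(2s,1)$, both your one-step inequality with factor $\tau^2$ (for the small $\tau$ you need) and the conclusion $\Psi(\rho)\lesssim\rho^{2\beta_0}$ are therefore false. So the tail cannot enter the decaying quantity unweighted, and your proposal supplies no replacement.

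The missing idea is the balance the paper builds into $\mathcal E_\gamma$. It uses $\tail_\gamma(w;B_\rho)=\rho^{\gamma-2s}\tail(w;B_\rho)$ with $\max\{2s,1\}<\gamma<2$, see \eqref{dep:gamma}. This choice works in two directions.
\begin{itemize}
\item Since $\gamma>1>\beta$, the tail part decays like $\tau^\gamma$ by \eqref{tail:1}. Its coupling to the local excess at intermediate scales carries the small factor $\varrho^{\gamma-2s}$, see \eqref{it:tail4} and \eqref{so:11}.
\item A small $\gamma$-excess now only bounds the standard tail by $\varrho^{2s-\gamma}$ times the excess, cf.\ \eqref{tail:rho}. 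The nonlocal error thus becomes $\nu\varrho^{2(1-s)}\varrho^{2s-\gamma}=\nu\varrho^{2-\gamma}$ (Lemma \ref{lemma:smnloc}), which still vanishes precisely because $\gamma<2$. In your notation, $r^{2-2s}$ times the standard tail is $r^{2-\gamma}$ times the weighted one.
\end{itemize}
An alternative repair is to keep $\Psi$ purely local and treat $r^{2(1-s)}\tail\big(u-(u)_{B_r};B_r\big)$ as a forcing term. You would bound it inductively via \eqref{tail:1} by the tail at the initial scale plus the excess bounds already obtained at intermediate scales, in the spirit of Proposition \ref{prop:newtail}.

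With the weighted tail, the regular set also becomes clean. By \eqref{exc.vanishing}, vanishing of the local excess forces vanishing of the $\gamma$-tail, so $\Omega\setminus\Omega_u$ lies in the Giusti set and your condition $\sup_r|(u)_{x_0,r}|<\infty$ is unnecessary.

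A smaller point concerns Step 1. The near-diagonal nonlocal term is bounded by $\nu r^{2(1-s)}\int_{B_r}|Du|^2$ over the whole ball, not by $\varepsilon\|D(\eta(u-\lambda))\|_2^2$. It must be reabsorbed through the Giaquinta--Giusti iteration lemma, which forces the restriction $r\le r_0(\nu)$ of \eqref{cond:r0}.
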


The next model theorem addresses the quantitative H\"older continuity of the gradient.

\begin{theorem}[Partial gradient H\"older continuity]\label{thm:model2}
     Let $u\in  W^{1,2}_{\rm loc}(\Omega;\R^N)\cap L^1_{2s}$ be a weak solution to \eqref{eq1}, with \eqref{A:gr}--\eqref{sym:phi} in force. Let $\beta\in (0,1)$, and suppose that $A(\cdot,\cdot\cdot)$ fulfils \eqref{es:cont} with $\omega(r)=r^\beta$. If
\begin{equation*}
f\in L^{n/(1-\beta) }_{\rm loc}(\Omega;\R^N)\,,
\end{equation*}
        then
\[
       u\in C^{1,\min\{\beta,2(1-s)\}}_{\rm loc}(\Omega_u;\R^N)\,,
\]
with $\Omega_u$ as in Theorem \ref{thm:model}. In particular,  if $A(x,u)\equiv A(x)$, then full gradient regularity holds. If in addition the nonlocal term is of the form
\begin{equation}\label{form:phi}
	\Phi(x,y,u,v,t)=a(x-y)t
\end{equation}
for some matrix-valued function  $a:\R^n\to \R^{N\times N}$ such that
\begin{equation}\label{form:phi1}
	a(z)=a(-z)\quad\text{and}\quad|a(z)|\leq \nu, \text{ for all } z\in \R^n\,,
\end{equation} then
\begin{equation}\label{full:aaa}
        u\in C^{1,\beta}_{\rm loc}(\Omega_u;\R^N)\,.
\end{equation}
\end{theorem}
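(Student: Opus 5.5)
The plan is an excess-decay (Campanato) argument on $\Omega_u$, in which the nonlocal term is frozen into a perturbation controlled by a tail quantity. Fix $x_0\in\Omega_u$. By Theorem~\ref{thm:model}, applied with any $\beta_0<1$ (note $n/(1-\beta)\ge n/(2-\beta_0)$), $u\in C^{0,\beta_0}$ near $x_0$; in particular $u$ is continuous and its oscillation on $B_\rho(x_0)$ is $\lesssim\rho^{\beta_0}$. On each ball $B_r=B_r(x_0)$ I freeze the coefficients and take the $A(x_0,(u)_r)$-harmonic replacement $h$, i.e.
\[
-\mathrm{div}\big(A(x_0,(u)_r)Dh\big)=0 \ \text{ in } B_r,\qquad h=u \ \text{ on }\partial B_r .
\]
Since constant Legendre coefficients yield smooth $h$ with the standard excess decay $\int_{B_\rho}|Dh-(Dh)_\rho|^2\lesssim(\rho/r)^{n+2}\int_{B_r}|Dh-(Dh)_r|^2$, the task reduces to a comparison estimate for $\int_{B_r}|Du-Dh|^2$. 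Testing the difference of the equations with $u-h$ (after approximation), I plan to bound this quantity by three contributions.
\begin{enumerate}
\item The coefficient oscillation: via \eqref{es:cont} with $\omega(r)=r^\beta$ and the H\"older continuity of $u$, this gives $\lesssim r^{2\beta\beta_0}\int_{B_r}|Du|^2$. Choosing $\beta_0$ close to $1$ and bootstrapping first to Lipschitz, i.e.\ Morrey bounds $\int_{B_r}|Du|^2\lesssim r^n$, will allow this to become $r^{n+2\beta}$.
\item The right-hand side: by H\"older and Sobolev--Poincar\'e, $\|f\|_{L^{n/(1-\beta)}}$ contributes $r^{n+2\beta}$.
\item The nonlocal term.
\end{enumerate}

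The nonlocal term is the main obstacle. I split the double integral into the diagonal part $B_{2r}\times B_{2r}$ and the off-diagonal part. On the diagonal I use \eqref{phi:gr} and the Lipschitz bound $|u(x)-u(y)|\lesssim|x-y|$, so that the kernel is integrable,
\[
\int_{B_r}\frac{|x-y|}{|x-y|^{n+2s}}\,dy\lesssim r^{1-2s}
\]
(for $s<1/2$, and with appropriate care otherwise), and I pair this with $|(u-h)(x)-(u-h)(y)|$. The off-diagonal part produces a tail term, which I estimate by $r^{-2s}\,\|u-h\|_{L^1(B_r)}$ times the tail of $u-\ell$, where $\ell$ is an affine approximation. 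Overall, this perturbation contributes like a right-hand side of size $r^{1-2s}$ in a Campanato-type scaling. Relative to the local term, this gives an excess gain of order $r^{2(1-s)}$ at the gradient level, which is exactly what caps the exponent at $\min\{\beta,2(1-s)\}$. Making this precise will need a tail-decay lemma: the tail of $u-(u)_\rho-(Du)_\rho\cdot(x-x_0)$ at scale $\rho$ is controlled by a geometric sum of excesses at the larger scales plus $\rho^{2s}$ times the global tail. I would iterate this together with the excess decay, in the spirit of De Filippis--Mingione--Nowak. Combining everything yields
\[
E(\rho)\le c\Big(\frac{\rho}{r}\Big)^{2}E(r)+c\,r^{2\min\{\beta,2(1-s)\}},
\]
and the standard iteration lemma together with Campanato's characterization gives the $C^{1,\min\{\beta,2(1-s)\}}$ regularity. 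When $A=A(x)$, Theorem~\ref{thm:model} already gives $\Omega_u=\Omega$, so full regularity follows.

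For the structure \eqref{form:phi}, the nonlocal operator is linear and translation invariant with even kernel $a(z)|z|^{-n-2s}$. Hence it annihilates affine functions, and more importantly it commutes with difference quotients. I would instead compare with $v$ solving the frozen mixed system
\[
-\mathrm{div}\big(A(x_0,(u)_r)Dv\big)+Q^{\mathrm{nl}}_\Phi v=0 \ \text{ in } B_r .
\]
For this system, differentiating the equation is allowed. Difference-quotient and tail estimates then give $v\in C^{1,1-\epsilon}$, with an excess decay of order $(\rho/r)^{2-\epsilon}$ plus a tail decay. The comparison $u-v$ then involves only the coefficient and $f$ errors, each of order $r^{2\beta}$, and this yields \eqref{full:aaa}.
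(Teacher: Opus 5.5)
Your plan is sound. For the first assertion it is essentially the paper's proof of Theorem~\ref{thm:c1b}:
\begin{itemize}
\item a frozen-coefficient local replacement $h$;
\item the nonlocal term treated as an error of relative size $r^{2(1-s)}$, coming from the $W^{s,2}$ pairing on $B_{2r}\times B_{2r}$ and from the tail $r^{1-2s}\tail(u-(u)_{B_r};B_r)$, which Proposition~\ref{prop:newtail} plus the Lipschitz bound makes $O(r^{\min\{1,2(1-s)\}})$;
\item a Campanato iteration.
\end{itemize}
You reach Lipschitz by a direct Campanato bootstrap from $C^{0,\beta_0}$ with $\beta_0$ close to $1$. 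That works for power moduli; the paper instead runs a Dini-type summation valid for general Dini moduli. One imprecision: for general $\Phi$ you cannot subtract an affine map inside $\Phi$. After using $|\Phi|\le\nu|u(x)-u(y)|$, the paper subtracts only constants. Inserting $\ell$ by hand produces an extra term $|(Du)_{B_r}|\,r^{1-2s}$, which is finite only for $s>1/2$; that term is exactly the cap.

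For \eqref{full:aaa} your route is genuinely different. The paper keeps the purely local replacement and bootstraps the exponent. Since $Q^{\mathrm{nl}}_a\ell=0$, it replaces $u$ by $u-\ell_{2r}$ in the nonlocal term. It then bounds the diagonal part by $r^{2(1-s)}E(Du;B_{2r})$, and the tail of $u-\ell_{2r}$ through Lemma~\ref{lemma:affine} and the already known $C^{1,\beta_0}$ bound, gaining $2(1-s)$ per step.

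Your mixed replacement $v$ makes the comparison error purely local, since the non-coercive nonlocal form of $u-v$ is absorbed by $r^{2(1-s)}$-smallness. The work then shifts onto $v$, and two points must be made explicit.
\begin{itemize}
\item $v\in C^{1,1-\epsilon}$ does not follow from literally differentiating the equation. Outside $B_r$ you only have $v=u\in L^1_{2s}$. The far-field forcing $\int_{\R^n\setminus B}a(x-y)\,u(y)\,|x-y|^{-n-2s}\,dy$ is not differentiable in $x$, because $a$ is merely measurable. You must cut off, keep the far field as a bounded forcing (which suffices for a second-order operator), and apply the $C^{0,\alpha}$ theory to $\partial_k(\eta v)$.
\item The ``tail decay'' term in the estimate for $v$ is $r^{1-2s}\tail(u-\ell_{r};B_r)$. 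With Lipschitz information alone it is of order $r^{2(1-s)}$ and reinstates the cap. You must feed it through Lemma~\ref{lemma:affine} inside a joint induction $E(Du;B_{\tau^kR})\le C(\tau^kR)^\beta$. There it becomes $O(r^{\min\{1,\beta+2(1-s)\}})$, which is of higher order.
\end{itemize}
Done this way, your argument closes in a single induction instead of the paper's finite bootstrap. The paper's route, in exchange, needs only the classical regularity of constant-coefficient local systems.
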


We refer to Section \ref{sec:mainresults} for the precise statements of the results, where we also provide finer quantitative estimates. We now briefly comment on these results. Theorem~\ref{thm:model} provides a quantitative H\"older continuity result for solutions, in analogy with the purely local equation~\cite{giaq}:
\[
-\mathrm{div}\big(A(x,u)\,Du \big)=f\in L_{\rm loc}^{n/(2-\beta_0)}(\Omega;\R^N)
\quad\implies \quad
u\in C^{0,\beta_0}_{\rm loc}(\Omega_u;\R^N)\,.
\]
This result is essentially sharp, as can be seen by a simple scaling argument\footnote{See Theorem~\ref{thm:hold}, where a slightly weaker integrability condition on $f$ is assumed in terms of a Marcinkiewicz space.}.
This behavior is not surprising since, as already mentioned, the local operator $Q_A^\mathrm{l}$ represents the leading term in~\eqref{eq1}, and therefore it is this component that ultimately dictates the regularity of solutions. 

Another key feature of our approach, which relies precisely on the dominance of the local operator, is that we do not impose regularity or coercivity assumptions on the nonlocal term $\Phi$. In particular, we do not require the usual coercivity condition 
\[
\Phi(x,y,u,v,t)\cdot t \ge c\,|t|^2\,,\qquad c>0\,;
\]
this is particularly relevant in applications, where the nonlocal term may exhibit irregular or poorly structured behavior, see for instance~\cite{garroni}.

We also recall that everywhere regularity may fail in the vectorial setting, even when the coefficients $u\mapsto A(x,u)$ are analytic~\cite[Chapter~II.3]{giaq}. On the other hand, when the $u$-dependence on the coefficients is dropped, everywhere regularity is then recovered.

\vs

Turning to gradient regularity, Theorem~\ref{thm:model2} establishes the H\"older continuity of $Du$ with an explicit H\"older exponent dictated by the integrability of $f$. However, under such low assumptions on the nonlocal term, the exponent cannot in general exceed the threshold $C^{1,2(1-s)}$ in the regime $s>1/2$ for such general nonlocal terms $Q^\mathrm{nl}_\Phi$. This reflects the fact that, in the mildly local regime $s>1/2$, the lack of regularity of the nonlocal term influences the regularity of $u$ at the level of the gradient, in view of the fact that the proof implicitly involves differentiating equation~\eqref{eq1}. We also emphasize that the threshold $2(1-s)$ is consistent with the optimal regularity expected in this setting. In particular, it coincides with the sharp boundary regularity exponent established for solutions to nonlocal equations, see \cite{AAC}, and this highlights the role of the nonlocal operator in limiting the attainable gradient regularity, even when the equation exhibits a predominantly local character.

On the other hand, when we impose structural assumptions on $\Phi$, namely when it has the form described in~\eqref{form:phi}--\eqref{form:phi1}, this limitation on the exponent disappears in \eqref{full:aaa}; thus, we recover the full, sharp H\"older continuity of the gradient
\begin{equation*}
    f\in L^{n/(1-\beta)}_{\rm loc}(\Omega;\R^N)\quad\implies\quad u\in C^{1,\beta}_{\rm loc}(\Omega_u;\R^N)\quad\text{for all $\beta\in (0,1)$.}
\end{equation*}
The prototypical example of $\Phi$ fulfilling \eqref{form:phi}--\eqref{form:phi1} is the fractional Laplacian $(-\Delta)^s u$, for which $\Phi(x,y,u,v,t)=\Phi(t)=t$\,.

\subsection*{Outline of the proof.} 
The proof proceeds in several steps and follows, in principle, a perturbative approach as in \cite{AC25,dmmixed}, but carried out in a finer and more quantitative form, making the interplay between the local and nonlocal operators more explicit.

As in the classical regularity theory for purely local systems, the first step consists in establishing higher integrability of the gradient, namely $Du\in L^{2+\zeta}_{\rm loc}$ in  Proposition~\ref{prop:highint}. The delicate point here is to keep explicit track of the dependence on the growth factor $\nu$ associated with the nonlocal term in~\eqref{phi:gr}, see Remark \ref{rem:r}. 

The next step is to prove an approximate harmonicity property for solutions--see Proposition~\ref{prop:a}. The key observation is that if $u$ solves~\eqref{eq1} in the ball $B_\vrho(x_0)\Subset \Omega$, then the blow-up function $u_\vrho(x):=u(x_0+\vrho x)$ solves a similar equation in which the nonlocal term is multiplied by the factor $\vrho^{2(1-s)}$, see equation~\eqref{sol:blow}.  More explicitly, in the case $f=0$, the function $u_\vrho$ is solution of
\begin{equation*}
  Q^\mathrm{l}_{A_\vrho} u_\vrho+\vrho^{2(1-s)}Q^\mathrm{nl}_{\Phi_\vrho} u_\vrho=0\qquad\text{in $B_1$}
\end{equation*}
with coefficient matrix $A_{\vrho}$ and $\Phi_\vrho$ still satisfying the structural assumptions \eqref{A:gr} to \eqref{sym:phi}. Consequently, by choosing the radius $\vrho$ sufficiently small, the nonlocal contribution in this equation can be made arbitrarily small, see Lemma~\ref{lemma:smnloc}. 

At the same time, the higher integrability result obtained in the first step guarantees that the $L^{2+\zeta}$-norm of $Du_\vrho$ remains uniformly controlled for any small radii $\vrho$ -- this is the reason of the care in tracking the dependence on the size of the nonlocal term $\nu$. This allows us to apply the approximate harmonicity Lemma~\ref{lem:harapprox}, see also \eqref{small:vr}; hence $u$ is quantitatively close to a harmonic function, which in turn leads to a quantitative decay estimate for the excess functional -- see Paragraph \ref{par:apprharm} and Proposition~\ref{dec:one}. 

From this point on, the proof of the regularity of $u$ follows a more standard route. In particular, we exploit the recently developed machinery of potential theory to control the source term $f$, together with suitable decay estimates for the nonlocal tail over shrinking balls, see Lemma~\ref{lemma:tail} and Lemma~\ref{lemma:affine}.




\subsection*{Plan of the paper} In Section \ref{sec:mainresults} we state the main results of the paper in full detail, while Section \ref{sec:prelim} provides the notation and some preliminary results. In Section \ref{sec:energy} we prove the Caccioppoli inequality and the higher integrability of solutions. Section \ref{linearization} is devoted to the proof of approximate harmonicity of solutions and the excess decay estimate, which will be the starting point of the proofs for the main results.
In Section \ref{sec:regset} we characterize the regular set of solutions and we provide the proof of the results involving the regularity of $u$, while in Section \ref{sec:gradreg} we study the regularity of the gradient $Du$. Finally, in Section \ref{sec:everywhere} we discuss in detail the everywhere regularity of solutions in the case of $u$-independent coefficients.

\section{Main results}\label{sec:mainresults}
In this section we state in full detail the results of the paper.  Unless otherwise stated, we shall assume that
\begin{equation}\label{ass:f}
    f\in L^\chi_{\rm loc}(\Omega;\R^N)\quad \text{for some }\chi>2_*\,,
\end{equation}
where we set
\begin{equation}\label{def:2}
    2_*=\begin{cases}
\displaystyle{\frac{2n}{n+2}}\quad & n>2
        \\[3mm]
        \text{any number in $(1,2)$}\quad & n=2
    \end{cases},\qquad\quad
    2^*=\begin{cases}
\displaystyle{\frac{2n}{n-2}}\quad & n>2
        \\[3mm]
        \text{any number in $(2,+\infty)$}\quad & n=2
    \end{cases}.
\end{equation}
The exponent \(\chi\) can be auxiliary: in the results where \(f\) is assumed to belong to a Marcinkiewicz space, \(\chi\) is chosen below the corresponding Marcinkiewicz exponent, as explained after \eqref{density.Marc}. We denote by  $\data$ the set of parameters
\begin{equation}\label{data}
    \data=\{n,N,s,\Lambda,\chi\}\,.
\end{equation}
Also, we shall always assume that $A(x,\cdot)$ is uniformly continuous; namely,  there exists a modulus of continuity $\omegau(\,\cdot\,)$ such that
\begin{equation}\label{mod:cont.u}
\big|A(x,u)-A(x,v)\big|\leq \Lambda\,\omegau\big(|u-v| \big)\qquad\text{for almost all $x\in \Omega$ and $u,v\in \R^N$}.
\end{equation}
Our first two main results are concerned with BMO and VMO regularity of solutions to \eqref{weak.formulation}. These are stated in terms of a smallness assumption on the $x$-mean oscillation of $A(\cdot,\cdot\cdot)$. Specifically, for any ball $B_{r}(x_{0})\subseteq\Omega$, we define the averaged vector field
\begin{equation}\label{A:mean}
{(A)}_{B_{r}(x_{0})}(z):=\mint_{B_{r}(x_{0})}A(x,z)\dx\,;
\end{equation}
then, for every $x_0\in\Omega$ and for some $r_0>0$, we define
\begin{equation}\label{E_A}
E_{r_0}(A;x_0)=\sup_{\substack{\vrho\in(0,r_0]:B_{\vrho}(x_{0})\subseteq\Omega,\\ z\in\R^N,z\neq 0}} \mint_{B_{\vrho}(x_{0})}\big|{A(x,z)-{(A)}_{B_{\vrho}(x_{0})}(z)}\big|\dx\,,
\end{equation} 
the $x${\em -mean oscillation of} $A(\cdot,\cdot\cdot)$. 
\begin{theorem}[Partial BMO Regularity]\label{part.BMO}
Let $u\in W^{1,2}_{\rm loc}(\Omega;\R^N)\cap L^1_{2s}$ be a local weak solution to \eqref{eq1}. Suppose that $A(\cdot,\cdot)$ satisfies \eqref{A:gr}, \eqref{mod:cont.u}, and that $\Phi$ fulfills \eqref{phi:gr}, \eqref{sym:phi}. Then there exist constants
\[
\varepsilon_b=\varepsilon_b\big(\data,\omegau(\,\cdot\,)\big)\in(0,1),
\qquad
\delta_0=\delta_0(\data)\in(0,1),
\]
such that the following implication holds: if for every compact set $K\Subset\Omega$ there exist radii $r_A,r_f\in(0,{\rm dist}(K,\partial\Omega))$ such that
\begin{equation}\label{x0:BMOd}
\sup_{x_0\in K}E_{r_A}(A;x_0)<\delta_0,
\end{equation}
and
\begin{equation}\label{ass.BMO.f0}
\sup_{\substack{x_0\in K,\\ 0<t\leq r_f}} t^2 \bigg( \mint_{B_t(x_0)}|f|^\chi\,dx \bigg)^{1/\chi} <\varepsilon_b,
\end{equation}
where $\chi$ is as in \eqref{ass:f}, then there exists an open set $\Omega_u\subseteq\Omega$ such that $\Omega_u=\Omega$ if $n=2$, while for $n\ge3$,
\[
\dim_{\mathcal H}(\Omega\setminus\Omega_u)\leq n-2-\zeta
\]
with $\zeta=\zeta(\data,\max\{\nu,1\})\in(0,1)$, and
\begin{equation}\label{local.BMO}
u\in BMO_{\rm loc}(\Omega_u;\R^N)\,.
\end{equation}
\end{theorem}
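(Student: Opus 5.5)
The plan is the classical partial-regularity scheme for systems with VMO-type/discontinuous coefficients (as in Giaquinta's treatment of $L^{2,n}$ regularity), carried out with a perturbative treatment of the nonlocal term.

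\textbf{Step 1: energy estimates.} We first establish a Caccioppoli inequality on balls $B_\vrho(x_0)\Subset\Omega$. The nonlocal contribution will be bounded by
\[
\vrho^{2(1-s)}\nu\,\big(\text{local }L^2\text{ oscillation}+\tail\big),
\]
while $f$ enters through $\vrho^2(\mint|f|^\chi)^{1/\chi}$. Via Gehring's lemma this gives higher integrability $Du\in L^{2+\zeta}_{\rm loc}$, with $\zeta=\zeta(\data,\max\{\nu,1\})$. The exponent $\zeta$ is exactly what produces the dimension bound $n-2-\zeta$: by the standard Giusti argument, the set where $\liminf_{\vrho\to0}\vrho^{2-n}\int_{B_\vrho}|Du|^2>0$ has $\mathcal H^{n-2-\zeta}$ measure zero.

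\textbf{Step 2: comparison on small balls.} On $B_\vrho(x_0)$ we freeze the coefficients to $\bar A:=(A)_{B_\vrho(x_0)}\big((u)_{B_\vrho}\big)$, which is constant, and let $h$ be $\bar A$-harmonic with $h=u$ on $\partial B_{\vrho/2}$. Testing the difference $u-h$ and using the higher integrability with H\"older's inequality, the comparison error is controlled by four terms:
\begin{itemize}
\item $\big(E_{r_A}(A;x_0)\big)^{\zeta/(2+\zeta)}$, which is $<\delta_0^{\zeta/(2+\zeta)}$;
\item $\omegau\big(\text{averaged oscillation of }u\big)$, handled by Poincar\'e's inequality together with the $L^2\to$ Campanato smallness;
\item the nonlocal term of size $\vrho^{2(1-s)}\nu$ times the energy and the tail;
\item the $f$-term of size $\vrho^{2}(\mint|f|^\chi)^{1/\chi}<\varepsilon_b$.
\end{itemize}
Combining this with the decay $\int_{B_{\tau\vrho}}|Dh|^2\le c\tau^n\int_{B_\vrho}|Dh|^2$ yields a decay estimate for the scale-invariant quantity
\[
\Psi(x_0,\vrho):=\vrho^{2-n}\int_{B_\vrho}|Du|^2+\vrho^{2(1-s)}\tail(u;B_\vrho)^2\text{-type term}.
\]
This estimate reads
\[
\Psi(\tau\vrho)\le c\big(\tau^2+\tau^{-n}[\delta_0^{\sigma}+\omegau(\cdot)+\vrho^{2(1-s)}\nu]\big)\Psi(\vrho)+c\tau^{-n}\varepsilon_b^2 .
\]

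\textbf{Step 3: closing the iteration.} We choose $\tau$ first, then $\delta_0$ depending only on $\data$, then a smallness threshold $\varepsilon_0$ for $\Psi$ so that the $\omegau$-term is small, and finally $\varepsilon_b$. The regular set is defined as
\[
\Omega_u:=\{x_0:\liminf_\vrho \Psi(x_0,\vrho)<\varepsilon_0\},
\]
which is open by continuity of $\Psi$ in $x_0$ at fixed $\vrho$. The smallness then propagates to all smaller scales and to nearby points, giving $\sup_\vrho\Psi\le C$ locally in $\Omega_u$. By Poincar\'e's inequality this implies a uniform Campanato $L^{2,n}$ bound on $u$, i.e. $u\in BMO_{\rm loc}(\Omega_u)$. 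Since $\Omega\setminus\Omega_u$ lies in the set of Step 1, we obtain the dimension estimate. For $n=2$, higher integrability gives $\vrho^{2-n}\int_{B_\vrho}|Du|^2\lesssim\vrho^{2\zeta/(2+\zeta)}\to0$, so $\Omega_u=\Omega$.

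\textbf{Main obstacle.} The hardest part is the tail. The iteration must also control $\tail(u-(u)_{B_{\tau\vrho}};B_{\tau\vrho})$, and this quantity is not monotone. I would first try the splitting
\[
\tail(\tau\vrho)\le c\,\tau^{2s}\tail(\vrho)+c\sum_{j}\tau^{j(2s)}\cdot\text{(oscillation on }B_{\tau^{-j}\tau\vrho})\text{-type terms},
\]
so that the tail decays by a factor $\tau^{2s}$ and is absorbed into $\Psi$. The weight $\vrho^{2(1-s)}$ is what makes the nonlocal term lower order at every scale. This absorption must be done without using any coercivity of $\Phi$, which is why $\Phi$ is only ever estimated through the growth bound \eqref{phi:gr}.
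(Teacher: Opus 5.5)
Your route is sound, but it is not the paper's.

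\textbf{How the two routes differ.} You compare $u$ directly with the $\bar A$-harmonic replacement $h$, testing the equation with $u-h$. The paper does this only later, in Proposition \ref{prop:comparison}, for gradient regularity. You then iterate a Morrey-type functional $\vrho^{2-n}\int_{B_\vrho}|Du|^2$ plus a weighted tail.

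For the BMO theorem the paper instead blows up at scale $\vrho$ and normalizes by the modified excess $\mathcal E_\gamma$, i.e.\ the $L^2$-oscillation of $u$ plus $\tail_\gamma$. It proves approximate $\bar{A_\vrho}$-harmonicity by testing only against smooth $\vphi$; Lemma \ref{lemma:smnloc} bounds the nonlocal term by $c\,\nu\vrho^{2-\gamma}\|D\vphi\|_{L^\infty}$. It then invokes the quantitative approximation Lemma \ref{lem:harapprox}. The datum forces a split into small and large potential regimes, because the normalization needs $f$ small relative to $\mathcal E_\gamma$.

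\textbf{Why your iteration closes.} Read your weight as $\vrho^{2(1-s)}$ on the squared centered tail, i.e.\ $\vrho^{1-s}\tail(u-(u)_{B_\vrho};B_\vrho)$. This is exactly $\tail_\gamma$ with $\gamma=1+s$, which lies in the admissible range \eqref{dep:gamma}. Your tail splitting is \eqref{tail:1}. The coupling term it produces, $c\,\tau^{2-2s-n}\vrho^{2(1-s)}\Psi_{\rm loc}(\vrho)$, is harmless for small $\vrho$, so the iteration closes.

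\textbf{What each approach buys.} Yours needs no approximation lemma and no regime splitting, since $f$ enters additively. The cost is that $D(u-h)$ is only in $L^2$. Hence the $x$-oscillation and $\omegau$ terms must be handled inside the comparison itself, via H\"older against $\|Du\|_{L^{2+\zeta}}$ and concavity of $\omegau$. The paper needs only $\|Du_\vrho\|_{L^2}$ at that step and uses higher integrability only inside Lemma \ref{lem:harapprox}. Its decay at an arbitrary rate $\tau^\beta$ is then reused for the VMO and H\"older results.

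\textbf{First point to fix: dependence of constants.} As written, $\sigma=\zeta/(2+\zeta)$ and the reverse H\"older constant depend on $\max\{\nu,1\}$. Your $\delta_0$ and $\varepsilon_b$ would then depend on $\nu$, contrary to the statement. The fix is to apply higher integrability only on balls with $\nu\vrho^{2(1-s)}\le1$, as in Remark \ref{rem:r}. Equivalently, apply it to $u(x_0+\vrho\,\cdot)$, whose nonlocal coefficient is $\vrho^{2(1-s)}\nu$. Then $\nu$ enters only through the threshold radius. Also, the tail makes the reverse H\"older inequality nonlocal, so the classical Gehring lemma does not apply verbatim; you need the exit-time argument behind Proposition \ref{prop:highint}.

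\textbf{Second point to fix: the singular set.} Use $\limsup$, not $\liminf$, in Step 1. The inclusion $\Omega\setminus\Omega_u\subseteq\{\limsup_{\vrho\to0}\vrho^{2-n}\int_{B_\vrho}|Du|^2>0\}$ rests on the analogue of \eqref{exc.vanishing}: smallness of the local energy at all small scales forces $\vrho^{1-s}\tail(u-(u)_{B_\vrho};B_\vrho)\to0$. Smallness along a sequence of radii does not control the tail, since the tail sees the oscillation at every intermediate scale. Giusti's lemma applies to the $\limsup$ set. The same vanishing argument is what yields $\Omega_u=\Omega$ when $n=2$.
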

Under assumptions stronger than \eqref{ass.BMO.f0}, we are also able to obtain VMO-regularity of solutions.

\begin{theorem}[Partial VMO Regularity]\label{teo.partVMO}
Under the assumptions and notations of Theorem \ref{part.BMO}, if $f$ satisfies
\begin{equation}\label{ass.BMO.f}
\lim_{\vrho\to0^+}\vrho^2 \bigg(\mint_{B_\vrho(x)} |f|^\chi \dx\bigg)^{1/\chi} =0\quad\text{locally uniformly in $x\in \Omega$,}
\end{equation}
$\chi$ as in \eqref{ass:f}, then 
\[u \in VMO_{\rm loc}(\Omega_u; \R^N)\,.
\]
\end{theorem}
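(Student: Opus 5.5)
The approach is to reuse the machinery behind Theorem~\ref{part.BMO} and upgrade the uniform smallness of the excess to vanishing smallness. The regular set $\Omega_u$ stays the same, characterized via the excess decay of Proposition~\ref{dec:one}. The new hypothesis \eqref{ass.BMO.f} means that, for each compact $K\Subset\Omega$ and each $\varepsilon>0$, there is a radius $r_f(\varepsilon)$ below which the $f$-quantity in \eqref{ass.BMO.f0} is smaller than $\varepsilon$. We want to show that, for $x_0\in\Omega_u$, the mean oscillation
\[
\mathcal E(x,\vrho):=\mint_{B_\vrho(x)}|u-(u)_{B_\vrho(x)}|^2\dx
\]
tends to $0$ as $\vrho\to0$, uniformly for $x$ in a neighbourhood of $x_0$. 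By Poincar\'e, $\mathcal E(x,\vrho)$ is controlled by $\vrho^{2}\mint_{B_\vrho}|Du|^2$, which plays the role of the excess, together with a tail contribution.

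The key steps are as follows. First, by the definition of the regular set and the BMO proof, there are a neighbourhood $U$ of $x_0$ and a radius $R_0$ such that the excess $\Psi(x,\vrho)$ (energy plus tail plus $f$-term) stays below the threshold of the decay estimate for all $x\in U$ and $\vrho\le R_0$. Second, we iterate the decay estimate of Proposition~\ref{dec:one}, which has roughly the form
\[
\Psi(x,\tau\vrho)\le c\,\tau^{2\alpha}\Psi(x,\vrho)+c\,\big(\delta_0+\omegau(\Psi(x,\vrho))+\vrho^{2(1-s)}\big)\Psi(x,\vrho)+c\,F(x,\vrho),
\]
where $F(x,\vrho)=\vrho^2(\mint_{B_\vrho(x)}|f|^\chi)^{1/\chi}$. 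Once $\tau$ is fixed so that the first two terms combine to at most $\tfrac12\Psi(x,\vrho)$, iterating gives
\[
\Psi(x,\tau^{k}\vrho)\le 2^{-k}\Psi(x,\vrho)+c\sum_{j<k}2^{-(k-1-j)}F(x,\tau^j\vrho).
\]
Third, given $\varepsilon>0$, we choose $\vrho\le\min\{R_0,r_f(\varepsilon)\}$, so that every $F$-term is below $\varepsilon$ and the sum is below $2c\varepsilon$. Then we take $k$ large so that $2^{-k}\Psi(x,\vrho)<\varepsilon$. This yields $\limsup_{r\to0}\sup_{x\in U'}\mathcal E(x,r)\le c\varepsilon$, and since $\varepsilon$ is arbitrary we get $u\in VMO$ on $U'$. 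Covering compact subsets of $\Omega_u$ by such neighbourhoods finishes the proof.

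The main obstacle is the tail part of $\Psi$. In Lemma~\ref{lemma:tail} the tail over $B_{\tau\vrho}$ is bounded by a geometric sum of the oscillations on the larger balls $B_{\tau^j\vrho}$, plus a term $\tau^{2s}$-small times the tail on $B_\vrho$. This term must also become vanishing, not merely bounded, and the discrete convolution with geometrically decaying weights gives exactly that once the oscillations at intermediate scales are small. The first thing I would try is to treat the tail as a component of $\Psi$ in the iteration, as in the BMO theorem, and then check that the constants do not depend on $\varepsilon$. The $\delta_0$-term in $E_{r_A}$ is only small, not vanishing, but it enters multiplicatively against $\Psi$ and is absorbed by the choice of $\tau$, so no VMO assumption on $A$ in $x$ is needed.
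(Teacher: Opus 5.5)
Your argument is correct and is essentially the paper's proof. The paper takes the uniform decay estimate \eqref{local.Egamma.decay2} on $B_{r_{x_0}}(x_0)$ from the BMO construction, takes the supremum over centers, and lets $\sigma\to0^+$ with $\rho$ fixed, using the uniform bound \eqref{pointwise.BMO}; it then lets $\rho\to0^+$ via \eqref{ass.BMO.f}, which is exactly your $\varepsilon$--$k$ argument followed by a covering. Note also two imprecisions: the tail needs no separate treatment, since it is built into $\mathcal E_\gamma$ and decays with it by Proposition~\ref{dec:one}; and the $\delta_0$-contribution is absorbed because $\delta_0$ is chosen small \emph{after} $\tau$ (its coefficient carries $\tau^{-n/2}$), as the hypotheses of Theorem~\ref{part.BMO} guarantee, not by the choice of $\tau$ itself.
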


Our next results deal with continuity of solutions and their gradient. In order to state them, we introduce the \textit{(truncated) Riesz-type potential} of $f$:
\begin{equation}\label{dfriesz:pot}
    \mathbf{I}^{f}_{\alpha,\chi}(x_0,\sigma):=\int_0^\sigma \vrho^\alpha\bigg( \mint_{B_\vrho(x_0)}|f|^\chi\dx\bigg)^{1/\chi}\,\frac{d\vrho}{\vrho}
\end{equation}
for  $x_0\in \Omega$ and $\sigma>0$. For $\chi=1$, the definition reduces to the averaged form of the classical truncated Riesz potential, which can be evaluated both on functions and, by replacing $|f|\,\dx$ with a measure, on locally finite signed measures. For $\chi>1$, $\mathbf I^f_{\alpha,\chi}$ is a $\chi$-averaged counterpart of the truncated Riesz potential and retains the same natural scaling properties, mutatis mutandis; see \cite[Chapter 4]{dmInv}.

\begin{theorem}[Partial continuity]\label{cont.t}
Under the assumptions and notations of Theorem \ref{part.BMO}, suppose that $f$ fulfills
\begin{equation}\label{I1.uniformly}
    \lim_{\sigma\to 0^+}\, \mathbf{I}^{f}_{2,\chi}(\cdot,\sigma)=0\quad \mbox{locally uniformly in $\Omega$}\,;
\end{equation}
then
\[
    u\in C^0(\Omega_u;\R^N)\,.
\]
\end{theorem}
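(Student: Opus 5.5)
We prove continuity by showing that on $\Omega_u$ the averages $(u)_{B_\vrho(x)}$ converge uniformly in $x$ as $\vrho\to0$. Uniform limits of continuous functions are continuous, and the limit equals $u$ a.e. by Lebesgue's theorem.

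\emph{Setup.} Fix $x_0\in\Omega_u$. By the characterization of the regular set used for Theorem~\ref{part.BMO}, there are a neighbourhood $U\Subset\Omega_u$ of $x_0$ and a radius $R>0$ such that, uniformly for $x\in U$ and $\vrho\le R$:
\begin{itemize}
\item the normalized excess $\vrho^{2}\mint_{B_\vrho(x)}|Du|^2\dx$ stays below the threshold of Proposition~\ref{dec:one};
\item a suitably scaled tail of $u$ at $B_\vrho(x)$ stays below the same threshold.
\end{itemize}

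\emph{Step 1: excess decay.} On such balls Proposition~\ref{dec:one}, combined with the tail decay Lemma~\ref{lemma:tail}, gives a one-step estimate of the form
\[
E(x,\tau\vrho)\le \tfrac12 E(x,\vrho)+c\,\vrho^{2}\Big(\mint_{B_\vrho(x)}|f|^\chi\dx\Big)^{1/\chi}+c\,\vrho^{2(1-s)}\,(\text{tail/energy terms}),
\]
for a fixed $\tau\in(0,1)$. Here $E(x,\vrho)$ is the mean oscillation $\mint_{B_\vrho(x)}|u-(u)_{B_\vrho(x)}|\dx$, or its $L^2$ version, coupled with a scaled tail term so that the scheme closes.

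\emph{Step 2: iteration.} Iterating along $\vrho_j=\tau^j\vrho$ and summing gives
\[
\sum_j E(x,\vrho_j)\le c\,E(x,\vrho)+c\sum_j \vrho_j^{2}\Big(\mint_{B_{\vrho_j}(x)}|f|^\chi\Big)^{1/\chi}+c\,\vrho^{2(1-s)}\,(\ldots).
\]
The $f$-sum is comparable to $\mathbf I^f_{2,\chi}(x,\vrho)$, by the standard comparison of dyadic sums with the integral in \eqref{dfriesz:pot}. Since $|(u)_{B_{\vrho_{j+1}}}-(u)_{B_{\vrho_j}}|\le \tau^{-n}E(x,\vrho_j)$, the sequence $(u)_{B_{\vrho_j}(x)}$ is Cauchy. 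Its oscillation beyond scale $\vrho$ is bounded by
\[
c\,E(x,\vrho)+c\,\mathbf I^f_{2,\chi}(x,\vrho)+c\,\vrho^{2(1-s)}\,(\ldots).
\]

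\emph{Step 3: uniform convergence.} Each term in this bound tends to zero uniformly for $x\in U$:
\begin{itemize}
\item the potential term by \eqref{I1.uniformly};
\item the lower-order term trivially;
\item $E(x,\vrho)$ because, given the hypothesis, Theorem~\ref{teo.partVMO} yields VMO, i.e. uniformly vanishing mean oscillation on $U$. Alternatively, the iteration itself makes $E$ small after finitely many steps, as the potential tends to zero.
\end{itemize}
Hence the averages converge uniformly to a continuous representative of $u$ on $U$.

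\emph{Main obstacle.} The delicate point is the tail. It must be propagated along the iteration without its bound accumulating, so that the smallness conditions defining the regular set persist at every scale and uniformly in a neighbourhood. To handle it, I would carry the scaled tail as part of the excess functional and use Lemma~\ref{lemma:tail} to show that it contracts by a factor $\tau^{2(1-s)}$-type plus the sum of the previous excesses. That sum is controlled by the same geometric series.
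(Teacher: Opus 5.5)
Your proposal is correct and follows the paper's proof. The paper sums the one-step decay \eqref{alla:fineuso} for the $\gamma$-excess $\mathcal E_\gamma$ along $\tau^j\vrho_{x_0}$, uniformly for centers in $B_{r_{x_0}}(x_0)$, telescopes the averages with the $f$-sum bounded by $\mathbf I^f_{2,\chi}$ via \eqref{est.potential.sum}, and gets uniform vanishing from \eqref{thesis:VMO} and \eqref{I1.uniformly}. Since $\mathcal E_\gamma$ already contains the scaled $\gamma$-tail, the obstacle you flag is handled inside Proposition~\ref{dec:one}, and no additive $\vrho^{2(1-s)}$ term arises. One descriptive inaccuracy: $\Omega_u$ is defined by smallness of $\mathcal E_\gamma(u;B_{\vrho_x}(x))$ at a single scale, not by the normalized energy $\vrho^2\mint_{B_\vrho(x)}|Du|^2\dx$, but this does not affect the argument.
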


For what concerns H\"older continuity of solutions, this can be quantified whenever the right-hand side of \eqref{eq1} belongs to a certain Marcinkiewicz, or weak-Lebesgue, space $\mathcal M^m(\Omega;\R^N)$ (often denoted by $ L^{m,\infty}(\Omega;\R^N)$).  Specifically, we say that  $f\in \mathcal M^m(\Omega;\R^N)$, for $m\geq1$, if 
\begin{equation}\label{def:marcke}
{\|f\|}_{\mathcal M^m(\Omega;\R^N)}^m\coloneqq\sup_{\lambda>0}\lambda^m \big|\big\{x\in \Omega:|f(x)|\geq\lambda\big\}\big|<\infty\,.
\end{equation}
We recall that  standard interpolation theory \cite[Theorem 3.18.8]{pick} yields
\begin{equation}\label{emb:weak}
    f\in L^{m}(\Omega;\R^N)\implies f\in \M^{m}(\Omega;\R^N),\quad m\in [1,\infty) 
\end{equation}
and $\M^{m}(\Omega;\R^N)\subseteq L^{m-\e}(\Omega;\R^N)$ for $\e\in(0,m-1]$, see also \eqref{density.Marc}.

\begin{theorem}[Quantified partial H\"older regularity]\label{thm:hold}
Assume that \eqref{A:gr}--\eqref{sym:phi} and \eqref{mod:cont.u} are in force, and let $u\in W^{1,2}_{\rm loc}(\Omega;\R^N)\cap L^1_{2s}$ be a local weak solution to \eqref{eq1}, with $s\in(0,1)$. Let $\beta_0\in(0,1)$ and assume that
\begin{equation}\label{ass:fhold}
    f\in \mathcal M^{n/(2-\beta_0)}_{\rm loc}(\Omega;\R^N)\,.
\end{equation}
Then there exists a constant $\delta_0=\delta_0(\data,\beta_0)\in(0,1)$ with the following property. If, for every compact set $K\Subset\Omega$, there exists a radius $r_A=r_A(K)>0$ such that
\begin{equation}\label{satip}
    \sup_{x_0\in K} E_{r_A}(A;x_0)<\delta_0,
\end{equation}
then there exists an open set $\Omega_u\subseteq\Omega$ such that $\Omega_u=\Omega$ if $n=2$, while, if $n\geq3$,
\[
\dim_{\mathcal H}(\Omega\setminus\Omega_u)\leq n-2-\zeta,\qquad \zeta=\zeta(\data,\max\{\nu,1\})\in(0,1)\,,
\]
and
\[
u\in C^{0,\beta_0}_{\rm loc}(\Omega_u;\R^N)\,.
\]
Finally, for every $x_0\in \Omega_u$, there exist radii $r_{x_0}\leq \vrho_{x_0}/2$ such that $B_{2\vrho_{x_0}}(x_0)\Subset \Omega$, and
{\rm
\begin{multline}\label{local.Holder1}
[u]_{C^{0,\beta_0}(B_{r_{x_0}}(x_0))}\leq c\,\vrho_{x_0}^{-\beta_0}\bigg( \mint_{B_{\vrho_{x_0}}(x_0)} {\big|u - (u)_{B_{\vrho_{x_0}}(x_0)}\big|}^2 \dx \bigg)^{1/2}\\
+c\,\vrho_{x_0}^{1-\beta_0-2s}\tail\big(u-(u)_{B_{\vrho_{x_0}}(x_0)};B_{\vrho_{x_0}}(x_0)\big)+c\,{\|f\|}_{\M^{n/(2-\beta_0)}(B_{2\vrho_{x_0}}(x_0))},
\end{multline}}
$\!\!$with constant $c=c(\data,\beta_0)$. 
\end{theorem}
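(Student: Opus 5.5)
We argue by a Campanato-type iteration of the excess decay. The excess is
\[
E(x_0,\vrho):=\bigg(\mint_{B_\vrho(x_0)}|u-(u)_{B_\vrho(x_0)}|^2\dx\bigg)^{1/2}+\vrho^{1-2s}\tail\big(u-(u)_{B_\vrho(x_0)};B_\vrho(x_0)\big),
\]
and the aim is to show $E(x,t)\le c\,t^{\beta_0}$ uniformly for $x$ near a regular point and small $t$. By Campanato's characterization this yields $u\in C^{0,\beta_0}$ together with \eqref{local.Holder1}.

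\emph{Step 1: regular set.} We define $\Omega_u$ as the set of points $x_0$ where the normalized excess $\vrho^{2-n}\int_{B_\vrho(x_0)}|Du|^2\dx$ (or an equivalent Poincar\'e-normalized quantity) falls below a threshold $\e_*$ at some small scale. The threshold $\e_*$ is fixed through Proposition~\ref{dec:one} and the approximate harmonicity of Proposition~\ref{prop:a}. Openness follows from the continuity of $x\mapsto\int_{B_\vrho(x)}|Du|^2$ at a fixed radius. The dimension bound $n-2-\zeta$ comes from the higher integrability $Du\in L^{2+\zeta}_{\rm loc}$ of Proposition~\ref{prop:highint}, via the standard Giusti-type density argument. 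When $n=2$, absolute continuity of $|Du|^2$ together with the $L^{2+\zeta}$ bound makes the normalized energy vanish at every point, so $\Omega_u=\Omega$.

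\emph{Step 2: one-step decay with $f$ and tail.} At a point $x_0\in\Omega_u$ we apply Proposition~\ref{dec:one}. Using \eqref{satip} with $\delta_0$ small, the $x$-oscillation of $A$ is absorbed, and using \eqref{mod:cont.u} with the smallness of the excess, the $u$-dependence is frozen. This should give, for a fixed $\tau\in(0,1/4)$,
\[
E(x_0,\tau\vrho)\le \tau^{\bar\beta}E(x_0,\vrho)+c\,\vrho^{2(1-s)}(\dots)+c\,\vrho\bigg(\mint_{B_\vrho}|f|^{\chi}\bigg)^{1/\chi}\vrho .
\]
Here $\bar\beta\in(\beta_0,1)$ is reachable because the harmonic comparison enjoys Lipschitz decay. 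The tail at the smaller ball is handled through Lemma~\ref{lemma:tail}, which bounds $\tail$ on $B_{\tau\vrho}$ by a sum over dyadic annuli of oscillations on $B_{\tau^{-j}\tau\vrho}$ plus a $\tau^{2s}$-weighted tail on $B_\vrho$.

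For the source term we take $\chi<n/(2-\beta_0)$, which is allowed by the embedding $\M^m\subset L^{m-\e}$. Then
\[
\vrho^{2}\bigg(\mint_{B_\vrho}|f|^\chi\bigg)^{1/\chi}\le c\,\vrho^{\beta_0}\,\|f\|_{\M^{n/(2-\beta_0)}(B_{2\vrho})},
\]
so the forcing is exactly of order $\vrho^{\beta_0}$.

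\emph{Step 3: iteration and uniformity.} We choose $\tau$ so that $\tau^{\bar\beta}\le\tau^{\beta_0}/2$ and iterate along $\vrho_k=\tau^k\vrho$. The nonlocal perturbation carries the factor $\vrho^{2(1-s)}$, which is harmless at small scales. The outcome is
\[
E(x_0,\vrho_k)\le c\,\tau^{k\beta_0}\Big[E(x_0,\vrho)+\|f\|_{\M}\Big],
\]
and this extends to all radii by comparability. The main obstacle is keeping the smallness condition that defines $\Omega_u$ valid along the whole iteration. This requires showing inductively that the excess stays below $\e_*$, since the $\beta_0$-decay and the smallness of $f$'s contribution at small $\vrho$ must dominate the tail terms summed over all previous scales. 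We plan to handle this with a discrete Gronwall/convolution sum over $j\le k$ of $\tau^{2s(k-j)}\tau^{j\beta_0}$. This sum converges to $c\,\tau^{k\beta_0}$ provided $\beta_0<2s$ or, otherwise, after absorbing the extra factor $\vrho^{2(1-s)}\cdot\vrho^{-1}$ scaling correctly. By continuity of the excess in $x_0$, the same smallness holds for all $x\in B_{r_{x_0}}(x_0)$, which gives uniform Campanato bounds on that ball. Campanato's theorem then yields \eqref{local.Holder1}.
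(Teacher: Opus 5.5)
\textbf{The gap is the weight on the tail.} Your excess is $E(u;B_\vrho(x_0))+\vrho^{1-2s}\tail\big(u-(u)_{B_\vrho(x_0)};B_\vrho(x_0)\big)$, i.e.\ the $\gamma$-excess \eqref{def:exc} with $\gamma=1$, and your target is $E(x,t)\le c\,t^{\beta_0}$.

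For $s>1/2$ this target is false even for smooth solutions. Take $u$ of class $C^2$ near $x_0$ with $Du(x_0)\neq0$, e.g.\ a solution of $-\Delta u+(-\Delta)^s u=f$ with smooth $f$. The annulus $B_{r_*}(x_0)\setminus B_\vrho(x_0)$, where $u$ is nearly affine, gives for all small $\vrho$
\[
\vrho^{1-2s}\tail\big(u-(u)_{B_\vrho(x_0)};B_\vrho(x_0)\big)=\vrho\int_{\R^n\setminus B_\vrho(x_0)}\frac{|u-(u)_{B_\vrho(x_0)}|}{|x-x_0|^{n+2s}}\dx\geq c\,|Du(x_0)|\,\vrho\int_\vrho^{r_*}\mu^{-2s}\,d\mu\geq c\,|Du(x_0)|\,\vrho^{2-2s}.
\]
Hence $t^{-\beta_0}E(x_0,t)\to\infty$ for every $\beta_0\in(2(1-s),1)$, and the theorem covers this range.

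The same defect blocks your one-step decay. In Lemma~\ref{lemma:tail} the intermediate scales contribute, relative to the excess at scale $\vrho$, the factor $\tau^{\gamma+1-2s}\vrho^{\gamma-2s}$; this is the last term in \eqref{it:tail4}. With $\gamma=1$ it equals $\tau^{2-2s}\vrho^{1-2s}$, which is unbounded as $\vrho\to0$. So no fixed $\tau$ gives a contraction $\tau^{\bar\beta}$ uniformly in small $\vrho$. The convolution sum in Step~3 does not touch this, and ``absorbing the extra factor'' is not an argument.

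\textbf{What fixes it.} Iterate $\mathcal E_\gamma$ with $\max\{2s,1\}<\gamma<2$, as in \eqref{dep:gamma}.
\begin{itemize}
\item The bound $\gamma<2$ makes the rescaled nonlocal term of size $\nu\vrho^{2-\gamma}$ (Lemma~\ref{lemma:smnloc}). It then enters multiplicatively through the smallness parameter $s_1$ of Proposition~\ref{prop:a}, not as an additive error $\vrho^{2(1-s)}(\dots)$.
\item The bound $\gamma>2s$ turns the intermediate-scale factor into $\tau^{\gamma+1-2s}\vrho^{\gamma-2s}$, and $\vrho^{\gamma-2s}$ is small for small $\vrho$. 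It also yields \eqref{exc.vanishing}, which places the singular set inside $\{\limsup_{\vrho\to0}E(u;B_\vrho)>0\}$ and gives the dimension bound.
\item The tail-to-tail factor is $\tau^\gamma$ with $\gamma>\beta$, not $\tau^{2s}$. So the restriction $\beta_0<2s$ you worry about in Step~3 never arises.
\end{itemize}
The factor $\vrho^{1-2s}$ in \eqref{local.Holder1} is not the quantity being iterated. It appears only at the initial scale, via $\vrho_{x_0}^{\gamma-2s-\beta_0}\le\vrho_{x_0}^{1-2s-\beta_0}$ for $\vrho_{x_0}\le1$.

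\textbf{Step~1 also needs changing.} Define the regular set by $\mathcal E_\gamma(u;B_\vrho(x))<\e_s/2$ at some admissible scale, with openness coming from continuity of the tail in the centre. Proposition~\ref{dec:one} needs the tail part small at the starting scale. Smallness of $\vrho^{2-n}\int_{B_\vrho}|Du|^2$ at a single scale gives no control on it.

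For $s\le1/2$ your weight is harmless. With Step~1 corrected, your outline then follows the paper's route: decay of $\mathcal E_\gamma$ with exponent $(1+\beta_0)/2$ uniformly in $\bar x\in B_{r_{x_0}}(x_0)$, then a maximal-function absorption and Campanato.
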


Observe that if $A(\cdot,\cdot\cdot)$ satisfies the continuity condition~\eqref{es:cont}, then it certainly fulfills~\eqref{satip}, since by \eqref{A:mean}--\eqref{E_A} and \eqref{es:cont} one easily deduces for every $K\Subset\Omega$
\[
\sup_{x_0\in K}E_{r}(A;x_0)\leq \Lambda\omega(r)\xrightarrow{r\to 0} 0 
\]
uniformly. Therefore, taking also into account  \eqref{emb:weak}, Theorem~\ref{thm:hold} is a stronger version  of Theorem~\ref{thm:model}.

We also stress that, for general coefficients $A(x,u)$, the  radii $r_{x_0},\vrho_{x_0}$ may depend also on the solution $u$. More precisely, they must fulfill the excess decay conditions \eqref{non:cancellare} and \eqref{local.Egamma.bdd}. However, in the case $A(x,u)=A(x)$ these conditions are no longer necessary, hence $r_{x_0},\vrho_{x_0}$ will become independent of $u$, so that the quantitative estimate \eqref{local.Holder1} becomes particularly useful for compactness arguments. We also stress that estimate~\eqref{local.Holder1} is  a direct consequence of a more general bound involving a maximal operator applied to $f$. We refer to Theorem~\ref{partial.Holder} and to equation~\eqref{vera:localHolder} below for the precise statement.

\vs

For what concerns gradient regularity of solutions, we need to make the following stronger assumptions on $A(\cdot,\cdot\cdot)$. Specifically, we shall assume that there exists another modulus of continuity $\omegax(\,\cdot\,)$ such that
\begin{equation}\label{mod:cont.x}
\big|A(x,u) - A(y,u)\big| \leq \Lambda\,\omegax(|x-y|)\qquad\text{for all $x,y\in \Omega$ and $u\in \R^N$}\,.
\end{equation}
In order to obtain Lipschitz continuity of solutions, a standard condition to impose on the moduli of continuity $\omegax, \omegau$ is that they satisfies the so-called the \textit{Dini property}
\begin{equation}\label{Dini}
\int_0^1\omega(\vrho)\,\frac{d\vrho}{\vrho}<\infty\,.
\end{equation}

\begin{theorem}[Partial Lipschitz continuity]\label{thm:lip}
 Let $u\in W^{1,2}_{\rm loc}(\Omega;\R^N)\cap L^1_{2s}$ be a local  weak solution to \eqref{eq1}, for $s\in(0,1)$. Assume that \eqref{A:gr}--\eqref{phi:gr} and \eqref{mod:cont.x} are in force, that $\omegau(\,\cdot\,),\omegax(\,\cdot\,)$ fulfill the Dini continuity property \eqref{Dini}, and that $f\in L^\chi_{\rm loc}(\Omega;\R^N)$ is such that for every $K\Subset\Omega$ there exists $r_f=r_f(K)>0$ such that 
    \begin{equation}\label{fin:riesz}
        \sup_{x_0\in K} {\bf I}_{1,\chi}^f(x_0,r_f)<\infty\,.
    \end{equation}
Then there exists an open set $\Omega_u\subseteq \Omega$, with $\Omega_u=\Omega$ in dimension $n=2$, satisfying $\mathrm{dim}_\H (\Omega\setminus \Omega_u)\leq n-2-\zeta$, $\zeta=\zeta(\data,\max\{\nu,1\})\in(0,1)$ in higher dimension $n\geq3$, such that
\begin{equation*}
Du\in L^\infty_{\rm loc}(\Omega_u;\R^N)\,.
\end{equation*}
Moreover, for every $x_0\in \Omega_u$, there exist three radii $\bar r_{x_0}\, r_{x_0}$ and $\vrho_{x_0}$ such that $\bar r_{x_0}\leq r_{x_0}/4$, $r_{x_0}\leq \vrho_{x_0}/8$, $B_{r_{x_0}}(x_0)\Subset \Omega_u$, $B_{2\vrho_{x_0}}(x_0)\Subset \Omega$,  and
{\rm \begin{multline}\label{quant:lip}
    {\|Du\|}_{L^\infty(B_r(x_0))}\leq c\bigg( \mint_{B_{2r}(x_0)}|Du|^2\dx\bigg)^{1/2}+c\bigg( \mint_{B_{ \vrho_{x_0}}(x_0)}|Du|^2\dx\bigg)^{1/2}
+c\,{\big\|{\bf I}^f_{1,\chi}(\,\cdot\,,r_{x_0})\big\|}_{L^\infty (B_{r_{x_0}}(x_0))}\\
+c\,r\Big[r_{x_0}^{-2s} \tail\big(u-(u)_{B_{r_{x_0}}(x_0)} ,B_{r_{x_0}}(x_0)\big)+\vrho_{x_0}^{-2s}\tail\big(u-(u)_{B_{\vrho_{x_0}}(x_0)} ,B_{\vrho_{x_0}}(x_0)\big)\Big]
\end{multline}}
$\!\!$for all $r\in (0,\bar r_{x_0}]$, with constant $c=c(\data,\max\{\nu,1\})$.
\end{theorem}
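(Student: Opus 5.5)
We prove Theorem~\ref{thm:lip} by a comparison/perturbation scheme around points of the regular set. We first fix $\Omega_u$ as the set of points where the (normalized, tail-corrected) excess of $u$ becomes small at some scale. Since Dini continuity of $\omegax$ implies smallness of $E_r(A;x_0)$ for small $r$, the partial BMO/VMO/continuity machinery applies: Proposition~\ref{dec:one} and the higher integrability Proposition~\ref{prop:highint} show that $\Omega_u$ is open. The standard Giusti-type argument then gives the dimension bound $n-2-\zeta$ via $Du\in L^{2+\zeta}$, and $\Omega_u=\Omega$ for $n=2$.

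On a ball $B_{r_{x_0}}(x_0)$ where the excess is small, $u$ is continuous, so the oscillation of $u$ on small balls $B_r(y)$ is small. On such a ball we freeze the coefficients, $\bar A:=A(y,(u)_{B_r(y)})$, and compare $u$ with the solution $h$ of $-\mathrm{div}(\bar A Dh)=0$ in $B_r(y)$ with $h=u$ on $\partial B_r(y)$. Testing with $u-h$ produces four error terms:
\begin{itemize}
\item[(i)] $\omegax(r)$ times the $L^2$ average of $Du$;
\item[(ii)] $\omegau(\mathrm{osc}\,u)$, whose Dini summability follows from the Campanato-type decay $\mathrm{osc}_{B_r}u\lesssim r$ times the gradient average, bootstrapped;
\item[(iii)] the source term, of size $r(\mint_{B_r}|f|^\chi)^{1/\chi}$ after Sobolev–Poincaré, using $\chi>2_*$;
\item[(iv)] the nonlocal term, which after blow-up carries the factor $r^{2(1-s)}$ by Lemma~\ref{lemma:smnloc}. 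It is bounded by $r^{1-2s}$ times the local Gagliardo-type energy, plus $r\cdot r_{x_0}^{-2s}\tail$, with the tail controlled on shrinking balls by Lemma~\ref{lemma:tail}.
\end{itemize}

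Using the $C^{1,\alpha}$ excess decay of constant-coefficient systems for $h$, we obtain a comparison inequality for the gradient excess $\mathcal E(r)=\mint_{B_r}|Du-(Du)_{B_r}|$:
\[
\mathcal E(\tau r)\le c\tau^{\alpha}\mathcal E(r)+c\,[\omegax(r)+\omegau(r\,|(Du)_{B_r}|+\dots)+r^{2(1-s)}]\,\mint_{B_r}|Du|+c\,r\Big(\mint_{B_r}|f|^\chi\Big)^{1/\chi}+c\,r^{1-2s+1}\text{-tail}.
\]
Here $\tau$ is chosen small.

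We iterate this along the dyadic radii $r_j=\tau^j r$. Summing over $j$, the forcing terms become: $\sum\omegax(r_j)$ and $\sum\omegau(r_j)$, which are finite by the Dini property \eqref{Dini}; a geometric series in $r_j^{2(1-s)}$; the discretized potential ${\bf I}^f_{1,\chi}(y,r)$; and $\sum_j r_j\, r_{x_0}^{-2s}\tail$, which sums to $r$ times the tail, as in Lemma~\ref{lemma:affine}. Since $|(Du)_{B_{r_{j+1}}}-(Du)_{B_{r_j}}|\lesssim\mathcal E(r_j)$, the averages $(Du)_{B_{r_j}}$ stay bounded, uniformly in $j$, by the right-hand side of \eqref{quant:lip}.

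This uniform bound is then used in two ways. A standard induction keeps the smallness conditions valid at every scale: the growth of the averages is controlled, and $\omegau(r_j|(Du)_{r_j}|)$ stays small provided $\bar r_{x_0}$ is small. Lebesgue's theorem then yields $|Du(y)|\le$ the right-hand side of \eqref{quant:lip} for a.e. $y\in B_r(x_0)$. The two energy terms at scales $2r$ and $\vrho_{x_0}$ arise from, respectively, the starting scale and the control of the nonlocal Gagliardo energy and tails via Caccioppoli at scale $\vrho_{x_0}$.

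The main obstacle is the nonlocal error in the comparison. It must be estimated only through $|\Phi|\le\nu|t|$, with no coercivity, in a way that is linear in $\|Du\|$ with a coefficient summable along dyadic scales. A naive bound involves $[u]_{W^{s,2}}$ over all of $\mathbb R^n$. Our plan is to split the double integral into near and far parts. The near part, on $B_{2r}\times B_{2r}$, is handled by $W^{1,2}\hookrightarrow W^{s,2}$ and gives $r^{2(1-s)}\mint|Du|^2$. The far part is handled by the tail decay lemma, which requires running the iteration on $u-(u)_{B_{r_j}}$ and relating successive tails through a telescoping sum. Keeping the constants dependent only on $\max\{\nu,1\}$ is what forces the explicit $\nu$-tracking of Remark~\ref{rem:r}.
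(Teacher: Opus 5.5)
There is a genuine gap. Your scheme never produces a \emph{quantitative} H\"older bound for $u$ before the gradient iteration starts, and both error terms you treat as summable need one.

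\textbf{The coefficient term.} This error has to be measured in $L^\infty$, i.e.\ by $\omegau(\mathrm{osc}_{B_{r_j}}u)$. An integral version, using Jensen plus higher integrability, only yields a root $\omegau(\cdot)^{1/q}$, and that is no longer Dini. Your proposed rate ``$\mathrm{osc}_{B_r}u\lesssim r$ times the gradient average'' fails for the sup-oscillation. It is also circular: the oscillation of $u$ on $B_{r_j}$ depends on the behaviour of $u$ at all scales below $r_j$, around every point of $B_{r_j}$. Your large-to-small induction on the averages $k_j$ has not reached those scales. Qualitative continuity of $u$ makes $\omegau(\mathrm{osc}_{B_{r_j}}u)$ small but gives no rate, so $\sum_j\omegau(\mathrm{osc}_{B_{r_j}}u)$ may diverge.

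The paper closes this first. It fixes $\beta_0\in(0,1)\setminus\{2s\}$ with $1+\beta_0>2s$, see \eqref{cond:beta}. It then observes two things. First, \eqref{fin:riesz} together with \eqref{est.potential.onescale} gives \eqref{new:fbound}. Second, \eqref{mod:cont.x} gives the $BMO$-smallness. Hence Theorem~\ref{partial.Holder} yields $u\in C^{0,\beta_0}(B_{r_{x_0}}(x_0))$ with the bound \eqref{usholder}. Then $\omegau([u]_{C^{0,\beta_0}}r^{\beta_0})$ is Dini-summable along $r_j=\tau^j r$ by the substitution $t=r^{\beta_0}$, which is exactly what makes \eqref{radius:chiamo} possible.

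\textbf{The tail term.} The nonlocal far part at scale $r_j$ is $r_j^{1-2s}\tail(u-(u)_{B_{r_j}};B_{r_j})$, not $r_j\,r_{x_0}^{-2s}\tail(\cdot)$. Passing up to $r_{x_0}$ with Lemma~\ref{lemma:tail} produces the extra term $r_j\int_{r_j}^{r_{x_0}}\mu^{-2s}\mint_{B_\mu}|u-(u)_{B_\mu}|\,d\mu/\mu$.

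The part of this integral below your starting radius $r$ could be fed back into the induction via Poincar\'e. The range $[r,r_{x_0}]$, however, is not controlled by any quantity on the right-hand side of \eqref{quant:lip}. Moreover, for $s\ge 1/2$ the sum over $j$ converges in general only under a decay rate $\mint_{B_\mu}|u-(u)_{B_\mu}|\lesssim\mu^{\beta_0}$ with $1+\beta_0>2s$; mere continuity gives terms like $1/j$ or worse.

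With the H\"older bound, Proposition~\ref{prop:newtail} turns this into $[u]_{C^{0,\beta_0}}r_j^{1+\beta_0-2s}$ plus $r_j\,r_{x_0}^{-2s}\tail(\cdot)$. This is summable precisely because of \eqref{cond:beta}.

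This is also where the $\vrho_{x_0}$-scale energy and tail terms in \eqref{quant:lip} come from. They enter through \eqref{usholder} when $[u]_{C^{0,\beta_0}}$ is expanded, not from a Caccioppoli inequality at scale $\vrho_{x_0}$. Finally, Lemma~\ref{lemma:affine} plays no role here; the paper uses it only in the $C^{1,\beta}$ bootstrap for $s>1/2$.
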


Observe that, by the absolute continuity of the Lebesgue integral, condition \eqref{fin:riesz} implies 
\begin{equation*}
    \lim_{\s\to 0} {\bf I}^f_{1,\chi}(x,\s)=0\quad\text{for every $x\in \Omega$}\,.
\end{equation*}
By upgrading this hypothesis to uniform convergence, we attain gradient continuity of solutions. This is the content of the following 
\begin{theorem}[Partial gradient continuity]\label{thm:gradcont}
Under the assumptions of Theorem \ref{thm:lip}, suppose additionally that
\begin{equation}\label{f:riesz1}
\lim_{\sigma\to 0} {\bf I}^f_{1,\chi}(\,\cdot\,,\sigma)=0\quad\text{locally uniformly in $\Omega$}\,;
\end{equation}
then
\begin{equation*}
Du\in C^0(\Omega_u;\R^N)\,.
\end{equation*}
\end{theorem}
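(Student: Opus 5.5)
We prove Theorem~\ref{thm:gradcont} by rerunning the argument behind Theorem~\ref{thm:lip}, keeping track of the quantity $|Du-(Du)_{B_r}|$ instead of $|Du|$ alone. The Lipschitz proof should give an excess-decay/iteration scheme at each $x_0\in\Omega_u$. Writing $E(x,r):=(\mint_{B_r(x)}|Du-(Du)_{B_r(x)}|^2\dx)^{1/2}$, it should come from comparison with a solution of the frozen linear system $-\mathrm{div}(A(x_0,(u)_{B_r})Dv)=0$. The expected one-step inequality has the form
\[
E(x,\tau r)\le \tfrac12 E(x,r)+c\big[\omegax(r)+\omegau(\cdots)\big]\big(\mint_{B_r}|Du|^2\big)^{1/2}+c\,r^{2(1-s)}\big(\cdots\big)+c\,r\Big(\mint_{B_r(x)}|f|^\chi\Big)^{1/\chi}+c\,r^{1-2s}\tail\big(\cdots\big).
\]
It should hold uniformly for $x$ in a small ball $B_{r_{x_0}}(x_0)\Subset\Omega_u$ and $r\le\bar r_{x_0}$; the uniformity comes from the openness of $\Omega_u$ and the smallness conditions defining it.

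\textbf{Step 1 (uniform Lipschitz bound).} By \eqref{quant:lip}, $Du$ is bounded on $B_{r_{x_0}}(x_0)$, say by $M$. This turns every factor $(\mint|Du|^2)^{1/2}$ above into the constant $M$. The tail terms are then handled by Lemma~\ref{lemma:tail} and Lemma~\ref{lemma:affine}: the tail of $u$ minus its affine approximation decays like $r\cdot(\text{excess sums})+r\,(\ldots)$, and the factor $r^{2(1-s)}$ makes this a genuinely small perturbation.

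\textbf{Step 2 (iteration and VMO-type control).} Iterate the one-step inequality on the radii $\tau^k r$ and sum. This should give, uniformly in $x\in B_{r_{x_0}/2}(x_0)$,
\[
\sum_{k\ge0}E(x,\tau^k r)\le c\,E(x,r)+c\,M\int_0^{2r}\big[\omegax(\vrho)+\omegau(M\vrho)\big]\frac{d\vrho}{\vrho}+c\,r^{2(1-s)}(\cdots)+c\,\mathbf I^f_{1,\chi}(x,2r).
\]
Each term is small uniformly in $x$ as $r\to0$:
\begin{itemize}
\item the first, because $E(x,r)\le c\,M$ can be made small after fixing an initial $r$ via the same argument (or by absorbing with $\tau$);
\item the Dini integrals, by \eqref{Dini};
\item the power term, trivially;
\item the potential term, precisely by hypothesis \eqref{f:riesz1}.
\end{itemize}
Consequently, $(Du)_{B_{\tau^k r}(x)}$ is Cauchy uniformly in $x$. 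Its limit equals $Du(x)$ at Lebesgue points, and the convergence is uniform on $B_{r_{x_0}/2}(x_0)$.

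\textbf{Step 3 (continuity).} The maps $x\mapsto (Du)_{B_{\tau^k r}(x)}$ are continuous for each fixed $k$. A uniform limit of continuous maps is continuous, so the representative $x\mapsto\lim_k (Du)_{B_{\tau^k r}(x)}$ is continuous. Hence $Du\in C^0$ near every $x_0\in\Omega_u$.

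\textbf{Main obstacle.} The delicate point is the uniformity in $x$ of the tail contributions, together with the $u$-dependent smallness conditions that define the regular set. I would handle these by reusing the radii $r_{x_0},\vrho_{x_0}$ from Theorem~\ref{thm:lip} and checking that those conditions persist for centers $x$ near $x_0$. This should follow because the excess and the tail depend continuously on the center, via absolute continuity of the integrals together with $u\in L^1_{2s}$.
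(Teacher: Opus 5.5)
Your outline is correct and follows the paper's proof. The Lipschitz bound turns the comparison estimate into the decay inequality \eqref{st:comp:lip} with the Dini modulus $\omega_1$. The tail is reduced to a fixed ball through Lemma~\ref{lemma:tail}, i.e.\ Proposition~\ref{prop:newtail} with $\beta=1$; Lemma~\ref{lemma:affine} is not needed and would require $s>1/2$. Summing along $\tau^k r$ with \eqref{Dini.dyadic}, \eqref{est.potential.sum} and \eqref{f:riesz1} then makes the averages $(Du)_{B_{\tau^k r}(x)}$ uniformly Cauchy.

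The one point you only gesture at, namely $\sup_x E(x,\tau^k r)\to 0$, is treated in the paper as a separate VMO-type Step~1: a single decay step with $\tau_\e$ chosen in terms of $\e/\|Du\|_{L^\infty}$. It also follows directly from iterating your $\tfrac12$-contraction, since $E(x,\tau^k r)\le 2^{-k}E(x,r)+\sum_{j<k}2^{-(k-1-j)}g_j(x)$ with forcing terms $g_j\to0$ uniformly in $x$.
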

We point out that \eqref{f:riesz1} is satisfied, for instance, under the Lorentz-space assumption 
\begin{equation*}
f\in L^{n,1}_{\rm loc}(\Omega;\R^N)\,,
\end{equation*}
see \cite[Lemma 3]{DM100}. $L^{n,1}_{\rm loc}(\Omega;\R^N)$ consists of the measurable functions $f:\Omega\to \R^N$ such that
\[
{\|f\|}_{L^{n,1}(\Omega;\R^N)}:=
\int_0^\infty \l\,|\{x\in \Omega:\,|f(x)|>\l\} |^{1/n}<\infty
\]
and the local variant is defined in the usual way. We recall that, by standard interpolation theory \cite[Proposition 8.1.8]{pick}, we have the continuous embeddings
\begin{equation*}
    L^{q}(\Omega) \hookrightarrow L^{n,1}(\Omega) \hookrightarrow L^n(\Omega)\quad \text{for any $q>n$.}
\end{equation*}
The next theorem concerns H\"older continuity of the gradient up to the threshold $2(1-s)$ in the case $s>1/2$ for general nonlocal terms merely satisfying \eqref{phi:gr} and \eqref{sym:phi}.

\begin{theorem}[Partial $C^{1,\beta}$-regularity I]\label{thm:c1b}
Let $u\in W^{1,2}_{\rm loc}(\Omega;\R^N)\cap L^1_{2s}$ be a local weak solution to \eqref{eq1}, for $s\in(0,1)$. Let $\beta\in (0,1)$, and assume that $A(\cdot,\cdot\cdot)$ satisfies \eqref{A:gr},\eqref{mod:cont.u} and \eqref{mod:cont.x} with
\begin{equation}\label{omega:rb}
\omegax(r)=\omegau(r)=r^\beta\,,
\end{equation} 
 that $\Phi$ satisfies \eqref{phi:gr} and  \eqref{sym:phi}, and  
 \begin{equation}\label{f:1beta}
      f\in \mathcal{M}_{\rm loc}^{n/(1-\beta)}(\Omega;\R^N)\,.
  \end{equation}
Then there exists an open set $\Omega_u\subseteq \Omega$, with $\Omega_u= \Omega$ when $n=2$, $\mathrm{dim}_\H (\Omega\setminus \Omega_u)\leq n-2-\zeta$, $\zeta=\zeta(\data,\max\{\nu,1\})\in(0,1)$ if $n\geq3$, such that 
\[
u\in C^{1,\min\{\beta,2(1-s)\}}_{\rm loc}(\Omega_u;\R^N)\,.
\]
Moreover, for every $x_0\in \Omega_u$, there exist three radii $\bar r_{x_0}\, r_{x_0}, \vrho_{x_0}$ such that $\bar r_{x_0}\leq r_{x_0}/4$, $r_{x_0}\leq \vrho_{x_0}/8$, $B_{r_{x_0}}(x_0)\Subset \Omega_u$, $B_{2\vrho_{x_0}}(x_0)\Subset \Omega$, and the quantitative estimates
{\rm \begin{multline}\label{new:quantlip}
{\|Du\|}_{L^\infty(B_{r}(x_0))}\leq c\,\bigg( \mint_{B_{2r}(x_0)}|Du|^2\dx\bigg)^{1/2}+c\,\bigg( \mint_{B_{ \vrho_{x_0}}(x_0)}|Du|^2\dx\bigg)^{1/2}+c\,{\|f\|}_{\M^{n/(1-\beta)}(B_{\vrho_{x_0}}(x_0))}\\
+c\,\bar r_{x_0}\,r_{x_0}^{-2s} \tail\big(u-(u)_{B_{r_{x_0}}(x_0)};B_{r_{x_0}}(x_0)\big)+c\,\bar r_{x_0}\,\vrho_{x_0}^{-2s}\tail\big(u-(u)_{B_{\vrho_{x_0}}(x_0)};B_{\vrho_{x_0}}(x_0)\big)\,,
\end{multline}}
for all $r\in (0,\bar r_{x_0}]$, and
{\rm \begin{multline}\label{C1:depend}
{[Du]}_{C^{0,\min\{\beta,2(1-s)\}}(B_{\bar r_{x_0}/8}(x_0))}\leq c\bigg[\bar r_{x_0}^{-\min\{\beta,2(1-s)\}}\bigg(\mint_{B_{\bar r_{x_0}}(x_0)}{\big|Du-{(Du)}_{B_{\bar r_{x_0}}( x_0)}\big|}^2\dx \bigg)^{1/2}\\
+ {\|Du\|}_{L^\infty(B_{\bar r_{x_0}}(x_0))}^{1+\beta} +\big(1+\bar r_{x_0}^{1-2s}\big){\|Du\|}_{L^\infty(B_{\bar r_{x_0}}(x_0))}\\
+{\|f\|}_{\M^{n/(1-\beta)}(B_{\bar r_{x_0}}(x_0))}+\bar r_{x_0}^{-2s}\tail\big(u-(u)_{B_{\bar r_{x_0}}(x_0)};B_{\bar r_{x_0}}(x_0)\big)  +1 \bigg]
 \end{multline}}
$\!\!$hold true, with $c=c(\data,\max\{\nu,1\},\beta)$.
\end{theorem}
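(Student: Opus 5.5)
Since $f\in\mathcal M^{n/(1-\beta)}\subset\mathcal M^{n/(2-\beta)}$ locally, with $\omegax(r)=\omegau(r)=r^\beta$ Dini, we are within the scope of Theorems~\ref{thm:hold} and~\ref{thm:lip}. The plan is to first obtain the regular set $\Omega_u$ and the local Lipschitz bound \eqref{new:quantlip}, and then run a Campanato-type excess decay for $Du$ on balls where $Du$ is bounded. Specifically, $\Omega_u$ will be the usual set of points where the (rescaled) $L^2$-excess of $u$ is small and the tail-weighted energy is bounded, with the Hausdorff estimate coming from the higher integrability of Proposition~\ref{prop:highint} via the standard Giusti argument. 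Estimate \eqref{new:quantlip} is \eqref{quant:lip} with the Riesz potential replaced by the Marcinkiewicz norm. This uses the elementary bound $\mathbf I^f_{1,\chi}(x,r)\le c\,r^{\beta}\|f\|_{\M^{n/(1-\beta)}}$, valid for $\chi<n/(1-\beta)$ and obtained from \eqref{density.Marc} by computing $\int_0^r \vrho^{1-(1-\beta)}\,d\vrho/\vrho$. The factor $r$ in front of the tails is bounded by $\bar r_{x_0}$.

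For the Hölder estimate, fix $\bar x\in B_{\bar r_{x_0}/8}(x_0)$ and $\vrho\le \bar r_{x_0}/2$, and set $M:=\|Du\|_{L^\infty(B_{\bar r_{x_0}})}$. On $B_\vrho(\bar x)$ we freeze the coefficients, $A_0:=A(\bar x,(u)_{B_\vrho(\bar x)})$, and compare $u$ with the solution $h$ of the constant-coefficient system $-\mathrm{div}(A_0Dh)=0$ with $h=u$ on $\partial B_\vrho$. The function $h$ enjoys the classical decay $\mint_{B_{\tau\vrho}}|Dh-(Dh)|^2\le c\tau^2\mint_{B_\vrho}|Dh-(Dh)|^2$. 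Testing the difference of the equations with $u-h$, we bound $\mint_{B_\vrho}|Du-Dh|^2$ by three contributions.
\begin{enumerate}
\item The coefficient oscillation $|A(x,u)-A_0|\le \Lambda(\vrho^\beta+(M\vrho)^\beta)$ contributes $c(1+M^{\beta})^2M^2\vrho^{2\beta}$; this is the source of the term $M^{1+\beta}$ in \eqref{C1:depend}.
\item The source term contributes $c\|f\|^2\vrho^{2\beta}$, via Sobolev–Poincaré and the Marcinkiewicz bound.
\item The nonlocal term is handled by splitting $\R^n$ into $B_{\bar r_{x_0}}$ and its complement. Using \eqref{phi:gr} together with $|u(x)-u(y)|\le M|x-y|$ inside, the local part of the double integral is bounded by $cM\vrho^{n}\int_{\vrho}^{\bar r}t^{1-2s}\,dt/t\cdot$(Poincaré of $u-h$), and the remaining parts near the diagonal in $B_{2\vrho}$ by $cM\vrho^{2-2s}$. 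The far part is bounded by the tail $\bar r_{x_0}^{-2s}\tail(\cdot)$ times $\vrho$. Altogether this gives a contribution of size $c\,[M(1+\bar r^{1-2s})+\bar r^{-2s}\tail]^2\,\vrho^{2\min\{1,2(1-s)\}}$, up to logarithms when $s=1/2$.
\end{enumerate}
Since no regularity of $\Phi$ can be exploited, the nonlocal term enters only through its size, scaling like $\vrho^{2(1-s)}$ in the gradient excess. This is exactly what caps the exponent at $2(1-s)$.

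Combining the three contributions gives
\[
\mint_{B_{\tau\vrho}(\bar x)}|Du-(Du)|^2\le c\tau^2\mint_{B_\vrho(\bar x)}|Du-(Du)|^2+c_\tau\,\mathcal K^2\vrho^{2\gamma},
\]
where $\gamma=\min\{\beta,2(1-s)\}$ and $\mathcal K$ collects the bracketed quantities in \eqref{C1:depend}. We choose $\tau$ so that $c\tau^{2-2\gamma'}\le 1$ for some $\gamma'\in(\gamma,1)$ and iterate. A standard lemma then yields the Campanato bound $\mint_{B_r(\bar x)}|Du-(Du)|^2\le c(r/\bar r)^{2\gamma}(\text{excess on }B_{\bar r})+c\mathcal K^2r^{2\gamma}$. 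By Campanato's characterization this gives \eqref{C1:depend}, where the summand $+1$ absorbs the case $\gamma=2(1-s)=1$ when $s=1/2$ and the logarithmic corrections.

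The main obstacle is the nonlocal estimate in the third contribution. It must be carried out with a test function supported only in $B_\vrho$, yet the double integral couples values across all of $\R^n$, so the tail must be exchanged between scales $\vrho$ and $\bar r_{x_0}$ using Lemma~\ref{lemma:tail} and Lemma~\ref{lemma:affine}. The goal is to produce precisely the factor $\vrho^{2-2s}$, rather than the worse $\vrho^{1-2s}$ that a naive bound gives when $s>1/2$. I would first attempt this by subtracting the affine function $\ell(x)=(u)_{B_\vrho}+(Du)_{B_\vrho}(x-\bar x)$. This is legitimate because $Q^{\mathrm{nl}}$ annihilates affine maps only under \eqref{form:phi}, so in general we just estimate $|u(x)-u(y)|\le M|x-y|$ directly, which produces exactly $\vrho^{2(1-s)}$.
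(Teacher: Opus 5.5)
Your proposal is correct and follows essentially the paper's route. You obtain \eqref{new:quantlip} from \eqref{quant:lip} via $\mathbf{I}^f_{1,\chi}(x,r)\leq c\,r^{\beta}\|f\|_{\mathcal M^{n/(1-\beta)}}$. You then compare with a frozen constant-coefficient system, controlling the nonlocal term only by its size through the Lipschitz bound; your direct estimate on the annulus is what the paper packages as Proposition~\ref{prop:newtail} with $\beta=1$, yielding $\vrho\,[\bar r_{x_0}^{1-2s}+\int_\vrho^{\bar r_{x_0}}\mu^{1-2s}\,d\mu/\mu]\leq c\,(1+\bar r_{x_0}^{1-2s})\,\vrho^{\min\{\beta,2(1-s)\}}$. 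You finish with the standard Campanato iteration.

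The detour in your last paragraph is unnecessary and partly inaccurate. The affine subtraction and Lemma~\ref{lemma:affine} are not available for general $\Phi$: they need \eqref{ass:phi2} and $s>1/2$, and the paper uses them only for Theorem~\ref{thm:c1bfull}. Your step (3) already works without them.
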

Under additional assumptions on $\Phi$, we are instead able to obtain $C^{1,\beta}$ regularity, regardless of the value of $s\in (1/2,1)$. For the reader's convenience, we write them once again. We assume that there exists a matrix valued function $a:\R^n\to \R^{N^2}$, $a=\{a_{\a,\b}\}^{\a,\b=1,\dots,N}$ such that
\begin{equation}\label{ass:phi1}
\begin{cases}
a(z)=a(-z)\\[3mm]
|a(z)|\leq \nu
\end{cases}
\qquad z\in \R^n\,,
\end{equation}
and 
\begin{equation}\label{ass:phi2}
    \Phi(x,y,v,w,t)=a(x-y)\,t\,.
\end{equation}

\begin{theorem}[Partial $C^{1,\beta}$ regularity II]\label{thm:c1bfull}
 Let $u\in W^{1,2}_{\rm loc}(\Omega;\R^N)\cap L^1_{2s}$ be a local weak solution to \eqref{eq1}, for $s\in(0,1)$. Let $\beta\in (0,1)$, and assume that $A(\cdot,\cdot\cdot)$ satisfies \eqref{A:gr},\eqref{mod:cont.u}, \eqref{mod:cont.x} and \eqref{omega:rb}. Suppose also that $\Phi$ is of the form \eqref{ass:phi1}--\eqref{ass:phi2}, and that
\[
      f\in \mathcal{M}^{n/(1-\beta) }_{\rm loc}(\Omega;\R^N)\,.
 \]
 Then there exists an open set $\Omega_u\subseteq \Omega$, with $\Omega_u= \Omega$ if $n=2$ and $\mathrm{dim}_\H (\Omega\setminus \Omega_u)<n-2-\zeta$ if $n\geq 3$, $\zeta=\zeta(\data,\max\{\nu,1\})$, such that
\begin{equation}
u\in C^{1,\beta}_{\rm loc}(\Omega_u;\R^N)
\end{equation}
and for every $x_0\in \Omega_u$, there exist radii  $R_{x_0}=\bar r_{x_0}/32\leq \bar r_{x_0}\leq r_{x_0}\leq \vrho_{x_0}$ such that $B_{2\vrho_{x_0}}(x_0)\Subset \Omega$, estimate \eqref{new:quantlip} holds, and we have
\begin{equation}\label{c1bfull}
    [Du]_{C^{0,\beta}(B_{R_{x_0}}(x_0))}\leq C_1\,,
\end{equation}
where $C_1$ depends on 
$\data,\allowbreak \max\{\nu,1\},\allowbreak \beta,\allowbreak 
 \bar r_{x_0},\allowbreak r_{x_0},\allowbreak 
\vrho_{x_0}$, and on any upper bound for {\rm ${\|f\|}_{\M^{n/(1-\beta)}(B_{\vrho_{x_0}}(x_0))}, \allowbreak \tail\big(u-(u)_{B_{4R_{x_0}}(x_0)}; B_{4R_{x_0}}(x_0)\big),\allowbreak
{\|Du\|}_{L^\infty(B_{\bar r_{x_0}}(x_0))}$.}
\end{theorem}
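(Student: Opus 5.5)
We prove Theorem~\ref{thm:c1bfull} by a perturbation argument around the point $x_0\in\Omega_u$, taking for granted everything obtained in the proofs of Theorems~\ref{thm:lip} and~\ref{thm:c1b}. In particular, the regular set $\Omega_u$, the radii $\bar r_{x_0}\le r_{x_0}\le\vrho_{x_0}$ and the Lipschitz bound \eqref{new:quantlip} are exactly those of Theorem~\ref{thm:c1b}. So $Du\in L^\infty(B_{\bar r_{x_0}}(x_0))$, and we only need to improve the H\"older exponent of $Du$ from $\min\{\beta,2(1-s)\}$ to $\beta$. The loss in Theorem~\ref{thm:c1b} comes from estimating the nonlocal term on a ball $B_r(x_1)\subset B_{4R_{x_0}}(x_0)$ crudely. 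The plan is to redo only this estimate, now using the linear translation-invariant structure \eqref{ass:phi2}.

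\textbf{Step 1: comparison.} Fix $x_1\in B_{R_{x_0}}(x_0)$ and small $r$. Freeze the coefficients, $A_0:=A(x_1,(u)_{B_r(x_1)})$, and let $v$ solve $-\mathrm{div}(A_0Dv)=0$ in $B_r(x_1)$ with $v=u$ on $\partial B_r(x_1)$. Testing with $u-v$ gives
\[
\int_{B_r}|Du-Dv|^2\lesssim \int_{B_r}|A(x,u)-A_0|^2|Du|^2+\Big|\int_{B_r}f\cdot(u-v)\Big|+\mathcal N_r ,
\]
where $\mathcal N_r$ is the nonlocal pairing of $u$ with $u-v$. The first two terms are handled exactly as in Theorem~\ref{thm:c1b}. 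By \eqref{omega:rb}, the Lipschitz bound and the Marcinkiewicz bound on $f$, they contribute $r^{n+2\beta}$ up to constants.

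\textbf{Step 2: the nonlocal term (main obstacle).} In Theorem~\ref{thm:c1b} one bounds $\mathcal N_r$ via $|u(x)-u(y)|\le L|x-y|$ near the diagonal. This yields the factor $r^{2(1-s)}$ and hence the threshold. Here we instead use that, for \eqref{ass:phi2}, the operator is $\mathcal L u(x)=2\,\mathrm{P.V.}\int a(x-y)(u(x)-u(y))|x-y|^{-n-2s}dy$, with a kernel that is even by \eqref{ass:phi1}. Hence $\mathcal L$ annihilates affine maps: for $\ell(x)=u(x_1)+\langle(Du)_{B_r},x-x_1\rangle$ the principal value vanishes pointwise by oddness. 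We therefore write $u=\ell+w$, so that $\mathcal N_r$ only involves $w$. Near the diagonal, the Campanato-type smallness of $w$ is measured by the excess $E(r):=\big(\mint_{B_r}|Du-(Du)_{B_r}|^2\big)^{1/2}$. Far away, the tail of $w$ is bounded through a discrete sum over dyadic annuli, as in Lemma~\ref{lemma:affine}. The expected outcome is an estimate of the form
\[
\mathcal N_r\lesssim r^{n}\Big(r^{2(1-s)}\sum_{j\ge0}2^{-j(2-2s)}E(2^jr)+r^{1+\gamma}\Big)\,E(r)+\text{tail terms},
\]
for some $\gamma\ge\beta$. In this form the nonlocal term is a perturbation of size $r^{2(1-s)}$ \emph{times the excess} rather than times $L$. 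I would first try to prove it by splitting $\mathbb R^n$ into $B_{2r}$ and dyadic annuli, using Poincar\'e on $w$ and the symmetry of $a$ on each annulus. The delicate point is whether $w$ can be taken as the test-function difference. If it cannot, I would instead test the difference quotient equation; translation invariance of $a(x-y)$ makes $\tau_hu$ solve the same nonlocal equation, which gives the same cancellation.

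\textbf{Step 3: iteration.} Combining Step 1 with the decay of $Dv$ for the constant-coefficient system gives, for $\tau\in(0,1)$,
\[
E(\tau r)\le c\tau E(r)+c\tau^{-n/2}\Big(r^{\beta}+r^{1-s}\big(\textstyle\sum_j2^{-j(1-s)}E(2^jr)\big)\Big)+\ldots
\]
The coefficients involve only the quantities listed after \eqref{c1bfull}. Since the excess already decays like $r^{\min\{\beta,2(1-s)\}}$ by Theorem~\ref{thm:c1b}, the nonlocal contribution behaves like $r^{1-s+\min\{\beta,2(1-s)\}}$, which improves the exponent. A finite bootstrap, raising the exponent each time by a fixed amount $1-s$, reaches $E(r)\lesssim r^\beta$ uniformly in $x_1\in B_{R_{x_0}}(x_0)$. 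Campanato's characterization then gives \eqref{c1bfull}, with $C_1$ depending on the listed quantities. The number of bootstrap steps depends only on $s$ and $\beta$.
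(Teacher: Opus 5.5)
Your architecture (frozen-coefficient comparison on $B_r(\bar x)$, the cancellation $Q^{\mathrm{nl}}_a\ell=0$ for affine $\ell$, and a finite bootstrap on the H\"older exponent of $Du$) is the one the paper uses. The gap is that the decisive estimate of Step~2 is only conjectured, and the conjectured form is wrong at large scales. As a result, Step~3 as written cannot get past $2(1-s)$. The sum $r^{1-s}\sum_j 2^{-j(1-s)}E(2^jr)$ must be truncated at $2^Jr\approx R$. Its top term alone is $r^{1-s}(r/R)^{1-s}E(R)=R^{-(1-s)}E(R)\,r^{2(1-s)}$, whatever the small-scale decay. Equivalently, take $E(\mu)\lesssim\mu^\alpha$ with $\alpha=2(1-s)>1-s$, which is the relevant case $\beta>2(1-s)$. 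Then $\sum_j 2^{j(\alpha-(1-s))}$ is dominated by the top scale, so your claim that the nonlocal contribution is $O(r^{1-s+\alpha})$ fails for your own inequality. The Step~2 weights $2^{-j(2-2s)}$ would likewise stall the bootstrap at $4(1-s)$, and they are not the weights that actually arise.

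The missing ingredient is the tail estimate for $u-\ell_{2r}$ with the correct weights, which is Lemma~\ref{lemma:affine}. On the annulus at scale $\mu$ you must control $|\ell_{\bar x,\mu}-\ell_{\bar x,2r}|(y)$, which grows linearly in $|y-\bar x|$ by \eqref{diff:affini}. Integrating this against $|y-\bar x|^{-n-2s}$ converges only because $s>1/2$, and it yields
\[
r^{-2s}\tail\big(u-\ell_{2r};B_{2r}(\bar x)\big)\lesssim R^{-2s}\tail\big(u-\ell_{R};B_{R}(\bar x)\big)+\int_r^R\mu^{1-2s}E(\mu)\,\frac{d\mu}{\mu}+R^{1-2s}E(R)\,.
\]
After multiplying by the factor $r$ from Poincar\'e on $u-h$, this is the weight $r^{2(1-s)}(r/\mu)^{2s-1}$, not $r^{2(1-s)}(r/\mu)^{2-2s}$. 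Large scales then contribute only $O(r)$. If $E(\mu)\lesssim\mu^{\beta_0}$ with $\beta_0\neq 2s-1$, small scales contribute $r^{\beta_0+2(1-s)}$. The near-diagonal part satisfies $[u-\ell_{2r}]_{W^{s,2}(B_{2r})}[u-h]_{W^{s,2}(B_{2r})}\lesssim r^{2(1-s)}\,r^{n/2}E(2r)\,\|D(u-h)\|_{L^2(B_r)}$. Combining these and dividing by $\|D(u-h)\|_{L^2(B_r)}$ as in \eqref{st:comp5} gives $\|D(u-h)\|_{L^2(B_r)}\lesssim r^{n/2+\min\{1,\beta_0+2(1-s)\}}$. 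This drives the induction $\beta_0\mapsto\min\{1,\beta_0+2(1-s)\}$, and the cap at $1$ is harmless since $\beta<1$.

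Two smaller points. First, do not bound $\|D(u-h)\|_{L^2(B_r)}$ by $r^{n/2}E(r)$ and then take square roots, since that halves the gain per step. Second, $s\le 1/2$ is already covered by Theorem~\ref{thm:c1b}. For $s>1/2$ one has $\ell\in L^1_{2s}$ and ${\llangle Q^{\mathrm{nl}}_a\ell,\varphi\rrangle}_{\mathrm{nl}}=0$ for compactly supported $\varphi$, so the affine subtraction is legitimate directly. The difference-quotient fallback, which would be awkward anyway with $A(x,u)$, is unnecessary.
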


We conclude this section by emphasizing that, in the $u$-independent setting $A(x,u)=A(x)$, we obtain full-regularity and the quantitative estimates in Theorems~\ref{thm:hold}--\ref{thm:c1bfull}
no longer exhibit the implicit dependence on $u$ through the radii of the balls on which the estimates are stated.  Instead, the solution $u$ only enters the estimates  explicitly through its $L^1$-norm and its tail, as specified in the statement below. This feature provides a powerful tool in compactness arguments for the family of operators \eqref{eq1}, since it yields uniform local regularity estimates that are well suited for passing to the limit along sequences of solutions with controlled $L^1$ and tail bounds. We decided to regroup all the results in the following statement:
\begin{theorem}[Full regularity in the $u$-independent case]\label{thm:full-u-independent}
Assume that $A(x,u)\equiv A(x)$ does not depend on $u$. Then each of the preceding partial regularity results, namely Theorems \ref{part.BMO} to \ref{thm:c1bfull}, holds under the same assumptions with the same conclusions, but with $\Omega_u=\Omega$, and all the corresponding local estimates are also valid on arbitrary compact subsets of $\Omega$. The same full-regularity conclusion also applies to the forthcoming Theorem \ref{partial.Holder}.

Moreover, the radii involved in the local estimates of the corresponding theorems can be chosen independently of $u$. More precisely, for every $x_0\in\Omega$, the initial radii $\varrho_{x_0}$ and $r_{x_0}$  can be chosen depending only on $d={\rm dist}(x_0,\partial\Omega), \data, \max\{\nu,1\}$ and
\begin{itemize}
\item $r_A(\,\cdot\,)$, $\beta_0$, and any upper bound on $\allowbreak{\|f\|}_{\mathcal{M}^{n/(2-\beta_0)}(B_{d/2}(x_0))}$ for Theorem \ref{thm:hold};
\item $r_A(\,\cdot\,)$, $\beta_0$, and any upper bound on $\displaystyle{\sup_{\substack{x\in B_{d/2}(x_0)\\ 0<t\leq d/4}} t^{2-\beta_0} \bigg(\mint_{B_t(x)} |f|^\chi\,dx\bigg)^{1/\chi}}$ for Theorem  \ref{partial.Holder} ;
\item $\omegax(\,\cdot\,)$ and any upper bound on $\|{\bf I}^f_{1,\chi}(\,\cdot\,,d/4)\|_{L^\infty(B_{d/2}(x_0))}$ for Theorem \ref{thm:lip};
\item $\beta$ and any upper bound on $\|f\|_{\mathcal M^{n/(1-\beta)}(B_{3d/4}(x_0))}$ for Theorems \ref{thm:c1b} and \ref{thm:c1bfull}.
\end{itemize}
The radii $\bar r_{x_0}$ (for Theorems \ref{thm:lip}, \ref{thm:c1b} and \ref{thm:c1bfull}) and $R_{x_0}$ (for Theorem \ref{thm:c1bfull}) can also be chosen independently of $u$. Since they are chosen not larger than $\varrho_{x_0}$, they inherit the corresponding dependences of $\varrho_{x_0}$ listed above; in Theorem \ref{thm:c1bfull} one may take $R_{x_0}:=\bar r_{x_0}/32$.
\end{theorem}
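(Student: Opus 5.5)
The plan is to revisit where the regular set $\Omega_u$ comes from in the partial results and show that, once $A$ does not depend on $u$, every ingredient is uniform in $x_0$ and independent of $u$. In the proofs of Theorems \ref{part.BMO}--\ref{thm:c1bfull}, the singular set is the set of points where the excess smallness conditions fail. These are conditions of the type \eqref{non:cancellare}--\eqref{local.Egamma.bdd}: a normalized excess such as $\mint_{B_\vrho}|u-(u)_{B_\vrho}|^2\dx$, or $\vrho^2\mint_{B_\vrho}|Du|^2\dx$, must be below a threshold $\varepsilon$ on some small ball. The only reason such a smallness requirement enters is the linearization step (Proposition~\ref{prop:a}, Proposition~\ref{dec:one}). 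There one freezes $A(x,u)$ at $A(x,(u)_{B_\vrho})$ and has to pay
\[
\mint_{B_\vrho}\omegau\big(|u-(u)_{B_\vrho}|\big)|Du|^2\dx ,
\]
which is small only if the oscillation of $u$ is small. When $A=A(x)$ this term is identically zero.

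The first step is therefore to redo the excess decay estimate of Proposition~\ref{dec:one} without any smallness hypothesis on the excess. The comparison map $h$ solves $-\mathrm{div}\big((A)_{B_\vrho}Dh\big)=0$ with constant coefficients, so it enjoys the classical Campanato decay. The error $u-h$ is controlled by three quantities:
\begin{itemize}
\item the $x$-oscillation $E_{r_A}(A;x_0)<\delta_0$, handled through the higher integrability of Proposition~\ref{prop:highint} and Hölder's inequality;
\item the nonlocal contribution, which carries the factor $\vrho^{2(1-s)}\nu$ after blow-up and becomes small by Lemma~\ref{lemma:smnloc} once $\vrho\le\vrho_0(\data,\nu)$;
\item the source term, through the Riesz potential or Marcinkiewicz norm.
\end{itemize}
None of these involves $u$ beyond the tail and the $L^2$ excess, which already appear linearly in the estimates. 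This gives a decay inequality of the form
\[
\mathrm{Exc}(\tau\vrho)\le c\,\tau^{\gamma}\,\mathrm{Exc}(\vrho)+c\,\mathrm{(tail/potential\ terms)} ,
\]
valid at every point $x_0\in K$ and every $\vrho\le\vrho_0$. Here $\vrho_0$ depends only on $d={\rm dist}(x_0,\partial\Omega)$, $\data$, $\max\{\nu,1\}$ and the quantities listed in the bullets: $r_A$, the bounds on $f$, and $\omegax$. The hypothesis \eqref{mod:cont.u} is trivially satisfied with $\omegau\equiv0$.

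The second step is to iterate this inequality as in the partial proofs. In those proofs the iteration had to preserve the smallness of the excess along the whole sequence of radii, and this is what forced the radii to depend on $u$. Here no such preservation is needed, so the iteration runs from any starting radius $\vrho_{x_0}\le\vrho_0$. Hence every point is regular and $\Omega_u=\Omega$. The same conclusion holds in dimension $2$, where it already was.

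The third step is to choose the subsequent radii. $r_{x_0}$, $\bar r_{x_0}$ and $R_{x_0}=\bar r_{x_0}/32$ are fixed as before by absorbing the perturbative terms, for instance making $\bar r_{x_0}^{\beta}$ times the relevant constants small. These constants now depend only on the same list of data, since no excess of $u$ enters the choice. The local estimates \eqref{local.Holder1}, \eqref{quant:lip}, \eqref{new:quantlip}, \eqref{C1:depend} and \eqref{c1bfull} then hold verbatim, with $u$ appearing only through its $L^2$ excess (or $L^1$ norm) and its tail. A covering argument turns these pointwise estimates into estimates on arbitrary compact subsets.

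The main obstacle is the gradient theorems (Theorems \ref{thm:lip}--\ref{thm:c1bfull}). There the affine-excess decay uses the a priori bound on $\|Du\|_{L^\infty}$, and, for the term in \eqref{C1:depend} of order $\|Du\|^{1+\beta}$, the $u$-dependence of $A$ entered through $\omegau(|u-\ell|)$. I would check that with $A=A(x)$ this term disappears. I would also verify that the tail decay lemmas (Lemma~\ref{lemma:tail}, Lemma~\ref{lemma:affine}) require only that $\vrho$ be small in terms of the data. The smallness threshold of the excess is the only place where the partial proofs invoke $u$-dependent radii, and I expect it to drop out cleanly.
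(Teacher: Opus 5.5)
Your proposal is correct and follows essentially the paper's argument. With $\omegau\equiv0$, the smallness \eqref{excess:small} enters only through \eqref{chiamoomegau}, which becomes automatic. Hence the decay of Proposition~\ref{dec:one} holds at every center and every radius below $\min\{r_0,r_1,r_A\}$ without any excess threshold. This gives $\Omega_u=\Omega$ and allows the explicit choices $\vrho_{x_0}=\min\{r_0,r_1,r_f,r_A,d/4,1/2\}$ and $r_{x_0}=\vrho_{x_0}/4$.

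One imprecision: the excess threshold is not the only source of $u$-dependent radii in the partial proofs. There, $r_{x_0}$ came from the continuity argument behind \eqref{local.Egamma.bdd}, and $\bar r_{x_0}$ depended on $[u]_{C^{0,\beta_0}}$ through $\omegau$ in \eqref{lorichiamo}--\eqref{radius:chiamo}. Both dependences vanish for the reasons you partly anticipate: $\omegau\equiv0$, and there is no smallness to propagate.
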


\section{Notation, functions spaces and preliminary results}\label{sec:prelim}
In the following section we collect a few basic definitions, and  we recall a few standard inequalities and properties which we shall use throughout the paper. 

\subsection{Notation}\label{nota}
We denote by $c$ a generic constant larger or equal than one, possibly varying from line to line, whose relevant dependencies will be emphasized using parentheses: for instance, $c = c(n,s)$ will mean that $c$ depends on $n$ and $s$. In order to simplify the notation for various dependencies, we shall denote by $\data$ the set of  given by \eqref{data}.  On the other hand,  we shall often keep explicit the dependence on the constant $\nu$ due to scaling reasons (see Proposition \ref{prop:highint} for instance). Special occurrences of constants will be denoted by $c_1$, $c_*$, $\bar c$ or the like. We also write $\omega_n=|B_1|$ the Lebesgue measure of the unit ball on $\R^n$; $\N=\{1,2,3,\dots\}$, while $\N_0=\N\cup\{0\}$.

As we deal with vector-valued functions and want to avoid componentwise notation as much as possible, we distinguish the scalar product in $\R^N$ from that in $\R^{n\times N}\simeq\R^{nN}$. Namely, for $\xi,\mu\in\R^N$ and $p,q\in\R^{nN}$, we set
\[
\xi\cdot\mu=\sum_{\alpha=1}^N \xi^\alpha\mu^\alpha\,,
\qquad
\langle p,q\rangle=\sum_{i=1}^n\sum_{\alpha=1}^N p_i^\alpha q_i^\alpha\,.
\]
Vectors in $\R^n$ are regarded as column vectors, while vectors in $\R^N$ are regarded as row vectors. With this convention, gradients of scalar functions are matrices with $n$ rows and one column, whereas gradients of vector-valued functions are matrices with $n$ rows and $N$ columns. More precisely, if $u=(u^1,\ldots,u^N)$, then the $\alpha$-th column of $Du$ is $Du^\alpha$. Integrals and averages of vectors and matrices are taken componentwise.

For tensors $A\in \R^{nN\times nN}$ and matrices $\xi\in\R^{nN}$, typically gradients of vector-valued functions, we use the shorthand $A\xi$ for the matrix in $\R^{nN}$ whose components are
\[
(A\xi)_i^\alpha
=
\sum_{j=1}^n\sum_{\beta=1}^N
A_{ij}^{\alpha\beta}\xi_j^\beta\,,
\qquad
i=1,\ldots,n,\quad \alpha=1,\ldots,N\,.
\]
With $E \subseteq \R^{n}$ being a measurable set with positive, finite measure $0<|E|<+\infty$, and with $F\in L^1(E;\R^\ell)$, $\ell\geq 1$, we shall denote by 
\[
   (F)_{E} \equiv \mint_{E}  F(x) \dx  := \frac{1}{|E|}\int_{E} F(x) \dx
\]
its integral average. 
For $p\in (1,\infty)$, we write $p'=p/(p-1)$ for its H\"older conjugate. 

We denote by $W^{1,2}(\Omega;\mathbb{R}^N)$ the Sobolev space of vector-valued functions $W^{1,2}(\Omega;\mathbb{R}^N)
:= \big\{ u \in L^2(\Omega;\mathbb{R}^N) : Du \in L^2(\Omega;\mathbb{R}^{nN}) \big\}$, where $Du$ denotes the distributional gradient of $u$. Accordingly, $W^{1,2}_{\mathrm{loc}}(\Omega;\mathbb{R}^N)$ denotes the class of functions $u$ such that $u \in W^{1,2}(A;\mathbb{R}^N)$ for every open set $A$ compactly contained in $\Omega$: $A \Subset \Omega$, while $W^{1,2}_{c}(\Omega;\mathbb{R}^N)$ denotes the subspace of functions with compact support in $\Omega$. We will also denote by ${[w]}_{W^{s,2}(B_r)}$ the $s$-fractional seminorm
\[
{[w]}_{W^{s,2}(B_r)}^2:=\int_{B_r}\int_{B_r}\frac{|w(x)-w(y)|^2}{|x-y|^{n+2s}}\dx\,dy\,.
\]


\subsection{Function spaces}\label{FS}

In this section we are going to consider measurable maps $F:U\to \R^\ell$, $\ell\in \N\setminus \{0\}$ and $U\subseteq \R^n$ measurable, not necessarily bounded,  $n\in \N,\, n\geq2$, and define several regularity function spaces where $F$ can belong to.

The ($L^2$-)excess of $F\in L^2(U;\R^\ell)$ on balls is classically defined as
\begin{equation}\label{defL2:excess}
    E\big(F;B_\vrho(x_0)\big)\coloneqq \bigg(\mint_{B_{\vrho}(x_0)}{\big|F-(F)_{B_{\vrho}(x_0)}\big|}^2\dx\bigg)^{1/2}
\end{equation}
if $B_\vrho(x_0)\subseteq U$. It is very standard to see that 
\begin{equation}\label{media}
\bigg(\mint_{B_\vrho(x_0)}\big|F-{(F)}_{B_\vrho(x_0)}\big|^2\dx\bigg)^{1/2}\leq \bigg(\mint_{B_\vrho(x_0)}|F-z|^2 \dx\bigg)^{1/2}
\end{equation}
holds true for any $z\in \R^\ell$.  As an immediate consequence of \eqref{media}, for every $B_{\vrho}(x_1)\subseteq B_r(x_2)$ not necessarily concentric balls, we deduce
\begin{equation}\label{media:nonconc}
E\big(F;B_\vrho(x_1)\big)\leq \Big(\frac{r}{\vrho}\Big)^{n/2}\,E\big(F;B_r(x_2)\big)\,.
\end{equation}
As we are only interested in local results, we localize the classic definition of the $BMO$ space in the following way: for $F\in L^2(U;\R^\ell)$ as above and a threshold $r_0\in(0,+\infty]$, we set
\begin{equation*}
E_{r_0}\big(F;x_0\big):=\sup_{\substack{\vrho\in(0,r_0]:\\B_{\vrho}(x_{0})\subseteq U}}    E\big(F;B_\vrho(x_0)\big)\,;
\end{equation*}
notice that for $0<r_0<r_1\leq +\infty$, $E_{r_0}(F;x_{0})<+\infty$ if and only if $E_{r_1}(F;x_{0})<+\infty$. Motivated by this fact, we say that $F(\,\cdot\,)$ is:
\begin{itemize}
\item $BMO$ in $U$ ($F\in BMO(U;\R^\ell)$) if these exist $c\geq0$ and $r_0>0$ such that $E_{r_0}(F;x_{0})\leq c$ for a.e. $x_0\in U$; we set in this case
\[
{[F]}_{BMO(U)} =\sup_{x_0\in U} E_{r_0}\big(F;x_0\big)=\sup_{\substack{\vrho\in(0,r_0]:\\B_\vrho(x_0)\subseteq U}} E\big(F;B_\vrho(x_0)\big)
\]
where $\sup$ is the essential supremum;
\item locally $BMO$ in $U$ ($F\in BMO_{\rm loc}(U;\R^\ell)$) if it is $BMO$ in $K$ for every compact subset $K\Subset U$;
\item \textnormal{VMO} in $x_0\in U$ if
\[
\lim_{r\to 0^+}E_r\big(F;x_0\big)=0\,;
\]
\item locally $VMO$ in $U$ ($F\in VMO_{\rm loc}(U;\R^\ell)$) if $E_r(\,\cdot\,)$ converges to zero locally uniformly in $U$, that is, for every $K\Subset U$, $E_r(\,\cdot\,)$ converges to zero uniformly in $K$.
\end{itemize}

\begin{remark}
{\rm Our definition of $BMO$ uses the $L^2$-excess for simplicity, but it is equivalent to the more classical one employing the $L^1$-excess, see \cite[Section 1.3]{Stein1}. Note that, as a consequence of our definition, $F\in BMO_{\rm loc}(U;\R^\ell)$ if and only if for every $K\Subset U$ there exists a threshold $r_K\in (0,{\rm dist}(K,\partial U))$ such that $E_{r_K}(F;\,\cdot\,)$ is bounded in $K$.
Moreover notice that $F\in VMO_{\rm loc}(U;\R^\ell)$ if and only if for every $K\Subset U$ and  $\delta>0$, there exists a non-negative $\bar r=\bar r(\delta,K)<{\rm dist}(K,\partial U)$ such that $\sup_K E_{\bar r}(\,\cdot\,)\leq \delta$. 
}
\end{remark}

Let $F\in C^0(U;R^\ell)$; we say that  $\omega:[0,\infty)\to [0,1]$ is a modulus of continuity for $F$ if $\omega(\,\cdot\,)$ is a concave, non-decreasing function such that $\omega(0)=\lim_{r\to 0^+}\omega(r)=0$,  and
\begin{equation*}
    |F(x)-F(y)|\leq \Lambda\,\omega\big(|x-y| \big),\qquad x,y\in U,
\end{equation*}
for some $\Lambda>0$. We recall that $F$ if Dini-continuous if $\omega(\,\cdot\,)$ satisfies \eqref{Dini}. Notice that if $\delta\in (0,1/2]$ and $R_0>0$, using the monotonicity of $\omega(\,\cdot\,)$
\begin{align}\label{Dini.dyadic}
\sum_{i=0}^\infty \omega(\delta^i R_0)&\leq \frac1{\log(1/\delta)}\sum_{i=1}^\infty\omega(\delta^i R_0)\int^{\delta^{i-1} R_0}_{\delta^{i} R_0}\frac{d\vrho}\vrho+\frac1{\log2}\,\omega(R_0)\int^{2R_0}_{R_0}\frac{d\vrho}\vrho\notag\\
&\leq \frac1{\log(1/\delta)}\bigg[\sum_{i=1}^\infty\int^{\delta^{i-1} R_0}_{\delta^{i} R_0}\omega(\vrho)\,\frac{d\vrho}\vrho+\int_{R_0}^{2R_0}\omega(\vrho)\,\frac{d\vrho}\vrho\bigg]=\frac1{\log(1/\delta)}\int_0^{2R_0}\omega(\vrho)\,\frac{d\vrho}\vrho\,.
\end{align}
Suppose that $F$ belongs to the Marcinkiewicz, or weak-Lebesgue, space $\mathcal M^m(U;\R^\ell)$, i.e. \eqref{def:marcke} holds.
By prescribing the decay of the measure of the super-level set, cake-layers formula implies a precise decay of the averages of $F$. Let indeed $F$ be in $\mathcal M^m(U;\R^\ell)$ for some $m\geq1$; then for every $\chi\in[1,m)$ and every ball $B_R(x_0)\subseteq U$, there holds
\begin{equation}\label{density.Marc}
\bigg( \mint_{B_{R}(x_0)} {|F|}^\chi \dx \bigg)^{1/\chi} \leq \Big[\frac\chi{m-\chi}\Big]^{1/m} {|B_R|}^{-1/m} {\|F\|}_{\mathcal M^m(B_{R}(x_0);\R^\ell)} \,.
\end{equation}
Notice that assumptions \eqref{ass:fhold} and \eqref{f:1beta} imply \eqref{ass:f} for a suitable exponent \(\chi>2_*\): in the case of \eqref{ass:fhold} we can choose $\displaystyle{2_*<\chi<\frac{n}{2-\beta_0}<n}$ and in the case of \eqref{f:1beta} we can choose $\displaystyle{2_*<\chi<\frac{n}{1-\beta}<n}$. In dimension \(n=2\), whenever a Marcinkiewicz assumption with exponent depending on \(\beta_0\) is used, the auxiliary exponent \(2_*\in(1,2)\) is chosen after \(\beta_0\), sufficiently close to \(1\), so that the interval $(2_*,\,2/(2-\beta_0))$ is non-empty.
%
We conclude this paragraph with two elementary inequalities.

\begin{lemma}[Fractional embedding]
    Let $s\in (0,1)$ and $B_R(x_0)\subseteq \R^n$. Assume that $w\in W^{1,2}(B_R(x_0)\R^N)$; then $w\in W^{s,2}(B_R(x_0);\R^N)$ with
    \begin{equation}\label{frac:emb}
\frac1{|B_R(x_0)|}{[w]}_{W^{s,2}(B_R(x_0))}^2=\int_{B_R(x_0)}\mint_{B_R(x_0)}\frac{|w(x)-w(y)|^2}{|x-y|^{n+2s}}\dx\,dy\leq \frac{c(n,N)}{1-s}R^{2(1-s)}\mint_{B_R(x_0)}|Dw|^2\dx\,.
    \end{equation}
\end{lemma}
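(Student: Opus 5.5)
The plan is to reduce to a one-dimensional estimate along segments via the fundamental theorem of calculus, and then integrate the singular kernel explicitly. By density it suffices to treat $w\in C^1(\overline{B_R(x_0)};\R^N)$; both sides of \eqref{frac:emb} are continuous with respect to $W^{1,2}$ convergence, using Fatou's lemma on the left. For $x,y\in B_R(x_0)$, convexity of the ball gives
\[
|w(x)-w(y)|^2\leq |x-y|^2\int_0^1 |Dw(y+t(x-y))|^2\,dt .
\]
Writing $h=x-y$, so that $|h|<2R$, we then bound
\[
\int_{B_R}\int_{B_R}\frac{|w(x)-w(y)|^2}{|x-y|^{n+2s}}\dx\,dy\leq \int_0^1\int_{|h|<2R}|h|^{2-n-2s}\int_{\{y\in B_R:\ y+h\in B_R\}}|Dw(y+th)|^2\,dy\,dh\,dt .
\]

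For fixed $t$ and $h$, the change of variables $z=y+th$ maps the inner domain into $B_R(x_0)$, again by convexity, so the inner integral is at most $\int_{B_R}|Dw|^2\dx$. What remains is the radial integral
\[
\int_{|h|<2R}|h|^{2-n-2s}\,dh=n\omega_n\int_0^{2R}\vrho^{1-2s}\,d\vrho=\frac{n\omega_n (2R)^{2(1-s)}}{2(1-s)} .
\]
This yields
\[
{[w]}_{W^{s,2}(B_R)}^2\leq \frac{c(n)}{1-s}R^{2(1-s)}\int_{B_R}|Dw|^2\dx .
\]
Dividing by $|B_R|$ gives \eqref{frac:emb}. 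The constant depends on $N$ only through the choice of matrix norm, hence the notation $c(n,N)$.

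No step here is a real obstacle. The only points to watch are that the $y$-integration is restricted to points whose translates stay in the ball, which is handled by the Fubini exchange above, and that the explicit factor $1/(1-s)$ comes out of the radial integral. That factor is the exact rate of blow-up as $s\to1$, so it must not be absorbed into the constant.
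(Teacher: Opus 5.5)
Your proof is correct, and it takes a genuinely different and more elementary route than the paper. The paper rescales to $B_1$ and extends $w$ to all of $\R^n$ by spherical inversion. It quotes from the literature the bound $\|D\tilde w\|_{L^2(B_4)}\leq c(n,N)\|Dw\|_{L^2(B_1)}$, and only then applies the fundamental theorem of calculus along segments $x+tz$ with $x\in B_1$, $|z|<2$. Those segments generally leave the ball, which is exactly why the extension is needed; it is also the only place where the constant picks up a dependence on $N$.

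You instead use convexity of the ball twice. First, it keeps the segment from $y$ to $x$ inside $B_R(x_0)$. Second, it gives $\{y\in B_R(x_0):\, y+h\in B_R(x_0)\}+th\subseteq B_R(x_0)$, so after Tonelli the inner integral is bounded by $\int_{B_R(x_0)}|Dw|^2\,dx$ with no extension at all.

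What your version buys:
\begin{itemize}
\item it is self-contained, with no extension lemma or external reference;
\item it gives the explicit constant $n\,2^{1-2s}/(1-s)$ in the averaged form, independent of $N$ when the Frobenius norm is used on $Dw$;
\item it works verbatim on any bounded convex set, with $2R$ replaced by the diameter.
\end{itemize}
The paper's extension route would carry over to non-convex extension domains. Since the lemma is only ever applied on balls, nothing is lost by your argument.

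One small precision in your density step: the left-hand side is not continuous under $W^{1,2}$ convergence, only lower semicontinuous (via Fatou along an a.e.\ convergent subsequence). That is all you actually use.
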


\begin{proof}
    By a scaling argument, we may assume that $x_0=0,R=1$. We first extend $w$ from $B_1$ to $\R^n$ via the spherical inversion map. Specifically, let
    \begin{equation*}
        \tilde{w}(x)=\begin{cases}
            w(x)\quad &x\in B_1
            \\[2mm]
\displaystyle{w\Big( \frac{x}{|x|^2}\Big)}\quad &x\in \R^n\setminus B_1,
        \end{cases}
    \end{equation*}
    and with some work one can show that
    \begin{equation}\label{D:invsph}
        {\|D\tilde{w}\|}_{L^2(B_4)}\leq c(n,N){\|Dw\|}_{L^2(B_1)}\,,
    \end{equation}
    see for instance \cite[ Proposition 3.9.1]{BBook}. Via a change of variables, the fundamental theorem of calculus, Fubini's theorem and \eqref{D:invsph}, we obtain
    \begin{equation*}
        \begin{split}
            \int_{B_1}\int_{B_1}\frac{|\tilde{w}(x)-\tilde{w}(y)|^2}{|x-y|^{n+2s}}\dx\dy & \leq \int_{B_1}\int_{B_2}\frac{|\tilde{w}(x)-\tilde w(z+x)|^2}{|z|^{n+2s}}\,dz\dx
            \\
            &\leq \int_{B_1}\int_{B_2}\int_0^1 |D\tilde w(x+tz)|^2\,dt \,\frac{dz}{|z|^{n+2(s-1)}}\dx
            \\
            &=\int_{B_2}\int_0^1\int_{B_1}|D\tilde w(x+tz)|^2\dx\,dt\,\frac{dz}{|z|^{n+2(s-1)}}
            \\
            &\leq \int_{B_1}\frac{dz}{|z|^{n+2(s-1)}}\int_{B_3} |D\tilde w|^2\dx\leq \frac{c(n,N)}{1-s}\int_{B_1} |Dw|^2\dx,
        \end{split}
    \end{equation*}
    which proves \eqref{frac:emb}.
\end{proof}

The following interpolation inequality can be found in \cite[Theorem 6.23]{leo:frac}.
\begin{lemma}
 Let $s\in (0,1)$, $R>0$ and $B_R(x_0)\subseteq \R^n$. If $u\in W^{s,2}(B_R(x_0))$ and $\varphi \in W^{1,\infty}(B_R(x_0))$, then $u\varphi \in W^{s,2}(B_R(x_0))$ and (if $\varphi$ is not identically zero)
\begin{equation}\label{interpol}
{[u\varphi]}_{W^{s,2}(B_R(x_0))}\leq c(n,s){\|\varphi\|}_{L^\infty(B_R(x_0))}\Big[{\|\varphi\|}_{L^\infty(B_R(x_0))}^{-s}{\|D\varphi\|}_{L^\infty(B_R(x_0))}^{s}{\|u\|}_{L^p(B_R(x_0))}+{ [u]}_{W^{s,2}(B_R(x_0))}\Big]\,.
\end{equation}
\end{lemma}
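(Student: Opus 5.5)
The plan is to prove \eqref{interpol} with $p=2$, the natural exponent in this $W^{s,2}$ setting, by a direct splitting of the difference quotient followed by an optimization of the cutoff radius. For $x,y\in B_R(x_0)$ I will write
\[
u(x)\varphi(x)-u(y)\varphi(y)=\varphi(x)\big(u(x)-u(y)\big)+u(y)\big(\varphi(x)-\varphi(y)\big).
\]
Using $(a+b)^2\le 2a^2+2b^2$, the seminorm ${[u\varphi]}_{W^{s,2}(B_R(x_0))}^2$ is then bounded by twice the sum of two double integrals. The first is immediately at most ${\|\varphi\|}_{L^\infty(B_R(x_0))}^2{[u]}_{W^{s,2}(B_R(x_0))}^2$. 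This produces the second summand inside the bracket of \eqref{interpol}.

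For the second double integral I will apply Fubini and integrate first in $x$ with $y$ fixed. Abbreviate $a:={\|\varphi\|}_{L^\infty(B_R(x_0))}$ and $b:={\|D\varphi\|}_{L^\infty(B_R(x_0))}$. Since $B_R(x_0)$ is convex, the mean value theorem gives the pointwise bound
\[
|\varphi(x)-\varphi(y)|\le \min\{2a,\,b|x-y|\}.
\]
Enlarging the $x$-domain to $\R^n$ and splitting at the radius $\lambda:=a/b$ (assume $b>0$ for now), I get
\[
\int_{\R^n}\frac{\min\{4a^2,\,b^2|x-y|^2\}}{|x-y|^{n+2s}}\dx
\le b^2\int_{B_\lambda(y)}\frac{dx}{|x-y|^{n+2s-2}}+4a^2\int_{\R^n\setminus B_\lambda(y)}\frac{dx}{|x-y|^{n+2s}}.
\]
The two integrals equal $c(n)\big[\tfrac{b^2\lambda^{2-2s}}{2-2s}+\tfrac{4a^2\lambda^{-2s}}{2s}\big]$, and with $\lambda=a/b$ this is $c(n,s)\,a^{2-2s}b^{2s}$. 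The bound is uniform in $y$, so integrating $|u(y)|^2$ against it yields $c(n,s)\,a^{2(1-s)}b^{2s}{\|u\|}_{L^2(B_R(x_0))}^2$.

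Taking square roots and factoring out $a$, since $a^{1-s}b^s=a\cdot a^{-s}b^s$, gives exactly the first summand in \eqref{interpol}. It remains to treat the degenerate case $b=0$. Then $\varphi$ is constant on the convex ball $B_R(x_0)$, the second double integral vanishes, and the estimate is trivial; $a>0$ because $\varphi$ is not identically zero. Finiteness of the right-hand side also shows $u\varphi\in W^{s,2}(B_R(x_0))$, since $u\varphi\in L^2$ holds trivially.

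I do not expect a real obstacle. The only point needing care is the choice of splitting radius $\lambda=a/b$ that balances the near and far contributions, because this is what produces the interpolated power $a^{-s}b^{s}$ rather than a cruder bound like $b\,R^{1-s}$. One should also note that the constant degenerates like $1/(s(1-s))$, which is harmless since it is absorbed into $c(n,s)$.
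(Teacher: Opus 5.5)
Your proof is correct. The paper does not prove this lemma itself: it only invokes \cite[Theorem 6.23]{leo:frac}, which, judging from the $\|u\|_{L^p}$ in the statement, is formulated for $W^{s,p}$. Your argument is the standard direct derivation of that product estimate: split $u(x)\varphi(x)-u(y)\varphi(y)$, use $|\varphi(x)-\varphi(y)|\le\min\{2\|\varphi\|_{L^\infty},\|D\varphi\|_{L^\infty}|x-y|\}$, integrate first in $x$ over all of $\R^n$, and split at the radius $\|\varphi\|_{L^\infty}/\|D\varphi\|_{L^\infty}$.

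The citation buys brevity and a general exponent $p$. Your route buys a self-contained proof with an explicit constant. It also makes visible the one place where the geometry enters: on the convex ball $B_R(x_0)$, a function in $W^{1,\infty}$ has a Lipschitz representative with constant $\|D\varphi\|_{L^\infty(B_R(x_0))}$. You should state explicitly that you work with this representative.

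Your reading $p=2$ is the right one. The undefined exponent is a remnant of the $W^{s,p}$ version. A constant depending only on $n,s$ is compatible with the dilations $x\mapsto x_0+R(x-x_0)$ only if $p=2$. This is also exactly how the lemma is used in the proof of Lemma~\ref{lemma:cacc}, with $\varphi=\eta^2$ and the term $\int_{B_{\tau_2}}|u-(u)_{B_{\tau_2}}|^2\,dx$. Finally, your argument works verbatim for vector-valued $u$, which is the case the paper needs.
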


\subsection{Nonlinear potentials of Riesz-type}

We shall repeatedly use the following elementary comparison between the $\chi$-averaged truncated Riesz-type potential introduced in \eqref{dfriesz:pot},
\[
    \mathbf I^f_{\alpha,\chi}(x_0,R)
    =
    \int_0^R
    \vrho^\alpha
    \bigg(\mint_{B_\vrho(x_0)} |f|^\chi\,\dx\bigg)^{1/\chi}
    \,\frac{d\vrho}{\vrho},
\]
and its dyadic counterpart. In particular, the continuous potential controls the sum of the corresponding quantities over a geometric sequence of radii, as well as the related one-scale supremum.
\begin{lemma}
Let $f\in L^\chi(B_{2R}(x_0);\R^N)$ for some $\chi\geq1$ and some ball $B_{2R}(x_0)\subseteq\R^n$; let moreover $\alpha\in (0,n/\chi)$ and $\tau\in (0,1/2]$ be a parameter. Then, setting $R_j=\tau^j R$ for $j\in\N_0$, $B_j=B_{R_j}(x_0)$, and $R_{-1}=2R$, we have
\begin{equation}\label{est.potential.sum}
\sum_{j=\bar \jmath}^\infty R_j^{\alpha} \bigg(\mint_{B_j} |f|^\chi \dx \bigg)^{1/\chi}\leq c\,{\bf I}^f_{\alpha,\chi}(x_0, R_{\bar\jmath-1})\qquad\text{for all $\bar\jmath\in\N_0$}
\end{equation}
and 
\begin{equation}\label{est.potential.onescale}
\sup_{t\in(0,R_{\bar\jmath}]}t^\alpha \bigg(\mint_{B_t(x_0)} |f|^\chi \dx \bigg)^{1/\chi} \leq c\,{\bf I}^f_{\alpha,\chi}(x_0, R_{\bar\jmath-1})\,,
\end{equation}
where both the constants depend only on $n,\chi$ and $\tau$.
\end{lemma}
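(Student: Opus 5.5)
The plan is to compare the dyadic sum and the one-scale supremum with the continuous integral by working one dyadic annulus $[R_{j},R_{j-1}]$ at a time. The only tool needed is the elementary monotonicity of the averaged quantity across comparable radii. For $\vrho\in[R_j,R_{j-1}]$ we have $B_j\subseteq B_\vrho(x_0)$. Hence
\[
\bigg(\mint_{B_j}|f|^\chi\dx\bigg)^{1/\chi}\leq \Big(\frac{\vrho}{R_j}\Big)^{n/\chi}\bigg(\mint_{B_\vrho(x_0)}|f|^\chi\dx\bigg)^{1/\chi}\leq \tau^{-n/\chi}\, 2^{n/\chi}\bigg(\mint_{B_\vrho(x_0)}|f|^\chi\dx\bigg)^{1/\chi},
\]
where the factor $2^{n/\chi}$ is needed only for $j=0$, since $R_{-1}=2R$ and $R_{-1}/R_0=2$ rather than $\tau^{-1}$. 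In the same range, $R_j^\alpha\leq \vrho^\alpha$.

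For \eqref{est.potential.sum}, multiply the above inequality by $R_j^\alpha$ and average it against $d\vrho/\vrho$ over $[R_j,R_{j-1}]$. The log-length of that interval is $\log(1/\tau)$ for $j\geq1$ and $\log 2$ for $j=0$, so it is bounded below by $\log 2$. This gives
\[
R_j^{\alpha}\bigg(\mint_{B_j}|f|^\chi\dx\bigg)^{1/\chi}\leq \frac{(2/\tau)^{n/\chi}}{\log 2}\int_{R_j}^{R_{j-1}}\vrho^\alpha\bigg(\mint_{B_\vrho(x_0)}|f|^\chi\dx\bigg)^{1/\chi}\frac{d\vrho}{\vrho}.
\]
Summing over $j\geq\bar\jmath$, the intervals $[R_j,R_{j-1}]$ tile $(0,R_{\bar\jmath-1}]$ without overlap. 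The sum is therefore bounded by $c(n,\chi,\tau)\,\mathbf I^f_{\alpha,\chi}(x_0,R_{\bar\jmath-1})$, exactly as in \eqref{Dini.dyadic}.

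For \eqref{est.potential.onescale}, fix $t\in(0,R_{\bar\jmath}]$ and pick $j\geq\bar\jmath$ with $t\in(R_{j+1},R_j]$. For every $\vrho\in[t,R_{j-1}]$ we have $B_t(x_0)\subseteq B_\vrho(x_0)$ and $\vrho/t\leq R_{j-1}/R_{j+1}\leq 2\tau^{-2}$. Hence
\[
t^\alpha\bigg(\mint_{B_t(x_0)}|f|^\chi\dx\bigg)^{1/\chi}\leq (2\tau^{-2})^{n/\chi}\,\vrho^\alpha\bigg(\mint_{B_\vrho(x_0)}|f|^\chi\dx\bigg)^{1/\chi}.
\]
Averaging this in $d\vrho/\vrho$ over $[R_j,R_{j-1}]\subseteq[t,R_{\bar\jmath-1}]$ bounds the left-hand side by $c\,\mathbf I^f_{\alpha,\chi}(x_0,R_{\bar\jmath-1})$. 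Taking the supremum over $t$ then concludes.

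There is no genuine obstacle. The only point of care is bookkeeping the first annulus $j=0$, where the ratio is $2$ instead of $\tau^{-1}$. This is why the constant is uniform in $\bar\jmath$ and depends only on $n,\chi,\tau$. The restriction $\alpha<n/\chi$ is not actually used in these estimates. The assumption $f\in L^\chi(B_{2R})$ only guarantees that all the averages appearing are finite.
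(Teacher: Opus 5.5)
Your proof is correct and follows essentially the paper's argument. You average each dyadic term against $d\vrho/\vrho$ over $[R_j,R_{j-1}]$, compare averages on nested balls of comparable radii, sum over the tiling of $(0,R_{\bar\jmath-1}]$, and get the one-scale bound from a single-annulus comparison; the paper instead passes through the sum there, which is equivalent. Your handling of the first annulus is more careful than the paper's display: the paper normalizes every term by $\log(1/\tau)$ even though $\int_{R_0}^{R_{-1}}d\vrho/\vrho=\log 2$, so the constant there should be $\tau^{-n/\chi}/\log 2$ as in your version. Your extra factor $2^{n/\chi}$ is harmless but unnecessary, since $2\leq\tau^{-1}$.
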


\begin{proof}
Recalling that $R_j=\tau^jR$, we have
\begin{align*}
        \sum_{j=\bar \jmath}^\infty  R_j^{\alpha} \bigg(\mint_{B_j} |f|^\chi \dx \bigg)^{1/\chi} &\leq \frac{1}{\log(1/\tau)}\sum_{j=\bar \jmath}^\infty  R_j^\alpha \bigg(\mint_{B_j} |f|^\chi \dx \bigg)^{1/\chi}\int_{R_j}^{R_{j-1}}\frac{d\vrho}{\vrho}
        \\
        &\leq \frac{\tau^{-n/\chi}}{\log(1/\tau)}\sum_{j=\bar \jmath}^\infty \int_{R_j}^{R_{j-1}}\vrho^\alpha\bigg( \mint_{B_\vrho(x_0)}|f|^{\chi}dx\bigg)^{1/\chi}\frac{d\vrho}{\vrho}=\frac{\tau^{-n/\chi}}{\log(1/\tau)}\,{\bf I}^f_{\alpha,\chi}(x_0, R_{\bar\jmath-1})
\end{align*}
which proves \eqref{est.potential.sum}. For the proof of \eqref{est.potential.onescale}, we observe that if $0<t\leq R_{\bar\jmath}$, then we may find $j\geq\bar\jmath$ such that $R_{j+1}< t\leq R_j$; we thus  have
\[
t^\alpha \bigg(\mint_{B_t(x_0)} |f|^\chi \dx \bigg)^{1/\chi}\leq \tau^{-n/\chi}R_j^\alpha \bigg(\mint_{B_j(x_0)} |f|^\chi \dx \bigg)^{1/\chi},
\]
and the proof is completed by  applying \eqref{est.potential.sum}.
\end{proof}
\subsection{\texorpdfstring{$\mathsf{A}$-}{A-}harmonic vectorial approximation}
In the due course of the proof, we will exploit the quantitative harmonic type approximation; the version we are going to use is a small variation of the very clean and sharp version that can be found in \cite{CUD}. We recall that, for a ball $B$ and a tensor $\mathsf{A}$ as in the statement below, we say that a function $h\in W^{1,2}(B;\R^N)$ is $\mathsf{A}$-harmonic if it solves 
\begin{equation*}
-{\rm div}\big(\mathsf{A}Dh)=0 \quad\text{in $B$,}\qquad\qquad\text{i.e.,}\qquad \mint_{B} \langle\mathsf{A}Dh, D\vphi\rangle\dx=0\qquad\text{for all $\vphi\in C^\infty_c(B)$}\,. 
\end{equation*}

\begin{lemma}[Harmonic approximation Lemma]\label{lem:harapprox}
Let $n\geq2, N\geq1$ and $\mathsf{A}\in \R^{nN\times nN}$ be a constant tensor satisfying for some $\Lambda\geq1$
\begin{equation}\label{Legendre.A}
\langle \mathsf{A}\xi,\xi\rangle \geq \Lambda^{-1}|\xi|^2\qquad\text{and}\qquad  |\mathsf{A}| \leq \Lambda\qquad\text{for all $\xi\in \R^{nN}$}\,. 
\end{equation}
Let $B\subseteq\R^n$ be a ball and assume that $u\in W^{1,2}(B;\R^N)$ is a function satisfying
\begin{equation}\label{quant:control}
\mint_B {|Du|}^{2+\zeta}\dx\leq c_0\,,
\end{equation}
for some $\zeta>0$ and $c_0\geq1$. If $u$ is also approximately $\mathsf{A}$-harmonic, in the sense that it satisfies
\begin{equation*}
\left|\mint_{B}\langle\mathsf{A}Du,D\vphi\rangle\dx\right|\leq \e M{\|D\vphi\|}_{L^\infty(B)}\quad\text{for all $\vphi\in C^\infty_c(B;\R^N)$}
\end{equation*}
for some $\varepsilon>0$ and $M>0$, there exists a (unique) $\mathsf{A}$-harmonic function $h\in u+ W^{1,2}_0(B;\R^N)$ such that
\begin{equation*}
\mint_{B}{|Du-Dh|}^2\dx\leq c\, \e^{\zeta/(1+\zeta)}\big[1+M^2\big]\,,
\end{equation*}
for a constant $c$ depending only $n,N,\Lambda,\zeta$ and $c_0$.
\end{lemma}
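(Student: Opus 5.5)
The natural route is to reduce to a normalized situation and then compare $u$ with the $\mathsf{A}$-harmonic function sharing its boundary values. Since the statement is invariant under translations and dilations, I will assume $B=B_1$. The estimate involves $\e$, $M$ and $c_0$ in a scaling-compatible way, because of the averaged integrals and $\|D\vphi\|_{L^\infty}$.

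First, Lax--Milgram applied to the coercive bilinear form $\int_B\langle\mathsf{A}Dw,D\vphi\rangle\dx$ on $W^{1,2}_0$ gives a unique $h\in u+W^{1,2}_0(B;\R^N)$ with $-{\rm div}(\mathsf{A}Dh)=0$. Coercivity holds by \eqref{Legendre.A}: the Legendre condition suffices here, with no need for Legendre--Hadamard, because we work with $W^{1,2}_0$ and the full gradient.

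Second, set $w=u-h\in W^{1,2}_0(B)$, so that $\mint_B\langle\mathsf{A}Dw,D\vphi\rangle\dx=\mint_B\langle\mathsf{A}Du,D\vphi\rangle\dx$. I cannot test with $w$ directly, since the hypothesis only controls Lipschitz test functions. I will therefore use a Lipschitz truncation of $w$: for a level $\l\geq1$, construct $w_\l\in W^{1,\infty}_0(B)$ with $\|Dw_\l\|_{L^\infty}\leq c\l$ and $w_\l=w$ outside a set $E_\l$ with $|E_\l|\leq c\l^{-q}\int_B|Dw|^q\dx$ for any $q\leq 2$ (or through the maximal function of $Dw$). For $w\in W^{1,2}_0$ I would use the version of the truncation that keeps zero boundary values, as in Diening--M\'alek--Steinhauer or Acerbi--Fusco. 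Testing with $w_\l$ (by density, since the hypothesis extends to $W^{1,\infty}_0$) gives
\[
\Lambda^{-1}\mint_{B\setminus E_\l}|Dw|^2\dx\leq \e M c\l+\frac{1}{|B|}\int_{E_\l}|\mathsf{A}Dw||Dw_\l|\dx+\frac{1}{|B|}\int_{E_\l}|Dw|^2\dx,
\]
where the last term accounts for $\int_{B\setminus E_\l}$ versus $\int_B$.

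Third, the terms on $E_\l$ are controlled using higher integrability. From \eqref{quant:control}, $u$ has $Du\in L^{2+\zeta}$, and by Meyers-type global estimates for constant-coefficient systems with boundary datum $u$, $\mint_B|Dh|^{2+\zeta'}\leq c\mint_B|Du|^{2+\zeta'}$ for some $\zeta'\in(0,\zeta]$ (Calder\'on--Zygmund for constant $\mathsf{A}$ even allows $\zeta'=\zeta$, but only on smooth domains, which balls are). Hence $\mint_B|Dw|^{2+\zeta}\leq c\,c_0$. Then $\int_{E_\l}|Dw|^2+\l\int_{E_\l}|Dw|\leq c|E_\l|^{\zeta/(2+\zeta)}+c\l|E_\l|^{(1+\zeta)/(2+\zeta)}$. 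Choosing the maximal-function level set, $|E_\l|\leq c\l^{-(2+\zeta)}$, which bounds these terms by $c\l^{-\zeta}$. Since $\mint_{B\setminus E_\l}|Dw|^2$ still misses $E_\l$, I add $\int_{E_\l}|Dw|^2\leq c\l^{-\zeta}$ again and obtain
\[
\mint_B|Dw|^2\dx\leq c\,\e M\l+c\,\l^{-\zeta}.
\]

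Finally, I optimize by taking $\l=\e^{-1/(1+\zeta)}$, which yields $c\,\e^{\zeta/(1+\zeta)}(1+M)\leq c\,\e^{\zeta/(1+\zeta)}[1+M^2]$, the claimed bound. The main obstacle is the Lipschitz truncation preserving zero boundary values with the correct measure estimate, together with the global higher integrability for $h$ up to the boundary. For the latter I would try first the global $L^{2+\zeta}$ estimate for Dirichlet problems with constant Legendre coefficients on balls, via reflection or standard $L^p$ theory. If it is only available for a smaller exponent, the same argument runs with $\zeta$ replaced by a smaller exponent depending on $n,N,\Lambda,\zeta$, and I would then absorb this by adjusting the power of $\e$ in the statement.
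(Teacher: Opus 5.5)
Your argument is correct, but it is not how the paper proceeds. The paper does not prove the lemma itself: it deduces it in one line from the main theorem (and the subsequent remark) of the cited sharp harmonic-approximation result, with $p=2$, $q=2+\zeta$, plus elementary algebra. The factor $1+M^2$ is the kind of loss produced by applying a normalized statement to $u/\max\{1,M\}$.

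You instead reconstruct the mechanism behind such sharp statements:
\begin{itemize}
\item Lax--Milgram for $h$;
\item a zero-trace Lipschitz truncation $w_\lambda$ of $w=u-h$;
\item the global Calder\'on--Zygmund bound $\|Dw\|_{L^{2+\zeta}(B)}\le c\,\|Du\|_{L^{2+\zeta}(B)}$ for the constant-coefficient Dirichlet problem $-\mathrm{div}(\mathsf{A}Dw)=-\mathrm{div}(\mathsf{A}Du)$, $w\in W^{1,2}_0(B)$;
\item balancing $\varepsilon M\lambda$ against $\lambda^{-\zeta}$.
\end{itemize}
The exponent bookkeeping is right and gives exactly $\varepsilon^{\zeta/(1+\zeta)}$. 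In fact you obtain $c\,\varepsilon^{\zeta/(1+\zeta)}(1+M)$, or $c\,(\varepsilon M)^{\zeta/(1+\zeta)}$ if you optimize in $\lambda$, which is slightly better than the stated $1+M^2$.

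The price is that you rely on two external tools rather than one black box.
\begin{itemize}
\item A truncation preserving zero boundary values, used with the measure bound $|E_\lambda|\le c\,\lambda^{-(2+\zeta)}\int_B|Dw|^{2+\zeta}\,dx$. You need it with exponent $2+\zeta$, not with $q\le 2$ as you first wrote. This is fine because $Dw\in L^{2+\zeta}$ and the weak-type bounds for the maximal operator hold in that space.
\item The global $W^{1,p}$ estimate on the ball. For constant Legendre coefficients this holds for every $p\in(1,\infty)$. So your fallback with a smaller exponent $\zeta'$ is unnecessary and should be dropped, since it would not give the power $\zeta/(1+\zeta)$ claimed in the statement.
\end{itemize}

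Finally, the case $\varepsilon>1$, where your choice gives $\lambda<1$, is trivial. The energy estimate $\int_B|Du-Dh|^2\,dx\le \Lambda^4\int_B|Du|^2\,dx$ already suffices there.
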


\begin{proof}
The Lemma follows from the main Theorem of \cite{CUD} and the subsequent Remark, choosing $p=2,q=2+\zeta$, using our assumption \eqref{quant:control} and performing basic algebraic manipulations.
\end{proof}

\subsection{Nonlocal tail}
An important quantity is the generalization of the so-called tail, namely for $\gamma>0$, we shall use the $\gamma$-tail defined by
\begin{equation}\label{def:tail}
    \tail_\gamma\big(u;B_R(x_0)\big)\coloneqq R^{\gamma}\int_{\R^n\setminus B_R(x_0)}\frac{|u(x)|}{|x-x_0|^{n+2s}}\,dx\,,
\end{equation}
for $x_0\in \R^n$ and $R>0$. When $\gamma=2s$, this is the classical Tail term introduced in \cite{dicastro,korv}, and for brevity we set
\begin{equation}\label{relaz:tails}
    \tail_{2s}\big(u;B_R(x_0)\big)\equiv \tail\big(u;B_R(x_0)\big)\quad\implies\quad     \tail_\gamma\big(u;B_R(x_0)\big)=R^{\gamma-2s}\tail\big(u;B_R(x_0)\big)\,.
\end{equation}
Let us recall a few properties concerning the tail: clearly, whenever this is defined,
\begin{equation}\label{tail.triangle}
\tail_\gamma\big(u+\lambda v;B_R(x_0)\big)\leq\tail_\gamma\big(u;B_R(x_0)\big)+|\lambda|\tail_\gamma\big(v;B_R(x_0)\big)
\end{equation}
for $\lambda\in\R$, but also
\begin{equation}\label{tail.constant}
\tail\big(1;B_R(x_0)\big)=\frac{n\,\omega_n}{1+2s},\qquad \tail_\gamma\big(1;B_R(x_0)\big)=\frac{n\,\omega_n}{1+2s}\,R^{\gamma-2s}\,.
\end{equation}
Note moreover that the $\gamma$-tail increases as the radius of the ball decreases:
\begin{equation}\label{counter.monotonicity}
0<R_1\leq R_2\qquad\implies\qquad \tail_\gamma\big(u;B_{R_2}(x)\big)\leq \tail_\gamma\big(u;B_{R_1}(x)\big)\,. 
\end{equation}
The proof of the following Lemma is a consequence of \cite[Lemma 3.2]{dmn} and \eqref{relaz:tails}.
\begin{lemma}\label{lemma:tail}
Let $w\in L^{1}_{2s}$. There exists $c=c(n,s)\geq 1$ such that
{\rm
\begin{multline}\label{tail:1}
\tail_\gamma\big(w-(w)_{B_{\vrho}};B_{\vrho}\big) \leq  c\left(\frac{\vrho}{r}\right)^{\gamma}\tail_\gamma\big(w-(w)_{B_{r}};B_{r}\big)+c\,\frac{\vrho^\gamma}{r^{2s}}\mint_{B_r} |w-(w)_{B_{r}}|\dx  \\  
 +c\int_{\vrho}^{r}\frac{\vrho^\gamma}{\mu^{2s}}\mint_{B_\mu} |w-(w)_{B_{\mu}}|\dx\,\frac{d\mu}{\mu}
\end{multline}
}
if $B_{\vrho}\Subset B_{r}\subseteq \mathbb{R}^{n}$ are concentric balls.
Moreover, if $B_{\vrho}(x_{1})\subseteq B_{r}(x_{2})$ are not necessarily concentric balls, then
{\rm
\begin{multline}\label{tail:3}
\tail_\gamma\big(w-(w)_{B_{\vrho}(x_{1})};B_{\vrho}(x_{1})\big)\leq c\,\Big(\frac{\vrho}{r}\Big)^{\gamma}\Big(\frac{r}{r-|x_{1}-x_{2}|}\Big)^{n+2s}\tail_\gamma\big(w-(w)_{B_{r}(x_{2})};B_{r}(x_{2})\big)\\
 +c\,\vrho^{\gamma-2s}\Big(\frac{r}{\vrho}\Big)^{n}\mint_{B_r(x_2)}\big|w-(w)_{B_r(x_2)}\big|\dx \,,
\end{multline}
and 
\begin{equation}\label{tail:4}
    \tail_\gamma\big(w;B_{\vrho}(x_{1})\big)\leq c\Big(\frac{\vrho}{r}\Big)^{\gamma}\Big(\frac{r}{r-|x_{1}-x_{2}|}\Big)^{n+2s}\tail_\gamma\big(w;B_{r}(x_{2})\big)+c\vrho^{\g-2s}\Big(\frac{r}{\vrho}\Big)^{n} \mint_{B_r(x_2)}|w|\dx\,.
\end{equation}
}
\end{lemma}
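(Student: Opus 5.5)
The plan is to prove the three estimates directly, by splitting the domain of integration in the definition \eqref{def:tail} and controlling differences of averages through mean oscillations. Since $\tail_\gamma$ differs from $\tail$ only by the factor $R^{\gamma-2s}$ (see \eqref{relaz:tails}), one could alternatively transfer \cite[Lemma 3.2]{dmn}. Here I sketch a self-contained route instead, with the parameter $\gamma$ entering only as the prefactor $\vrho^\gamma$.

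\emph{Proof of \eqref{tail:1}.} Set $x_0$ as the common center and split $\R^n\setminus B_\vrho=(B_r\setminus B_\vrho)\cup(\R^n\setminus B_r)$.

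On $\R^n\setminus B_r$ I write
\[
|w-(w)_{B_\vrho}|\leq |w-(w)_{B_r}|+|(w)_{B_r}-(w)_{B_\vrho}|.
\]
The first piece gives exactly $(\vrho/r)^\gamma\tail_\gamma(w-(w)_{B_r};B_r)$. The second piece is a constant integrated against $|x-x_0|^{-n-2s}$ outside $B_r$, so it gives $c\,\vrho^\gamma r^{-2s}|(w)_{B_r}-(w)_{B_\vrho}|$.

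To estimate this difference of averages, I take dyadic radii $\mu_k=2^k\vrho$ for $k=0,\dots,K$, with $K$ the largest index such that $\mu_K\leq r$. Telescoping, each step satisfies
\[
|(w)_{B_{\mu_{k+1}}}-(w)_{B_{\mu_k}}|\leq 2^n\mint_{B_{\mu_{k+1}}}|w-(w)_{B_{\mu_{k+1}}}|\dx,
\]
and the last step from $\mu_K$ to $r$ is handled in the same way, producing the term $\vrho^\gamma r^{-2s}\mint_{B_r}|w-(w)_{B_r}|\dx$. Since $r^{-2s}\leq \mu^{-2s}$ and $\log 2\leq\int_{\mu_k}^{\mu_{k+1}}d\mu/\mu$, the remaining sum is bounded by the integral term in \eqref{tail:1}. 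Here the comparability of the means over $B_\mu$ for $\mu\in[\mu_k,\mu_{k+1}]$ comes from \eqref{media} up to a factor $2^n$.

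On the annulus $B_r\setminus B_\vrho$, I decompose into dyadic annuli $B_{\mu_{k+1}}\setminus B_{\mu_k}$. On each annulus I bound the kernel by $\mu_k^{-n-2s}$, which gives
\[
c\,\mu_k^{-2s}\mint_{B_{\mu_{k+1}}}|w-(w)_{B_\vrho}|\dx \leq c\,\mu_k^{-2s}\Big[\mathrm{osc}_{k+1}+\sum_{j\leq k+1}\mathrm{osc}_j\Big],
\]
where $\mathrm{osc}_j$ denotes the mean oscillation on $B_{\mu_j}$. Exchanging the order of summation and using
\[
\sum_{k\geq j}\mu_k^{-2s}\leq \frac{\mu_j^{-2s}}{1-2^{-2s}}
\]
yields $c(n,s)\sum_j\mu_j^{-2s}\mathrm{osc}_j$. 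As before, this is converted into the integral term; the outermost annulus is again absorbed into the $B_r$ term.

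\emph{Proof of \eqref{tail:4}.} Let $d=|x_1-x_2|$. For $x\notin B_r(x_2)$ one has
\[
|x-x_1|\geq |x-x_2|-d\geq |x-x_2|\,\frac{r-d}{r}.
\]
Hence the integral over $\R^n\setminus B_r(x_2)$ is at most
\[
\vrho^\gamma\Big(\frac{r}{r-d}\Big)^{n+2s} r^{-\gamma}\tail_\gamma\big(w;B_r(x_2)\big).
\]
On $B_r(x_2)\setminus B_\vrho(x_1)$ the kernel is bounded by $\vrho^{-n-2s}$, giving
\[
\vrho^{\gamma-n-2s}\int_{B_r(x_2)}|w|\dx=c\,\vrho^{\gamma-2s}\Big(\frac r\vrho\Big)^n\mint_{B_r(x_2)}|w|\dx.
\]

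\emph{Proof of \eqref{tail:3}.} Write $w-(w)_{B_\vrho(x_1)}=(w-(w)_{B_r(x_2)})+\kappa$, where $\kappa=(w)_{B_r(x_2)}-(w)_{B_\vrho(x_1)}$ is a constant. By \eqref{tail.triangle}, it suffices to bound each part separately. I apply \eqref{tail:4} to $w-(w)_{B_r(x_2)}$. For the constant, \eqref{tail.constant} together with
\[
|\kappa|\leq (r/\vrho)^n\mint_{B_r(x_2)}|w-(w)_{B_r(x_2)}|\dx
\]
gives the last term of \eqref{tail:3}.

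The step I expect to need the most care is the annular part of \eqref{tail:1}. There one must track the double sum so that the constant depends only on $n$ and $s$, through $(1-2^{-2s})^{-1}$, and not on $\gamma$. One must also handle the non-dyadic final scale $r$ cleanly, by comparing averages on $B_{\mu_K}$ and $B_r$ via \eqref{media:nonconc}.
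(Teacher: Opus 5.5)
Your proof is correct, but it takes a different route from the paper. The paper gives no argument of its own. It simply states that the lemma follows from \cite[Lemma 3.2]{dmn}, which is the case $\gamma=2s$, together with \eqref{relaz:tails}. That is, one multiplies the $\gamma=2s$ inequalities by $\vrho^{\gamma-2s}$ and uses $\vrho^{\gamma-2s}(\vrho/r)^{2s}\tail(\,\cdot\,;B_r)=(\vrho/r)^{\gamma}\tail_\gamma(\,\cdot\,;B_r)$, so the constant carries over unchanged. This is exactly the alternative you mention and set aside.

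Your direct route has four ingredients:
\begin{itemize}
\item splitting the domain at $\partial B_r$;
\item telescoping averages along $\mu_k=2^k\vrho$;
\item summing over dyadic annuli and exchanging the double sum;
\item for the non-concentric bounds, the distance comparison $|x-x_1|\geq |x-x_2|(r-d)/r$ outside $B_r(x_2)$.
\end{itemize}
This essentially reproduces the argument behind the cited lemma, and it parallels the paper's own proof of Lemma \ref{lemma:affine}. It costs more space, but it is self-contained and shows directly that $\gamma$ enters only as the prefactor $\vrho^\gamma$, so that $c=c(n,s)$ does not depend on $\gamma$.

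Two small points of bookkeeping. First, \eqref{media} and \eqref{media:nonconc} are stated for the $L^2$ excess, whereas your telescoping uses $L^1$ mean oscillations. There you need the substitute $\mint_{B}|F-(F)_{B}|\,dx\leq 2\mint_{B}|F-z|\,dx$, which only changes constants. Second, each $\mathrm{osc}_j$ with $1\leq j\leq K-1$ must be compared with oscillations on the larger balls $B_\mu$, $\mu\in[\mu_j,\mu_{j+1}]\subseteq[\vrho,r]$, since the comparison cannot go downward. The top index $j=K$ is then charged to $B_r$, which is what you indicate.
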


\subsection{A modified excess}
For our approach we need to introduce a modified excess, incorporating the non-locality in a weighted tail-type term: we set
\begin{align}\label{def:exc}
    \mathcal{E}_{\gamma}\big(w;B_\vrho(x_0)\big)&\coloneqq \bigg(\mint_{B_{\vrho}(x_0)}{\big|w-(w)_{B_{\vrho}(x_0)}\big|}^2\dx\bigg)^{1/2}+\tail_\gamma\big(w-(w)_{B_\vrho(x_0)};B_{\vrho}(x_0)\big)\notag\\
    &= E\big(w;B_{\vrho}(x_0)\big)+\tail_\gamma\big(w-(w)_{B_\vrho(x_0)};B_{\vrho}(x_0)\big)\,,
\end{align}
where $\tail_\gamma$ is defined in \eqref{def:tail}, and $E(\,\cdot\,)$ is the local $L^2$-excess defined in \eqref{defL2:excess}.

Moreover, the map $x\mapsto \mathcal{E}_\g(w;B_\vrho(x))$ is continuous--see, for instance, \cite[Proposition 3.4]{dmn} for the continuity of the $\tail$-term, and therefore of $\tail_\g$ by \eqref{relaz:tails}. 

\vs

We now prove two properties; the first one shows that our modified excess enjoys, qualitatively, the same behavior as the standard excess under enlargement of the integration domain.
\begin{lemma}
Let $w\in L^2_{\rm loc}(\Omega;\R^N)\cap L^{1}_{2s}$ and suppose that $\gamma\geq 2s$. Then, for every couple of radii $0< \vrho\leq r\leq 1$, there holds
\begin{equation}\label{ut:exc}
\mathcal{E}_{\g}\big(w;B_{\vrho}(x_0)\big)\leq c(n, s)\Big(\frac r \vrho\Big)^{n/2}\mathcal{E}_{\g}\big(w;B_{r}(x_0)\big)\,.
\end{equation}
\end{lemma}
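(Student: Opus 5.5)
The plan is to estimate the two pieces of $\mathcal{E}_\gamma(w;B_\vrho(x_0))$ separately, in each case comparing with quantities on $B_r(x_0)$. The local part is immediate: by \eqref{media:nonconc} with concentric balls,
\[
E\big(w;B_\vrho(x_0)\big)\leq \Big(\frac r\vrho\Big)^{n/2}E\big(w;B_r(x_0)\big)\,.
\]
So everything reduces to bounding the tail term $\tail_\gamma\big(w-(w)_{B_\vrho};B_\vrho\big)$ by $(r/\vrho)^{n/2}\mathcal{E}_\gamma(w;B_r)$, up to a constant $c(n,s)$.

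For the tail I will invoke \eqref{tail:3} with $x_1=x_2=x_0$, so that the factor $\big(r/(r-|x_1-x_2|)\big)^{n+2s}$ equals one. This gives
\[
\tail_\gamma\big(w-(w)_{B_\vrho};B_\vrho\big)\leq c\Big(\frac\vrho r\Big)^{\gamma}\tail_\gamma\big(w-(w)_{B_r};B_r\big)+c\,\vrho^{\gamma-2s}\Big(\frac r\vrho\Big)^{n}\mint_{B_r}\big|w-(w)_{B_r}\big|\dx\,.
\]
The first term is harmless, since $(\vrho/r)^\gamma\leq 1\leq (r/\vrho)^{n/2}$. For the second, the assumptions $\gamma\geq 2s$ and $\vrho\leq 1$ give $\vrho^{\gamma-2s}\leq 1$, and Hölder's inequality bounds the $L^1$-average by $E(w;B_r)$. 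This leaves a factor $(r/\vrho)^{n}$, which is worse than the claimed $(r/\vrho)^{n/2}$.

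That exponent loss is the main obstacle. To remove it, I would rederive the second term directly instead of citing \eqref{tail:3}. Split
\[
w-(w)_{B_\vrho}=\big(w-(w)_{B_r}\big)+\big((w)_{B_r}-(w)_{B_\vrho}\big)\,,
\]
and use \eqref{tail.triangle} together with \eqref{tail.constant}.

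The constant piece contributes
\[
c(n,s)\,\vrho^{\gamma-2s}\big|(w)_{B_r}-(w)_{B_\vrho}\big|\leq c\,\Big(\mint_{B_\vrho}|w-(w)_{B_r}|^2\dx\Big)^{1/2}\leq c\Big(\frac r\vrho\Big)^{n/2}E(w;B_r)\,,
\]
again using $\vrho^{\gamma-2s}\leq 1$.

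For the piece $w-(w)_{B_r}$, I split the integral over $\R^n\setminus B_\vrho$ into the annulus $B_r\setminus B_\vrho$ and the exterior $\R^n\setminus B_r$. On the exterior, $|x-x_0|\geq r$, so this part is at most $(\vrho/r)^\gamma\tail_\gamma(w-(w)_{B_r};B_r)$. On the annulus, $|x-x_0|\geq\vrho$, so this part is at most
\[
\vrho^{\gamma-n-2s}|B_r|\mint_{B_r}\big|w-(w)_{B_r}\big|\dx\leq c\,\vrho^{\gamma-2s}\Big(\frac r\vrho\Big)^n E(w;B_r)\,.
\]
This is once more an $(r/\vrho)^n$ factor, so the crude annulus bound does not achieve $n/2$ either.

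To fix the annulus, I would use dyadic annuli $B_{2^{k+1}\vrho}\setminus B_{2^k\vrho}$ with Hölder's inequality on each. Each annulus contributes at most $c\,\vrho^{\gamma}(2^k\vrho)^{-2s}\big(\mint_{B_{2^{k+1}\vrho}}|w-(w)_{B_r}|^2\big)^{1/2}$, and $\big(\mint_{B_{2^{k+1}\vrho}}|w-(w)_{B_r}|^2\big)^{1/2}\leq (r/2^{k+1}\vrho)^{n/2}E(w;B_r)$. Summing over $k$ gives a geometric series, because the exponents $-2s-n/2$ in $2^k$ are negative. The annulus contribution is then $\leq c(n,s)\,\vrho^{\gamma-2s}(r/\vrho)^{n/2}E(w;B_r)\leq c\,(r/\vrho)^{n/2}E(w;B_r)$.

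Collecting the local part, the exterior part, the annulus part and the constant piece yields \eqref{ut:exc}. The constant depends only on $n$ and $s$, entering through \eqref{tail.constant} and the geometric sum.
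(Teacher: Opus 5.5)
Your argument is correct, but it takes a different route from the paper. The paper does not rederive anything. It applies the multi-scale tail estimate \eqref{tail:1}, which re-centers at $(w)_{B_\mu}$ at every intermediate radius $\mu\in(\vrho,r)$. It then uses \eqref{media} to bound each $\big(\mint_{B_\mu}|w-(w)_{B_\mu}|^2\,dx\big)^{1/2}$ by $(r/\mu)^{n/2}E(w;B_r)$, and integrates $\int_\vrho^r\mu^{-2s-n/2}\,d\mu/\mu\leq c\,\vrho^{-2s-n/2}$. This gives exactly your intermediate bound
\[
c\,(\vrho/r)^\gamma\tail_\gamma\big(w-(w)_{B_r};B_r\big)+c\,\vrho^{\gamma-2s}(r/\vrho)^{n/2}E(w;B_r),
\]
and the paper concludes, as you do, with $\vrho^{\gamma-2s}\leq 1$.

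Your version is self-contained. You keep the single reference mean $(w)_{B_r}$ throughout and pay for the shift of means once, through the constant piece. Your dyadic sum over annuli is the discrete counterpart of the paper's $\mu$-integral. Both beat \eqref{tail:3} for the same reason: the comparison of $L^2$-averages is done scale by scale, not once on all of $B_r$. That single-scale comparison is exactly the source of the $(r/\vrho)^n$ loss you correctly diagnosed. The paper's route is shorter, since \eqref{tail:1} is needed elsewhere anyway. Yours avoids the external lemma and makes transparent where the exponent $n/2$ comes from.

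One small detail needs tidying. The outermost dyadic annulus must be intersected with $B_r$, because $B_{2^{k+1}\vrho}$ need not be contained in $B_r$, and then the comparison with $E(w;B_r)$ is not available as written. Bound that last piece by $(2^k\vrho)^{-n-2s}|B_r|\mint_{B_r}|w-(w)_{B_r}|\,dx$, using $r\leq 2^{k+1}\vrho$. This gives a contribution of the same order, so the conclusion is unaffected.
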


\begin{proof}
Let us denote, for a moment, $w_r=w-(w)_{B_r(x_0)}$ for $r\in(0,\vrho]$. First observe that by \eqref{tail:1}, H\"older's inequality and \eqref{media}, we get
\begin{align}\label{decay.tailgamma}
    \tail_\g\big(w_{\vrho};B_{\vrho}(x_0)\big)&\leq  c\Big(\frac\vrho r\Big)^\g\tail_\g\big(w_r;B_r(x_0)\big)+c\frac{\vrho^\gamma}{r^{2s}}\bigg(\mint_{B_r(x_0)}|w_r|^2\dx \bigg)^{1/2}\notag\\
    &\hspace{5cm}+ c\,\vrho^{\g}\int_{\vrho}^r\mu^{-2s}\bigg(\mint_{B_\mu(x_0)}|w_\mu|^2\dx \bigg)^{1/2}\frac{d\mu}{\mu}\notag
    \\
    &\leq c\Big(\frac\vrho r\Big)^\g\tail_\g\big(w_r;B_r(x_0)\big)+c\frac{\vrho^\gamma}{r^{2s}}\bigg(\mint_{B_r(x_0)}|w_r|^2\dx \bigg)^{1/2}\notag\\
    &\pushright{+\, c\vrho^{\g}r^{n/2}\bigg(\mint_{B_r(x_0)}|w_r|^2dx \bigg)^{1/2}\int_\vrho^r\mu^{-2s-n/2}\,\frac{d\mu}{\mu}}\notag\\
    &\leq c\Big(\frac\vrho r\Big)^\g\tail_\g\big(w_r;B_r(x_0)\big)+c\,\frac{\vrho^\gamma}{r^{2s}}\bigg[1+\Big(\frac r\vrho\Big)^{2s+n/2}\bigg]\bigg(\mint_{B_r(x_0)}|w_r|^2\dx \bigg)^{1/2}
\end{align}
for some constant $c=c(n,s)$. Recalling that $\vrho\leq r\leq 1$ and $\gamma\geq 2s$, we get 
\[
\tail_\g\big(w_{\vrho};B_{\vrho}(x_0)\big)\leq c\,\Big(\frac\vrho r\Big)^\g\tail_\g(w_r;B_r(x_0))+c\,\vrho^{\gamma-2s}\Big(\frac r\vrho\Big)^{n/2}E_{\g}\big(w;B_{r}(x_0)\big)\,;
\]
merging this inequality with the obvious estimate for the excess part of $\mathcal{E}_\gamma$ (using also \eqref{media}), we deduce \eqref{ut:exc}. 
\end{proof}

The second property we prove justifies the fact that, qualitatively, the behavior of the modified excess $E_\gamma(\,\cdot\,)$ is dictated by the decay of the excess only.

\begin{lemma}
Let $w\in L^{1}_{2s}$ and suppose that $\gamma\geq 2s$ as in the previous Lemma. Then 
{\rm
\begin{equation*}
\limsup_{\vrho\to0^+} E\big(w;B_\vrho(x_0)\big)=0\qquad\implies\qquad \limsup_{\vrho\to0^+} \tail_\gamma\big(w-(w)_{B_\vrho(x_0)};B_{\vrho}(x_0)\big)=0
\end{equation*}
}
so that, in particular
\begin{equation}\label{exc.vanishing}
    \limsup_{\vrho\to 0^+}\, \mathcal{E}_{\gamma}\big(w;B_\vrho(x_0)\big)>0\qquad\implies\qquad     \limsup_{\vrho\to0^+} E\big(w;B_\vrho(x_0)\big)>0\,.
\end{equation}
\end{lemma}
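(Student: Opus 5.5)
We need to show that if $E(w;B_\vrho(x_0))\to0$, then $\tail_\g(w-(w)_{B_\vrho};B_\vrho)\to0$. The second implication \eqref{exc.vanishing} is then just the contrapositive, since $\mathcal E_\g = E+\tail_\g$.

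The starting point is the concentric estimate \eqref{tail:1}, combined with Hölder's inequality to pass from the $L^1$-oscillations to the $L^2$-excess. We fix a large radius $r\le 1$ with $B_r(x_0)$ inside the domain where $w\in L^2$. For $\vrho<r$, with $w_\mu:=w-(w)_{B_\mu(x_0)}$, this gives
\begin{equation*}
\tail_\g\big(w_\vrho;B_\vrho\big)\le c\Big(\frac{\vrho}{r}\Big)^{\g}\tail_\g\big(w_r;B_r\big)+c\,\frac{\vrho^\g}{r^{2s}}E\big(w;B_r\big)+c\,\vrho^{\g}\int_\vrho^r\mu^{-2s}E\big(w;B_\mu\big)\,\frac{d\mu}{\mu}.
\end{equation*}
Here $\tail_\g(w_r;B_r)<\infty$ because $w\in L^1_{2s}$. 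Hence the first two terms vanish as $\vrho\to0$ for fixed $r$, since $\g\ge2s>0$.

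The key step is the integral term. We fix $\e>0$ and choose $r_\e\le r$ such that $E(w;B_\mu)\le\e$ for all $\mu\le r_\e$; this uses the hypothesis that $E(w;B_\mu)\to 0$. We then split the integral at $r_\e$.

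On $(\vrho,r_\e)$ we bound $E(w;B_\mu)\le\e$ and get
\begin{equation*}
\vrho^\g\,\e\int_\vrho^{r_\e}\mu^{-2s-1}\,d\mu\le\frac{\e}{2s}\,\vrho^{\g-2s}\le\frac{\e}{2s},
\end{equation*}
using $\vrho\le1$ and $\g\ge2s$.

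On $(r_\e,r)$ we use the crude bound \eqref{media:nonconc}, namely $E(w;B_\mu)\le (r/r_\e)^{n/2}E(w;B_r)$. This gives a quantity $C(\e,r)$ independent of $\vrho$, multiplied by $\vrho^\g$, which tends to $0$.

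Taking $\limsup_{\vrho\to0}$ therefore yields $\limsup\tail_\g\le c\,\e/(2s)$. Since $\e$ is arbitrary, the claim follows.

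The only delicate point is the integral term. Uniform smallness of the excess only holds below $r_\e$, and the weight $\mu^{-2s}$ blows up near $\vrho$. Both issues are resolved by the splitting above, since $\g\ge 2s$ absorbs the factor $\vrho^{-2s}$.
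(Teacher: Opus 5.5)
Your proof is correct and follows the paper's argument. Both start from \eqref{tail:1} plus H\"older to reduce the claim to showing $\vrho^\gamma\int_\vrho^r\mu^{-2s}E(w;B_\mu)\,\frac{d\mu}{\mu}\to0$, and both obtain the second implication as the contrapositive. The only difference is this last term: the paper handles it with de l'H\^opital's rule after setting aside the case where the integral stays bounded, while your $\e$-splitting at $r_\e$ gives the same limit directly, with no case distinction and no differentiation of the integral in $\vrho$.
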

\begin{proof}
The proof the first implication follows again from \eqref{tail:1}, which implies, for any $\vrho\in(0,r)$ (see the first inequality in \eqref{decay.tailgamma}; here we use the same notation as there)
\begin{multline*}
\tail_\gamma\big(w-(w)_{B_\vrho(x_0)};B_{\vrho}(x_0)\big)\leq c\,\vrho^\gamma\bigg[\frac1{r^\gamma}\tail_\g\big(w_r;B_r(x_0)\big)+\frac{1}{r^{2s}}\bigg(\mint_{B_r(x_0)}{\big|w-(w)_{B_r(x_0)}\big|}^2\dx \bigg)^{1/2}\bigg]\\
+c\,\vrho^{\g}\int_{\vrho}^r\mu^{-2s}\bigg(\mint_{B_\mu(x_0)}{\big|w-(w)_{B_\mu(x_0)}\big|}^2\dx \bigg)^{1/2}\frac{d\mu}{\mu}
\end{multline*}
and it is clear that we only need to check that the latter term is vanishing as $\vrho\to0^+$. However, supposing the $\limsup$ of the integral as $\vrho\to0^+$ is $+\infty$ (otherwise the product, trivially, goes to zero), we can use de l'H\^opital rule and get, as $\gamma-2s\geq0$,
\begin{align*}
\limsup_{\vrho\to 0^+}\vrho^{\g}\int_{\vrho}^r\mu^{-2s}&\bigg(\mint_{B_\mu(x_0)}{\big|w-(w)_{B_\mu(x_0)}\big|}^2\dx \bigg)^{1/2}\frac{d\mu}{\mu}\\
&\leq 
\limsup_{\vrho\to 0^+}\frac1{\gamma\vrho^{-\g-1}}\vrho^{-2s-1}\bigg(\mint_{B_\vrho(x_0)}{\big|w-(w)_{B_\vrho(x_0)}\big|}^2\dx \bigg)^{1/2}\\
&=\limsup_{\vrho\to 0^+}\frac{\vrho^{\gamma-2s}}{\gamma}\bigg(\mint_{B_\vrho(x_0)}{\big|w-(w)_{B_\vrho(x_0)}\big|}^2\dx \bigg)^{1/2}=0\,.
\end{align*}
The second implication is obvious by the definitions of $E(\,\cdot\,)$ and $\mathcal{E}_\gamma(\,\cdot\,)$.
\end{proof}

\section{Caccioppoli's inequality and higher integrability}\label{sec:energy}
The first main result of this section is a Caccioppoli-type inequality. For our future purposes, we prove it for a wider class of operators than \eqref{eq1}. 
Specifically, let $\mathcal{A}:\Omega\times \R^N\times \R^{nN}\to\R^{nN}$, $\mathcal{A}(x,z,p)={\{\A^\a_i(x,z,p)\}}_{i=1,\dots,n}^{\a=1,\dots,N}$ be a Carath\'eodory function satisfying the quadratic growth and ellipticity assumptions
\begin{equation}\label{gr:Agen}
    \big|\A(x,z,p)\big|\leq \Lambda\big(1+|p|\big)\qquad\text{and }\qquad \langle \A(x,z,p),p\rangle\geq \Lambda^{-1}\big(|p|^2-1\big)\,,
\end{equation}
for almost all $x\in \Omega$ and all $z\in \R^N,p\in \R^{nN}$, for a given constant $\Lambda>0$. Moreover, let $f$ be as in \eqref{ass:f} and $G={\{G^\a_i\}}_{i=1,\dots,n}^{\a=1,\dots,N}$ is a vector field in $L^2_{\rm loc}(\Omega;\R^{nN})$.

\vs

We consider local weak solutions to 
\begin{equation}\label{eq2}
    -\mathrm{div}\,\A(x,u,Du)+Q_\Phi^\mathrm{nl} u=f+\mathrm{div}\,G\qquad\text{in $\Omega$}
\end{equation}
that is, functions $u\in W^{1,2}_{\rm loc}(\Omega;\R^N)\cap L^1_{2s}$ that satisfy
\begin{multline*}
\int_\Omega \langle \A(x,u,Du), D\vphi\rangle \dx+\int_{\R^n}\int_{\R^n} \Phi(x,y,u(x),u(y), u(x)-u(y))\cdot \big(\vphi(x)-\vphi(y)\big)\,\frac{dx\dy}{|x-y|^{n+2s}}\\
=\int_\Omega f\cdot \vphi\dx+\int_\Omega \langle G, D\vphi\rangle\dx\,,
\end{multline*}
for all test functions $\vphi\in W^{1,2}_c(\Omega;\R^N)$.  

    \begin{lemma}[Caccioppoli's inequality]\label{lemma:cacc}
        Let $u\in W^{1,2}_{\rm loc}(\Omega;\R^N)\cap L^1_{2s}$ be a weak solution to \eqref{eq2} with the assumptions  \eqref{phi:gr}, \eqref{sym:phi} and \eqref{gr:Agen} in force. Then there exists a radius $r_0=r_0(\data,\nu)\in (0,1)$ such that for any ball $B_r\equiv B_r(x_0)\Subset \Omega$ with $0<r\leq r_0$, and for all $\sigma\in (0,1)$ and $u_0\in \R^N$
\rm{\begin{multline}\label{caccioppoli}
\s^n \mint_{B_{\s r}}|Du|^2\dx\leq c\,\Big(\frac1{(1-\sigma)^2r^2}+\frac\nu{(1-\sigma)^{2s}r^{2s}}\Big)\mint_{B_r}|u-u_0|^2\dx\\
+\frac{c}{(1-\s)^{n+2s}}\,\nu\,r^{-2s}\tail\big(u-u_0; B_r\big)\mint_{B_r}|u-u_0|\dx + c\,r^2\bigg(\mint_{B_r} |f|^\chi\dx\bigg)^{2/\chi}+c\mint_{B_r} \big[|G|^2+1\big]\dx
\end{multline}
}
holds true with a constant $c$ depending on $\data$. 
\end{lemma}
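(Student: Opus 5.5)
We test the weak formulation with $\vphi=\eta^2(u-u_0)$, where $\eta\in C^\infty_c(B_{(1+\sigma)r/2})$, $0\le\eta\le1$, $\eta\equiv1$ on $B_{\sigma r}$, $|D\eta|\le 4/((1-\sigma)r)$. This is admissible by density in $W^{1,2}_c(\Omega;\R^N)$. Set $w=u-u_0$.

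\textbf{Local terms.} By \eqref{gr:Agen}, Young's inequality and absorption,
\[
\int\langle\A(x,u,Du),D\vphi\rangle\dx\ \geq\ \frac{1}{2\Lambda}\int\eta^2|Du|^2\dx-c\int|D\eta|^2|w|^2\dx-c|B_r|.
\]
The $G$-term gives $\frac{1}{8\Lambda}\int\eta^2|Du|^2+c\int|G|^2+c\int|D\eta|^2|w|^2$. For the $f$-term we use H\"older with $\chi'$, noting that $\chi>2_*$ means $\chi'<2^*$. Sobolev--Poincar\'e on $\eta w$ then gives
\[
\Big|\int f\cdot\eta^2 w\dx\Big|\le |B_r|\, r\Big(\mint|f|^\chi\Big)^{1/\chi}\Big(\mint_{B_r}|D(\eta^2w)|^2\Big)^{1/2},
\]
and Young's inequality produces $c\,r^2(\mint|f|^\chi)^{2/\chi}|B_r|$ plus absorbable pieces. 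When $n=2$ we use the chosen $2^*$ instead.

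\textbf{Nonlocal term.} Here no coercivity is available, so the whole term must be estimated in absolute value using only \eqref{phi:gr}. Split $\R^n\times\R^n$ into $B_r\times B_r$ and the two symmetric off-diagonal parts; the region where both points lie outside $B_r$ contributes nothing. Using $|\Phi|\le\nu|w(x)-w(y)|$ and
\[
\eta^2(x)w(x)-\eta^2(y)w(y)=\eta^2(x)(w(x)-w(y))+(\eta^2(x)-\eta^2(y))w(y),
\]
the diagonal part is bounded by
\[
\nu\big[\eta w\big]_{W^{s,2}(B_r)}\,\big[w\big]_{W^{s,2}(B_r)}\ +\ \nu\int\!\!\int|w(x)-w(y)|\,|D\eta|\,|x-y|\,|w(y)|\,\frac{dx\,dy}{|x-y|^{n+2s}}.
\]

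The \textbf{main obstacle} is that $[w]_{W^{s,2}(B_r)}$ is not controlled by the left-hand side on the smaller ball. I would handle this by using \eqref{frac:emb} on $B_r$ applied to $\eta w$ type quantities where possible, and interpolating via \eqref{interpol}. The cleanest route is to estimate $\nu[\eta w]^2_{W^{s,2}}$ by
\[
\nu\, c\, r^{2(1-s)}\int|D(\eta w)|^2+\nu\,c\,(1-\sigma)^{-2s}r^{-2s}\int|w|^2.
\]
The gradient part, $\nu r_0^{2(1-s)}\int\eta^2|Du|^2$, is absorbed by choosing $r\le r_0(\data,\nu)$ small; this is exactly where $r_0$ enters. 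Cross terms in which $\eta$ does not sit on the differences must be arranged so that only $\eta$-weighted differences appear. To this end I would rewrite symmetrically as
\[
\tfrac12\big(\eta^2(x)+\eta^2(y)\big)(w(x)-w(y))+\tfrac12\big(\eta^2(x)-\eta^2(y)\big)(w(x)+w(y)),
\]
and use $|\eta(x)^2-\eta(y)^2|\le(\eta(x)+\eta(y))|\eta(x)-\eta(y)|$ together with Young's inequality. This leaves $\nu\int\!\!\int(\eta(x)+\eta(y))^2|w(x)-w(y)|^2$, which is controlled by $[\eta w]^2$ plus $|\eta(x)-\eta(y)|^2|w|^2$ terms. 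The latter yield $\nu(1-\sigma)^{-2s}r^{-2s}\int|w|^2$ once $\int|\eta(x)-\eta(y)|^2|x-y|^{-n-2s}dy\le c((1-\sigma)r)^{-2s}$ is used.

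\textbf{Off-diagonal part.} For $x\in\operatorname{supp}\eta$ and $y\notin B_r$ we have $|y-x_0|\le c(1-\sigma)^{-1}|x-y|$. Hence
\[
\int_{B_r}\eta^2|w(x)|\int_{\R^n\setminus B_r}\nu\,\frac{|w(x)|+|w(y)|}{|x-y|^{n+2s}}\dy\dx\ \le\ c\nu\big((1-\sigma)r\big)^{-2s}\int|w|^2+\frac{c\nu}{(1-\sigma)^{n+2s}}\,r^{-2s}\tail(w;B_r)\int_{B_r}|w|.
\]
Finally, collecting all terms, dividing by $|B_r|$ and using $\mint_{B_{\sigma r}}=\sigma^{-n}|B_r|^{-1}\int_{B_{\sigma r}}$ gives \eqref{caccioppoli}.
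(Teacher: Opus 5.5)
Your proof is correct. It differs from the paper's in how the local part of the nonlocal term is absorbed.

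\textbf{The paper's route.} The paper bounds that term crudely by $\nu[u]_{W^{s,2}(B_{\tau_2})}[\varphi]_{W^{s,2}(B_{\tau_2})}$, applies the product rule \eqref{interpol} to $[\varphi]$, and then applies \eqref{frac:emb} on the whole ball. This produces $\bar c\,\nu r_0^{2(1-s)}\int_{B_{\tau_2}}|Du|^2$, an unweighted gradient on a \emph{larger} ball than the one on the left-hand side. That is why the paper works with two radii $\sigma r\le\tau_1<\tau_2\le r$ and concludes with the Giaquinta--Giusti iteration lemma.

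\textbf{Your route.} You keep the weight $(\eta(x)+\eta(y))^2$ on $|w(x)-w(y)|^2$ via the symmetrized Leibniz identity. This reduces the diagonal part to $[\eta w]^2_{W^{s,2}(B_r)}\le c\,r^{2(1-s)}\int|D(\eta w)|^2$, plus terms controlled by $\|D\eta\|_\infty^{2s}\int|w|^2$. The gradient part of $|D(\eta w)|^2$ is exactly $\int\eta^2|Du|^2$, the quantity on the left, so one cutoff and a direct absorption suffice. You pay with slightly more careful algebra and gain by not needing an iteration lemma.

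\textbf{A bookkeeping fix.} The $|D\eta|^2|w|^2$ part of $|D(\eta w)|^2$ produces $c\,\nu r^{2(1-s)}((1-\sigma)r)^{-2}\int|w|^2$, not a $(1-\sigma)^{-2s}r^{-2s}$ term as you wrote. It must be absorbed into the $(1-\sigma)^{-2}r^{-2}$ term using $\nu r_0^{2(1-s)}\le 1$. This same condition is what keeps $c$ independent of $\nu$.

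\textbf{Off-diagonal term.} For $x\in\operatorname{supp}\eta$ you use the direct bound $\int_{\R^n\setminus B_r}|x-y|^{-n-2s}\,dy\le c((1-\sigma)r)^{-2s}$. This gives exactly the weight $(1-\sigma)^{-2s}$ that \eqref{caccioppoli} displays on the $L^2$ contribution. The paper's route through \eqref{est:dist} gives the cruder factor $(1-\sigma)^{-n-2s}$ on that term. This is harmless for $\sigma=1/2$, which is how the lemma is used later.
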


\begin{remark}
   \rm{ We remark that the constant $c$ appearing in \eqref{caccioppoli} is independent of $\nu$. The only implicit dependence on $\nu$ in Lemma \ref{lemma:cacc} is in the radius $r_0$, which has to satisfy
   \begin{equation}\label{cond:r0}
    0< r_0\leq \min\bigg\{1, \left(\frac{1}{c(\data)\,\nu}\right)^{1/[2(1-s)]}\bigg\}\,.   
   \end{equation}
  In particular, for $\nu\ll1$, we may take $r_0(\data, \nu)\equiv 1$.}
\end{remark}

\begin{proof}[Proof of Lemma \ref{lemma:cacc}]
It is sufficient to prove the estimate for $u_0=0$, for otherwise the function $u-u_0$ is a weak solution to $-\mathrm{div}\big( \A_0(x,v,Dv)\big)+Q^\mathrm{nl}_{\Phi_0}v=f+{\rm div}\,G$, where $\A_0(x,z,p)=\A(x, z+u_0,p)$ still satisfies \eqref{gr:Agen} and $\Phi_0(x,y,v,w,t)=\Phi(x,y,v+u_0,w+u_0,t)$ satisfies \eqref{phi:gr} and \eqref{sym:phi}. We fix $\s\in (0,1)$ and two intermediate radii $\sigma\,r\leq \tau_1<\tau_2\leq r$, and we define $\tilde{\tau}_1=(\tau_1+\tau_2)/2$ and $\tilde{\tau}_2=(3\,\tau_2+\tau_1)/4$, so that $\tau_1<\tilde{\tau}_1<\tilde{\tau}_2<\tau_2$. 

\medskip 
Let $\eta$ be a cut-off function $\eta\in C^\infty_c(B_{\tilde \tau_2})$ such that $0\leq \eta\leq 1$, $\eta\equiv 1$ in $B_{\tilde{\tau}_1}$, and ${\|D\eta\|}_{\infty}\leq c/(\tau_2-\tau_1)$. Take $\vphi=(u-(u)_{B_{\tau_2}})\eta^2$ as a test function in \eqref{eq2}. Using \eqref{sym:phi}, we get
\begin{multline}\label{t0}
(O_1)+(O_2)\coloneqq \int_{B_{\tau_2}} f\cdot \vphi\dx+\int_{B_{\tau_2}}\langle G, D\vphi\rangle\dx=  \int_{B_{\tau_2}} \langle \A(x,u,Du), D\vphi\rangle\dx\\
+\int_{B_{\tau_2}} \int_{B_{\tau_2}} \Phi(x,y,u(x),u(y),u(x)-u(y))\cdot (\vphi(x)-\vphi(y))\,\frac{dx\dy}{|x-y|^{n+2s}} \\
+2 \int_{\R^n\setminus B_{\tau_2}}\int_{B_{\tau_2}}\Phi(x,y,u(x),u(y),u(x)-u(y)) \cdot \vphi(x)\,\frac{dx\dy}{|x-y|^{n+2s}}\eqqcolon (I)+(II)+(III)\,.
\end{multline}
We estimate separately each term in \eqref{t0}: via H\"older's inequality and, after recalling \eqref{ass:f}--\eqref{def:2}  that implies $\chi'\leq 2^*$, using Sobolev's inequality we obtain
\begin{align}\label{t1}
(O_1)&\leq \,\bigg(\int_{B_{\tau_2}}|f\eta|^{\chi}\dx\bigg)^{1/\chi}\bigg(\int_{B_{\tau_2}} \big|(u-(u)_{B_{\tau_2}})\eta\big|^{\chi'}dx\bigg)^{1/\chi'} \notag
    \\
    & \leq  c(n,N,\chi) \bigg(\int_{B_{\tau_2}}|f|^{\chi}\dx\bigg)^{1/\chi}\tau_2^{1+n/2-n/\chi}\bigg(2\int_{B_{\tau_2}} |Du|^2\eta^2\dx+4\int_{B_{\tau_2}} \big|u-(u)_{B_{\tau_2}}\big|^2|D\eta|^2\dx\bigg)^{1/2}\notag
    \\
    &\leq \delta\int_{B_{\tau_2}} |Du|^2\eta^2\dx+c(n,N,\chi,\delta)\,r^{n+2(1-\frac n\chi)}\bigg(\int_{B_r}|f|^{\chi}\dx \bigg)^{2/\chi}+\frac{c}{(\tau_2-\tau_1)^2}\int_{B_{\tau_2}}|u|^2\dx\,,
\end{align}
where in the last inequality we used Young's inequality for $\delta>0$ to be chosen later, and triangle inequality too. Then, by Young's inequality and the triangle inequality, we similarly compute
\begin{align*}
    (O_2) &=\int_{B_{\tau_2}}\langle G, Du\rangle\eta^2 \dx+2\int_{B_{\tau_2}} \langle G, D\eta\rangle \big(u-(u)_{B_{\tau_2}}\big)\eta\dx\notag\\
     &\leq \delta\int_{B_{\tau_2}}|Du|^2\eta^2\dx+c(n,N,\delta)\int_{B_r}|G|^2\dx+\frac{c(n,N,\delta)}{(\tau_2-\tau_1)^2}\int_{B_{\tau_2}}|u|^2\dx\,,
\end{align*}
again for $\delta>0$ to be chosen. Next, by \eqref{gr:Agen} we deduce
\begin{align*}
    (I) & \geq \Lambda^{-1}\int_{B_{\tau_2}} \big(|Du|^2-1\big)\eta^2\dx-2\,\Lambda\int_{B_{\tau_2}}\big(1+|Du|\big)|D\eta||u-(u)_{B_{\tau_2}}|\,\eta\dx\notag\\
    & \geq \Lambda^{-1}\int_{B_{\tau_2}} |Du|^2\eta^2\dx-\Lambda^{-1}-\frac{\Lambda^{-1}}2\int_{B_{\tau_2}}|Du|^2\eta^2\dx-c(\Lambda)\int_{B_{\tau_2}}{\big|u-(u)_{B_{\tau_2}}\big|}^2{|D\eta|}^2\dx\notag\\
    &\pushright{-\,2\Lambda\int_{B_{\tau_2}}|D\eta|\big|u-(u)_{B_{\tau_2}}\big|\eta\dx}\notag\\
&\geq \frac{\Lambda^{-1}}{2}\int_{B_{\tau_2}}|Du|^2\eta^2\dx-\frac{c}{(\tau_2-\tau_1)^2}\int_{B_{\tau_2}} |u|^2\dx-c\,,
\end{align*}
where we have used Young's inequality several times, the assumptions on $\eta$ and the triangle inequality. Using \eqref{phi:gr}, H\"older's inequality, the product rule \eqref{interpol} and \eqref{frac:emb}, we infer
\begin{align*}
    -(II) & \leq \nu\int_{B_{\tau_2}}\int_{B_{\tau_2}}\frac{|u(x)-u(y)|\,|\vphi(x)-\vphi(y)|}{|x-y|^{n+2s}}\dx\dy\\
    &\leq \nu\,{[u]}_{s,2;B_{\tau_2}}\,{[\vphi]}_{s,2;B_{\tau_2}}\notag
    \\
    &\leq \frac{\nu}{2}{[u]}^2_{s,2;B_{\tau_2}}+c\,\nu\bigg[{\|\eta^2\|}_{L^\infty}^{2(1-s)}\,{\|D\eta^2\|}_{L^\infty}^{2s}\int_{B_{\tau_2}}\big|u-(u)_{B_{\tau_2}}\big|^2\dx+{\|\eta^2\|}_{L^\infty}^2\,{[u]}^2_{s,2;B_{\tau_2}}\bigg]\notag\\
    &\leq c\,\nu{[u]}^2_{s,2;B_{\tau_2}}+\frac{c\,\nu}{(\tau_2-\tau_1)^{2s}}\int_{B_{\tau_2}}\big|u-(u)_{B_{\tau_2}}\big|^2\dx\notag\\
    &\leq c\,\nu\,r^{2(1-s)}\int_{B_{\tau_2}}|Du|^2\dx+\frac{c\,\nu}{(\tau_2-\tau_1)^{2s}}\int_{B_{\tau_2}}|u|^2\dx\,,
\end{align*}
with $c=c(n,s)$. Finally, using \eqref{phi:gr}, by using 
\[
\tail\big(u-(u)_{B_{\tau_2}};B_{\tau_2}\big)\leq \tail\big(u;B_{\tau_2}\big)+c(n,s)|(u)_{B_{\tau_2}}|
\]
and the fact that, for $x\in B_{\tilde \tau_2}=B_{\tilde\tau_2}(x_0)$ and $y\in\R^n\setminus B_{\tau_2}(x_0)$, one has
\begin{equation}\label{est:dist}
    |y-x|\geq \frac{\tau_2-|x-x_0|}{\tau_2}|y-x_0|\geq \frac{\tau_2-\tilde{\tau}_2}{\tau_2}|y-x_0|\approx\frac{\tau_2-\tau_1}{\tau_2}|y-x_0|
\end{equation}
(as $\tau_2|y-x|\leq \tau_2|y-x_0|+\tau_2|x-x_0|\leq \tau_2|y-x_0|+|y-x_0||x-x_0|$), we estimate 
\begin{align}\label{t4}
-(III)& \leq c\,\nu\int_{\R^n\setminus B_{\tau_2}}\int_{B_{\tilde{\tau}_2}}\Big[|u(x)-(u)_{B_{\tau_2}}||u(y)-(u)_{B_{\tau_2}}|+|u(x)-(u)_{B_{\tau_2}}|^2\Big]\,\frac{dx\dy}{|x-y|^{n+2s}}\notag\\
& \stackleq{est:dist}c\,\nu\Big(\frac{\tau_2}{\tau_2-\tau_1}\Big)^{n+2s} \tau_2^{-2s}\bigg[\tail\left(u-(u)_{B_{\tau_2}};B_{\tau_2}\right)\int_{B_{\tau_2}}\big|u-(u)_{B_{\tau_2}}\big|\dx\notag\\
&\pushright{+\,\tail\left(1;B_{\tau_2}\right)\int_{B_{\tau_2}}\big|u-(u)_{B_{\tau_2}}\big|^2\dx\bigg]}\notag\\
&\stackleq{tail.constant} \frac{c\,\nu\tau_2^n}{(\tau_2-\tau_1)^{n+2s}}\bigg[\tail\big(u;B_{\tau_2}\big)\int_{B_{\tau_2}} |u|\dx+\bigg(\int_{B_{\tau_2}}|u|\dx\bigg)^2+\int_{B_{\tau_2}}|u|^2\dx\bigg]\notag\\
&\stackrel{\mbox{Jensen}}{\leq} \frac{c\,\nu\tau_2^n}{(\tau_2-\tau_1)^{n+2s}}\bigg[\tail\big(u;B_{\tau_2}\big)\int_{B_{\tau_2}} |u|\dx+\int_{B_{\tau_2}}|u|^2\dx\bigg]\notag\\
& \stackleq{tail:4} \frac{c\,\nu\tau_2^n}{(\tau_2-\tau_1)^{n+2s}}\,\bigg[\Big[\tail\big(u; B_r\big)+\frac{r^n}{\tau_2^n}\int_{B_r}|u|\dx\Big]\int_{B_r}|u|\dx+\int_{B_r}|u|^2\dx \bigg]\notag\\
&\leq  \frac{c\,\nu r^n}{(\tau_2-\tau_1)^{n+2s}}\bigg[ \tail\big(u; B_r\big)\,\int_{B_r} |u|\dx+\int_{B_r}|u|^2\dx \bigg]\,.
\end{align}
Inserting \eqref{t1}--\eqref{t4} into \eqref{t0}, choosing $\delta=\Lambda^{-1}/8$, multiplying both members by $r^n$ and recalling that $\eta\equiv 1$ on $B_{\tau_1}$ and $r\leq r_0$, we find
\begin{multline*}
\int_{B_{\tau_1}}|Du|^2\dx\leq  \bar c\,\nu\,r_0^{2(1-s)}\int_{B_{\tau_2}} |Du|^2\dx+c\,\Big(\frac1{(\tau_2-\tau_1)^2}+\frac\nu{(\tau_2-\tau_1)^{2s}}\Big)\int_{B_{r}}|u|^2\dx\\
+\frac{c\,\nu\,r^n}{(\tau_2-\tau_1)^{n+2s}}\bigg[ \tail\big(u; B_r\big)\,\int_{B_r} |u|\dx+\int_{B_r}|u|^2\dx \bigg]\\
+c\,r^{n+2(1-\frac n\chi)}\bigg(\int_{B_r}|f|^\chi\dx\bigg)^{2/\chi}+c\int_{B_r} \big[|G|^2+1\big]\dx
\end{multline*}
with $c$ and $\bar c$ depending only on $\data$; now we choose $r_0\leq 1$ such that
\[
 \bar c\,\nu\,r_0^{2(1-s)}\leq \frac12
\]
(which yields \eqref{cond:r0}) and we apply the well-known ``simple, but fundamental Lemma'' by Giaquinta \& Giusti \cite[Lemma 1]{GG} (see also \cite[Lemma 6.1]{giusti}) to reabsorb the first integral on the right-hand side, getting
\begin{multline*}
    \int_{B_{\s r}}|Du|^2\dx\leq c\,\Big(\frac1{(1-\sigma)^2r^2}+\frac\nu{(1-\sigma)^{2s}r^{2s}}\Big)\int_{B_r}|u|^2\dx\\
    +\frac{c\,\nu}{(1-\s)^{n+2s}r^{2s}}\tail\big(u; B_r\big)\int_{B_r} |u|\dx+c\,r^{n+2(1-\frac n\chi)}\bigg(\int_{B_r}|f|^\chi\dx\bigg)^{2/\chi}+c\int_{B_r} \big[|G|^2+1\big]\dx\,.
\end{multline*}
After taking averages, this is the desired estimate.
    \end{proof}

Our next goal is to show that, under suitable integrability assumptions on $f,G$, we have higher integrability of the gradient of solutions to \eqref{eq2}.
Before stating such result, we anticipate a useful lemma to rewrite the right-hand side of \eqref{eq2} in divergence form by using classical potential theory for the Poisson equation. More precisely, we have
\begin{lemma}\label{lem:pot}
Let $f\in L^{\chi}(B_R(x_0))$, with $\chi>2_*$. Then there exist an exponent $q>2$ and a vector field $F\in L^q(B_R(x_0);\mathbb R^n)$ such that  $f=\mathrm{div}\, F$ in distributional sense, and satisfying
\begin{equation}\label{pot:est}
\bigg(\mint_{B_R(x_0)} |F|^q\dx\bigg)^{1/q} \leq c(n,\chi,q)\, R \bigg(\mint_{B_R(x_0)} |f|^\chi\dx\bigg)^{1/\chi}\,.
\end{equation}
If $2_*<\chi<n$, one may take
\[
q=\chi^*:=\frac{n\chi}{n-\chi}>2\,,
\]
while if $\chi\geq n$, one may take any number $q>2$ (actually, for $\chi>n$ one might even take $q=+\infty$).
\end{lemma}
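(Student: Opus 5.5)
By scaling and translation we reduce to $x_0=0$, $R=1$. Set $\tilde f(x):=R^2 f(x_0+Rx)$ on $B_1$. If $\tilde F$ solves $\mathrm{div}\,\tilde F=\tilde f$ on $B_1$, then $F(x):=R^{-1}\tilde F((x-x_0)/R)$ satisfies $\mathrm{div}\,F=f$ on $B_R(x_0)$. Averaged norms then transform as
\[
\Big(\mint_{B_R(x_0)}|F|^q\dx\Big)^{1/q}=R^{-1}\Big(\mint_{B_1}|\tilde F|^q\dx\Big)^{1/q}
\quad\text{and}\quad
\Big(\mint_{B_1}|\tilde f|^\chi\dx\Big)^{1/\chi}=R^2\Big(\mint_{B_R(x_0)}|f|^\chi\dx\Big)^{1/\chi},
\]
so \eqref{pot:est} on $B_1$ with constant $c(n,\chi,q)$ yields the general case. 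We also record that $f$ is vector valued in $\R^N$, so $F$ should be understood componentwise, as a matrix field in $\R^{nN}$. We simply treat each component $f^\alpha$ separately.

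On $B_1$ we extend $f$ by zero to $\R^n$ and use the Newtonian potential. Let $\Gamma$ be the fundamental solution of the Laplacian, normalized so that $\Delta\Gamma=\delta_0$, and set
\[
w:=\Gamma*(f\mathbf 1_{B_1}),\qquad F:=Dw=(D\Gamma)*(f\mathbf 1_{B_1}).
\]
Then $\mathrm{div}\,F=\Delta w=f$ in $\mathcal D'(B_1)$; for $L^\chi$ data this is checked first for smooth $f$ and then by density, using the bound below.

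The key estimate uses $|D\Gamma(z)|\le c(n)|z|^{1-n}$, which makes $F$ pointwise bounded by the Riesz potential $I_1(|f|\mathbf 1_{B_1})$.
\begin{itemize}
\item If $2_*<\chi<n$, the Hardy--Littlewood--Sobolev inequality gives $\|F\|_{L^{\chi^*}(\R^n)}\le c(n,\chi)\|f\|_{L^\chi(B_1)}$. Here $\chi>2_*$ is exactly equivalent to $\chi^*>2$, since $\chi^*>2\iff n\chi>2n-2\chi\iff\chi>2n/(n+2)$.
\item If $\chi\ge n$, we pick any finite $q>2$ and a $\tilde\chi<n$ close to $n$ with $\tilde\chi^*\ge q$; when $n=2$, $\tilde\chi$ is simply taken near $2$. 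We then apply H\"older on $B_1$, $\|f\|_{L^{\tilde\chi}(B_1)}\le c\|f\|_{L^\chi(B_1)}$, followed by HLS and H\"older again on $B_1$ to descend from $\tilde\chi^*$ to $q$.
\item If $\chi>n$, Young's/H\"older's inequality applied directly gives $\sup_{x\in B_1}\int_{B_1}|x-y|^{1-n}|f(y)|\dy\le c\|f\|_{L^\chi(B_1)}$, because $(1-n)\chi'>-n$. This yields $q=\infty$.
\end{itemize}
Restricting to $B_1$ and converting to averages, which only costs factors depending on $n$, gives \eqref{pot:est}.

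The only point requiring care is the borderline handling: we need the exponent condition $\chi>2_*$ to guarantee $q>2$, together with the fact that the dimension $n=2$ convention for $2_*$ is compatible with this. Both are elementary, so we do not expect a genuine obstacle; the rest is classical potential theory.
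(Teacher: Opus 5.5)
Your proposal is correct and follows essentially the same route as the paper: Newtonian potential of the zero extension, pointwise domination of $F$ by the Riesz potential $I_1(|f|)$, Hardy--Littlewood--Sobolev for $2_*<\chi<n$, reduction to a subcritical exponent via H\"older when $\chi\geq n$, and H\"older on the kernel $|x-y|^{1-n}$ when $\chi>n$. The differences are cosmetic: you rescale explicitly to $B_1$ instead of appealing to localized HLS on $B_R(x_0)$, and you treat the $\R^N$-valued $f$ componentwise. In the case $\chi\geq n$ you take any $\tilde\chi<n$ with $\tilde\chi^*\geq q$ and then apply H\"older once more, whereas the paper chooses exactly $\chi_0=q_*$ so that $\chi_0^*=q$.
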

\begin{proof}
We extend $f$ to the whole space by setting $f\equiv 0$ in $\R^n\setminus B_{R}(x_0)$. Consider the function 
    \begin{equation*}
        w=\int_{\R^n} \Gamma(x-y)f(y)\dy\,,
    \end{equation*}
    where $\Gamma$ is the fundamental solution of the Laplace equation in $\R^n$. It follows that $-\Delta w=f$, and classical potential estimates (\cite[Section 7.8]{gt}, \cite[Section 2]{min:ku}) give $|Dw| \leq c(n)\,{\bf I}_1(|f|)$. The assertion follows now by setting $F=-Dw$, and taking into account the Hardy--Littlewood--Sobolev inequality and its standard localized consequences. If $1<\chi<n$, this is immediate as $\chi^*>2$ by $\chi>2_*$. If $\chi\geq n$, we fix $q>2$ and choose 
\[ 
\chi_0=q_*=\frac{nq}{n+q}\in (2_*,n),\qquad q=\chi_0^*
\]
and using the previous case with exponent $\chi_0$ and H\"older's inequality gives \eqref{pot:est}. Finally, if $\chi>n$, one can use H\"older's inequality applied to the kernel $|x-y|^{1-n}$. 
\end{proof}
The main result of this section is the following:

\begin{proposition}[Higher integrability of the gradient]\label{prop:highint}
Let $u\in W^{1,2}_{\rm loc}(\Omega;\R^N)\cap L^1_{2s}$ be a weak solution to \eqref{eq2}, under assumptions \eqref{phi:gr}--\eqref{sym:phi}, \eqref{gr:Agen} in force, and $f=\mathrm{div}\,F $ as above.  Then there exist constants
\[
\zeta=\zeta(\data,\max\{\nu,1\})>0\quad\text{and}\quad c=c\big(\data,\max\{\nu,1\}\big)\geq 1
\]
 such that, if $F,G\in L^{2+\zeta}_{\rm loc}(\Omega;\R^{nN})$, then $|Du|\in L^{2+\zeta}_{\rm loc}(\Omega)$, with quantitative estimate
{\rm
\begin{multline}\label{high:int}
\bigg(\mint_{B_{R_0/2}}|Du|^{2+\zeta}\dx\bigg)^{1/(2+\zeta)} \leq c\bigg(\mint_{B_{2R_0}}|Du|^2\dx\bigg)^{1/2}+c\bigg(\mint_{B_{2R_0}}\big[|F|+ |G|+1\big]^{2+\zeta}\dx\bigg)^{1/(2+\zeta)}\\
        +c\,\nu R_0^{1-2s}\tail\big(u-(u)_{B_{2R_0}};B_{2R_0}\big)\,.
\end{multline}}
This holds for all $B_{2R_0}\equiv B_{2R_0}(x_0)\Subset \Omega$ with $R_0\leq r_0(\nu)\leq 1$ satisfying \eqref{cond:r0}.
\end{proposition}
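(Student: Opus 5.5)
The plan is to derive a reverse Hölder inequality with increasing support from the Caccioppoli inequality \eqref{caccioppoli}, and then apply Gehring's lemma in the version allowing a nonlocal term on the right-hand side. Since $f=\mathrm{div}\,F$, we first rewrite \eqref{eq2} as $-\mathrm{div}\,\A(x,u,Du)+Q^{\rm nl}_\Phi u=\mathrm{div}(F+G)$. We then apply Lemma \ref{lemma:cacc} with $f\equiv 0$ and $G$ replaced by $F+G$, which removes the $f$-term. The constant $c$ stays independent of $\nu$, and $r_0$ satisfies \eqref{cond:r0}.

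\emph{Step 1: reverse Hölder.} Fix a ball $B_{r}(y)\subseteq B_{R_0}(x_0)$ with $r\le r_0$. Take $\sigma=1/2$ and $u_0=(u)_{B_r(y)}$ in \eqref{caccioppoli}. The term $\nu r^{-2s}\mint|u-u_0|^2$ is treated as $\nu r^{2-2s}\cdot r^{-2}\mint|u-u_0|^2$. Because $r\le r_0\le1$, the factor $\nu r^{2(1-s)}$ is bounded by a constant depending only on $\data$; this is precisely where the smallness \eqref{cond:r0} is used. Next we apply the Sobolev–Poincaré inequality with exponent $2_\#:=\max\{2_*,1\}$ (any number in $(1,2)$ if $n=2$) to get
\[
r^{-2}\mint_{B_r}|u-u_0|^2\dx\le c\Big(\mint_{B_r}|Du|^{2_\#}\dx\Big)^{2/2_\#}.
\]
The mixed term $\nu r^{-2s}\tail(u-u_0;B_r)\mint|u-u_0|$ is handled by Young's inequality: it is bounded by $\e r^{-2}(\mint|u-u_0|)^2+c_\e\nu^2r^{2-4s}\tail^2$, and the first piece is again Sobolev–Poincaré. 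Altogether this gives
\[
\mint_{B_{r/2}(y)}|Du|^2\le c\Big(\mint_{B_r(y)}|Du|^{2_\#}\Big)^{2/2_\#}+c\mint_{B_r(y)}\big(|F|+|G|+1\big)^2+c\,\nu^2r^{2-4s}\tail^2\big(u-(u)_{B_r(y)};B_r(y)\big).
\]

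\emph{Step 2: controlling the tail uniformly.} The tail term changes with the small ball and cannot be fed into Gehring's lemma as it stands. We therefore bound it through the big ball by \eqref{tail:3} with $B_r(y)\subseteq B_{2R_0}(x_0)$, $y\in B_{R_0}$, $\gamma=2s$. This gives
\[
r^{1-2s}\tail\big(u-(u)_{B_r(y)};B_r(y)\big)\lesssim r^{1-2s}\big(\tfrac r{R_0}\big)^{2s}\tail\big(u-(u)_{B_{2R_0}};B_{2R_0}\big)+r^{1-2s}\big(\tfrac{R_0}{r}\big)^n\mint_{B_{2R_0}}\big|u-(u)_{B_{2R_0}}\big|.
\]
The $r^{-n}$ blow-up in the second term is the main obstacle. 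We would avoid it as in \cite{dmn} and the Kuusi–Mingione–Sire approach. For balls with $r$ comparable to $R_0$ we use Poincaré on $B_{2R_0}$. For small $r$ we use \eqref{tail:1} and split the tail into the annulus $B_{R_0}(y)\setminus B_r(y)$, controlled by a dyadic sum of local excesses, plus the outer part. The dyadic excess sum is then dominated by $r\,M(|Du|^{2_\#})^{1/2_\#}$-type quantities, i.e.\ by a maximal function of $|Du|$ at $y$, which is compatible with the Gehring machinery. Cleanest route to try first: state the reverse Hölder with the extra term $\nu^2 \sum_{k}2^{-k(2-2s)}\mint_{B_{2^kr}(y)}|Du|^{2_\#}$ and use the variant of Gehring's lemma for such nonlocal right-hand sides (e.g.\ \cite[Lemma]{dmn}). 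The $2^{-k(2-2s)}$ decay, coming from $\nu r^{2(1-s)}$, makes this summable.

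\emph{Step 3: Gehring.} Applying Gehring's lemma (or the nonlocal variant) on $B_{R_0}(x_0)$ yields $\zeta>0$ and the estimate. Here $\zeta$ depends on $\data$ and on the constant in the reverse Hölder inequality, hence on $\max\{\nu,1\}$ through $\nu^2 r^{2-2s}$ after normalizing. The outer tail contributes $c\,\nu R_0^{1-2s}\tail(u-(u)_{B_{2R_0}};B_{2R_0})$, and the mean-excess contributions are absorbed into $(\mint_{B_{2R_0}}|Du|^2)^{1/2}$ by Poincaré. This gives \eqref{high:int}.
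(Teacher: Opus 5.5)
Your plan has the same architecture as the paper's proof:
\begin{itemize}
\item Caccioppoli with $\sigma=1/2$, $u_0=(u)_{B_r}$, and \eqref{cond:r0} to neutralize $\nu r^{2(1-s)}$;
\item then \eqref{tail:1} to replace the tail on $B_r(y)$ by averages of $|Du|$ on the scales $r\le\mu\lesssim R_0$, plus a far tail and an excess frozen at scale $\sim R_0$ (you rightly reject \eqref{tail:3} because of its $r^{-n}$ loss);
\item then self-improvement.
\end{itemize}

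The difference is in the last step. You invoke a Gehring lemma whose right-hand side contains a weighted sum over dilated balls. Such a lemma is true, and no smallness of the coefficient $\nu^2$ is needed because the extra averages carry the lower exponent $2_\#$. At an exit radius every average on a larger ball is $\le\lambda^2$, so the scales $2^kr$ with $k\ge k_0$ contribute at most $\lambda^2/4$. The finitely many remaining scales are handled by a Vitali covering with balls enlarged by $2^{k_0}$. However, you leave this lemma unproved. Its proof is precisely the exit-time, Vitali and $\lambda$-inequality argument that the paper carries out by hand with the functional $\Xi$.

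What your discrete formulation buys is normalization. The intermediate scales enter through weights whose total mass is bounded independently of $r$. This is exactly what lets one bound that contribution by $c\lambda^2$ at an arbitrarily small exit radius. In the paper's continuous version (H\"older against $\mu^{1-2\tau}\,d\mu/\mu$), the same effect requires keeping the factor $r^{2\tau-1}$ produced by the choice of $\delta_3$, i.e.\ working with $(r/\mu)^{2\tau-1}$. Otherwise $\int_{2R_j}^{R_0}\mu^{1-2\tau}\,d\mu/\mu\sim R_j^{1-2\tau}$ is unbounded.

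Two bookkeeping fixes are needed.
\begin{itemize}
\item \eqref{tail:1} produces the weight $r\mu^{1-2s}=r^{2-2s}2^{k(1-2s)}$ at $\mu=2^kr$, which grows in $k$ when $s<1/2$. A decaying weight such as your $2^{-k(2-2s)}$ therefore comes from a weighted Cauchy--Schwarz inequality together with $2^kr\le R_0\le 1$, not from the prefactor $\nu r^{2(1-s)}$.
\item The extra term must read $\big(\mint_{B_{2^kr}(y)}|Du|^{2_\#}\dx\big)^{2/2_\#}$ to have the same homogeneity as $\mint_{B_{r/2}(y)}|Du|^2\dx$.
\end{itemize}
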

The proof of Proposition \ref{prop:highint} is very similar to that of \cite[Proposition 3.4]{byun}, our starting point being \eqref{caccioppoli}. For the sake of completeness, we sketch the full proof.

\vs

We start by proving a reverse H\"older inequality, in the same spirit as \cite[Lemma 3.3]{byun}. To this end, we fix a parameter  $\tau>0$, depending only on $s$, satisfying
\begin{equation}\label{tau}
    \max\{s,1/2\}<\tau<1.
\end{equation}

\begin{proposition}[Reverse H\"older's inequality]
    Let $u\in W^{1,2}_{\rm loc}(\Omega;\R^N)\cap L^1_{2s}$ be a weak solution to \eqref{eq2}, under assumptions \eqref{phi:gr}--\eqref{sym:phi}, \eqref{gr:Agen} in force. Suppose that $f=\mathrm{div}\,F$, with $F,G\in L^2_{\rm loc}(\Omega;\R^{nN})$. Let $r<r_0(\nu)$, where $r_0(\nu)$ is the threshold appearing in Lemma \ref{lemma:cacc}, $x_0\in \Omega$, and let $\ell\in \N$ be such that 
\[
B_{2^{\ell+1}r}(x_0)\Subset \Omega\quad\text{and}\quad 2^{\ell+1}r\leq 1\,.
\]
\rm{
Then there exists a constant $c=c(\data)$ such that
\begin{multline}\label{rev:hold}
    \mint_{B_{r/2}}|Du|^2\dx \leq c\,\Big(1+\frac{\nu^2}{\delta}\Big)\bigg(\mint_{B_r}|Du|^{2n/(n+2)}\dx\bigg)^{(n+2)/n}\\
+\delta\bigg[\bigg(\mint_{B_{2^\ell r}}|Du|^{2n/(n+2)}\dx\bigg)^{(n+2)/n}+\int_{r}^{2^\ell r}\mu^{1-2\tau}\bigg(\mint_{B_\mu}|Du|^{2n/(n+2)}\dx\bigg)^{(n+2)/n}\,\frac{d\mu}{\mu}\bigg]\\
+\delta\bigg[\nu\,r(2^\ell r)^{-2s}\tail\big(u-(u)_{B_{2^\ell r}};B_{2^\ell r}\big)\bigg]^2+c\mint_{B_r}\big[|F|^2+|G|^2+1\big]\dx
\end{multline}
}
   holds for all $\delta\in (0,1]$.
\end{proposition}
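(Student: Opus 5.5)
We will derive \eqref{rev:hold} from the Caccioppoli inequality \eqref{caccioppoli}. Since $f=\mathrm{div}\,F$, equation \eqref{eq2} becomes $-\mathrm{div}\,\A(x,u,Du)+Q^{\rm nl}_\Phi u=\mathrm{div}(F+G)$. We apply Lemma \ref{lemma:cacc} with $f\equiv0$, with $G$ replaced by $F+G$, with $\sigma=1/2$ and with $u_0=(u)_{B_r}$; this is allowed because $r<r_0(\nu)$. The result is
\[
\mint_{B_{r/2}}|Du|^2\dx\le c\,(r^{-2}+\nu r^{-2s})\mint_{B_r}|u-(u)_{B_r}|^2\dx+c\,\nu r^{-2s}\tail\big(u-(u)_{B_r};B_r\big)\mint_{B_r}|u-(u)_{B_r}|\dx+c\mint_{B_r}\big[|F|^2+|G|^2+1\big]\dx .
\]
The local terms are handled by the Sobolev--Poincar\'e inequality with exponent $2_\#=2n/(n+2)$, which gives $\mint_{B_r}|u-(u)_{B_r}|^2\le c\,r^2(\mint_{B_r}|Du|^{2_\#})^{2/2_\#}$. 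Since $r\le1$ we have $\nu r^{2-2s}\le\nu$, so the first term is bounded by $c(1+\nu)(\mint_{B_r}|Du|^{2_\#})^{(n+2)/n}$, which fits inside the $c(1+\nu^2/\delta)$ term.

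The main work is the tail term. By Poincar\'e, $\mint_{B_r}|u-(u)_{B_r}|\le c\,r(\mint_{B_r}|Du|^{2_\#})^{1/2_\#}=:c\,r\,D_r$. Young's inequality then gives
\[
c\,\nu r^{1-2s}\tail(\cdot)\,D_r\le \frac{c\,\nu^2}{\delta}D_r^2+\delta\,\big[r^{1-2s}\tail\big(u-(u)_{B_r};B_r\big)\big]^2 .
\]
It remains to express $r^{-2s}\tail(u-(u)_{B_r};B_r)$ at scale $2^\ell r$. We use \eqref{tail:1} with $\gamma=2s$, $\vrho=r$ and $2^\ell r$ in place of $r$; this requires $2^\ell r\le1$, which is assumed. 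It gives
\[
\tail\big(u-(u)_{B_r};B_r\big)\le c\,2^{-2s\ell}\tail\big(u-(u)_{B_{2^\ell r}};B_{2^\ell r}\big)+c\,\frac{r^{2s}}{(2^\ell r)^{2s}}\mint_{B_{2^\ell r}}|u-(u)_{B_{2^\ell r}}|\dx+c\int_r^{2^\ell r}\frac{r^{2s}}{\mu^{2s}}\mint_{B_\mu}|u-(u)_{B_\mu}|\dx\,\frac{d\mu}{\mu}.
\]
We multiply by $r^{1-2s}$ and apply Poincar\'e at every scale, $\mint_{B_\mu}|u-(u)_{B_\mu}|\le c\,\mu D_\mu$. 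The first term becomes $r(2^\ell r)^{-2s}\tail(u-(u)_{B_{2^\ell r}};B_{2^\ell r})$. The second becomes $c\,r(2^\ell r)^{1-2s}D_{2^\ell r}\le c\,D_{2^\ell r}$, using $r\le 2^\ell r\le1$. The third becomes $c\,r\int_r^{2^\ell r}\mu^{1-2s}D_\mu\,d\mu/\mu$.

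After squaring, the first term yields the tail contribution in \eqref{rev:hold}. We keep the factor $\nu$ inside it, either by writing the Young split with $\nu$ on both sides or by absorbing $\max\{\nu,1\}$; this is a bookkeeping point to be fixed. The second term yields $\delta D_{2^\ell r}^2$.

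The expected obstacle is squaring the integral term so as to obtain exactly $\int_r^{2^\ell r}\mu^{1-2\tau}D_\mu^2\,d\mu/\mu$. This is where $\tau\in(\max\{s,1/2\},1)$ from \eqref{tau} is used. We write $r\mu^{1-2s}D_\mu=\big(r\,\mu^{\tau-2s}\big)\cdot\big(\mu^{1-\tau}D_\mu\big)$ and apply Cauchy--Schwarz in the measure $d\mu/\mu$:
\[
\Big(r\int_r^{2^\ell r}\mu^{1-2s}D_\mu\frac{d\mu}{\mu}\Big)^2\le r^2\int_r^{\infty}\mu^{2\tau-4s}\frac{d\mu}{\mu}\cdot\int_r^{2^\ell r}\mu^{2-2\tau}D_\mu^2\frac{d\mu}{\mu}.
\]
The first factor converges because $\tau<2s$ is needed, which may fail for small $s$. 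In that case we split the weights differently, taking the constant-in-$\mu$ part $r^{2-4s+\dots}$ and bounding it via $\mu\ge r$ and $\mu\le1$. We then use $r\le\mu\le1$ and $\tau>1/2$ to dominate the resulting weight by $\mu^{1-2\tau}$. This step requires exponent bookkeeping to pick a splitting compatible with both $\tau>s$ and $\tau>1/2$. Once it is done, we collect all terms, which gives \eqref{rev:hold} with $c=c(\data)$.
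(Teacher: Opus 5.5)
Your scheme is the paper's: \eqref{caccioppoli} with $\sigma=1/2$, $u_0=(u)_{B_r}$ and $G$ replaced by $F+G$, Sobolev--Poincar\'e on $B_r$, \eqref{tail:1} to move the tail to $B_{2^\ell r}$, Poincar\'e at each intermediate scale, Young, and Cauchy--Schwarz in $\mu$. The step you single out as the main obstacle, however, is left unresolved, and the version you write down does not work. After splitting $\mu^{1-2s}=\mu^{\tau-2s}\mu^{1-\tau}$ you enlarge the mass integral to $(r,\infty)$. But $\int_r^\infty\mu^{2\tau-4s}\,d\mu/\mu=\infty$ whenever $\tau\ge 2s$. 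Since $\tau>1/2$ is prescribed by \eqref{tau}, not chosen, this happens for every $s\le 1/4$, and the ``different splitting'' you announce is never specified.

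The enlargement is what breaks it. Keep the upper limit $2^\ell r\le1$ and use $r\le\mu\le1$: then $r^2\int_r^{2^\ell r}\mu^{2\tau-4s}\,\frac{d\mu}{\mu}\le r^{2+\min\{0,2\tau-4s\}}\log(1/r)\le c(s)$, since $2+2\tau-4s>2-2s>0$. Together with $\mu^{2-2\tau}\le\mu^{1-2\tau}$, your own split then works for every $s\in(0,1)$.

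The paper argues differently. It first uses $\tau>s$ and $\mu\le 2^\ell r\le1$ to bound $\mu^{1-2s}\le\mu^{1-2\tau}$. It then applies Cauchy--Schwarz with respect to $d\lambda(\mu)=\mu^{1-2\tau}\,d\mu/\mu$, whose mass on $(r,\infty)$ is $r^{1-2\tau}/(2\tau-1)$ precisely because $\tau>1/2$. The factor $r$ from Poincar\'e on $B_r$ stays attached to $\mint_{B_r}|Du|$, and the loss $r^{1-2\tau}$ is compensated by the scale-dependent Young weight $\delta_3=(2\tau-1)^2r^{2(2\tau-1)}\delta$. In your normalization this is just $r^2\cdot r^{1-2\tau}=r^{3-2\tau}\le1$. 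This route makes the role of both conditions in \eqref{tau} explicit.

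The $\nu$-bookkeeping you defer also needs a concrete choice, and one of your options is not admissible. Absorbing $\max\{\nu,1\}$ makes $c$ depend on $\nu$. Moreover, for $\nu<1$ it leaves $\delta\,[r(2^\ell r)^{-2s}\tail(\cdots)]^2$ without the factor $\nu^2$ required in \eqref{rev:hold}. That factor is what later lets \eqref{high:int} be applied to the blow-ups with $\nu$ replaced by $\vrho^{2(1-s)}\nu$, compensating the growth of their tails (cf.\ Remark \ref{rem:r} and the proof of \eqref{utile}).

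Two fixes work. You can decompose the tail via \eqref{tail:1} and Poincar\'e before using Young, then split each of the three products separately, attaching $\nu$ to the tail factor in the first and to $\mint_{B_r}|Du|$ in the other two; this is the paper's choice. Alternatively, keep your order but use the Young weight $\delta\min\{1,\nu^2\}$ on the tail side.

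Either way, the Young step on the tail product costs a factor $c/\delta$. So the coefficient of $(\mint_{B_r}|Du|^{2n/(n+2)}\dx)^{(n+2)/n}$ comes out as $c(1+\nu^2)/\delta$, not $c(1+\nu^2/\delta)$. The paper's own proof reaches the same $c(1+\nu^2)/\delta$ in its intermediate estimate before rewriting it in the last display. This is harmless, since $\delta$ is afterwards fixed in terms of $\data$ and $\max\{\nu,1\}$.
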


\begin{proof}
In view of Lemma \ref{lem:pot} and \eqref{pot:est}, we may assume that $f\equiv 0$ by replacing $G$ with $F+G$, where $f=\mathrm{div}\, F$; we omit the center of the various balls under consideration. We want to estimate the terms on the right-hand side of \eqref{caccioppoli} with $\s=1/2$ and $u_0=(u)_{B_{r}}$. By Sobolev and H\"older inequalities, we have
\begin{equation}\label{a00}
    \mint_{B_{r}(x_0)}|u-(u)_{B_{r}(x_0)}|\dx\leq \Big(\mint_{B_{r}(x_0)}{\big|u-(u)_{B_{r}(x_0)}\big|}^2\dx\Big)^{1/2}\leq  c(n,N)\,r\Big( \mint_{B_{r}(x_0)}|Du|^{2n/(n+2)}\dx\Big)^{(n+2)/[2n]}\,;
\end{equation}
using \eqref{tail:1}, we can estimate the tail term as follows:
\begin{multline*}
    r^{-2s}\tail\big( u-(u)_{B_{r}};B_r\big)\leq c\,(2^\ell r)^{-2s}\tail\big(u-(u)_{B_{2^\ell r}};B_{2^\ell r}\big) \\
    +c\,(2^\ell r)^{-2s}\mint_{B_{2^\ell r}}|u-(u)_{B_{2^\ell r}}|\dx+ \,c\int_{r}^{2^\ell r}\mu^{-2s}\mint_{B_\mu}\big|u-(u)_{B_\mu}\big|\dx\,\frac{d\mu}{\mu}\,.
\end{multline*}
Using the previous estimate, Poincar\'e's inequality and then repeated use of weighted Young's inequality, we get
\begin{align}\label{a2}
\nu\,r^{-2s} &\tail\big( u-(u)_{B_{r}};B_r\big)\mint_{B_r}|u-(u)_{B_r}|\dx\notag\\
&\leq c\,\nu\bigg[r(2^\ell r)^{-2s}\tail\big(u-(u)_{B_{2^\ell r}};B_{2^\ell r}\big)\notag\\
&\pushright{+\,r(2^\ell r)^{1-2s}\mint_{B_{2^\ell r}}|Du|\dx + r \int_r^{2^\ell r}\mu^{1-2s}\mint_{B_\mu}|Du|\dx\,\frac{d\mu}{\mu}\bigg]\mint_{B_r}|Du|\dx}\notag\\
& \leq  \delta_1\Big[\nu\,r(2^\ell r)^{-2s}\tail\big(u-(u)_{B_{2^\ell r}};B_{2^\ell r}\big) \Big]^2+\delta_2\bigg(\mint_{B_{2^\ell r}}|Du|\dx\bigg)^2\notag\\
&\qquad+\delta_3\bigg[\int_{r}^{2^\ell r}\mu^{1-2s}\mint_{B_\mu}|Du|\dx\,\frac{d\mu}{\mu}\bigg]^2+c\,\bigg[\frac{1}{\delta_1}+\frac{\nu^2r^2(2^\ell r)^{2(1-2s)}}{\delta_2}+\frac{\nu^2 r^2}{\delta_3}\bigg]\bigg( \mint_{B_r}|Du|\dx\bigg)^2
\end{align}
for $\delta_i\in (0,1]$, $i=1,2,3$. Moreover, for $\mu\leq 2^\ell r\leq 1$ we may bound $\mu^{1-2s}\leq \mu^{1-2\tau}$ thanks to \eqref{tau}, so  by H\"older's inequality applied with measure $d\l(\mu)=\mu^{1-2\tau}\,\frac{d\mu}{\mu}$, we compute
\begin{align}\label{a3}
    \bigg(\int_{r}^{2^\ell\,r}\mu^{1-2s}\mint_{B_\mu}|Du|\dx\,\frac{d\mu}{\mu}\bigg)^2 & \leq \int_r^{2^\ell r}d\l(\mu) \,\int_r^{2^\ell r}\bigg(\mint_{B_\mu}|Du|\dx\bigg)^2\,d\l(\mu)\notag
    \\
    &\leq \frac{r^{1-2\tau}}{2\tau-1}\int_r^{2^\ell r}\mu^{1-2\tau}\bigg(\mint_{B_\mu}|Du|\dx\bigg)^2\,\frac{d\mu}{\mu}\,;
\end{align}
in the last inequality we used that $\tau\geq 1/2$ by \eqref{tau}. Combining \eqref{a2}--\eqref{a3}, choosing  $\delta_1=\delta_2=\delta$ and $\delta_3=(2\tau-1)^2r^{2(2\tau-1)}\delta$ and finally using the facts $r^2(2^\ell r)^{2(1-2s)}\leq (2^\ell r)^{2(2-2s)}\leq 1$ and $r^{1-\tau}\leq 1$ by \eqref{tau}, we infer
\begin{multline*}
\nu\,  r^{-2s} \tail\big( u-(u)_{B_{r}};B_r\big)\mint_{B_r}|u-(u)_{B_r}|\dx\leq \delta\Big[\nu\,r(2^\ell r)^{-2s}\tail\big(u-(u)_{B_{2^\ell r}};B_{2^\ell r}\big) \Big]^2\\
+\delta\bigg(\mint_{B_{2^\ell r}}|Du|\dx\bigg)^2+\delta\int_{r}^{2^\ell\,r}\mu^{1-2\tau}\bigg(\mint_{B_\mu}|Du|\dx\bigg)^2\,\frac{d\mu}{\mu}+\frac{c\,(1+\nu^2)}{\delta}\bigg(\mint_{B_r}|Du|\dx\bigg)^2
\end{multline*}
with $c$ depending on $n,N$ and $s$. To conclude, it suffices to use Caccioppoli's inequality \eqref{caccioppoli} with the choice $\sigma=1/2$, $u_0=(u)_{B_r}$, which, together with  \eqref{a00}, \eqref{a3} and H\"older's inequality gives
\begin{multline*}
\mint_{B_{r/2}}|Du|^2\dx  \leq  c\,\Big(1+\frac{\nu^2}{\delta}+\frac\nu{r^{2(s-1)}}\Big)\bigg(\mint_{B_{r}}|Du|^{2n/(n+2)}\dx\bigg)^{(n+2)/n}+c\mint_{B_r}\big[|F|^2+|G|^2+1\big]\dx\notag\\
+\delta\bigg[\bigg(\mint_{B_{2^\ell r}}|Du|^{2n/(n+2)}\dx\bigg)^{(n+2)/n}+\int_{r}^{2^\ell\,r}\mu^{1-2\tau}\bigg(\mint_{B_\mu}|Du|^{2n/(n+2)}\dx\bigg)^{(n+2)/n}\,\frac{d\mu}{\mu}\bigg]\notag\\
+\delta\Big[\nu\,r\,(2^\ell r)^{-2s}\tail\big(u-(u)_{B_{2^\ell r}};B_{2^\ell r}\big) \Big]^2\,,
\end{multline*}
and using that $\nu/r^{2(s-1)}\leq c(\data)$ by \eqref{cond:r0}, we get the desired outcome.
\end{proof}

We are now in the position to prove Proposition \ref{prop:highint}, whose proof is based on the reverse H\"older's inequality \eqref{rev:hold} and an exit time argument as in \cite{cm,kms} and \cite[Proposition 3.4]{byun}.

\begin{proof}[Proof of Proposition \ref{prop:highint}]
In the following, we set
\begin{equation*}
|\tilde{F}|\coloneqq |F|+|G|+1\,,
\end{equation*}
and we fix two constants $\zeta>0$ and $M\geq 1$ depending only on  $\data$ and $\max\{\nu,1\}$, to be determined later.
Given $R_0$ and $x_0$ as in the statement of the proposition, for all $R$ and $\bar x\in \Omega$ such that $B_R(\bar x)\subseteq B_{R_0}(x_0)$, we set
\begin{align*}
        \Upsilon(\bar x,R) & \coloneqq \bigg(\mint_{B_R(\bar x)}|\tilde{F}|^{2+\zeta}\dx\bigg)^{1/(2+\zeta)}\notag
        \\
        \Psi_M(\bar x,R) & \coloneqq \bigg( \mint_{B_R(\bar x)}|Du|^2\dx\bigg)^{1/2}+M\,\bigg(\mint_{B_R(\bar x)}|\tilde{F}|^2\dx\bigg)^{1/2}\,,
\end{align*}
    and
\begin{align}\label{def:nuove1}
        \mathcal{T}(\bar x,R)\coloneqq \bigg[\int_R^{2^\ell R}\mu^{1-2\tau}\bigg( \mint_{B_\mu(\bar x)}|Du|^{2\gamma}\dx\bigg)^{1/\g}\frac{d\mu}{\mu}\bigg]^{1/2}+\nu\,R\int_{\R^n\setminus B_{2^\ell R}(\bar x)}\frac{\big|u(y)-(u)_{B_{2^\ell R}(\bar x)}\big|}{{|y-\bar x|}^{n+2s}}\dy\,,
    \end{align}
    where $\gamma=n/(n+2)\in[1/2,1)$ and  $\ell=\ell(R)\in \N$ is such that 
    \begin{equation}\label{cond:ell}
        \frac{R_0}{2}\leq 2^\ell R<R_0\,;
    \end{equation}
we also set
    \begin{align}\label{def:nuove2}
        \Xi(\bar x,R)\coloneqq\Upsilon(\bar x,R)+\Psi_M(\bar x,R)+\mathcal{T}(\bar x,R)\,.
    \end{align}
We fix two radii $r_1,r_2>0$ such that 
    \begin{equation}\label{radii0}
        \frac{R_0}{2}\leq r_1<r_2\leq R_0
    \end{equation}
    and define the quantity
    \begin{equation}\label{def:l0}
        \l_0\coloneqq \sup\left\{\Xi(\bar x,R):\,\bar x\in B_{r_1}(x_0),\,\frac{r_2-r_1}{16}<R\leq \frac{R_0}{2}\right\}\,.
    \end{equation}
\noindent\textbf{Step 1: Upper bound for $\l_0$.} Let
\begin{equation}\label{fix:st1}
\bar x\in B_{r_1}(x_0)\qquad \text{and}\qquad \frac{r_2-r_1}{16}<R\leq \frac{R_0}{2}\,;
\end{equation}
we have $B_R(\bar x)\subseteq B_{2R_0}(x_0)$, and for this reason
\begin{equation}\label{a7}
 \Upsilon(\bar x,R)\leq \Big(\frac{2R_0}{R} \Big)^{n/(2+\zeta)}\Upsilon(x_0,2R_0)\leq \Big(\frac{32R_0}{r_2-r_1} \Big)^{n/2}\Upsilon(x_0,2R_0)\,.
\end{equation}
Moreover, by \eqref{cond:ell}--\eqref{fix:st1}, we have  $B_{2^{\ell+1}R}(\bar x)\subseteq B_{2R_0}(x_0)$, so that for all $R\leq \mu\leq 2^\ell R$, we similarly have
\begin{equation*}
    \bigg(\mint_{B_\mu(\bar x)}|Du|^{2\g}\dy\bigg)^{1/\g}\leq c(n)\Big( \frac{32 R_0}{r_2-r_1}\Big)^{{n}/{\g}}\bigg(\mint_{B_{2R_0}(x_0)}|Du|^{2\g}\dy\bigg)^{{1}/{\g}}\,.
\end{equation*}
In particular, by \eqref{tau} and $R\leq R_0\leq 1$, we get
\[
    \int_R^{2^\ell R}\mu^{1-2\tau}\bigg( \mint_{B_\mu(\bar x)}|Du|^{2\gamma}\dy\bigg)^{1/\g}\,\frac{d\mu}{\mu}\leq c(n,s)\Big(\frac{32R_0}{r_2-r_1} \Big)^{n+2}\mint_{B_{2R_0}(x_0)}|Du|^2\dy\,,
\]
where we also estimated the last integral using H\"older's inequality, since $\gamma=n/(n+2)<1$. Again, using the inclusion $B_{2^{\ell+1}R}(\bar x)$ $\subseteq B_{2R_0}(x_0)$, we may use \eqref{tail:3}, \eqref{fix:st1} and Poincar\'e's inequality to deduce
\begin{align*}
   \nu R\int_{\R^n\setminus B_{2^\ell R}(\bar x)}\frac{\big|u(y)-(u)_{B_{2^\ell R}(\bar x)}\big|}{{|y-\bar x|}^{n+2s}}\dy&= \nu R (2^\ell R)^{-2s}\tail\big(u-(u)_{B_{2^\ell R}(\bar x)};  B_{2^\ell R}(\bar x)\big)\notag
     \\
     &\leq c\,\nu R (2R_0)^{-2s}\Big( \frac{2R_0}{2R_0-|\bar x-x_0|}\Big)^{n+2s}\tail\big( u-(u)_{B_{2R_0}(x_0)};  B_{2R_0}(x_0)\big)\notag
     \\
     &\pushright{+\,c\,\nu R (2^\ell R)^{-2s}\Big( \frac{2R_0}{2^\ell R}\Big)^n\mint_{B_{2R_0}}|u-(u)_{B_{2R_0}(x_0)}|\dx}\notag
     \\
      &\leq c\,\nu R \Big( \frac{2R_0}{r_2-r_1}\Big)^{n+2s}(2R_0)^{-2s}\tail\big( u-(u)_{B_{2R_0}(x_0)};  B_{2R_0}(x_0)\big)\notag
      \\
      &\pushright{+\,c\,\nu R (2^\ell R)^{-2s}\frac{(2R_0)^{n+1}}{(2^\ell R)^n}\bigg(\mint_{B_{2R_0}(x_0)}|Du|^2\dx \bigg)^{1/2}\,;}
\end{align*}
here $c=c(n,N,s)$. Since $2^\ell R\leq R_0\leq 1$, by \eqref{tau} we estimate $(2^\ell R)^{-2s}\leq (2^\ell R)^{-2\tau}\leq R^{-2\tau}$, whence from the above inequality we find
\begin{align}\label{a9}
\nu\,R \int_{\R^n\setminus B_{2^\ell R}(\bar x)}\frac{|u(y)-(u)_{B_{2^\ell R}(\bar x)}|}{{|y-\bar x|}^{n+2s}} \dy &\leq c\, \nu\,R \Big( \frac{2R_0}{r_2-r_1}\Big)^{n+2s}  (2R_0)^{-2s}\tail\big( u-(u)_{B_{2R_0}(x_0)};  B_{2R_0}(x_0)\big)\notag\\
&\pushright{+ \,c\, \nu\,\frac{(2R_0)^{n+1}}{R^{n+2\tau-1}} \bigg(\mint_{B_{2R_0}(x_0)}|Du|^2 \dx \bigg)^{1/2}}\notag\\
&\leq  c\,\nu\Big( \frac{2R_0}{r_2-r_1}\Big)^{n+2s}  (2R_0)^{1-2s}\tail\big( u-(u)_{B_{2R_0}(x_0)};  B_{2R_0}(x_0)\big)\notag\\
&\pushright{+\, c\,\nu\Big( \frac{32R_0}{r_2-r_1}\Big)^{n+2\tau-1} \bigg(\mint_{B_{2R_0}(x_0)}|Du|^2 \dx \bigg)^{1/2}\,,}
\end{align}
with $c=c(n,N,s)$, where in the last inequality we also used \eqref{fix:st1} and $n+1>n+(2\tau-1)>n$ by \eqref{tau}.
Combining \eqref{a7}--\eqref{a9}, we thus find
\begin{equation}\label{bd:l0}
  \l_0\leq c\,M\Big(\frac{32R_0}{r_2-r_1}\Big)^{n+2}\Xi_0\,,
\end{equation}
where $c=c(n,N,s)$, and we have set
\begin{multline}\label{def:xi0}
    \Xi_0\coloneqq  \bigg(\mint_{B_{2R_0}(x_0)} |\tilde{F}|^{2+\zeta}\dx\bigg)^{1/(2+\zeta)}+\bigg(\mint_{B_{2R_0}(x_0)}|Du|^2\dx\bigg)^{1/2}\\
+\nu\,(2R_0)^{1-2s}\tail\big( u-(u)_{B_{2R_0}(x_0)};  B_{2R_0}(x_0)\big)\,.
\end{multline}

\noindent\textbf{Step 2: Exit time and Vitali covering.} Let $\lambda\geq \lambda_0$, and define
\[
    E_\lambda\coloneqq \bigg\{\bar x\in B_{r_1}(x_0):\,\sup_{0\leq R\leq \frac{r_2-r_1}{16}}\Psi_M(\bar x,R)>\lambda\bigg\}\,.
\]
By definition \eqref{def:l0}, we have
\[
    x\in B_{r_1}(x_0),\,R\in \left[\frac{r_2-r_1}{16},\frac{R_0}{2}\right]\implies \Psi_M(x,R)\leq \lambda_0\leq \lambda\,.
\]
On the other hand, for every $x\in E_\lambda$, there exists an exit radius $R_x\in \left(0,\frac{r_2-r_1}{16} \right) $ such that
\begin{equation}\label{exit}
    \Psi_M(x,R_x)=\lambda\qquad\text{and}\qquad \Psi_M(x,R)\leq \lambda \text{ for all $R\in \Big(R_x,\frac{R_0}{2}\Big)$}\,.
\end{equation}
By Vitali's covering argument, we may find a countable collection of pairwise disjoint balls $\{B_{2R_j}(x_j)\}_{j\in \mathcal J}$, with cardinality $\sharp \mathcal J\leq\sharp \N$, with centers $\{x_j\}_{j\in \mathcal J}\subseteq E_\l$ and radii $\{R_j\}_{j\in \mathcal J}$ such that
\begin{equation}\label{vit:1}
    E_\l\subseteq \bigcup_{j\in \mathcal J} B_{10 R_j}(x_j)\cup\mathcal N\,,
\end{equation}
with $\mathcal N$ negligible. For notational simplicity, we set $B_j=B_{R_j}(x_j)$, and denote by $KB_j=B_{KR_j}(x_j)$ for $K> 0$. For all $j\in\mathcal J$, let us pick $\ell_j=\ell(R_j)\in \N$ satisfying
\begin{equation*}
    \frac{R_0}{2}\leq  2^{\ell_j} R_j\leq  R_0\,.
\end{equation*}
Since $R_j<R_0/8$, it follows that $\ell_j\geq 3$ for all $j\in\mathcal J$, and $2^{\ell_j+1} B_j=B_{2^{\ell_j+1}R_j}(x_j)\subseteq B_{2R_0}(x_0)$.

\vs

We aim to estimate the measure $|B_j|$. By \eqref{exit}, we have that either
\begin{equation}\label{case1}
\bigg(\mint_{B_j} |Du|^2\dx\bigg)^{1/2}\geq \frac{\l}{2}\qquad\text{or}\qquad M\,\bigg(\mint_{B_j}|\tilde{F}|^2\dx \bigg)^{1/2}\geq \frac{\l}{2}
\end{equation}
holds true. Assume that the first condition in \eqref{case1} is in force. This information coupled with \eqref{rev:hold} (with $r=2R_j$) gives
\begin{multline}\label{a10}
    \lambda^2 \leq c\,\Big(1+\frac{\nu^2}{\delta}\Big)\bigg(\mint_{2B_j}|Du|^{2\gamma}\dx\bigg)^{1/\gamma} +\delta\Big[\nu\,2R_j(2^{\ell_j+1}R_j)^{-2s}\tail\big(u-(u)_{B_{2^{\ell_j+1}R_j}};B_{2^{\ell_j+1}R_j}\big)\Big]^2\\
+\delta\bigg[\bigg(\mint_{2^{\ell_j}B_j}|Du|^{2\gamma}\dx\bigg)^{1/\gamma}+\int_{2R_j}^{2^{\ell_j+1}R_j}\mu^{1-2\tau}\bigg(\mint_{B_\mu(x_j)}|Du|^{2\gamma}\dx\bigg)^{1/\gamma}\,\frac{d\mu}{\mu}\bigg]+c\mint_{2B_j}|\tilde{F}|^2\dx\,;
\end{multline}
furthermore, thanks to \eqref{exit}, for all $\mu\geq R_j$ we have
\begin{equation*}
    \mint_{B_\mu(x_j)}|Du|^{2}\dy\leq \l^2\,,
\end{equation*}
while, by \eqref{def:nuove1}, \eqref{def:nuove2}, \eqref{def:l0} and the fact that $\lambda\geq\lambda_0$, we have
\begin{multline*}
\nu\,2R_j\,(2^{\ell_j+1}R_j)^{-2s}\tail\big(u-(u)_{2^{\ell_j+1}B_j};2^{\ell_j+1}B_j\big)=\nu\,2R_j\int_{\R^n\setminus 2^{\ell_j+1}B_j}\frac{|u(y)-(u)_{2^{\ell_j+1}B_j}|}{{|y-x_j|}^{n+2s}}\dy\leq \mathcal{T}(x_j,2R_j)\\\leq \Xi(x_j, 2R_j)\leq \l\,,
\end{multline*}
and
\begin{equation*}
    \mint_{2B_j}|\tilde{F}|^2\dx\leq \frac{[\Xi(x_j,2R_j)]^2}{M^2}\leq \frac{\l^2}{M^2}\,.
\end{equation*}
Using the last three estimates in \eqref{a10} together with H\"older's inequality, yields
\begin{equation*}
    \l^2\leq c\,\Big(1+\frac{\nu^2}{\delta}\Big)\bigg(\mint_{2B_j}|Du|^{2\gamma}\dx \bigg)^{1/\g}+c\frac{\l^2}{M^2}+c\,\delta\,\l^2\,.
\end{equation*}
Hence we fix $\delta$ small enough and $M$ large enough, both depending only on $\data,\max\{\nu,1\}$, so as to obtain
\begin{equation*}
    \l^2\leq c(1+\nu)^2\bigg(\mint_{2B_j}|Du|^{2\gamma}\dx \bigg)^{1/\g}\,,
\end{equation*}
from which it follows, for any $\k>0$,
\begin{equation}\label{Bj1}
    |B_j|\leq \frac{c(1+\nu)^{2\gamma}}{\l^{2\gamma}}\int_{2B_j}|Du|^{2\gamma}\dx\leq \frac{c(1+\nu)^{2\gamma}}{\l^{2\gamma}}\int_{2B_j\cap \{|Du|>\k\l\}}|Du|^{2\g}\dx+c(1+\nu)^{2\gamma}\k^{2\g}|B_j|\,;
\end{equation}
by choosing $\k=\k(\data,\max\{\nu,1\})\in (0,1)$ so that $c(1+\nu)^{2\gamma}\k^{2\g}\leq 1/2$, from \eqref{Bj1} we get
\begin{equation}\label{Bj11}
    |B_j|\leq \frac{c(1+\nu)^{2\gamma}}{\l^{2\gamma}}\int_{2B_j\cap \{|Du|>\k\l\}}|Du|^{2\gamma}\dx
\end{equation}
with $c\equiv c(\data)$. If, on the other hand, the second condition in \eqref{case1} holds true, then trivially (recall that now $M$ is fixed depending on the $\data$)
\begin{equation*}
    |B_j|\leq \frac{c}{\l^2}\int_{B_j}|\tilde{F}|^2\dx\,,
\end{equation*}
and arguing as before we find $\k=\k(\data,\max\{\nu,1\})\in (0,1)$ small enough such that
\begin{equation}\label{Bj2}
    |B_j|\leq \frac{c}{\l^2}\int_{B_j\cap \{|\tilde{F}|>\k\l\}}|\tilde{F}|^2\dx
\end{equation}
(clearly we can suppose that the two small constants $\kappa$ in \eqref{Bj11} and \eqref{Bj2} are the same). In any case, from \eqref{Bj11} and \eqref{Bj2} we deduce
\begin{equation}\label{est:Bj}
    |B_j|\leq \frac{c(1+\nu)^{2\g}}{\l^{2\g}}\int_{2B_j\cap \{|Du|>\k\l\}}|Du|^{2\g}\dx+\frac{c}{\l^2}\int_{2B_j\cap\{|\tilde{F}|>\k\l\}}|\tilde{F}|^2\dx\,.
\end{equation}
From this point the proof is standard, thus we only sketch the main points; for the details see \cite{AM}, \cite[Section 8]{BB}, \cite[Section 5.2]{byun} with the appropriate modifications. We observe that \eqref{vit:1} implies
\begin{equation}\label{a11}
    B_{r_1}(x_0)\cap \{|Du|>\l\}\subseteq E_\l\subseteq \bigcup_{j\in\mathcal J} 10 B_j\quad\text{up to a negligible set,}
\end{equation}
and this can be easily proven considering Lebesgue points of the gradient in $B_{r_1}(x_0)\cap \{|Du|>\l\}$.

\vs

\noindent\textbf{Step 3: The $\l$-inequality.} From now on we denote $B_{r_1}, B_{r_2}$ for short, omitting their center $x_0$. As $\{2B_j\}$ is a pairwise disjoint family and $\bigcup_{j\in\mathcal J} 2B_j\subseteq B_{r_2}$, summing \eqref{est:Bj} over $j\in\mathcal J$ yields
\begin{equation}\label{est:sBj}
\sum_{j\in\mathcal J} |B_j|\leq \frac{c(1+\nu)^{2\g}}{\l^{2\g}}\int_{B_{r_2}\cap \{|Du|>\k\l\}}|Du|^{2\g}\dx+\frac{c}{\l^2}\int_{B_{r_2}\cap\{|\tilde{F}|>\k\l\}}|\tilde{F}|^2\dx\,.
\end{equation}
On the other hand, by \eqref{exit}, we have $\Psi_M(x_j,R)\leq \l$ if $R\geq R_j$, hence
\begin{equation}\label{a12}
    \int_{10B_j}|Du|^2\dx\leq \l^2|10B_j|\qquad \text{for all $j\in\mathcal J$\,;}
\end{equation}
from \eqref{a11}--\eqref{a12} we infer
\[
\int_{B_{r_1}\cap \{|Du|>\l\}}|Du|^2\dx\leq \sum_{j\in\mathcal J} \int_{10B_j}|Du|^2\dx\leq  10^n \l^2\,\sum_{j\in\mathcal J} |B_j| 
\]
and combining the latter inequality with \eqref{est:sBj}, we obtain
\begin{equation*}
    \int_{B_{r_1}\cap \{|Du|>\l\}}|Du|^2\dx\leq \frac{c(1+\nu)^2}{\l^{2\g-2}}\int_{B_{r_2}\cap \{|Du|>\k\l\}}|Du|^{2\g}\dx+c\int_{B_{r_2}\cap\{|\tilde{F}|>\k\l\}}|\tilde{F}|^2\dx\,,
\end{equation*}
that is 
\begin{align}\label{a14}
    \int_{\mathcal{U}_\l(r_1)}|Du|^2\dx\leq c\,\l^{2-2\g}\int_{\mathcal{U}_{\k\l}(r_2)}|Du|^{2\g}\dx+c\int_{\mathcal{F}_{\k\l}(r_2)}|\tilde{F}|^2\dx
\end{align}
(where now $c$ may also depend on $\max\{\nu,1\}$) where we have introduced the sets
\begin{align*}
\mathcal{U}_\l(r)=\big\{x\in B_r(x_0): |Du|(x)>\l\big\},\qquad \mathcal{F}_\l(r)=\big\{x\in B_r(x_0): |\tilde{F}|>\l\big\}\,.
\end{align*}

\noindent\textbf{Step 4: Integration and conclusion.} To ensure that all the quantities we are considering are finite, for any $t\geq 0$ we should define the truncated functions
\begin{equation*}
    |Du|_t=\min\big\{|Du|,t\big\} \quad \text{and}\quad|\tilde{F}|_t=\min\big\{|\tilde{F}|,t\big\}
\end{equation*}
and proceed as in \cite{BB}; we shall instead proceed formally and let the reader follow \cite[Section 8]{BB} to implement the necessary changes needed to make the proof rigorous.
Let us multiply  \eqref{a14} by $\l^{\zeta-1}$, where $\zeta>0$ is as in the statement, and then integrate it with respect to $\lambda$ over $(\lambda_0,+\infty)$ to obtain, after changing variables in the two integrals on the right-hand side,
\begin{align*}
\int_{\lambda_0}^\infty  \l^{\zeta}   \int_{\mathcal{U}_\l(r_1)}&|Du|^2\dx\,\frac{d\lambda}{\lambda} \\
&\leq \frac c{\kappa^{\zeta+2-2\g}}\int_{\k\lambda_0}^\infty \l^{\zeta+2(1-\g)}\int_{\mathcal{U}_{\l}(r_2)}|Du|^{2\g}\dx\,\frac{d\lambda}{\lambda}+\frac c{\kappa^\zeta}\int_{\k\lambda_0}^\infty \l^\zeta\int_{\mathcal{F}_{\l}(r_2)}|\tilde{F}|^2\dx \,\frac{d\lambda}{\lambda} \\
&\leq \frac c{[\zeta+2(1-\g)]\kappa^{\zeta}}\int_{B_{r_2}}|Du|^{2+\zeta}\dx+\frac c{\zeta\kappa^\zeta}\int_{B_{r_2}}|\tilde{F}|^{2+\zeta}\dx
\end{align*}
with $c,\kappa$ depending on $\data$ and $\max\{\nu,1\}$; in the last estimate we used Fubini's layer formula twice. On the other hand, recalling \eqref{bd:l0}--\eqref{def:xi0}, we have
\[
    \int_0^{\l_0}\l^\zeta \int_{\mathcal{U}^{r_1}_\l}|Du|^2\dx\,\frac{d\l}{\lambda} \leq\frac{\l_0^{\zeta}}{\zeta}\int_{B_{r_2}}|Du|^2\dx\leq \frac{c}{\zeta}\Big(\frac{R_0}{r_2-r_1} \Big)^{\zeta(n+2)}|B_{R_0}|\,\Xi_0^{\zeta+2}\,.
\]
Combining these two estimates and again using Fubini's formula yields
\begin{multline*}
\int_{B_{r_1}}|Du|^{2+\zeta}\dx=\zeta    \int_0^{+\infty}\l^\zeta \int_{\mathcal{U}^{r_1}_\l}|Du|^2\dx\,\frac{d\l}{\lambda}\\
\leq\frac {c\,\zeta}{[\zeta+2(1-\g)]\kappa^{\zeta}}\int_{B_{r_2}}|Du|^{2+\zeta}\dx+\frac c{\kappa^\zeta}\int_{B_{R_0}}|\tilde{F}|^{2+\zeta}\dx+c\left(\frac{R_0}{r_2-r_1} \right)^{\zeta(n+2)}|B_{R_0}|\,\Xi_0^{\zeta+2}\,;
\end{multline*}
this inequality is valid for any $r_1,r_2$ fulfilling \eqref{radii0}. Finally we conclude by choosing $\zeta>0$ depending on $\data,\max\{\nu,1\}$ such that
\begin{equation*}
    \frac{c\,\zeta}{[\zeta+2(1-\g)]\kappa^\zeta}\leq \frac{1}{2}\,,
\end{equation*}
as we are in the position to apply the aforementioned \cite[Lemma 6.1]{giusti} to reabsorb $\int_{B_{r_2}} |Du|^{2+\zeta}\dx$; this gives, after dividing by $|B_{R_0}|$, and recalling the definition of $\Xi_0$ in \eqref{def:xi0}, the desired estimate.
\end{proof}

\section{Linearization and excess decay}\label{linearization}
In this section, we fix $B_\vrho(x_0)\Subset \Omega$ and we assume that $u$ satisfies
\begin{equation}\label{excess:small}
    \mathcal{E}_{\g}\big(u;B_\vrho(x_0)\big)\leq \e_0
\end{equation}
for some $\e_0=\e_0(\data,\omegau(\,\cdot\,))\in (0,1)$ to be determined later, where $\mathcal{E}_\g(\,\cdot\,)$ is the $\gamma$-excess defined in \eqref{def:exc}. 
We also assume that $A(\cdot,\cdot\cdot)$ fulfills the $BMO$-condition
\begin{equation}\label{hp:A}
 E_{r_A}(A;x_0)\leq \delta,
\end{equation}
for some radius $r_A\in (0,1)$, and $\delta=\delta(\data)\in (0,1)$ to be chosen later, where $E_{r}(A;x_0)$ is defined in \eqref{E_A}.

\vs

We carry out a blow-up procedure at a point $x_0 \in \Omega$ within the ball $B_\vrho(x_0)$, where $\vrho \leq \min\{r_0,r_A\}$. Here $r_A$ is the radius from \eqref{hp:A} and $r_0 = r_0(\data, \nu) \in (0, 1)$ satisfies \eqref{cond:r0}; $r_0$ will be further reduced in due course of the proof.

In this section, unlike in the previous ones, we will always make the center of the balls explicit when it is $x_0$ and we will use the notation $B_{1/4},B_1$, etc., for balls centered at the origin. 

\vs

From now on, we fix a parameter 
\begin{equation}\label{dep:gamma}
    \text{$\gamma=\gamma(s)$ such that $\max\{2s,1\}<\gamma<2$\,,}
\end{equation}
 and for $\mathcal{E}_\gamma\equiv \mathcal{E}_\gamma(u;B_\vrho(x_0))$, we set
\begin{equation}\label{def:blow}
    u_\vrho(x):=\frac{u(x_0+\vrho x)-(u)_{B_\vrho(x_0)}}{\mathcal{E}_\gamma}\,,\qquad f_\vrho(x):=\frac{\vrho^2 f(x_0+\vrho x)}{\mathcal{E}_\gamma} \,.
\end{equation}
Note that
\begin{equation}\label{med:tail0}
    {(u_\vrho)}_{B_1}=0\,, \qquad    \mint_{B_1}{|u_\vrho|}^2\dx\leq 1
\end{equation}
and, by a change of variables, we have
\begin{equation}\label{tail:rho}
    \tail_\gamma\big(u_\vrho;B_1\big)=\tail\big(u_\vrho;B_1\big)=\frac{1}{\mathcal{E}_\g}\tail\big(u-(u)_{B_\vrho(x_0)};B_\vrho(x_0)\big)\leq \vrho^{2s-\gamma}\,;
\end{equation}
the last inequality is due to the definition of $\gamma$-excess in \eqref{def:exc} and \eqref{relaz:tails}.
By setting 
\[
A_\vrho(x,z)=A\big(x_0+\vrho x, {(u)}_{B_\vrho(x_0)}+\mathcal{E}_\g z\big)
\] 
and
\[
    \Phi_\vrho(x,y,v,w,t)=\frac{\Phi\big(x_0+\vrho x,x_0+\vrho y,(u)_{B_\vrho(x_0)}+\mathcal{E}_\gamma v,\, (u)_{B_\vrho(x_0)}+\mathcal{E}_\gamma w,\, \mathcal{E}_\gamma t\big)}{\mathcal{E}_\gamma}\,,\qquad Q^\mathrm{nl}_{\vrho}\equiv Q^\mathrm{nl}_{\Phi_\vrho}\,,
\]
straightforward computations (via change of variables) show that $u_\vrho$ is a weak solution to
\begin{equation}\label{sol:blow}
    -\mathrm{div}\big(A_\vrho(\cdot,u_\vrho)Du_\vrho \big)+\vrho^{2(1-s)}Q^\mathrm{nl}_{\vrho} u_\vrho=f_\vrho\qquad\text{in $B_1$}\,.
\end{equation}
Furthermore, it is straightforward to verify that $A_\vrho(\cdot,\cdot\cdot)$ still satisfies \eqref{A:gr}, and that $\Phi_\vrho$ continues to satisfy \eqref{phi:gr}--\eqref{sym:phi}, that is
\begin{equation}\label{A:grrho}
    \langle A_\vrho(x,z)\,\xi,\xi\rangle\geq \Lambda^{-1}|\xi|^2,\qquad |A_\vrho(x,z)|\leq \Lambda\quad\text{for a.e. $x\in B_1$ and $z\in \R^N$,}
\end{equation}
and, for a.e. $x,y\in \R^n$, all $u,v\in \R^N$ and $t\in \R^N$
\begin{equation}\label{phi:grrho}
\begin{cases}
    \Phi_\vrho(x,y,u,v,t) =\Phi_\vrho(y,x,v,u,t)
    \\
    \Phi_\vrho(x,y,u,v,-t)=-\Phi_\vrho(x,y,u,v,t)
    \end{cases},
\qquad     |\Phi_\vrho(x,y,v,w,t)|\leq \nu|t|\,.
\end{equation}
In addition, if $A(\cdot,\cdot\cdot)$ satisfies \eqref{hp:A}, then a simple change of variables shows that $A_\vrho(\cdot,\cdot\cdot)$ also fulfills
\begin{equation}\label{Ar:vani}
\bigg(\mint_{B_\sigma}{\big|A_\vrho(x,z)-(A_\vrho)_{B_\sigma}(z)\big|}^2 \dx \bigg)^{1/2}
\leq \delta,
\quad \text{for all $\sigma \leq r_A/\vrho$ and $z\in\mathbb{R}^N$}\,.
\end{equation}
Moreover, by \eqref{mod:cont.u}, we have that $u\mapsto A_\vrho(x,u)$ is uniformly continuous (uniformly in $x$), with modulus of continuity given by
\begin{equation}\label{new:modcont}
    \omega_{\mathsf{u},\vrho}(t)\coloneqq \omegau(\mathcal E_\gamma t)\quad \text{for $t\geq 0$}\,.
\end{equation}

\begin{remark}\label{rem:r}
{\rm From now on, possibly reducing $r_0$, we will always assume that
\begin{equation}\label{rpiccolo}
    c(\data)r_0^{2(1-s)}\nu\leq 1\quad\text{and}\quad  r_0^{2-\gamma}\nu\leq 1\,,
\end{equation}
where $c(\data)\geq1$ is the constant appearing in \eqref{cond:r0}. This choice is possible due to the bound in \eqref{dep:gamma}. Consequently, all quantitative constants from the previous sections (in particular, those in the Caccioppoli inequality of Lemma \ref{lemma:cacc} and the gradient higher integrability of Proposition \ref{prop:highint}) applied to $u_\vrho$ will be  independent of $\vrho$. Indeed, \eqref{A:grrho} holds, and the nonlocal term $\Phi_\vrho$ in \eqref{sol:blow} satisfies \eqref{phi:grrho} while being multiplied by $\vrho^{2(1-s)}$, yielding
\begin{equation}\label{Phi:gr1}
    \vrho^{2(1-s)}|\Phi_\vrho(x,y,u,v,t)|\leq r_0^{2(1-s)}\nu|t|\leq |t| \,,\quad\text{for all $t\in \R$}.
\end{equation}
Moreover, for the rescaled equation, the threshold radius appearing in \eqref{cond:r0} effectively becomes $1$ for any choice of $\vrho\leq r_0$; this allows us to apply \eqref{high:int} directly to the function $u_\vrho$.}
\end{remark}

\subsection{Small potential regime}
In this paragraph, we further assume that we are in the so-called \textit{small potential regime}, which means that
\begin{equation}\label{small:regime}
\vrho^2\bigg(\mint_{B_\vrho(x_0)}|f|^\chi\dx\bigg)^{1/\chi}<\e^*\mathcal E_\gamma\big(u;B_\vrho(x_0)\big)
\end{equation}
for some constant $\e^*\in (0,1)$ depending on $\data$ to be determined later. In particular, by definition of $f_\vrho$ in \eqref{def:blow}, this implies
\begin{equation}\label{s:reg}
   \bigg( \mint_{B_1}|f_\vrho|^\chi\dx\bigg)^{1/\chi}\leq \e^*<1 \,.
\end{equation}
We claim that, under assumption \eqref{small:regime}, there exists $\delta_1=\delta_1(\data)$ such that the higher integrability property
\begin{align}\label{utile}
\int_{B_{1/2}}|Du_\vrho|^2\dx+\int_{B_{1/2}}|Du_\vrho|^{2+\delta_1}\dx\leq c(\data)
\end{align}
holds true if $\vrho\leq r_0$. To prove it, let us fix $y\in B_{1/2}$; since $u_\vrho$ solves \eqref{sol:blow} with \eqref{A:grrho} and \eqref{phi:grrho}--\eqref{Phi:gr1} in force, from Caccioppoli inequality \eqref{caccioppoli} with $u_0=0$ and $\nu$ replaced by $\vrho^{2(1-s)}\nu\leq 1$, we obtain the energy inequality
\begin{multline}\label{a20}
    \mint_{B_{ {1}/{8}}(y)}|Du_\vrho|^2\dx\\
    \leq  c\mint_{B_{ {1}/{4}}(y)}|u_\vrho|^2\dx+c\,\vrho^{2(1-s)}\nu\tail\big(u_\vrho;B_{ {1}/{4}}(y)\big)\mint_{B_{ {1}/{4}}(y)}|u_\vrho|\dx+c\bigg(\mint_{B_{ {1}/{4}}(y)} |f_\vrho|^\chi\dx\bigg)^{2/\chi}+c\,,
\end{multline}
with $c=c(\data)$ due to Remark \ref{rem:r}.  Next observe that, for any $\sigma\in (1/32,1/4)$ and all $y\in B_{1/2}$, we have $B_\sigma(y)\subseteq B_1$. It then follows by \eqref{tail:4} and \eqref{med:tail0}--\eqref{tail:rho} that
\begin{align*}
    \vrho^{2(1-s)}\nu\tail\big(u_\vrho;B_{\sigma}(y)\big)&\leq c(n,s)\vrho^{2(1-s)}\nu\Big[\tail\big(u_\vrho;B_1\big)+\int_{B_1}|u_\vrho|\dx\Big]\\
    &\leq c(n,s)\vrho^{2-\gamma}\nu\leq c(n,s)r_0^{2-\gamma}\nu\leq c(n,s)\,,
\end{align*}
where the last inequality due to \eqref{rpiccolo}; a similar estimate holds if we replace $u_\vrho$ with $u_\vrho-{(u_\vrho)}_{B_{1/8}(y)}$, using this time \eqref{tail:3} and \eqref{tail.triangle}--\eqref{tail.constant}. Therefore, using H\"older's inequality, \eqref{s:reg}, \eqref{med:tail0} and the last estimate, from \eqref{a20} we infer
\begin{equation*}
    \mint_{B_{1/8}(y)}|Du_\vrho|^2dx\leq c(\data)\,.
\end{equation*}
Then, using \eqref{high:int} (with $\nu$ replaced by $\vrho^{2(1-s)}\nu$ by \eqref{Phi:gr1}) and \eqref{pot:est}, if we set $\delta_1=\min\{\zeta,\epsilon\}$ (which only depends on $\data$ thanks to Remark \ref{rem:r}), we have
\begin{multline*}
    \bigg(\mint_{B_{1/32}(y)}|Du_\vrho|^{2+\delta_1}\dx\bigg)^{1/(2+\delta_1)}\leq c\bigg(\mint_{B_{1/8}(y)}|Du_\vrho|^2\dx\bigg)^{1/2} +c+c\bigg(\mint_{B_1}|f_\vrho|^\chi dx\bigg)^{1/\chi}\\
+c\,\vrho^{2(1-s)}\nu\tail\big(u_\vrho-(u_\vrho)_{B_{1/8}(y)};B_{1/8}(y)\big)\,,
\end{multline*}
which, estimating as before, yields
\begin{equation}\label{a21}
    \mint_{B_{1/32}(y)}|Du_\vrho|^{2+\delta_1}\dx\leq c(\data)\,.
\end{equation}
By the arbitrariness of $y\in B_{1/2}$, \eqref{utile}) can be obtained from \eqref{a21} and a standard covering argument.
\vs

 It is useful to introduce the following notation: for $\Phi$ a kernel as in \eqref{phi:gr}, and $\varphi\in C^\infty_c(\R^n;\R^N)$, we write 
\begin{equation}\label{bracket}
{\llangle Q_\Phi^\mathrm{nl}u, \varphi\rrangle}_{\mathrm{nl}}:=\int_{\R^n}\int_{\R^n}\Phi(x,y,u(x),u(y), u(x)-u(y))\cdot (\vphi(x)-\vphi(y))\,\frac{dx\dy}{|x-y|^{n+2s}}\,. 
\end{equation}
In view of the quantitative approximation lemma, the next lemma shows the core of our paper: namely, it shows why the nonlocal term can be treated as a perturbative term, and it can be made arbitrarily small by taking $\vrho$ suitably small.
\begin{lemma}\label{lemma:smnloc}
There exists a constant $c$, depending only on $\data$, such that if $\gamma=\gamma(s)$ is given by \eqref{dep:gamma}, and \eqref{s:reg} is in force, then
    \begin{equation}\label{a23}
        \left|\vrho^{2(1-s)}{\llangle Q^\mathrm{nl}_{\vrho} u_\vrho,\vphi\rrangle}_{\mathrm{nl}} \right|\leq c\,\nu\vrho^{2-\gamma}{\|D\vphi\|}_{L^\infty(B_{1/4})}\,,
    \end{equation}
    for all functions $\vphi\in C^\infty_c(B_{1/4};\R^N)$.
\end{lemma}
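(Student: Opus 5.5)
The plan is to split the double integral in \eqref{bracket} according to whether the variables lie in $B_{1/2}$, using $\mathrm{supp}\,\vphi\subseteq B_{1/4}$. By the symmetry \eqref{phi:grrho} we write
\[
{\llangle Q^\mathrm{nl}_{\vrho} u_\vrho,\vphi\rrangle}_{\mathrm{nl}}
=\int_{B_{1/2}}\int_{B_{1/2}}\Phi_\vrho(\ldots)\cdot(\vphi(x)-\vphi(y))\,\frac{dx\,dy}{|x-y|^{n+2s}}
+2\int_{\R^n\setminus B_{1/2}}\int_{B_{1/4}}\Phi_\vrho(\ldots)\cdot\vphi(x)\,\frac{dx\,dy}{|x-y|^{n+2s}}
=:(J_1)+(J_2),
\]
as in the proof of Lemma \ref{lemma:cacc}. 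We then bound each term by $c\,\nu\,\vrho^{2s-\gamma}\|D\vphi\|_{L^\infty}$. Multiplying by $\vrho^{2(1-s)}$ gives $\vrho^{2-\gamma}$, which is exactly \eqref{a23}.

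For $(J_1)$, we use \eqref{phi:grrho} and $|\vphi(x)-\vphi(y)|\le \|D\vphi\|_\infty|x-y|$. This gives
\[
|(J_1)|\le \nu\|D\vphi\|_\infty\int_{B_{1/2}}\int_{B_{1/2}}\frac{|u_\vrho(x)-u_\vrho(y)|}{|x-y|^{n+2s-1}}\,dx\,dy.
\]
By Cauchy--Schwarz, this is controlled by $[u_\vrho]_{W^{s,2}(B_{1/2})}$ times $\big(\int\!\int_{B_{1/2}\times B_{1/2}}|x-y|^{2-n-2s}\big)^{1/2}$, and the second factor is finite since $2-2s>0$. The fractional embedding \eqref{frac:emb} together with the energy bound \eqref{utile} then gives $|(J_1)|\le c\,\nu\|D\vphi\|_\infty$. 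Here the constant depends only on $\data$; this is where \eqref{s:reg} enters, through \eqref{utile}. Since $\vrho\le1$ and $\gamma>2s$, we have $1\le \vrho^{2s-\gamma}$, so the bound is of the required form.

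For $(J_2)$, we note that for $x\in B_{1/4}$ and $|y|\ge 1/2$ one has $|x-y|\ge |y|/2$. Using $|\vphi(x)|\le \|D\vphi\|_\infty/4$ (as $\vphi$ vanishes on $\partial B_{1/4}$) and $|\Phi_\vrho|\le\nu(|u_\vrho(x)|+|u_\vrho(y)|)$, we get
\[
|(J_2)|\le c\,\nu\|D\vphi\|_\infty\Big[\int_{B_{1/4}}|u_\vrho|\,dx\int_{\R^n\setminus B_{1/2}}\frac{dy}{|y|^{n+2s}}+\int_{\R^n\setminus B_{1/2}}\frac{|u_\vrho(y)|}{|y|^{n+2s}}\,dy\Big].
\]
The first bracketed term is bounded by \eqref{med:tail0}. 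For the second, we split $\R^n\setminus B_{1/2}$ into $B_1\setminus B_{1/2}$, controlled again by \eqref{med:tail0}, and $\R^n\setminus B_1$, which equals $\tail(u_\vrho;B_1)\le\vrho^{2s-\gamma}$ by \eqref{tail:rho}. Together these give $|(J_2)|\le c\,\nu\,\vrho^{2s-\gamma}\|D\vphi\|_\infty$.

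The main obstacle is the far-field term $(J_2)$. There the large tail of $u_\vrho$, which is only bounded by $\vrho^{2s-\gamma}$, must be compensated by the scaling factor $\vrho^{2(1-s)}$; the choice $\gamma<2$ in \eqref{dep:gamma} is precisely what makes the product $\vrho^{2-\gamma}$ small. The local term $(J_1)$ is routine once \eqref{utile} and \eqref{frac:emb} are available, and one need only check that \eqref{utile} holds on $B_{1/2}$ so that it covers the region of integration.
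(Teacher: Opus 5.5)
Your proof is correct and follows the paper's route. The near-diagonal part is bounded by $\nu\,[u_\vrho]_{W^{s,2}}$ via \eqref{frac:emb} and \eqref{utile}, the far-field part by \eqref{med:tail0} together with $\tail(u_\vrho;B_1)\le\vrho^{2s-\gamma}$ from \eqref{tail:rho}, and the factor $\vrho^{2(1-s)}$ turns $\vrho^{2s-\gamma}$ into $\vrho^{2-\gamma}$. Your decomposition, $B_{1/2}\times B_{1/2}$ against $(\R^n\setminus B_{1/2})\times B_{1/4}$ with the Lipschitz bound $|\vphi(x)-\vphi(y)|\le\|D\vphi\|_{L^\infty}|x-y|$ on the first piece, is in fact slightly cleaner than the paper's split at $B_{1/4}$: there the far-field estimate uses $2|x-y|\ge|y|$ only for $|y|\ge 1/2$ and so leaves the strip $y\in B_{1/2}\setminus B_{1/4}$ implicitly untreated, whereas your splitting covers it.
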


\begin{proof}
Let $\vphi$ be as in the statement: by the symmetry property in \eqref{sym:phi},
\begin{multline*}
{\llangle Q^\mathrm{nl}_{\vrho} u_\vrho,\vphi\rrangle}_{\mathrm{nl}}=\,\int_{B_{1/4}}\int_{B_{1/4}} \Phi_{\vrho}\big(x,y,u_\vrho(x),u_\vrho(y),u_\vrho(x)-u_\vrho(y)\big)\cdot \big( \vphi(x)-\vphi(y)\big)\frac{dx\dy}{|x-y|^{n+2s}}\\
+2\int_{\R^n\setminus B_{1/4}}\int_{B_{1/4}}\Phi_{\vrho}\big(x,y,u_\vrho(x),u_\vrho(y),u_\vrho(x)-u_\vrho(y)\big)\cdot \vphi(x)\,\frac{dx\dy}{|x-y|^{n+2s}}\coloneqq (I)+(II)\,.
\end{multline*}
Using \eqref{phi:grrho}, the triangle inequality and \eqref{frac:emb}, we have
\[
    |(I)| \leq \nu{[u_\vrho]}_{W^{s,2}(B_{1/4})}{[\vphi]}_{W^{s,2}(B_{1/4})}\leq c(n,s)\nu{\|Du_\vrho\|}_{L^2(B_{1/4})}\,{\|D\vphi\|}_{L^2(B_{1/4})}\leq  c\,\nu{\|D\vphi\|}_{L^\infty(B_{1/4})}\,,
\]
where the last inequality follows from \eqref{utile} and the constant $c$ depends only on $\data$. Next we observe that for $x\in B_{1/4}$ and $y\in \R^n\setminus B_{1/2}$, we have $2|x-y|\geq |y|$; by using this information, the triangle inequality and again \eqref{phi:grrho}, we get, for a constant $c$ ultimately depending only on $\data$,
\begin{align*}
    |(II)|&\leq c\,\nu\bigg[\int_{\R^n\setminus B_{1/2}}\frac{dy}{|y|^{n+2s}}\int_{B_{1/4}}|u_\vrho(x)|\,|\vphi(x)|\dx+\int_{\R^n\setminus B_{1/2}}\frac{|u_\vrho(y)|}{|y|^{n+2s}}\dy\int_{B_{1/4}}|\vphi(x)|\dx\bigg]\\
    &\leq c\,\nu\bigg[\Big(\int_{B_1}|u_\vrho|^2\dx \Big)^{1/2}\Big(\int_{B_{1/4}}|\vphi|^2\dx \Big)^{1/2}+\int_{\R^n\setminus B_{1/2}}\frac{|u_\vrho(y)|}{|y|^{n+2s}}\dy\int_{B_{1/4}}|\vphi(x)|\dx\bigg]\\
        &\leq c\,\nu\bigg[\Big(\int_{B_1}|u_\vrho|^2\dx \Big)^{1/2}+\int_{\R^n\setminus B_{1/2}}\frac{|u_\vrho(y)|}{|y|^{n+2s}}\dy\bigg]{\|D\vphi\|}_{L^\infty(B_{1/4})}\,.
\end{align*}
We conclude the proof of \eqref{a23} by estimating the first term between parentheses by \eqref{med:tail0} and the second one as follows, using \eqref{tail:rho}:
\begin{align*}
    \int_{\R^n\setminus B_{1/2}}\frac{|u_\vrho(y)|}{|y|^{n+2s}}\dy & =\tail\big(u_\vrho;B_1\big)+\int_{B_1\setminus B_{1/2}}\frac{|u_\vrho(y)|}{|y|^{n+2s}}\dy
    \\
    &\leq \vrho^{2s-\gamma}+2^{n+2s}\,\int_{B_1}|u_\vrho(y)|dy\leq c(n)\big(\vrho^{2s-\gamma}+1\big)\,.
\end{align*}
Multiplying the  estimates above by $\vrho^{2(1-s)}$ and recalling that $\vrho^{2s-\gamma}>1$, $\vrho^{2(1-s)}<\vrho^{2-\gamma}$ by \eqref{dep:gamma} yields \eqref{a23}.
\end{proof}

\subsection{Approximate harmonicity}\label{par:apprharm}
With Lemma \ref{lemma:smnloc} at our disposal , we now want to apply Lemma \ref{lem:harapprox} with
\begin{equation}\label{def:Arhorho}
    \mathsf{A}\equiv \bar{A_\vrho} \coloneqq (A_{\vrho})_{B_1}(0),
\end{equation}
 where $(A_{\vrho})_{B_\sigma}(z)$ was defined in \eqref{A:mean}; $\bar{A_\vrho}$ clearly satisfies \eqref{Legendre.A} by \eqref{A:grrho}. To this end, we first prove the following
\begin{proposition}[Quantified approximate harmonicity]\label{prop:a}
Let $u_\vrho, A_\vrho $ be given by \eqref{def:blow} and \eqref{def:Arhorho}, respectively. Assume that  \eqref{excess:small}, \eqref{dep:gamma} and \eqref{small:regime} are in force. Then we have 
\begin{equation}\label{b:3}
        \bigg|  \int_{B_{1/4}}\big\langle \bar{A_\vrho} Du_\vrho, D\vphi\big\rangle\dx \bigg| \leq c\,s_1(\vrho, \e^*,\e_0,\delta){\|D\vphi\|}_{L^\infty(B_{1/4})}
\end{equation}
for all $\vphi\in C^\infty_c(B_{1/4};\R^N)$, where $c=c(\data)$ and
\begin{equation}\label{ess}
s_1(\vrho, \e^*,\e_0,\delta)= \nu\vrho^{2-\gamma}+\e^*+ [\omegau(\e_0)]^{1/2}+\delta\,.
\end{equation}
\end{proposition}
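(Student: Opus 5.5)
The plan is to start from the weak formulation of \eqref{sol:blow}, tested with $\vphi\in C^\infty_c(B_{1/4};\R^N)$, and to write
\begin{align*}
\int_{B_{1/4}}\langle \bar{A_\vrho}Du_\vrho,D\vphi\rangle\dx
&=\int_{B_{1/4}}\big\langle \big(\bar{A_\vrho}-(A_\vrho)_{B_1}(u_\vrho)\big)Du_\vrho,D\vphi\big\rangle\dx\\
&\quad+\int_{B_{1/4}}\big\langle \big((A_\vrho)_{B_1}(u_\vrho)-A_\vrho(x,u_\vrho)\big)Du_\vrho,D\vphi\big\rangle\dx\\
&\quad+\int_{B_{1/4}}f_\vrho\cdot\vphi\dx-\vrho^{2(1-s)}{\llangle Q^\mathrm{nl}_\vrho u_\vrho,\vphi\rrangle}_{\mathrm{nl}}
\eqqcolon (J_1)+(J_2)+(J_3)+(J_4)\,.
\end{align*}
Here $(A_\vrho)_{B_1}(u_\vrho(x))$ is the $x$-average of $A_\vrho(\cdot,z)$ over $B_1$, frozen at $z=u_\vrho(x)$. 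Each of the four terms is then bounded by one summand of $s_1$ in \eqref{ess}, times $\|D\vphi\|_{L^\infty(B_{1/4})}$.

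The two easy terms are $(J_4)$ and $(J_3)$. Lemma~\ref{lemma:smnloc} directly gives $|(J_4)|\le c\,\nu\vrho^{2-\gamma}\|D\vphi\|_\infty$. For $(J_3)$, H\"older's inequality together with \eqref{s:reg} and $|\vphi|\le c\|D\vphi\|_\infty$ on $B_{1/4}$ give $|(J_3)|\le c\,\e^*\|D\vphi\|_\infty$.

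For $(J_1)$ I use the continuity in $z$. Since $\bar{A_\vrho}=(A_\vrho)_{B_1}(0)$ and averaging preserves the modulus \eqref{new:modcont}, we get
\[
|(A_\vrho)_{B_1}(u_\vrho)-\bar{A_\vrho}|\le \Lambda\min\{\omegau(\mathcal E_\gamma|u_\vrho|),2\}\,.
\]
Using Cauchy--Schwarz, \eqref{utile}, and $\mathcal E_\gamma\le\e_0$, this yields
\[
|(J_1)|\le c\Big(\int_{B_{1/4}}\min\{\omegau(\e_0|u_\vrho|),2\}^2\Big)^{1/2}\|D\vphi\|_\infty\,.
\]
I would split $B_{1/4}$ into $\{|u_\vrho|\le \e_0^{-1/2}\}$ and its complement. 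On the first set $\omegau(\e_0|u_\vrho|)\le\omegau(\e_0^{1/2})$. The complement has measure at most $\e_0$ by Chebyshev and \eqref{med:tail0}. This produces $\omegau(\e_0^{1/2})+\e_0^{1/2}$, which is not quite $[\omegau(\e_0)]^{1/2}$. To get exactly $[\omegau(\e_0)]^{1/2}$, I would instead use concavity, i.e.\ $\omegau(\e_0 t)\le(1+t)\omegau(\e_0)$, and estimate $\int \min\{\omegau,2\}^2\le 2\int\omegau(\e_0|u_\vrho|)\le 2\omegau(\e_0)\int(1+|u_\vrho|)\le c\,\omegau(\e_0)$. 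This gives $|(J_1)|\le c[\omegau(\e_0)]^{1/2}\|D\vphi\|_\infty$.

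The term I expect to be the main obstacle is $(J_2)$, where the coefficients are evaluated at the variable point $u_\vrho(x)$ while \eqref{Ar:vani} only controls oscillation for a \emph{fixed} $z$. Freezing $z$ pointwise is not allowed inside the $x$-integral. My plan is to use H\"older's inequality with exponents $(2+\delta_1)/(1+\delta_1)$ and $2+\delta_1$, putting $Du_\vrho$ in $L^{2+\delta_1}$ via \eqref{utile}. The coefficient difference is bounded by $2\Lambda$, so its $L^{q}$ norm for large $q$ is controlled by its $L^1$ norm to a power. To reduce to fixed $z$, I would approximate $u_\vrho$ by a piecewise constant function: fix a finite $\eta$-net $\{z_k\}$ of the ball $\{|z|\le\e_0^{-1/2}\}$ and partition $B_{1/4}$ accordingly. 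On each piece I replace $u_\vrho$ by $z_k$, paying $\omegau(\e_0\eta)$ by \eqref{new:modcont}, and apply \eqref{Ar:vani} with $\sigma=1$, which is allowed since $\vrho\le r_A$. The large set $\{|u_\vrho|>\e_0^{-1/2}\}$ is handled by Chebyshev as before.

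The difficulty is that summing over $k$ costs the number of net points. For this reason I would rather bound, for each $z$, $\int_{B_1}|A_\vrho(x,z)-(A_\vrho)_{B_1}(z)|\dx\le c\,\delta$, and then use a supremum in $z$ of this oscillation combined with uniform continuity to pass to $z=u_\vrho(x)$. Concretely, $|A_\vrho(x,u_\vrho(x))-(A_\vrho)_{B_1}(u_\vrho(x))|\le \sup_z|A_\vrho(x,z)-(A_\vrho)_{B_1}(z)|$, and I would need to show that the integral of this supremum is $\lesssim\delta+\omegau(\e_0)$. If that fails, I would accept a worse power of $\delta$ from the interpolation, which is harmless for the later choice of $\delta$. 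The resulting bound is $|(J_2)|\le c(\delta+[\omegau(\e_0)]^{1/2})\|D\vphi\|_\infty$, and collecting $(J_1)$–$(J_4)$ gives \eqref{b:3}.
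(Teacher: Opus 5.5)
Your bounds for $(J_1)$, $(J_3)$ and $(J_4)$ are fine. The problem is $(J_2)$: you never actually estimate it, and the routes you sketch cannot give \eqref{b:3} as stated.

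The hypothesis \eqref{Ar:vani} controls the $x$-oscillation of $A_\vrho(\cdot,z)$ only for each \emph{fixed} $z$, so the supremum in $z$ sits outside the integral. Your $(J_2)$ instead needs the integrand at $z=u_\vrho(x)$, which amounts to controlling $\int_{B_1}\sup_z|A_\vrho(x,z)-(A_\vrho)_{B_1}(z)|\dx$. That quantity is not controlled by the hypothesis. For instance, take $A(x,z)=\big(1+\tfrac12\phi(z-g(x))\big)\mathrm{Id}$ with a fixed unit bump $\phi$ and $g$ spreading its values along a very long segment. Its fixed-$z$ oscillation is as small as you like, yet the pointwise supremum in $z$ is of order one at every $x$.

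The net argument does not repair this. Summing over the net points of $\{|z|\le\e_0^{-1/2}\}$ puts a factor of order at least $\e_0^{-N/2}$ in front of $\delta$, so the constant depends on $\omegau(\,\cdot\,)$ through $\e_0$. Optimizing a Chebyshev cut-off instead only yields a sublinear power of $\delta$ depending on $N$. Either way you do not get $c=c(\data)$ with $s_1$ linear in $\delta$. This matters downstream, where $\delta_0$ must depend only on $\data,\beta$ while $\e_0$ depends on $\omegau$.

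The missing idea is to freeze $z$ at the constant value $0$ (the mean of $u_\vrho$ on $B_1$) rather than at $u_\vrho(x)$. Contrary to your remark, freezing inside the $x$-integral is allowed here, because \eqref{new:modcont} holds uniformly in $x$. The paper writes
\[
\bar{A_\vrho}-A_\vrho(x,u_\vrho(x))=\big[\bar{A_\vrho}-A_\vrho(x,0)\big]+\big[A_\vrho(x,0)-A_\vrho(x,u_\vrho(x))\big]\,.
\]
The second bracket is bounded in norm by $\Lambda\,\omegau(\e_0|u_\vrho(x)|)$. Your own $(J_1)$ argument (concavity, \eqref{med:tail0}, Cauchy--Schwarz with \eqref{utile}) then gives the contribution $c[\omegau(\e_0)]^{1/2}{\|D\vphi\|}_{L^\infty(B_{1/4})}$.

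The first bracket involves only the single value $z=0$. So \eqref{Ar:vani} with $\sigma=1$ (admissible since $\vrho\le r_A$), together with Cauchy--Schwarz and \eqref{utile}, gives $c\,\delta\,{\|D\vphi\|}_{L^\infty(B_{1/4})}$.

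If you prefer to keep your splitting, handle $(J_2)$ by inserting $A_\vrho(x,0)$ and $(A_\vrho)_{B_1}(0)$ via the triangle inequality. In that case the intermediate average $(A_\vrho)_{B_1}(u_\vrho(x))$, and with it your $(J_1)$, becomes superfluous.
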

\begin{proof}
   Given $\vphi\in C^\infty_c(B_{1/4};\R^N)$, we have for $X\in B_{1/4}$
\[
\big\langle \bar{A_\vrho} Du_\vrho(x), D\vphi(x)\big\rangle= \big\langle A_{\vrho}(x,u_\vrho(x))Du_\vrho(x), D\vphi(x)\big\rangle +\big\langle\big(\bar{A_\vrho} -A_{\vrho}(x,u_\vrho(x))\big)Du_\vrho(x), D\vphi(x)\big\rangle\,,
\]
so that, integrating over $B_{1/4}$ and using the weak formulation \eqref{sol:blow}, we obtain
\begin{multline*}
\bigg| \int_{B_{1/4}}\big\langle \bar{A_\vrho}Du_\vrho, D\vphi\big\rangle\dx\bigg| \leq    \Big|\vrho^{2(1-s)}{\llangle Q^\mathrm{nl}_{\vrho} u_\vrho,\vphi\rrangle}_{\mathrm{nl}}\Big|+\bigg|\int_{B_{1/4}}f_\vrho\cdot\vphi\dx\bigg|\\
+\int_{B_{1/4}}\big|\bar{A_\vrho}-A_{\vrho}(x,u_\vrho)\big||Du_\vrho| |D\vphi|\dx=(I)+(II)+(III)\,.
\end{multline*}
The term $(I)$ is estimated using Lemma \ref{lemma:smnloc}. Moreover, by H\"older's inequality, \eqref{s:reg}, and since ${\|\vphi\|}_{L^\infty(B_{1/4})}\leq {\|D\vphi\|}_{L^\infty(B_{1/4})}$ being $\varphi$ compactly supported, we have
\begin{equation}
(II)\leq {\|f_\vrho\|}_{L^\chi(B_1)}\,{\|\vphi\|}_{L^\infty(B_{1/4})}\leq c(n)\,\e^*{\|D\vphi\|}_{L^\infty(B_{1/4})}\,;
\end{equation}
for the last term we use the triangle inequality and split
\begin{equation*}
(III)\leq \int_{B_{1/4}}\big|\bar{A_\vrho}-A_\vrho(x,0)\big||Du_\vrho||D\vphi|\dx   +\int_{B_{1/4}}\big|A_\vrho(x,0)-A_{\vrho}(x,u_\vrho)\big||Du_\vrho||D\vphi|\dx\eqqcolon (IV)+(V)\,.
\end{equation*}
By  H\"older's inequality, \eqref{Ar:vani} and \eqref{def:Arhorho} (recalling that $1/4\leq r_A/\vrho$ as $\vrho\leq r_0(\data,\nu,r_A)\leq r_A$), and by \eqref{utile}  we infer
\begin{align*}
    (IV)& \leq \bigg(\int_{B_{1/4}}{|\bar{A_\vrho}-A_\vrho(x,0)|}^2\dx\bigg)^{1/2}{\|Du_\vrho\|}_{L^2(B_{1/4})}{\|D\vphi\|}_{L^\infty(B_{1/4})}\notag
    \\
    &\leq c\,\bigg(\int_{B_{1/4}}{|\bar{A_\vrho}-A_\vrho(x,0)|}^2\dx\bigg)^{1/2}{\|D\vphi\|}_{L^\infty(B_{1/4})}\leq c\,\d\,{\|D\vphi\|}_{L^\infty(B_{1/4})}\,,
\end{align*}
with $c=c(\data)$. Then, by the uniform continuity of $A_\vrho$ \eqref{new:modcont}, H\"older's and Jensen's inequalities, \eqref{utile}, the boundedness of $\omega_{\mathsf{u},\vrho}(\,\cdot\,)\leq 1$, \eqref{med:tail0} and then \eqref{excess:small}, we deduce
\begin{align*}
(V)&\leq c\,{\|D\vphi\|}_{L^\infty(B_{1/4})}\int_{B_{1/4}}\omega_{\mathsf{u},\vrho}\big(|u_\vrho|\big)|Du_\vrho|\dx\notag\\
& \leq c\bigg(\mint_{B_1}\omega_{\mathsf{u},\vrho}(|u_\vrho|)^2\dx\bigg)^{1/2}\,{\|Du_\vrho\|}_{L^2(B_{1/4})}\,{\|D\vphi\|}_{L^\infty(B_{1/4})}\notag\\
&\leq c\bigg[\omega_{\mathsf{u},\vrho}\bigg(\mint_{B_1} |u_\vrho|\dy\bigg)\bigg]^{1/2}{\|D\vphi\|}_{L^\infty(B_{1/4})}\notag\\
&\leq c\,\Big[\omegau\Big(\mathcal{E}_{\g}\big(u;B_\vrho(x_0)\big)\Big)\Big]^{1/2}\,{\|D\vphi\|}_{L^\infty(B_{1/4})}\leq c\big[\omegau(\e_0)\big]^{1/2}{\|D\vphi\|}_{L^\infty(B_{1/4})}\,,
\end{align*}
where $c=c(\data)$. Combining all these estimates, we get the desired estimate.
\end{proof}

By the previous proposition, we are ready to apply the approximate harmonicity lemma \ref{lem:harapprox}: owing to \eqref{utile} and \eqref{b:3}, there exist a small exponent $\vartheta=\vartheta(\data)\in(0,1]$ and $c=c(\data)$ such that
\begin{equation}\label{small:vr}
    \int_{B_{1/4}}|Du_\vrho-Dv_\vrho|^2\dx\leq c\big[s_1(\vrho, \e^*,\e_0,\delta)\big]^{\vartheta}\,,
\end{equation}
for all $0<\vrho<r_0(\data,\nu,r_A)$, where $v_\vrho\in u_\vrho+W_0^{1,2}(B_{1/4};\R^N)$ is the unique solution to 
\begin{equation}\label{b:6}
\begin{cases}
    -\mathrm{div}\big(\bar{A_\vrho}Dv_\vrho \big)=0\quad & \text{in $B_{1/4}$}
    \\
    v_\vrho=u_\vrho\quad &\text{on $\partial B_{1/4}$}
    \end{cases}\,;
\end{equation}
here $\bar{A_\vrho}$ is given by \eqref{def:Arhorho}.
By standard elliptic regularity theory (see \cite[Remark 2.2, Chapter III]{giaq}) and \eqref{utile}, for all $k\in \N$ we have
\begin{equation}\label{utile1}
    {\|D^kv_\vrho\|}_{L^\infty(B_{1/8})}\leq c(k,\Lambda){\|Dv_\vrho\|}_{L^2(B_{1/4})}\leq c(k,\Lambda){\|Du_\vrho\|}_{L^2(B_{1/4})}\leq c(k,\data)\,;
\end{equation}
the second inequality comes from testing the weak formulation of \eqref{b:6}$_1$  with $u_\vrho-v_\vrho$, and performing standard computations, using the ellipticity and growth assumptions \eqref{A:grrho}.

\vs

\noindent Now let $\tau\in (0,1/64)$ to be fixed later, and estimate $\mathcal E_{\g}(u_\vrho;B_\tau)$ by  considering separately its two components. By  \eqref{small:vr} and \eqref{utile1}, we find
\begin{align}\label{av:1}
\bigg(\mint_{B_\tau} |Du_\vrho|^2\dx\bigg)^{1/2}    &\leq \bigg(\mint_{B_\tau} |Du_\vrho-Dv_\vrho|^2\dx\bigg)^{1/2}+\,\bigg(\mint_{B_\tau} |Dv_\vrho|^2\dx\bigg)^{1/2}\notag\\
    &\leq c(n)\tau^{-n/2}\bigg(\mint_{B_{1/4}} |Du_\vrho-Dv_\vrho|^2\dx\bigg)^{1/2}+{\|Dv_{\vrho}\|}_{L^\infty(B_{1/8})}\notag
    \\
    &\leq c\,\tau^{-n/2}\,\big[s_1(\vrho, \e^*,\e_0,\delta)\big]^{\vartheta/2}+c\,,
\end{align}
with $c=c(\data)$. Moreover, by Poincar\'e's inequality and a change of variables,
\[
\frac{1}{\mathcal E_{\g}(u;B_\vrho(x_0))}\bigg(\mint_{B_{\tau\vrho}(x_0)} {\big|u-{(u)}_{B_{\tau\vrho}(x_0)}\big|}^2\dx\bigg)^{1/2}=\bigg(\mint_{B_\tau} {\big|u_\vrho-{(u_\vrho)}_{B_\tau}\big|}^2\dx\bigg)^{1/2}\leq c\,\tau\bigg(\mint_{B_\tau} |Du_\vrho|^2\dx\bigg)^{1/2}\,,
\]
$c=c(n)$, which, coupled with \eqref{av:1} implies
\begin{equation}\label{av:2}
 \bigg(\mint_{B_{\tau\vrho}(x_0)} {\big|u-{(u)}_{B_{\tau\vrho}(x_0)}\big|}^2\dx\bigg)^{1/2}\leq c\,\tau\Big\{\tau^{-n/2}\,\big[s_1(\vrho, \e^*,\e_0,\delta)\big]^{\vartheta/2}+1\Big\}\,\mathcal E_{\g}\big(u;B_\vrho(x_0)\big)\,,
\end{equation}
again with $c$ depending on $\data$.
Next, using \eqref{tail:1} with $w=u_\vrho$ (remember that ${(u_\vrho)}_{B_1}=0$), \eqref{med:tail0}--\eqref{tail:rho}, we may estimate the $\gamma$-tail term as follows:
\begin{align}\label{it:tail1}
\tail_\gamma\big(u_\vrho-(u_\vrho)_{B_\tau};B_\tau\big) &\leq  c\,\tau^\g\tail_\g(u_\vrho;B_1)+c\,\tau^\g\mint_{B_1}|u_\vrho|\dx+c\int_{\tau}^1\frac{\tau^\g}{\mu^{2s}}\mint_{B_\mu}\big|u_\vrho-{(u_\vrho)}_{B_\mu}\big|\dx\,\frac{d\mu}{\mu}\notag
    \\
    &\leq  c\,\tau^\g\vrho^{2s-\g}+c\,\tau^\g\bigg[1+\int_{\tau}^1\mu^{-2s}\mint_{B_\mu}\big|u_\vrho-{(u_\vrho)}_{B_\mu}\big|\dx\,\frac{d\mu}{\mu}\bigg]\,.
\end{align}
We estimate separately the last integral, thanks to the triangle, H\"older's and Poincar\'e's inequalities, \eqref{med:tail0}, \eqref{small:vr} and \eqref{utile1}:
\begin{align}\label{it:tail2}
\int_{\tau}^1\mu^{-2s}\mint_{B_\mu}\big|u_\vrho-{(u_\vrho)}_{B_\mu}\big|\dx\,\frac{d\mu}{\mu}&\leq \int_{\tau}^{1/16}\mu^{-2s}\mint_{B_\mu}\big|u_\vrho-{(u_\vrho)}_{B_\mu}\big|\dx\,\frac{d\mu}{\mu}+2\int_{1/16}^1\mu^{-2s}\mint_{B_\mu}|u_\vrho|\dx\,\frac{d\mu}{\mu}\notag\\
&\leq c(n)\int_{\tau}^{1/16}\frac{1}{\mu^{2s-1}}\bigg(\mint_{B_\mu}|Du_\vrho-Dv_\vrho|^2\dx\bigg)^{1/2}\,\frac{d\mu}{\mu}\notag\\
&\qquad\qquad\qquad+c(n)\int_{\tau}^{1/16}\frac{1}{\mu^{2s-1}}\bigg(\mint_{B_\mu}|Dv_\vrho|^2\dx\bigg)^{1/2}\,\frac{d\mu}{\mu}\notag\\
&\pushright{+\,2\int_{1/16}^1\mu^{-2s}\bigg(\mint_{B_\mu}|u_\vrho|^2\dx\bigg)^{1/2}\,\frac{d\mu}{\mu}}\notag\\
    &\leq c\,\big[s_1(\vrho, \e^*,\e_0,\delta)\big]^{\vartheta/2} \int_\tau^{1/16}\mu^{1-2s-n/2}\,\frac{d\mu}{\mu}\notag\\
    &\pushright{+\,c{\|Dv_\vrho\|}_{L^\infty({B_{1/8}})}\,\int_{\tau}^{1/16}\mu^{1-2s}\,\frac{d\mu}{\mu}+c\int_{1/16}^1\mu^{-2s-n/2}\,\frac{d\mu}{\mu}}\notag\\
    &\leq  c(\data)\Big\{\tau^{1-2s-n/2}[s_1(\vrho, \e^*,\e_0,\delta)]^{\vartheta/2}+\tau^{1-2s}+1\Big\}\,.
\end{align}
On the other hand, using \eqref{relaz:tails} and a change of variables, we have
\begin{align}\label{it:tail3}
     \tail_\gamma\big(u_\vrho-(u_\vrho)_{B_\tau};B_\tau\big)=\tau^{\gamma-2s} \tail\big(u_\vrho-(u_\vrho)_{B_\tau};B_\tau\big)&=\frac{\tau^{\gamma-2s}}{\mathcal E_{\g}(u;B_\vrho(x_0))} \tail\big(u-(u)_{B_{\tau\vrho}(x_0)};B_{\tau\vrho}(x_0)\big)\notag\\
     &=\frac{\vrho^{2s-\g}}{\mathcal E_{\g}(u;B_\vrho(x_0))}\tail_\g\big(u-(u)_{B_{\tau\vrho}(x_0)};B_{\tau\vrho}(x_0)\big)\,.
\end{align}
Plugging \eqref{it:tail2} into \eqref{it:tail1} and in turn merging with \eqref{it:tail3} yields
\begin{align}\label{it:tail4}
    \tail_\g\big(u-(u)_{B_{\tau\vrho}(x_0)};B_{\tau\vrho}(x_0)\big)&=\frac{\mathcal E_{\g}(u;B_\vrho(x_0))}{\vrho^{2s-\g}} \tail_\gamma\big(u_\vrho-(u_\vrho)_{B_\tau};B_\tau\big)\notag\\
&\leq c\,\frac{\mathcal E_{\g}(u;B_\vrho(x_0))}{\vrho^{2s-\g}} \bigg[\tau^\g\vrho^{2s-\g}+\tau^\g\Big\{\tau^{1-2s-n/2}[s_1(\vrho, \e^*,\e_0,\delta)]^{\vartheta/2}+\tau^{1-2s}+1\Big\}\bigg]\notag\\    
&= c\,\tau^\gamma \Big\{1+\tau^{1-2s-n/2}\,[s_1(\vrho, \e^*,\e_0,\delta)]^{\vartheta/2}\,\vrho^{\g-2s}+ \tau^{1-2s}\vrho^{\g-2s}\Big\}\,\mathcal{E}_{\g}\big(u;B_\vrho(x_0)\big)\,,
\end{align}
recalling that $1\leq \vrho^{\g-2s}$ from \eqref{dep:gamma}. Finally, combining \eqref{av:2} and \eqref{it:tail4}, and using that $\tau^\gamma\leq \tau$ by \eqref{dep:gamma} and $\tau\leq 1$, we arrive at
\begin{equation}\label{avuuu}
\mathcal E_\gamma(u;B_{\tau\vrho}(x_0))\leq c\,\Big\{\tau^{1-n/2}\,[s_1(\vrho, \e^*,\e_0,\delta)]^{\vartheta/2} \big(1+\tau^{\g-2s}\vrho^{\g-2s}\big)+\tau+ \tau^{\g+1-2s}\vrho^{\g-2s} \Big\}\,\mathcal{E}_{\g}\big(u;B_\vrho(x_0)\big)\,,
\end{equation}
where $c=c(\data)$.
As a consequence of the above estimate, we can prove the following
\begin{proposition}[Excess decay in the small potential regime]\label{dec:small}
Let $u\in W^{1,2}_{\rm loc}(\Omega;\R^N)\cap L^1_{2s}$ be a weak solution to \eqref{eq1} under assumptions \eqref{A:gr} to \eqref{sym:phi}, let $f$ satisfy \eqref{ass:f} and let $B_\vrho(x_0)\Subset\Omega$. Then for every $\beta\in (0,1)$, there exist small constants
\begin{equation}\label{forevery:constants}
    \e^*=\e^*(\data,\beta),\quad \delta_0=\delta_0(\data,\beta),\quad \e_0=\e_0\big(\data,\beta,\omegau(\,\cdot\,)\big),\quad r_1=r_1(\data,\beta,\nu)\in(0,1),\quad \tau=\tau(\data, \beta)
\end{equation}
  such that if 
\begin{itemize}
\item    $A(\cdot,\cdot\cdot)$ satisfies \eqref{mod:cont.u} and the  $BMO$-condition \eqref{hp:A} for  some $\delta\leq \delta_0$ and radius $r_A>0$,
\item  the ball has radius $\vrho\leq \min\{r_0,r_1,r_A\}$, with $r_0$ satisfying \eqref{cond:r0} and \eqref{rpiccolo},
\item the excess is small at the initial scale, that is \eqref{excess:small} holds,
\item the system is in the small potential regime, that is \eqref{small:regime} holds; 
\end{itemize}
then there holds the $\gamma$-excess decay estimate
\begin{equation}\label{impr:excess}
    \mathcal E_{\g}\big(u;B_{\tau\vrho}(x_0)\big)\leq \tau^\beta\mathcal E_{\g}\big(u;B_{\vrho}(x_0)\big)\leq \frac12\mathcal E_{\g}\big(u;B_{\vrho}(x_0)\big)\,.
\end{equation}
\end{proposition}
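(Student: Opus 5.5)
The plan is to start from the already established estimate \eqref{avuuu}. It holds under the standing assumptions \eqref{excess:small}, \eqref{small:regime}, \eqref{hp:A} and $\vrho\le r_0$, with a constant $c=c(\data)$ that does not depend on $\tau,\vrho,\e^*,\e_0,\delta$. The remaining task is to order the choice of the parameters so that the bracket on its right-hand side is at most $\tau^\beta$.

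The only genuine subtlety is that the last term $c\,\tau^{\g+1-2s}\vrho^{\g-2s}$ carries the factor $\vrho^{\g-2s}\ge1$. This factor blows up as $\vrho\to0$ when $\g>2s$, so it cannot be absorbed by shrinking $\vrho$. I expect the fix is that the choice \eqref{dep:gamma} allows $\g$ as close to $\max\{2s,1\}$ as needed. I would rather avoid changing $\g$, though, so I will first check whether the $\tau^{1-2s}\vrho^{\g-2s}$ term in \eqref{it:tail2}--\eqref{it:tail4} was merely a crude bookkeeping step. Indeed, going back to \eqref{it:tail3}, the factor $\vrho^{\g-2s}$ multiplies the whole bracket of \eqref{it:tail1}. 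In that bracket, the first term $\tau^\g\vrho^{2s-\g}$ exactly cancels it. The other terms $\tau^\g\{\dots\}$ pick up $\vrho^{\g-2s}$, which can only be large.

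So the honest route is to redo \eqref{it:tail1} using \eqref{tail:1} with $\gamma$ replaced by $2s$, i.e.\ working with the plain $\tail$, and only then convert. Equivalently, I would estimate $\tail_\g(u-(u)_{B_{\tau\vrho}};B_{\tau\vrho})$ directly from \eqref{tail:1} at the original scale. This gives $\tau^\g\tail_\g(\cdots;B_\vrho)$, plus $\tau^\g\vrho^{\g-2s}$ times local averages over $B_\vrho$, plus $(\tau\vrho)^\g\int_{\tau\vrho}^{\vrho}\mu^{-2s}(\dots)\,d\mu/\mu$.

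The local pieces are then controlled by $\vrho^{\g-2s}\cdot\vrho^{2s-\g}$-free quantities times $\vrho^{\g-2s}$. This is small precisely because $\g>2s$ and $\vrho\to0$ when $2s<\g$. When instead $\g$ is constrained by $\g>1\ge2s$ ($s\le 1/2$), the factor $\vrho^{\g-2s}\le1$ is harmless anyway. Either way, the term becomes $c\,\tau^{\g+1-2s}\vrho^{\g-2s}\le c\,\tau^{\g+1-2s}$. Since $\g+1-2s>1>\beta$, it is $\le c\,\tau\cdot\tau^{\g-2s}\le c\,\tau$. I will verify this bookkeeping first; it is the main obstacle.

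Granting that \eqref{avuuu} holds in the form
\[
\mathcal E_\g(u;B_{\tau\vrho})\le c\big\{\tau^{1-n/2}\,[s_1]^{\vartheta/2}(1+\tau^{\g-2s})+\tau\big\}\,\mathcal E_\g(u;B_\vrho),
\]
I would fix the parameters in order. First choose $\tau=\tau(\data,\beta)\in(0,1/64)$ with $c\,\tau\le\tfrac12\tau^\beta$ and $\tau^\beta\le1/2$; this is possible since $\beta<1$. Then require
\[
c\,\tau^{1-n/2}(1+\tau^{\g-2s})[s_1]^{\vartheta/2}\le\tfrac12\tau^\beta .
\]
Recalling \eqref{ess}, $s_1=\nu\vrho^{2-\g}+\e^*+[\omegau(\e_0)]^{1/2}+\delta$, it suffices to make each of the four summands at most $\eta:=\tfrac14\big(\tau^{\beta+n/2-1}/(2c(1+\tau^{\g-2s}))\big)^{2/\vartheta}$. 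So choose $\e^*:=\eta$ and $\delta_0:=\eta$, both depending on $(\data,\beta)$. Then choose $\e_0$ with $\omegau(\e_0)\le\eta^2$, depending additionally on $\omegau$. Finally choose $r_1=(\eta/\nu)^{1/(2-\g)}$, which uses $\g<2$ and depends on $\nu$. With $\vrho\le\min\{r_0,r_1,r_A\}$ and $\delta\le\delta_0$, the four summands are each $\le\eta$, hence the bracket is $\le\tau^\beta$, giving \eqref{impr:excess}, and $\tau^\beta\le1/2$ gives the second inequality.
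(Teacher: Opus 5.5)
Your final argument is correct and is the paper's: from \eqref{avuuu} you fix $\tau=\tau(\data,\beta)$ so that $c\,\tau^{1-\beta}$ is small and $\tau^\beta\le 1/2$, and then choose $\e^*,\delta_0,\e_0,r_1$ so that $\tau^{-n/2}[s_1(\vrho,\e^*,\e_0,\delta)]^{\vartheta/2}$ is at most a fixed small constant. The ``main obstacle'' you describe does not exist, because it rests on a sign error: \eqref{dep:gamma} forces $\g>2s$ strictly and $\vrho\le r_0\le1$, so $\vrho^{\g-2s}\le 1$ (indeed $\vrho^{\g-2s}\to0$), not $\vrho^{\g-2s}\ge1$. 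Hence the simplified form of \eqref{avuuu} that you grant follows immediately, with no re-derivation of \eqref{it:tail1} and no adjustment of $\g$. The phrase ``$1\le\vrho^{\g-2s}$'' after \eqref{it:tail4} is a typo for the reverse inequality, which is what is actually used there.
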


\begin{proof}
We start from \eqref{avuuu}: factoring out $\tau^\beta$ and grouping the terms appropriately, we obtain
\begin{equation}\label{so:0}
         \mathcal E_{\g}\big(u;B_{\tau\vrho}(x_0)\big)\leq \tau^\beta\Big\{\bar c\,\tau^{1-\beta}\Big(\tau^{-n/2}[s_1(\vrho, \e^*,\e_0,\delta)]^{\vartheta/2}+1\Big)\Big(1+\tau^{\g-2s}\vrho^{\g-2s}\Big)\Big\}\,\mathcal E_{\g}\big(u;B_\vrho(x_0)\big) 
\end{equation}
with $\bar c$ depending on $\data$. 
We now fix $\tau=\tau(\data,\beta)>0$ so small that
\begin{equation}\label{so:1}
\tau^{1-\beta}\leq  \frac1{4 \,\bar c}\qquad \tau^\beta\leq \frac{1}{2},\qquad \tau\leq \frac{1}{64}
\end{equation}
then, given the dependence $\gamma=\gamma(s)$ and $\vartheta=\vartheta(\data)\in (0,1)$, we may choose a small radius $r_1=r_1(\data,\beta,\nu)\in (0,1)$ so that
\begin{equation}\label{so:11}
\tau^{\g-2s}r_1^{\g-2s}+\tau^{-n/2}\,\big(\nu r_1^{2-\gamma}\big)^{\vartheta/2}\leq \frac14
 \end{equation}
 then we choose $\e_0=\e_0(\data,\beta,\omegau(\,\cdot\,))$ small enough such that
\begin{equation}\label{chiamoomegau}
\tau^{-n/2}\,\omegau(\e_0)^{\vartheta/4}\leq \frac{1}{4}\,,
\end{equation}
and finally we pick small parameters $\e^*,\delta_0$ as in \eqref{forevery:constants} fulfilling
\begin{equation}\label{so:2}
\tau^{-n/2}\,\big[\e^*+\delta_0\big]^{\vartheta/2}\leq \frac1{2}\,.
\end{equation}
Combining the content of  \eqref{so:11} to \eqref{so:2} into \eqref{ess} yields, by sub-additivity,
\[
\tau^{-n/2}\big[s_1(\vrho,\e^*,\e_0,\delta) \big]^{\vartheta/2}\leq 1
\]
for all $\vrho\leq r_1$ and $\delta\leq \delta_0$. Inserting this estimate and \eqref{so:1}$_1$ into \eqref{so:0}, yields \eqref{impr:excess}.
\end{proof}

\subsection{Large potential regime} 

Let us now assume that \eqref{small:regime} is violated, that is
\begin{equation}\label{larg:pot}
    \vrho^2\bigg(\mint_{B_\vrho(x_0)}|f|^\chi\dx\bigg)^{1/\chi}>\e^*\mathcal E_{\g}\big(u;B_\vrho(x_0)\big) 
\end{equation}
holds; for $\beta\in(0,1)$ now $\e^*$ is the fixed constant, depending on $\data$ and $\beta$, given by Proposition \ref{dec:small}. As a simple application of \eqref{ut:exc}, we infer the following
\begin{proposition}[Excess decay in the large potential regime]\label{dec:large}
    Let $u\in W^{1,2}_{\rm loc}(\Omega;\R^N)\cap L^1_{2s}$ be as in Proposition \ref{dec:small}, and assume that $B_{\vrho}(x_0)\Subset \Omega$, $\vrho\leq r_0$, $r_0$ as in the beginning of Section \ref{linearization} and satisfying \eqref{rpiccolo}, are such that \eqref{larg:pot} is in force. Then, for every $\tau\in (0,1]$, we have
\begin{equation}\label{b:12}
\mathcal E_{\g}\big(u;B_{\tau \vrho}(x_0)\big) \leq \frac{c(n,s)\tau^{-n/2}}{\e^*}\vrho^2\bigg(\mint_{B_\vrho(x_0)}|f|^\chi\dx\bigg)^{1/\chi}\,.
\end{equation}
\end{proposition}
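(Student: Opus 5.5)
The plan is to combine the enlargement property \eqref{ut:exc} of the modified excess with the violated smallness condition \eqref{larg:pot}. No equation or harmonic approximation is needed; the statement is purely a consequence of the definitions.

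\emph{Step 1.} Apply \eqref{ut:exc} with the pair of radii $\tau\vrho\leq\vrho$. This is legitimate because $\vrho\leq r_0\leq 1$ and $\gamma\geq 2s$ by \eqref{dep:gamma}. It gives
\[
\mathcal E_\gamma\big(u;B_{\tau\vrho}(x_0)\big)\leq c(n,s)\,\tau^{-n/2}\,\mathcal E_\gamma\big(u;B_{\vrho}(x_0)\big).
\]
Here $u\in L^2_{\rm loc}\cap L^1_{2s}$ holds since $u\in W^{1,2}_{\rm loc}(\Omega;\R^N)\cap L^1_{2s}$, and $B_\vrho(x_0)\Subset\Omega$.

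\emph{Step 2.} Bound the right-hand side using \eqref{larg:pot}, which is exactly
\[
\mathcal E_\gamma\big(u;B_\vrho(x_0)\big)<\frac{1}{\e^*}\,\vrho^2\bigg(\mint_{B_\vrho(x_0)}|f|^\chi\dx\bigg)^{1/\chi}.
\]
Chaining the two inequalities yields \eqref{b:12}, with the constant $c(n,s)$ from \eqref{ut:exc}.

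The only point needing care is checking the hypotheses of \eqref{ut:exc}, namely $\gamma\geq 2s$ and radii at most $1$. Both are guaranteed by \eqref{dep:gamma} and by $\vrho\leq r_0\leq 1$. I expect no genuine obstacle. The case $\tau=1$ is trivial, and the constant does not depend on $\tau$ beyond the explicit factor $\tau^{-n/2}$.
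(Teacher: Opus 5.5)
Your proposal is correct and is the paper's argument: the paper states \eqref{b:12} as a direct consequence of the enlargement estimate \eqref{ut:exc}, applied with radii $\tau\vrho\leq\vrho\leq r_0\leq 1$ and $\gamma\geq 2s$ from \eqref{dep:gamma}. Chaining this with \eqref{larg:pot}, read as $\mathcal E_\gamma(u;B_\vrho(x_0))<(\e^*)^{-1}\vrho^2(\mint_{B_\vrho(x_0)}|f|^\chi\dx)^{1/\chi}$, gives the claim exactly as you describe.
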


\subsection{Conclusion}
We now combine Propositions \ref{dec:small} and \ref{dec:large} and implement a standard iteration procedure to deduce the following conditional excess decay estimate:

\begin{proposition}[Excess decay]\label{dec:one}
Let $u\in W^{1,2}_{\rm loc}(\Omega;\R^N)\cap L^1_{2s}$ be a weak solution to \eqref{eq1} under assumptions \eqref{A:gr} to \eqref{sym:phi} and let $f$ satisfy \eqref{ass:f}. Fix $\beta\in(0,1)$, $\gamma$ as in \eqref{dep:gamma}, and let $B_\vrho(x_0)\Subset\Omega$.  There exists a smallness parameter  $\e_s=\e_s(\data,\omegau(\,\cdot\,),\beta)$, a small radius $r_1=r_1(\data,\nu,\beta)\in(0,1)$, a small constant $\delta_0=\delta_0\,(\data,\beta)$ and a large constant $c=c(\data,\beta)\geq 1$ such that if
\begin{itemize}
\item $A(\cdot,\cdot\cdot)$ satisfies \eqref{mod:cont.u} and the $BMO$ condition \eqref{hp:A} for some $\delta\leq \delta_0$ and $r_A>0$;
\item the ball has radius $\vrho\leq \min\{r_0,r_1,r_A\}$, with $r_0$ satisfying \eqref{cond:r0} and \eqref{rpiccolo}; 
\item the following smallness condition at the initial scale holds:
\begin{equation}\label{eps-small}
\mathcal E_{\g}\big(u;B_\vrho(x_0)\big)+\sup_{t\leq \vrho}\,t^2\bigg(\mint_{B_t(x_0)}|f|^\chi\dx\bigg)^{1/\chi}\leq \e_s\,;
\end{equation}
\end{itemize}
then there exists $\tau=\tau(\data,\beta)$ such that $\tau^\beta\leq 1/2$ and
\begin{equation}\label{alla:fineuso}
    \mathcal E_{\g}\big(u;B_{\tau^{k+1}\vrho}(x_0)\big)\leq \tau^\beta \mathcal E_{\g}\big(u;B_{{\tau^k\vrho}}(x_0)\big)+c\big({\tau^k\vrho}\big)^2\bigg(\mint_{B_{\tau^k\vrho}(x_0)}|f|^\chi\dx\bigg)^{1/\chi}
\end{equation}
for all $k\in \N_0$, and
\begin{equation}\label{impr:excess.general}
    \mathcal E_{\g}\big(u;B_{\sigma}(x_0)\big)\leq c\Big(\frac{\sigma}{\rho}\Big)^\beta \mathcal E_{\g}\big(u;B_{\rho}(x_0)\big)+c\sup_{\sigma\leq t\leq\rho}\, t^2\bigg(\mint_{B_t(x_0)}|f|^\chi\dx\bigg)^{1/\chi}
\end{equation}
for every $0<\sigma\leq\rho\leq \vrho$. Moreover
\begin{equation}\label{unif:EXCS}
\sup_{t\leq \vrho}\,\mathcal E_{\g}\big(u;B_t(x_0)\big)\leq c(\data,\beta)\,\e_s\,.
\end{equation}
\end{proposition}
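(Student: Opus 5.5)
The plan is an induction on $k$ that alternates between Proposition \ref{dec:small} and Proposition \ref{dec:large}, followed by a standard summation of the one-step inequality. Fix $\beta$ and take $\tau,\e^*,\delta_0,\e_0,r_1$ from Proposition \ref{dec:small}. Set $\vrho_k=\tau^k\vrho$ and $\mathcal E_k=\mathcal E_\g(u;B_{\vrho_k}(x_0))$.

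At each scale $\vrho_k$ there are two cases. If \eqref{small:regime} holds at $\vrho_k$ and moreover $\mathcal E_k\le\e_0$, then Proposition \ref{dec:small} applies, since $\vrho_k\le\vrho\le\min\{r_0,r_1,r_A\}$ and the BMO condition is inherited at smaller radii. It gives $\mathcal E_{k+1}\le\tau^\beta\mathcal E_k$. If instead \eqref{larg:pot} holds, then Proposition \ref{dec:large} gives
\[
\mathcal E_{k+1}\le c\,\tau^{-n/2}(\e^*)^{-1}\vrho_k^2\Big(\mint_{B_{\vrho_k}(x_0)}|f|^\chi\dx\Big)^{1/\chi},
\]
with a constant depending only on $\data$ and $\beta$. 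In either case \eqref{alla:fineuso} holds at step $k$, provided $\mathcal E_k\le\e_0$.

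The main obstacle is keeping this smallness along the iteration, which is what \eqref{unif:EXCS} encodes. Write $S=\sup_{t\le\vrho}t^2(\mint_{B_t}|f|^\chi)^{1/\chi}\le\e_s$. I prove by induction that $\mathcal E_k\le 2(1+c)\e_s$, where $c$ is the constant in \eqref{alla:fineuso}. The base case $k=0$ is \eqref{eps-small}. For the inductive step, \eqref{alla:fineuso} together with $\tau^\beta\le1/2$ gives
\[
\mathcal E_{k+1}\le\tfrac12\mathcal E_k+cS\le(1+c)\e_s+c\,\e_s\le 2(1+c)\e_s .
\]
Choosing $\e_s$ with $2(1+c)\e_s\le\e_0$ keeps every $\mathcal E_k\le\e_0$, so \eqref{alla:fineuso} is valid for all $k$.

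Iterating \eqref{alla:fineuso} yields
\[
\mathcal E_k\le\tau^{k\beta}\mathcal E_0+c\sum_{j<k}\tau^{(k-1-j)\beta}\,\vrho_j^2\Big(\mint_{B_{\vrho_j}}|f|^\chi\Big)^{1/\chi}\le\tau^{k\beta}\mathcal E_0+\frac{c}{1-\tau^\beta}\sup_{\vrho_k\le t\le\vrho}t^2\Big(\mint_{B_t}|f|^\chi\Big)^{1/\chi}.
\]
This inequality holds starting from any $\rho\le\vrho$ in place of $\vrho$, because the hypotheses hold equally at $\rho$, using \eqref{unif:EXCS} at the discrete radii and $\e_s$ small. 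For general radii $\sigma\le\rho$, pick $k$ with $\tau^{k+1}\rho<\sigma\le\tau^k\rho$. Then \eqref{ut:exc} gives $\mathcal E_\g(u;B_\sigma)\le c\,\tau^{-n/2}\mathcal E_\g(u;B_{\tau^k\rho})$. Combining this with the discrete estimate and $\tau^{k\beta}\le\tau^{-\beta}(\sigma/\rho)^\beta$ gives \eqref{impr:excess.general}; the supremum over $[\tau^k\rho,\rho]$ is dominated by the one over $[\sigma,\rho]$ because $\tau^k\rho\ge\sigma$. The same interpolation applied to the discrete bound $\mathcal E_k\le2(1+c)\e_s$ gives \eqref{unif:EXCS} for all $t\le\vrho$.
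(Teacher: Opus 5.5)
Your proposal is correct and follows essentially the paper's route. It combines Propositions \ref{dec:small} and \ref{dec:large} into the one-step inequality, propagates $\mathcal E_\g\le\e_0$ along $\tau^k\vrho$ by induction (your invariant $\mathcal E_k\le 2(1+c)\e_s$ plays the role of the paper's $\mathcal E_k\le\e_0/2$), sums the geometric series, and interpolates with \eqref{ut:exc}. The only variation is that you restart the iteration at an arbitrary $\rho\le\vrho$, which needs a further harmless, $\data$- and $\beta$-dependent reduction of $\e_s$ so that \eqref{unif:EXCS} gives $\mathcal E_\g\le\e_0$ at every scale; the paper instead keeps the grid $\tau^k\vrho$ fixed and applies \eqref{ut:exc} at both ends, so no extra reduction is required.
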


\begin{proof}
 For $\beta\in (0,1)$ as in the statement, we take $\e^*(\data,\beta),\e_0(\data,\beta, \omegau(\,\cdot\,)),\tau(\data,\beta),r_1(\data,\beta,\nu)$ the corresponding constants from Proposition \ref{dec:small} and we fix $\e_s$ as
\[
\e_s=\e_s\big(\data,\omegau(\,\cdot\,),\beta\big)=\frac{\e_0}4\Big[\frac{c(n,s)\tau^{-n/2}}{\e^*}\Big]^{-1}\,,
\]
where $c(n,s)$ is the constant in \eqref{b:12}; notice that $\e_s\leq \e_0/4$. Now, for $\vrho\leq \min\{r_0,r_1,r_A\}$, combining the estimates from Propositions \ref{dec:small} and \ref{dec:large}, we get
\begin{equation}\label{exc.decay.onescale}
   \mathcal E_{\g}\big(u;B_{\tau\vrho}(x_0)\big)\leq \tau^\beta \mathcal E_{\g}\big(u;B_{\vrho}(x_0)\big)+\frac{c\,\tau^{-n/2}}{\e^*}\,\vrho^2\bigg(\mint_{B_\vrho(x_0)}|f|^\chi\dx\bigg)^{1/\chi} 
\end{equation}
for $c$ depending on the $\data$. Using \eqref{eps-small}, we can estimate the right-hand side as
\[
\tau^\beta \mathcal E_{\g}\big(u;B_\vrho(x_0)\big)+\frac{c\,\tau^{-n/2}}{\e^*}\vrho^2\bigg(\mint_{B_\vrho(x_0)}|f|^\chi\dx\bigg)^{1/\chi}\leq \Big[\tau^\beta+\frac{c\,\tau^{-n/2}}{\e^*}\Big]\e_s \leq \frac{\e_0}4+\frac{\e_0}4\leq \frac{\e_0}2\,,
\]
we are in a position to use again the decay estimate \eqref{exc.decay.onescale} with $\tau \vrho$ replacing $\vrho$, as \eqref{excess:small} holds in $B_{\tau\vrho}(x_0)$ and, repeating the argument, we get that for every $k\in\N_0$,
\begin{equation*}
\begin{cases}
\displaystyle{ \mathcal E_{\g}\big(u;B_{{\tau^k\vrho}}(x_0)\big)\leq \frac{\e_0}2}\\[2mm]
\displaystyle{\mathcal E_{\g}\big(u;B_{\tau^{k+1}\vrho}(x_0)\big)\leq \tau^\beta \mathcal E_{\g}\big(u;B_{{\tau^k\vrho}}(x_0)\big)+c\big({\tau^k\vrho}\big)^2\bigg(\mint_{B_{\tau^k\vrho}(x_0)}|f|^\chi\dx\bigg)^{1/\chi}}
\end{cases},
\end{equation*}
so that  \eqref{alla:fineuso} is proved; moreover, by iterating this estimate we deduce that, for every $i,j \in\N_0$ with $j\geq i$,
\begin{align*}
\mathcal E_{\g}\big(u;B_{\tau^{j}\vrho}(x_0)\big)&\leq \tau^{\beta(j-i)} \mathcal E_{\g}\big(u;B_{{\tau^i \vrho}}(x_0)\big)+c\sum_{k=i}^{j-1}\tau^{\beta(j-1-k)}\big({\tau^k\vrho}\big)^2\bigg(\mint_{B_{\tau^k\vrho}(x_0)}|f|^\chi\dx\bigg)^{1/\chi} \\
&\leq \tau^{\beta(j-i)} \mathcal E_{\g}\big(u;B_{\tau^i\vrho}(x_0)\big)+\frac{c}{1-\tau^\beta}\sup_{\tau^{j-1}\vrho\leq t \leq\tau^i\vrho}\,t^2\bigg(\mint_{B_t(x_0)}|f|^\chi\dx\bigg)^{1/\chi}
\end{align*}
(if $i=j$ we agree that the summation and the supremum are both zero). Now the conclusion is standard: to prove \eqref{impr:excess.general}, we first consider when $\sigma<\tau \rho$, and in this case there are $j,k\in\mathbb N$, with $j\geq k+1$, such that $\tau^{j+1}\vrho<\sigma\leq \tau^j\vrho\leq \tau^{k+1}\vrho<\rho\leq \tau^k\vrho$: at this point by \eqref{ut:exc}, using the previous estimate with $i=k+1$, with the same agreement on the supremum,
\begin{align*}
\mathcal E_{\g}\big(u;B_\sigma(x_0)\big)&\leq c(n,s)\tau^{-\frac n2}\mathcal E_{\g}\big(u;B_{\tau^j\vrho}(x_0)\big)\\
&\leq c(n,s)\tau^{-\frac n2}\bigg[\tau^{\beta(j-k-1)}\mathcal E_{\g}\big(u;B_{{\tau^{k+1}\vrho}}(x_0)\big)+\frac{c}{1-\tau^\beta}\sup_{\tau^{j-1}\vrho\leq t\leq \tau^{k+1}\vrho}\,t^2\bigg(\mint_{B_t(x_0)}|f|^\chi\dx\bigg)^{1/\chi}\bigg]\\
&\leq c\bigg[\Big(\frac{\sigma}{\rho}\Big)^\beta\mathcal E_{\g}\big(u;B_{\rho}(x_0)\big)+\sup_{\sigma\leq t\leq \rho}t^2\bigg(\mint_{B_t(x_0)}|f|^\chi\dx\bigg)^{1/\chi}\bigg]\,,
\end{align*}
with now $c=c(\data,\beta)$, since $\tau^{j-k-1}\leq \tau^{-2}(\sigma/\rho)\leq c\,(\sigma/\rho)$. In the case $\sigma\in[\tau\rho,\rho]$ the conclusion is again standard by \eqref{ut:exc}. Using \eqref{eps-small} in \eqref{impr:excess.general} immediately leads to \eqref{unif:EXCS}.
\end{proof}

\section{Regular set and therein regularity of solutions}\label{sec:regset}
For $u\in W^{1,2}_{\rm loc}(\Omega;\R^N)\cap L^1_{2s}$ a fixed local solution to \eqref{eq1}, under the assumptions of Theorem \ref{part.BMO}, and $\gamma$ as in \eqref{dep:gamma}, let us define the $\gamma$-singular set
\[
\Omega_{{\rm sing},\gamma}=\big\{x_0\in\Omega:\limsup_{\vrho\to0^+}\mathcal E_\gamma(u;B_\vrho(x_0))>0\big\}\,.
\]
Taking into account \eqref{exc.vanishing} from Lemma \ref{exc.vanishing}, with $\zeta$ defined in Proposition \ref{prop:highint}, we have
\[
\Omega_{{\rm sing},\gamma}\subseteq\big\{x_0\in\Omega:\limsup_{\vrho\to0^+}E(u;B_\vrho(x_0))>0\big\}
\]
and
\[
\Omega_{{\rm sing},\gamma}\subseteq\Big\{x_0\in\Omega:\limsup_{\vrho\to0^+}\vrho^{-[n-(2+\zeta)]}\int_{B_\vrho(x_0)} |Du|^{2+\zeta}\,dx>0
\Big\}\,.
\]
The latter inclusion follows from Poincar\'e's inequality
\[
\bigg(
\mint_{B_\vrho(x_0)} \big|u-(u)_{B_\vrho(x_0)}\big|^2\,dx \bigg)^{(2+\zeta)/2}
\leq c(n,\zeta)\vrho^{2+\zeta-n} \int_{B_\vrho(x_0)} |Du|^{2+\zeta}\,dx\,.
\]
Therefore $\Omega_{{\rm sing},\gamma}=\emptyset$ in dimension $n=2$, while,
if $n\ge3$, the classical Giusti lemma \cite[Chapter IV, Theorem 2.2]{giaq}
gives
\[
\dim_{\mathcal H}(\Omega_{{\rm sing},\gamma}) \leq n-2-\zeta\,.
\]
 
\subsection{A general construction}\label{general.construction}
We now record a construction that will be used with different choices of the parameter $\beta\in(0,1)$. Fix $\beta\in(0,1)$ and let $\varepsilon_s = \varepsilon_s(\data,\omegau(\,\cdot\,),\beta)$, $r_1=r_1(\data,\beta,\nu)$, $\delta_0=\delta_0(\data)$ and $\tau=\tau(\data,\beta)$ be the quantities provided by Proposition \ref{dec:one}. We assume that the smallness hypotheses \eqref{x0:BMOd} and \eqref{ass.BMO.f0} hold with this value of $\delta_0$ and with $\varepsilon_b:=\varepsilon_s/2$. Let also $r_0$ satisfy \eqref{cond:r0} and \eqref{rpiccolo}. 

\vs

We fix a compact exhaustion $(K_i)_{i\in\mathbb N}$ of $\Omega$ such that $K_i\Subset {\rm int}(K_{i+1})$ for every $i\in\mathbb N$. For every $i\in\mathbb N$ there exist radii $r_A(K_i),r_f(K_i)\in(0,{\rm dist}(K_i,\partial\Omega))$ such that \eqref{x0:BMOd} and \eqref{ass.BMO.f0} hold on $K_i$; we set
\begin{equation}\label{bar.r}
\bar r_\beta(K_i):=\min\Big\{r_0,r_1,r_f(K_i),r_A(K_i),\frac14{\rm dist}\big(K_i,\partial\Omega\big)\Big\}\,,
\end{equation}
and then
\begin{equation}\label{Om.u.1}
K_{u,i,\beta} := \Big\{x\in{\rm int}(K_i): \exists \vrho_x\in\big(0,\bar r_\beta(K_i)\big)\text{ such that } B_{\vrho_x}(x)\Subset\Omega \text{ and } \mathcal E_\gamma(u;B_{\vrho_x}(x))<\varepsilon_s/2 \Big\}
\end{equation}
and
\begin{equation}\label{Om.u.2}
\Omega_{u,\beta} := \bigcup_{i\in\mathbb N}K_{u,i,\beta}.
\end{equation}
By the continuity of $x\mapsto \mathcal E_\gamma(u;B_\vrho(x))$ for fixed $\vrho$ (see the discussion after \eqref{def:exc})  the set $\Omega_{u,\beta}$ is open. Moreover, if $x_0\in\Omega\setminus\Omega_{u,\beta}$, then
\[
\limsup_{\vrho\to0^+} \mathcal E_\gamma(u;B_\vrho(x_0)) \geq \frac{\varepsilon_s}{2}.
\]
Indeed, otherwise, since $x_0\in{\rm int}(K_i)$ for some $i$, one could choose $\vrho<\bar r_\beta(K_i)$ sufficiently small so that $B_\vrho(x_0)\Subset\Omega$ and $\mathcal E_\gamma(u;B_\vrho(x_0))<\varepsilon_s/2$, which would imply $x_0\in K_{u,i,\beta}\subset\Omega_{u,\beta}$, a contradiction. Hence $\Omega\setminus\Omega_{u,\beta}\subseteq\Omega_{{\rm sing},\gamma}$; consequently, $\Omega\setminus\Omega_{u,\beta}=\emptyset$ if $n=2$ while, if $n\ge3$,
\begin{equation}\label{hausdorff.estimate}
\dim_{\mathcal H}(\Omega\setminus\Omega_{u,\beta}) \leq n-2-\zeta\,. 
\end{equation}
The next proofs rely on the following observation. Let $x_0\in\Omega_{u,\beta}$; then, there exist $i=i(x_0)\in\mathbb N$ and a radius $\vrho_{x_0}\in(0,\bar r_\beta(K_i))$ such that $B_{\vrho_{x_0}}(x_0)\Subset\Omega$ and 
\begin{equation}\label{non:cancellare}
\mathcal E_\gamma\big(u;B_{\vrho_{x_0}}(x_0)\big)<\frac{\varepsilon_s}{2}\,. 
\end{equation}
By the already mentioned continuity of the map $x\mapsto\mathcal E_\gamma(u;B_{\vrho_{x_0}}(x))$, there exists a small radius $r_{x_0}\leq \vrho_{x_0}/8$ such that $B_{r_{x_0}}(x_0)\Subset{\rm int}(K_i)$, $B_{\vrho_{x_0}}(\bar x)\Subset\Omega$ for every $\bar x\in B_{r_{x_0}}(x_0)$ and
\begin{equation}\label{local.Egamma.bdd}
  \mathcal E_\gamma\big(u;B_{\vrho_{x_0}}(\bar x)\big) < \frac{\varepsilon_s}{2}\qquad\text{for every $\bar x\in B_{r_{x_0}}(x_0)$}\,;
\end{equation}
since $\vrho_{x_0}<r_f(K_i)$ and $\bar x\in K_i$, assumption \eqref{ass.BMO.f0}, applied with $K=K_i$, gives 
\begin{equation}\label{small.nou.f}
\sup_{\substack{\bar x\in B_{r_{x_0}}(x_0)\\0<t\leq \vrho_{x_0}}} t^2\bigg(\mint_{B_t(\bar x)}|f|^\chi\dx \bigg)^{1/\chi}< \frac{\varepsilon_s}{2}\,. 
\end{equation}
Therefore the assumption \eqref{eps-small} of Proposition \ref{dec:one} holds on every ball $B_{\vrho_{x_0}}(\bar x)$ with $\bar x\in B_{r_{x_0}}(x_0)$. Moreover, \eqref{x0:BMOd} gives the required coefficient smallness and, since also $\vrho_{x_0}<\min\{r_0,r_1\}$, all assumptions of Proposition \ref{dec:one} are fulfilled on such balls. Therefore Proposition \ref{dec:one} yields
\begin{equation}\label{pointwise.BMO}
\sup_{0<t\leq \vrho_{x_0}}
\mathcal E_\gamma(u;B_t(\bar x)) \leq c(\data,\beta)\,\varepsilon_s\,,
\end{equation}
\begin{equation}\label{local.Egamma.decay}
\mathcal E_{\g}\big(u;B_{\tau^{k+1}\vrho_{x_0}}(\bar x)\big)\leq \tau^\beta \mathcal E_{\g}\big(u;B_{{\tau^k\vrho_{x_0}}}(\bar x)\big)+c\big({\tau^k\vrho_{x_0}}\big)^2\bigg(\mint_{B_{\tau^k\vrho_{x_0}}(\bar x)}|f|^\chi\dx\bigg)^{1/\chi}\qquad\text{for all $k\in \N_0$}\,,
\end{equation}
and
\begin{equation}\label{local.Egamma.decay2}
\mathcal  E_{\g}\big(u;B_{\sigma}(\bar x)\big)\leq c\,\Big(\frac{\sigma}{\rho}\Big)^\beta \mathcal E_{\g}\big(u;B_{\rho}(\bar x)\big)+c\,\sup_{t\leq \rho}\,t^2\bigg(\mint_{B_t(\bar x)}|f|^\chi\dx\bigg)^{1/\chi}
\end{equation}
for all $0<\s\leq \rho\leq \vrho_{x_0}$ and all $\bar x\in B_{r_{x_0}}(x_0)$; all the constants $c$ depend on the $\data$ and $\beta$, but not on $\bar x$. 

%

\begin{proof}[Proof of Theorem \ref{part.BMO}]
The proof is essentially a consequence of Proposition \ref{dec:one}. However, due to the local nature of assumptions \eqref{x0:BMOd}--\eqref{ass.BMO.f0}, the argument is slightly more delicate than in the standard setting, and we therefore provide the details.

\vs

We use the construction above with $\beta=1/2$. Thus $\varepsilon_s=\varepsilon_s(\data,\omegau(\,\cdot\,))$, $r_1=r_1(\data,\nu)$, $\delta_0=\delta_0(\data)$, and the smallness parameter in the statement of the theorem is chosen as $\varepsilon_b:=\varepsilon_s/2$; we also set $\Omega_u:=\Omega_{u,1/2}$. By the construction above, $\Omega_u$ is open, $\Omega\setminus\Omega_u=\emptyset$ if $n=2$, while, if $n\ge3$, estimate \eqref{hausdorff.estimate} holds. It remains to prove \eqref{local.BMO}; the case
$A(x,u)\equiv A(x)$ will be treated separately in Section \ref{sec:everywhere}.

\vs

Let now $K\Subset\Omega_u$. For every $x_0\in K$ we have radii $\vrho_{x_0}>0$ and $r_{x_0}\leq \vrho_{x_0}/8$ satisfying \eqref{pointwise.BMO} for every $\bar x\in B_{r_{x_0}}(x_0)$. The radii $r_{x_0}$ may depend on $x_0$ and may degenerate along $K$; we therefore use compactness: there exist points $x_1,\ldots,x_m\in K$ such that the union of $B_{r_{x_j}}(x_j)$ covers $K$; we set then $r_K:=\min_{1\leq j\leq m}r_{x_j}>0$. We claim that
\[
E_{r_K}(u;\bar x)\leq c(\data)\,\varepsilon_s \qquad\text{for every } \bar x\in K\,.
\]
Indeed, if $\bar x\in K$, then $\bar x\in B_{r_{x_j}}(x_j)$ for some $j\in\{1,\ldots,m\}$. Since $r_K\leq r_{x_j}\leq\vrho_{x_j}$, estimate \eqref{pointwise.BMO}, applied with $x_j$ in place of $x_0$, gives
\[
E_{r_K}(u;\bar x) = \sup_{0<t\leq r_K}E\big(u;B_t(\bar x)\big) \leq \sup_{0<t\leq r_K}\mathcal E_\gamma\big(u;B_t(\bar x)\big) \leq c(\data)\varepsilon_s\,;
\]
thus, by the definition in Paragraph \ref{FS}, $u\in BMO(K;\R^N)$. Since $K\Subset\Omega_u$ was arbitrary, this proves \eqref{local.BMO}.
\end{proof}

\begin{proof}[Proof of Theorem \ref{teo.partVMO}]
We again fix $\beta\in(0,1)$, say $\beta=1/2$, and apply the construction described at the beginning of the section. Indeed, assumption \eqref{ass.BMO.f} implies the validity of \eqref{ass.BMO.f0} on every compact subset of $\Omega$, for some radius $r_f>0$ depending on the compact set. In particular, estimate \eqref{local.Egamma.decay2} holds for all $0<\sigma\leq \rho\leq \vrho_{x_0}$, with a constant $c=c(\data)$ independent of $\bar x\in B_{r_{x_0}}(x_0)$. Hence
\[
\mathcal E_{\gamma}\big(u;B_{\sigma}(\bar x)\big) \leq c\Big(\frac{\sigma}{\rho}\Big)^\beta \mathcal E_{\gamma}\big(u;B_{\rho}(\bar x)\big)+ c\,\sup_{t\leq \rho} t^2 \bigg( \mint_{B_t(\bar x)}|f|^\chi\,dx \bigg)^{1/\chi}\,.
\]
Taking the supremum with respect to $\bar x\in B_{r_{x_0}}(x_0)$ and then the limsup as $\sigma\to0^+$, with $\rho$ fixed, we obtain
\[
\limsup_{\sigma\to0^+} \sup_{\bar x\in B_{r_{x_0}}(x_0)} \mathcal E_\gamma\big(u;B_\sigma(\bar x)\big) \leq c\sup_{\substack{\bar x\in B_{r_{x_0}}(x_0), \\ t\leq\rho}} t^2 \bigg(\mint_{B_t(\bar x)}|f|^\chi\,dx\bigg)^{1/\chi}\,.
\]
Letting $\rho\to0^+$ and using \eqref{ass.BMO.f}, we obtain
\begin{equation}\label{thesis:VMO} 
\lim_{\sigma\to0^+} \mathcal E_\gamma\big(u;B_\sigma(\,\cdot\,)\big) = 0 \qquad \text{uniformly in } B_{r_{x_0}}(x_0)\,.
\end{equation}
This implies $u\in VMO_{\rm loc}(\Omega_u;\R^N)$ by a covering argument analogous to the one used in the proof of Theorem \ref{part.BMO}.
\end{proof}

\begin{proof}[Proof of Theorem \ref{cont.t}]
Again, we fix $\beta\in (0,1)$, say $\beta=1/2$. Thanks to  \eqref{est.potential.onescale}, we see that \eqref{I1.uniformly} implies
\begin{equation*}
    \lim_{\vrho\to0^+}\sup_{t\leq \vrho}\,t^2\bigg(\mint_{B_t(\,\cdot\,)}|f|^\chi\dx\bigg)^{1/\chi}=0 \qquad\text{locally uniformly in $\Omega$}\,,
\end{equation*}
that in turn implies the validity of \eqref{ass.BMO.f0} for some radius $r_f>0$, so that \eqref{local.Egamma.decay} holds true.

Now, let $\tau=\tau(\data)\in(0,1)$ be as given at the beginning of the Section for the choice $\beta=1/2$. We set $\vrho_j = \tau^j \vrho_{x_0}$, and we write, for short, $B_j(\bar x)=B_{\vrho_j}(\bar x)$, $\mathcal E_j(\bar x)=\mathcal E_\gamma(u; B_{\vrho_j}(\bar x))$ for $\bar x\in B_{r_{x_0}}(x_0)$; then by using \eqref{local.Egamma.decay} and recalling the bound before \eqref{alla:fineuso} we deduce  
\[
\mathcal E_{j+1}(\bar x)  \leq \frac12 \mathcal E_j(\bar x) + c\, \vrho_j^2 \bigg( \mint_{B_j(\bar x)} |f|^\chi \dx\bigg)^{1/\chi} 
\]
for all $j\in\N_0$ and all $\bar x\in B_{r_{x_0}}(x_0)$. For $i,k\in\N$, $k > i$, summing over $j=i,i+1,\dots,k$, changing the summation variable and enlarging the summation on the right-hand side, we obtain
\[
\sum_{j=i}^{k+1} \mathcal E_j(\bar x) \leq \mathcal E_i(\bar x) + \frac12\sum_{j=i}^{k+1} \mathcal E_j(\bar x)+ c \sum_{j=i}^k \vrho_j^2\bigg( \mint_{B_j(\bar x)} |f|^\chi \dx\bigg)^{1/\chi} 
\]
and, after reabsorbing, 
\[
\sum_{j=i}^{k+1} \mathcal E_j(\bar x) \leq 2\mathcal E_i(\bar x) + c \sum_{j=i}^\infty \vrho_j^2 \bigg( \mint_{B_j(\bar x)} |f|^\chi \dx\bigg)^{1/\chi}\,;
\]
the constant $c$ depends here only on $\data$. Now, for the same indices, by telescopic summation, using the above estimate and \eqref{est.potential.sum}, we get
\begin{align}\label{uk:cauchy}
\big|(u)_{B_k(\bar x)} - (u)_{B_i(\bar x)}\big| &\leq \sum_{j=i}^k \big|(u)_{B_{j+1}(\bar x)} - (u)_{B_j(\bar x)}\big| \leq \tau^{-n} \sum_{j=i}^{k+1} \bigg(\mint_{B_j(\bar x)} {\big|u - (u)_{B_j(\bar x)}\big|}^2 \dx \bigg)^{1/2} \\
&\leq c\, \mathcal E_i(\bar x)+ c \sum_{j=i}^k \vrho_j^2 \bigg(\mint_{B_j(\bar x)} |f|^\chi\dx \bigg)^{1/\chi} \leq c\, \mathcal E_i(\bar x)+c \,{\bf I}^f_{2,\chi}(\bar x,2\vrho_{i-1})\,.\notag
\end{align}
As the right-hand side of \eqref{uk:cauchy} vanishes uniformly in $B_{r_{x_0}}(x_0)$ thanks to \eqref{thesis:VMO} and \eqref{I1.uniformly}, it follows that the sequence of maps $\{\bar x\mapsto (u)_{B_i(\bar x)}\}_{i\in \N}$ satisfies the Cauchy condition with respect to the uniform convergence in $B_{r_{x_0}}(x_0)$.  As $(u)_{B_i(\bar x)}\rightarrow u(\bar x)$ for $i\to+\infty$, for a.e. $\bar x$ by Lebesgue's point theorem, this implies that $u$ agrees a.e. with a continuous function.
\end{proof}

The next result is a slightly more general version of Theorem \ref{thm:hold}.

\begin{theorem}[Partial H\"older continuity]\label{partial.Holder}
Let $\beta_0\in (0,1)$, and assume that for every compact set $K\Subset\Omega$, there exists a radius $R_f=R_f(K)>0$ such that $f$ satisfies
\begin{equation}\label{new:fbound}
\sup_{\substack{x_0\in K,\\ 0<t\leq R_f}} t^{2-\beta_0}\,\bigg(\mint_{B_t(x_0)} |f|^\chi\dx\bigg)^{1/\chi}<\infty\,;
\end{equation}
suppose moreover that $A(\cdot,\cdot\cdot)$ satisfies \eqref{mod:cont.u}. There exists a small constant  $\delta_0 = \delta_0(\data,\beta_0)$ such that if for every compact $K\Subset\Omega$, there exists a radius $r_A=r_A(K)>0$ for which \eqref{hp:A} holds uniformly for $x_0\in K$, then 
\begin{equation*}
   u \in C^{0,\beta_0}_{\rm loc}(\Omega_u; \R^N)\,,
\end{equation*}
 where $\Omega_u$ is the open set given by \eqref{Om.u.1}--\eqref{Om.u.2} for the choice $\beta=(1+\beta_0)/2$. Moreover, for every $x_0\in \Omega_u$, there exist radii $0<r_{x_0}\leq \vrho_{x_0}$ such that $B_{r_{x_0}}(x_0)\Subset \Omega_u$, $B_{2\vrho_{x_0}}(x_0)\Subset \Omega$, and
{\rm \begin{multline}\label{vera:localHolder}
{[u]}_{C^{0,\beta_0}(B_{r_{x_0}}(x_0))}\leq c\,\vrho_{x_0}^{-\beta_0}\bigg( \mint_{B_{\vrho_{x_0}}(x_0)} {\big|u - (u)_{B_{\vrho_{x_0}}(x_0)}\big|}^2 \dx \bigg)^{1/2}\\
+c\,\vrho_{x_0}^{\gamma-2s-\beta_0}\tail\big(u-(u)_{B_{\vrho_{x_0}}(x_0)};B_{\vrho_{x_0}}(x_0)\big)+ c\,{\big\|\mathrm{M}^{2-\beta_0}_{\vrho_{x_0},\chi}[f]\big\|}_{L^\infty(B_{r_{x_0}}(x_0))}\,,
\end{multline}}
$\!\!$where $c=c(\data,\beta_0)$, $\gamma=\gamma(s)$ is fixed as in \eqref{dep:gamma}, and we set
\begin{equation}\label{maximal:f}
     \mathrm{M}^{2-\beta_0}_{\vrho,\chi}[f](\bar x):=\sup\limits_{t \in (0, \vrho]} t^{2-\beta_0}\bigg( \mint_{B_t(\bar x)}|f|^\chi\dx\bigg)^{1/\chi}.
\end{equation}
Assumption \eqref{new:fbound} is verified if $f \in \mathcal M^{n/(2-\beta_0)}_{\rm loc}(\Omega;\R^N)$, in which case the quantitative estimate \eqref{local.Holder1} holds true.
\end{theorem}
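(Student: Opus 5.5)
The plan is to run the general construction of Paragraph~\ref{general.construction} with $\beta=(1+\beta_0)/2>\beta_0$, and then pass from excess decay to a Campanato characterization. First, \eqref{new:fbound} implies \eqref{ass.BMO.f0} on every compact set with $\varepsilon_b=\varepsilon_s/2$, provided the radius is small: since $t^2(\mint_{B_t}|f|^\chi)^{1/\chi}\le t^{\beta_0}\,C_K$, it suffices to take $r_f\le (\varepsilon_s/(2C_K))^{1/\beta_0}$. Hence $\Omega_u=\Omega_{u,\beta}$ is open and satisfies the stated dimension bound. Moreover, for $x_0\in\Omega_u$ we obtain radii $\vrho_{x_0}$ and $r_{x_0}\le\vrho_{x_0}/8$ such that \eqref{local.Egamma.decay} holds uniformly for $\bar x\in B_{r_{x_0}}(x_0)$. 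Shrinking $\vrho_{x_0}$ if necessary gives $B_{2\vrho_{x_0}}(x_0)\Subset\Omega$.

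The key step is to iterate \eqref{local.Egamma.decay} with the exponent $\beta>\beta_0$. Write $\vrho_j=\tau^j\vrho_{x_0}$, $\mathcal E_j=\mathcal E_\gamma(u;B_{\vrho_j}(\bar x))$, and $\mathrm M:=\mathrm M^{2-\beta_0}_{\vrho_{x_0},\chi}[f](\bar x)$. Since
\[
\vrho_j^2\Big(\mint_{B_{\vrho_j}(\bar x)}|f|^\chi\Big)^{1/\chi}\le \vrho_j^{\beta_0}\,\mathrm M,
\]
the recursion reads $\mathcal E_{j+1}\le\tau^\beta\mathcal E_j+c\,\vrho_j^{\beta_0}\mathrm M$. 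Dividing by $\vrho_{j+1}^{\beta_0}$ gives
\[
\vrho_{j+1}^{-\beta_0}\mathcal E_{j+1}\le \tau^{\beta-\beta_0}\,\vrho_j^{-\beta_0}\mathcal E_j+c\,\tau^{-\beta_0}\mathrm M .
\]
Because $\tau^{\beta-\beta_0}<1$ is a fixed number depending only on $\data$ and $\beta_0$, induction yields
\[
\sup_j \vrho_j^{-\beta_0}\mathcal E_j\le c\,\vrho_{x_0}^{-\beta_0}\mathcal E_0+c\,\mathrm M,
\]
with $c=c(\data,\beta_0)$. The geometric factor $1/(1-\tau^{\beta-\beta_0})$ is exactly why the construction uses $\beta$ strictly larger than $\beta_0$. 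Interpolating with \eqref{ut:exc} for intermediate radii then gives
\[
\sup_{0<t\le\vrho_{x_0}}t^{-\beta_0}E\big(u;B_t(\bar x)\big)\le c\,\vrho_{x_0}^{-\beta_0}\mathcal E_\gamma\big(u;B_{\vrho_{x_0}}(\bar x)\big)+c\,\mathrm M,
\]
uniformly in $\bar x\in B_{r_{x_0}}(x_0)$.

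Campanato's characterization (equivalently, the telescoping estimate \eqref{uk:cauchy} applied with weights $\vrho_j^{\beta_0}$, plus a comparison of means for nearby centres) then gives $u\in C^{0,\beta_0}(B_{r_{x_0}}(x_0))$, with seminorm bounded by $\sup_{\bar x}$ of the right-hand side. It remains to express $\mathcal E_\gamma(u;B_{\vrho_{x_0}}(\bar x))$ through quantities centred at $x_0$. I would enlarge slightly: either use \eqref{media:nonconc} together with \eqref{tail:3} on $B_{\vrho_{x_0}}(\bar x)\subseteq B_{2\vrho_{x_0}}(x_0)$, or adjust the radii so that the ball centred at $x_0$ is the larger one. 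Since $|\bar x-x_0|\le\vrho_{x_0}/8$, the factor $\big(r/(r-|\bar x-x_0|)\big)^{n+2s}$ is bounded. Together with \eqref{relaz:tails}, $\tail_\gamma=\vrho^{\gamma-2s}\tail$, this produces the three terms in \eqref{vera:localHolder}; the last term is bounded by the $L^\infty$ norm of the maximal function over $B_{r_{x_0}}(x_0)$. The main obstacle is this bookkeeping of centres and radii: the ball at $x_0$ must contain $B_{\vrho_{x_0}}(\bar x)$ while still satisfying \eqref{non:cancellare}, so I expect to state the estimate with $\vrho_{x_0}$ suitably rescaled, up to harmless constants.

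Finally, for $f\in\mathcal M^{n/(2-\beta_0)}_{\rm loc}$, with $2_*<\chi<n/(2-\beta_0)$, \eqref{density.Marc} gives
\[
t^{2-\beta_0}\Big(\mint_{B_t}|f|^\chi\Big)^{1/\chi}\le c\,t^{2-\beta_0}\,t^{-(2-\beta_0)}\|f\|_{\mathcal M^{n/(2-\beta_0)}(B_{2\vrho_{x_0}}(x_0))},
\]
for balls inside $B_{2\vrho_{x_0}}(x_0)$. This verifies \eqref{new:fbound} and bounds the maximal term in \eqref{vera:localHolder}. Using $\vrho_{x_0}^{\gamma-2s-\beta_0}\le\vrho_{x_0}^{1-2s-\beta_0}$, which holds because $\gamma>1$ and $\vrho_{x_0}\le1$, we then obtain \eqref{local.Holder1}.
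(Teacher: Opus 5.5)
Your proposal is correct and follows essentially the paper's route: excess decay at $\beta=(1+\beta_0)/2$ from the general construction, a geometric iteration exploiting $\beta-\beta_0>0$ to bound $t^{-\beta_0}\mathcal E_\gamma(u;B_t(\bar x))$ uniformly in $\bar x\in B_{r_{x_0}}(x_0)$, Campanato's characterization, recentring via \eqref{media:nonconc} and \eqref{tail:3}, and \eqref{density.Marc} for the Marcinkiewicz case. The differences are only in implementation. The paper iterates the continuous decay \eqref{local.Egamma.decay2} with a new ratio $\vartheta$ satisfying $c\,\vartheta^{\beta-\beta_0}\le 1/2$ and reabsorbs truncated maximal functions $\mathrm{M}^{\beta_0}_{\tilde\rho,\rho}$, rather than using your discrete recursion. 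It also settles your centring concern by starting at radius $\vrho_{x_0}/2$ around $\bar x$, so that $B_{\vrho_{x_0}/2}(\bar x)\subseteq B_{\vrho_{x_0}}(x_0)$; no shrinking of $\vrho_{x_0}$ is needed, since $\vrho_{x_0}<\frac14{\rm dist}(K_i,\partial\Omega)$ already gives $B_{2\vrho_{x_0}}(x_0)\Subset\Omega$.
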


\begin{proof}
We let $\beta=(1+\beta_0)/2$ and define $\Omega_u=\Omega_{u,(1+\beta_0)/2}$ as in \eqref{Om.u.1}--\eqref{Om.u.2}; we fix, according to this value of $\beta$, $\varepsilon_s = \varepsilon_s(\data,\omegau(\,\cdot\,),\beta_0)$, $r_1=r_1(\data,\beta_0,\nu)$ and $\delta_0=\delta_0(\data,\beta_0)$ as at the beginning of this Section. We then fix a generic $x_0\in\Omega_u$; as described in Paragraph \ref{general.construction}, there exists an index $i=i(x_0)$ such that $x_0\in {\rm int}(K_i)$ and we let now $r_A$ to be the constant $r_A(K_i)$ given from the assumption of the Theorem. Moreover, from  \eqref{new:fbound} it follows immediately that there exists a radius $0<r_f\leq R_f$ such that  \eqref{ass.BMO.f0} is satisfied on $K_i$ with $\e_b=\e_s/2$; moreover we can suppose $r_f\leq {\rm dist}(K_i,\partial\Omega)/4$. Using the general construction of  Paragraph \ref{general.construction},  we may find radii $r_{x_0}\leq \vrho_{x_0}/8$ such that \eqref{local.Egamma.decay2} holds and $B_{\vrho_{x_0}}(\bar x)\subset B_{2\vrho_{x_0}}(x_0)\Subset\Omega$ for all $\bar x\in B_{r_{x_0}}(x_0)$. 

\vs

We now choose $\vartheta=\vartheta(\data,\beta_0)\in (0,1/16)$ such that $c\,\vartheta^{\beta-\beta_0}=c\,\vartheta^{(1-\beta_0)/2}\leq 1/2$, with $c=c(\data,\beta_0)$ exactly the constant appearing in \eqref{local.Egamma.decay2}; thus, by choosing $\s=\vartheta\rho$ for $0<\rho\leq \vrho_{x_0}$ in \eqref{local.Egamma.decay2}, we find
\begin{equation}\label{6}
       \mathcal E_\g\big(u; B_{\vartheta\rho}(\bar x)\big)\leq \frac{1}{2}\vartheta^{\b_0} \mathcal E_\g\big(u; B_{\rho}(\bar x)\big)+ c\,\rho^{\beta_0}\sup_{t\leq \rho}t^{2-\beta_0}\bigg(\mint_{B_t(\bar x)}|f|^\chi\dx\bigg)^{1/\chi}\quad\text{for all $\bar x\in B_{r_{x_0}}(x_0)$}\,,
\end{equation}
with $c=c(\data,\beta_0)$. Now, for $0<\tilde \rho\leq \rho$, we define the fractional maximal operators
\begin{equation}\label{def:maximu}
\begin{cases}
\mathrm{M}^{\beta_0}_{\tilde \rho,\rho}(\bar x) := \sup\limits_{t \in (\tilde \rho,\rho]} t^{-\beta_0} \mathcal E_\g\big(u; B_t(\bar x)\big)
\\
 \mathrm{M}^{\beta_0}_{\rho}(\bar x) := \sup\limits_{t \in (0, \rho]} t^{-\beta_0} \mathcal E_\g\big(u; B_t(\bar x)\big)
\end{cases}\,,
\end{equation}
and $ \mathrm{M}^{2-\beta_0}_{\vrho,\chi}[f](\bar x)$ is defined by \eqref{maximal:f}. Multiplying \eqref{6} by $(\vartheta\rho)^{-\beta_0}$, and by taking the supremum for $\rho\in(\vrho_{x_0}/[2k],\vrho_{x_0}/2]$ for $k \in \N\setminus\{0,1\}$, we deduce
\begin{align*}
\mathrm{M}^{\beta_0}_{\vartheta\vrho_{x_0}/[2k],\vartheta\vrho_{x_0}/2}(\bar x) &\leq \frac{1}{2} \mathrm{M}^{\beta_0}_{\vrho_{x_0}/[2k],\vrho_{x_0}/2}(\bar x) + c\,\vartheta^{-\beta_0}\mathrm{M}^{2-\beta_0}_{\vrho_{x_0},\chi}[f](\bar x)\\
&\leq \frac12\mathrm{M}^{\beta_0}_{\vartheta\vrho_{x_0}/[2k],\vartheta\vrho_{x_0}/2}(\bar x)+ \frac{1}{2} \mathrm{M}^{\beta_0}_{\vartheta\vrho_{x_0}/2,\vrho_{x_0}/2}(\bar x) + c\,\mathrm{M}^{2-\beta_0}_{\vrho_{x_0},\chi}[f](\bar x)\,,
\end{align*}
for all $\bar x\in B_{r_{x_0}}(x_0)$, where we also used that $\vartheta$ depends on $\data$ and $\beta_0$. By reabsorbing terms, we get
\begin{equation*}
    \mathrm{M}^{\beta_0}_{\vartheta\vrho_{x_0}/[2k],\vartheta\vrho_{x_0}/2}(\bar x)\leq \mathrm{M}^{\beta_0}_{\vartheta\vrho_{x_0}/2,\vrho_{x_0}/2}(\bar x) + c\,\mathrm{M}^{2-\beta_0}_{\vrho_{x_0},\chi}[f](\bar x)\,;
\end{equation*}
since, using the definition in \eqref{def:maximu} and the fact that $\vartheta=\vartheta(\data,\beta_0)$, we have
\[
\mathrm{M}^{\beta_0}_{\vartheta \vrho_{x_0}/2,\vrho_{x_0}/2}(\bar x) \leq c(\vrho_{x_0}/2)^{-\beta_0}\mathcal E_{\g}\big(u;B_{\vrho_{x_0}/2}(\bar x)\big)= c\,\vrho_{x_0}^{-\beta_0}\mathcal E_{\g}\big(u;B_{\vrho_{x_0}/2}(\bar x)\big)\,,
\]
combining the two estimates above, we obtain
\[
\mathrm{M}^{\beta_0}_{\vartheta\vrho_{x_0}/[2k],\vrho_{x_0}/2}(\bar x)\leq  c\,\vrho_{x_0}^{-\beta_0}\mathcal E_{\g}\big(u;B_{\vrho_{x_0}/2}(\bar x)\big)+ c{\big\|\mathrm{M}^{2-\beta_0}_{\vrho_{x_0},\chi}[f]\big\|}_{L^\infty(B_{r_{x_0}}(x_0))}\,,
\]
with $c=c(\data,\beta_0)$, for all $\bar x\in B_{r_{x_0}}(x_0)$. On the other hand, recalling that $|\bar x-x_0|\leq r_{x_0}\leq \vrho_{x_0}/8\leq 1/8$, we have $B_{\vrho_{x_0}/2}(\bar x)\subseteq B_{\vrho_{x_0}}(x_0)$; hence we may use \eqref{media:nonconc} and \eqref{tail:3} in \eqref{def:exc} and get
\begin{equation*}
    \mathcal{E}_\gamma\big(u;B_{\vrho_{x_0}/2}(\bar x)\big)\leq c\,\mathcal{E}_\g\big(u;B_{\vrho_{x_0}}(x_0)\big)\qquad\text{for all $\bar x\in B_{r_{x_0}}(x_0)$}
\end{equation*}
-- recall that $\gamma-2s\geq0$. Combining the two estimates above, letting $k\to\infty$, and expanding the definition of $\mathcal E_\gamma(\,\cdot\,)$, we get
\begin{align*}
\mathrm{M}^{\beta_0}_{\vrho_{x_0}/2}(\bar x)&\leq  c\,\vrho_{x_0}^{-\beta_0}\mathcal E_{\g}\big(u;B_{\vrho_{x_0}}(x_0)\big)+ c{\big\|\mathrm{M}^{2-\beta_0}_{\vrho_{x_0},\chi}[f]\big\|}_{L^\infty(B_{r_{x_0}}(x_0))}\notag
    \\
    &\leq c\,\vrho_{x_0}^{-\beta_0}\bigg( \mint_{B_{\vrho_{x_0}}(x_0)} {\big|u - (u)_{B_{\vrho_{x_0}}(x_0)}\big|}^2 \dx \bigg)^{1/2}\notag
\\
&\qquad\qquad+c\,\vrho_{x_0}^{\gamma-2s-\beta_0}\tail\big(u-(u)_{B_{\vrho_{x_0}}(x_0)};B_{\vrho_{x_0}}(x_0)\big)+ c{\big\|\mathrm{M}^{2-\beta_0}_{\vrho_{x_0},\chi}[f]\big\|}_{L^\infty(B_{r_{x_0}}(x_0))}\,,
    \end{align*}
which is valid for every $\bar x\in B_{r_{x_0}}(x_0)$, with $c=c(\data,\beta_0)$.

\vs 

This estimate implies the desired $\beta_0$-H\"older continuity of $u$ and the local estimate \eqref{vera:localHolder} via Campanato's characterization of H\"older continuity \cite[Chapter 3.1]{giaq} since, by the very definition of $\mathrm{M}^{\beta_0}_{\vrho}$ and $\mathcal E_\g(\,\cdot\,)$ in \eqref{def:maximu} and \eqref{def:exc}, we have
\begin{equation*}
\mathrm{M}^{\beta_0}_{\vrho_{x_0}/2}(\bar x) \ge \s^{-\beta_0}\bigg( \mint_{B_\s(\bar x)} {\big|u - (u)_{B_\s(\bar x)}\big|}^2 \dx \bigg)^{1/2}
\end{equation*}
for every $t\in(0,\vrho_{x_0}/2]$, and $\bar x\in B_{r_{x_0}}(x_0)$. Finally, thanks to \eqref{density.Marc}, if \eqref{ass:fhold} holds, then we have, for $\chi$ as chosen after \eqref{density.Marc}
\begin{equation*}
    t^{2-\beta_0} \bigg(\mint_{B_t(\bar x)} |f|^\chi \dx\bigg)^{1/\chi}\leq c(n,\beta_0){\|f\|}_{\mathcal M^{n/(2-\beta_0)}(B_{t}(\bar x))}\leq c(n,\beta_0){\|f\|}_{\mathcal M^{n/(2-\beta_0)}(B_{2\vrho_{x_0}}(x_0))}
\end{equation*}
for all $\bar x\in B_{r_{x_0}}(x_0)$ and all $0<t\leq\varrho_{x_0}$, hence \eqref{new:fbound} is valid, and we have
\begin{equation*}
    {\big\|  \mathrm{M}^{2-\beta_0}_{\vrho_{x_0},\chi}[f]\big\|}_{L^\infty(B_{r_{x_0}}(x_0))}\leq c(n,\beta_0){\|f\|}_{\mathcal M^{n/(2-\beta_0)}(B_{2\vrho_{x_0}}(x_0))}\,.
\end{equation*}
From this inequality,
\eqref{vera:localHolder} and using that $\vrho_{x_0}^{\gamma}\leq \vrho_{x_0}$ by \eqref{dep:gamma} as $\vrho_{x_0}\leq 1$, estimate \eqref{local.Holder1} follows; local H\"older regularity follows using a covering argument similar to that employed in the proof of Theorem \ref{part.BMO}.
\end{proof}

\section{Gradient regularity of solutions}\label{sec:gradreg}

 We start with some preliminary remarks; in particular, let us verify the validity of Theorem \ref{partial.Holder} whenever assumptions  \eqref{mod:cont.x} and \eqref{fin:riesz} are in force, and consequently derive a H\"older estimate in the reference ball $B_{r_{x_0}}(x_0)$ for every exponent $\beta_0\in(0,1)$.

\vs

Let us now fix $\beta_0=\beta_0(s)\in(0,1)\setminus\{2s\}$ such that
\begin{equation}\label{cond:beta}
2s<1+\beta_0 \qquad\Longleftrightarrow\qquad 1+\beta_0-2s>0\,,
\end{equation}
and let $x_0\in\Omega_u$, where $\Omega_u$ is the open set given by \eqref{Om.u.1}--\eqref{Om.u.2} for the choice $\beta=(1+\beta_0)/2$. Let $\delta_0=\delta_0(\data,\beta_0)$ be the small constant appearing in Theorem \ref{partial.Holder}. By assumption \eqref{mod:cont.x} and the fact that $\omegax(\,\cdot\,)$ is a modulus of continuity, we may choose $r_A=r_A\big(\data,\beta_0,\omegax(\,\cdot\,), {\rm dist}(K,\partial\Omega)\big)\in(0,1)$ such that \eqref{hp:A} holds uniformly for $x_0\in K$, as
\begin{equation}\label{small.ass61}
\sup_{x_0\in K}E_{r}(A;x_0)=\sup_{x_0\in K}\sup_{\substack{\vrho\in(0,r]:B_{\vrho}(x_{0})\subseteq\Omega,\\ z\in\R^N,z\neq 0}} \mint_{B_{\vrho}(x_{0})}\big|{A(x,z)-{(A)}_{B_{\vrho}(x_{0})}(z)}\big|\dx\leq \Lambda\omegax(r)\xrightarrow{r\to 0} 0. 
\end{equation}
 Moreover, by \eqref{fin:riesz} and \eqref{est.potential.onescale}, condition \eqref{new:fbound} is satisfied, with $R_f=\min\{1,{\rm dist}(K,\partial\Omega)\}/2$; hence all the hypotheses of Theorem \ref{partial.Holder} are fulfilled. Let now $r_{x_0}$ and $\vrho_{x_0}$ be the radii provided by Theorem \ref{partial.Holder}, with $0<r_{x_0}\leq \vrho_{x_0}/8$ such that $B_{r_{x_0}}(x_0)\Subset \Omega_u$, $B_{2\vrho_{x_0}}(x_0)\Subset \Omega$. Using again \eqref{est.potential.onescale}, we have
\[
{\big\| \mathrm{M}^{2-\beta_0}_{\vrho_{x_0},\chi}[f]\big\|}_{L^\infty(B_{r_{x_0}}(x_0))}\leq c(n,\beta_0,\chi){\big\|{\bf I}_{1,\chi}^f(\cdot,2\vrho_{x_0})\big\|}_{L^\infty(B_{r_{x_0}}(x_0))}.
\]
Therefore, estimate \eqref{vera:localHolder} gives
\begin{align}\label{usholder}
[u]_{C^{0,\beta_0}(B_{r_{x_0}}(x_0))}
&\leq c\,\vrho_{x_0}^{-\beta_0}\bigg(\mint_{B_{\vrho_{x_0}}(x_0)}\big|u-(u)_{B_{\vrho_{x_0}}(x_0)}\big|^2 \dx\bigg)^{1/2}+c\,\vrho_{x_0}^{\gamma-\beta_0-2s}\tail\big(u-(u)_{B_{\vrho_{x_0}}(x_0)};B_{\vrho_{x_0}}(x_0)\big)\notag\\
&\pushright{+\,c\,{\big\|{\bf I}_{1,\chi}^f(\cdot,2\vrho_{x_0})\big\|}_{L^\infty(B_{r_{x_0}}(x_0))}}\notag\\
&\leq c\,\vrho_{x_0}^{-n/2}{\|Du\|}_{L^2(B_{\vrho_{x_0}}(x_0))}+c\,\vrho_{x_0}^{\gamma-\beta_0-2s}\tail\big(u-(u)_{B_{\vrho_{x_0}}(x_0)};B_{\vrho_{x_0}}(x_0)\big)\notag\\
&\pushright{+\,c\,{\big\|{\bf I}_{1,\chi}^f(\cdot,2\vrho_{x_0})\big\|}_{L^\infty(B_{r_{x_0}}(x_0))}}\,,
\end{align}
where $c=c(\data,\beta_0)$. In the last inequality we used Poincar\'e's inequality and the fact that $\vrho_{x_0}^{1-\beta_0}\leq 1$.

\subsection{A gradient comparison estimate.} Let us take a point $\bar x\in B_{r_{x_0}/2}(x_0)$ and a radius $0<r\leq r_{x_0}/4$. Accordingly, we define
\begin{equation}\label{def:Ax}
\bar A=A\big(\bar x, u(\bar x)\big)\,.
\end{equation}
Here and in what follows, $u$ denotes its continuous representative in $\Omega_u$, so that $A(\bar x,u(\bar x))$ is well-defined for every $\bar x\in B_{r_{x_0}/2}(x_0)$, thanks to what proved at the beginning of the Section. Accordingly, with a slight abuse of notation, we shall write ${[u]}_{C^{0,\beta_0}}$ for the H\"older seminorm of this representative. In particular, the pointwise values $u(x)$ and $u(\bar x)$ appearing below are well defined for every $x,\bar x\in B_{r_{x_0}}(x_0)$.

\medskip 

Consider $h\in W^{1,2}(B_r(\bar x);\R^N)$ solving
\begin{equation}\label{new:eqv}
    \begin{cases}
        -\mathrm{div}\big(\bar ADh \big)=0\quad & \text{in $B_r(\bar x)$}
        \\
        h=u\quad & \text{on $\partial B_r(\bar x)$}
    \end{cases},
\end{equation}
and we extend $h$ outside $B_r(\bar x)$ by  setting
\begin{equation}\label{v:extension}
    h\equiv u\quad\text{in $\R^n\setminus B_r(\bar x)$.}
\end{equation}

Let us now prove the following comparison estimate.
\begin{proposition}[Comparison estimate]\label{prop:comparison}
Let $u\in W^{1,2}_{\rm loc}(\Omega;\R^N)\cap L^1_{2s}$ be a local weak solution to \eqref{eq1} under the assumptions \eqref{A:gr} and \eqref{phi:gr}--\eqref{sym:phi}, and let $x_0,\bar x,r_{x_0}, r$ and $h\in W^{1,2}(B_r(\bar x);\R^N)$ be as above.  Then the following comparison estimate holds:{\rm 
\begin{multline}\label{st:comp5}
    \bigg(\mint_{B_r(\bar x)}|Du-Dh|^2\dx \bigg)^{1/2}\leq  c\Big[{\big\|A(\cdot,u(\,\cdot\,))-\bar A\big\|}_{L^\infty(B_r(\bar x))}+r^{2(1-s)}\Big]\bigg(\mint_{B_{2r}(\bar x)}|Du|^2\dx\bigg)^{1/2}\\
    +c\,r\bigg( \mint_{B_r(\bar x)}|f|^{2_*}\dx\bigg)^{1/2_*}+c\,r^{1-2s}\tail\big(u-(u)_{B_r(\bar x)};B_r(\bar x)\big)\,,
\end{multline}}
$\!\!$where $c=c(\data,\max\{\nu,1\})$, and $\bar A$ is given by \eqref{def:Ax}.
\end{proposition}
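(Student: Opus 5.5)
We test the equation for $u$ and the one for $h$ with $\vphi:=u-h\in W^{1,2}_0(B_r(\bar x);\R^N)$. By \eqref{v:extension}, $\vphi$ is admissible in \eqref{weak.formulation} after a density argument and vanishes outside $B_r(\bar x)$. Subtracting the weak formulation of \eqref{new:eqv} from \eqref{weak.formulation}, and using the Legendre condition for the constant tensor $\bar A$, we get
\begin{align*}
\Lambda^{-1}\int_{B_r(\bar x)}|Du-Dh|^2\dx&\leq \int_{B_r(\bar x)}\big\langle (\bar A-A(x,u))Du,D\vphi\big\rangle\dx+\int_{B_r(\bar x)}f\cdot\vphi\dx\\
&\qquad-{\llangle Q^\mathrm{nl}_\Phi u,\vphi\rrangle}_{\mathrm{nl}}=:(I)+(II)+(III).
\end{align*}
We estimate the three terms separately and then absorb $\delta\|D\vphi\|_{L^2}^2$ on the left.

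For $(I)$ we use Cauchy--Schwarz to bound it by $\|A(\cdot,u)-\bar A\|_{L^\infty(B_r)}\|Du\|_{L^2(B_r)}\|D\vphi\|_{L^2(B_r)}$. For $(II)$ we use H\"older with exponents $2_*,2^*$ and Sobolev's inequality on $W^{1,2}_0(B_r)$, which gives $\|\vphi\|_{L^{2^*}}\le c\|D\vphi\|_{L^2}$. After averaging, this yields $r\,(\mint_{B_r}|f|^{2_*})^{1/2_*}|B_r|^{1/2}\|D\vphi\|_{L^2}$. In dimension $n=2$ we use the convention $2_*\in(1,2)$ with the corresponding Sobolev embedding.

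The main work is $(III)$. By the symmetry \eqref{sym:phi} we split it as in Lemma \ref{lemma:smnloc}. The local part is the double integral over $B_{2r}\times B_{2r}$ with $B_{2r}=B_{2r}(\bar x)$. By \eqref{phi:gr} it is bounded by $\nu[u]_{W^{s,2}(B_{2r})}[\vphi]_{W^{s,2}(B_{2r})}$. We apply \eqref{frac:emb} to both factors; for $\vphi$ this is legitimate since $\vphi\in W^{1,2}(B_{2r})$ vanishes on $B_{2r}\setminus B_r$. This gives
\[
c\,\nu\, r^{2(1-s)}|B_{2r}|\Big(\mint_{B_{2r}}|Du|^2\dx\Big)^{1/2}\Big(\mint_{B_{2r}}|D\vphi|^2\dx\Big)^{1/2},
\]
which produces the term $r^{2(1-s)}(\mint_{B_{2r}}|Du|^2)^{1/2}$.

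The nonlocal part is $2\int_{\R^n\setminus B_{2r}}\int_{B_r}\Phi\cdot\vphi(x)$. For $x\in B_r$ and $y\notin B_{2r}$ we have $|x-y|\ge|y-\bar x|/2$, and we write $u(x)-u(y)=(u(x)-(u)_{B_r})-(u(y)-(u)_{B_r})$. This bounds the nonlocal part by
\[
c\,\nu\, r^{-2s}\Big[\int_{B_r}|u-(u)_{B_r}||\vphi|\dx+\tail\big(u-(u)_{B_r};B_{2r}\big)\int_{B_r}|\vphi|\dx\Big].
\]
We then use \eqref{counter.monotonicity} to pass from $B_{2r}$ to $B_r$. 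Poincar\'e's inequality applied to $\vphi$ and to $u-(u)_{B_r}$ gives $\int_{B_r}|u-(u)_{B_r}||\vphi|\le c\,r^2\|Du\|_{L^2(B_r)}\|D\vphi\|_{L^2(B_r)}$. Since $r\le 1$, the first piece contributes $r^{2(1-s)}(\mint|Du|^2)^{1/2}$ again. The second piece, via $\int_{B_r}|\vphi|\le c\,r|B_r|^{1/2}\|D\vphi\|_{L^2}$, contributes $r^{1-2s}\tail(u-(u)_{B_r};B_r)$.

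Finally, Young's inequality lets us absorb $\|D\vphi\|_{L^2}^2$ on the left. Dividing by $|B_r|$ and taking square roots gives \eqref{st:comp5}, with $\nu$ absorbed into $c(\data,\max\{\nu,1\})$. The step needing most care is the nonlocal term: $\vphi$ is only defined through the extension, so one must check that the fractional seminorm over $B_{2r}$ of the zero-extended $\vphi$ is controlled by $r^{1-s}\|D\vphi\|_{L^2(B_r)}$, and that the tail is taken relative to the correct ball.
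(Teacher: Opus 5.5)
Your proposal is correct and follows the paper's proof essentially step by step. You test with $u-h$, bound the coefficient term by the $L^\infty$ oscillation of $A(\cdot,u(\cdot))$ and the datum term by H\"older--Sobolev. You then split the nonlocal term into the $B_{2r}(\bar x)\times B_{2r}(\bar x)$ part, handled via \eqref{frac:emb}, and the $B_r(\bar x)\times(\R^n\setminus B_{2r}(\bar x))$ part, handled via the tail and Poincar\'e. Finally you absorb; the paper divides by $r^{n/2}\|Du-Dh\|_{L^2(B_r(\bar x))}$ instead of using Young, which is equivalent.

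Two cosmetic remarks. First, passing from $\tail(\cdot;B_{2r}(\bar x))$ to $\tail(\cdot;B_r(\bar x))$ really costs a factor $2^{2s}$, since \eqref{counter.monotonicity} only holds up to $(R_2/R_1)^{2s}$; this is harmless here. Second, $r\le 1$ is not needed in the Poincar\'e piece, because the power there is exactly $r^{2(1-s)}$.
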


\begin{proof}
Using the coercivity and growth assumptions \eqref{A:gr} (which are also inherited by $\bar A$), and testing \eqref{eq1} and \eqref{new:eqv} with $u-h$, we get
\begin{align}\label{st:comp0}
    \frac{1}{\Lambda}\int_{B_r(\bar x)}|Du & -Dh|^2\dx
    \leq \int_{B_r(\bar x)}\langle\bar A(Du-Dh), Du-Dh\rangle\dx=\int_{B_r(\bar x)}\langle\bar ADu,Du-Dh\rangle\dx\notag\\
    &=\int_{B_r(\bar x)} \Big\langle\big[\bar A-A(x,u(x))\big]Du,Du-Dh\Big\rangle\dx+\int_{B_r(\bar x)}\big\langle A(x,u(x))Du, Du-Dh\big\rangle\dx\notag\\
    &= \int_{B_r(\bar x)} \Big\langle\big[\bar A-A(x,u(x))\big]Du,Du-Dh\Big\rangle\dx+\int_{B_r(\bar x)}f\cdot (u-h)\dx-{\llangle Q^\mathrm{nl}_\Phi u, u-h\rrangle}_{\mathrm{nl}} \notag\\
    &\eqqcolon (I)+(II)+(III)\,;
\end{align}
we recall the notation introduced in \eqref{bracket}. By H\"older's inequality, we get
\begin{align}\label{st:comp1}
    |(I)|&\leq {\big\|A(\cdot,u(\,\cdot\,))-\bar A\big\|}_{L^\infty(B_r(\bar x))}\,\int_{B_r(\bar x)}|Du|\,|Du-Dh|\dx\notag\\
    &\leq c(n){\|A\big(\cdot,u(\,\cdot\,) \big)-\bar A\|}_{L^\infty(B_r(\bar x))}\,r^{n/2}\bigg(\mint_{B_r(\bar x)} |Du|^2\dx\bigg)^{1/2}\,\bigg(\int_{B_r(\bar x)} |Du-Dh|^2\dx\bigg)^{1/2}\,,
\end{align}
and by H\"older's and Sobolev's inequalities
\begin{align}\label{st:comp2}
    |(II)| & \leq c(n)r^{n/2_*}\bigg(\mint_{B_r(\bar x)}  |f|^{2_*}\dx\bigg)^{1/2_*}\bigg(\int_{B_r(\bar x)} |u-h|^{2^*}\dx\bigg)^{1/2^*}\notag\\
    &\leq c(n)r^{n/2_*}\bigg(\mint_{B_r(\bar x)}  |f|^{2_*}\dx\bigg)^{1/2_*}\bigg(\int_{B_r(\bar x)} |Du-Dh|^{2}\dx\bigg)^{1/2} \,.
\end{align}
Then, by using \eqref{sym:phi} and recalling that $u-h\equiv 0$ in $\R^n\setminus B_r(\bar x)$ by \eqref{v:extension}, we get
\begin{multline}\label{est:nonlocal}
    (III)= -\int_{B_{2r}(\bar x)}\int_{B_{2r}(\bar x)}\Phi\big(y,z,u(y),u(z),u(y)-u(z)\big)\cdot \big[ (u-h)(y)-(u-h)(z)\big] \,\frac{dy\,dz}{|y-z|^{n+2s}}\\
-\,2\int_{B_r(\bar x)}\int_{\R^n\setminus B_{2r}(\bar x)}\Phi\big(y,z,u(y),u(z),u(y)-u(z)\big)\cdot(u-h)(z)\,\frac{dy\,dz}{|y-z|^{n+2s}}\eqqcolon (III)_1+(III)_2\,.
\end{multline}
Using \eqref{phi:gr}, H\"older's  inequality and the fractional Sobolev inequality \eqref{frac:emb}, we have
\begin{align}\label{st:comp3}
    |(III)_1|&\leq c(n)\nu[u]_{W^{s,2}(B_{2r}(\bar x))}\,[u-h]_{W^{s,2}(B_{2r}(\bar x))}\notag\\
    &\leq c(n,s)\nu\, r^{2(1-s)}\bigg(\int_{B_{2r}(\bar x)}|Du|^2\dx \bigg)^{1/2}\bigg(\int_{B_r(\bar x)}|Du-Dh|^2\dx \bigg)^{1/2}\,,
\end{align}
where in the last line we also used that $Du=Dh$ in $B_{2r}(\bar x)\setminus B_r(\bar x)$ by \eqref{v:extension}. Then, by using the elementary inequalities
\begin{equation}\label{zbarx}
    |y-\bar x|\leq 4|y-z|,\qquad \text{for $y\in \R^n\setminus B_{2r}(\bar x)$ and $z\in B_r(\bar x)$}\,,
\end{equation}
together with \eqref{phi:gr}, \eqref{tail.constant}, H\"older's and Poincar\'e's inequalities, we get
\begin{align}\label{st:comp4}
    |(III)_2| &\leq c\,\nu\int_{B_r(\bar x)}|(u-h)(z)|\,\bigg\{\int_{\R^n\setminus B_{2r}(\bar x)}\big(|u(y)-(u)_{B_r(\bar x)}|+|u(z)-(u)_{B_r(\bar x)}|\big)\frac{dy}{|y-\bar x|^{n+2s}} \bigg\}\,dz\notag
    \\
    &= c\,r^{-2s}\tail\big(u-(u)_{B_r(\bar x)};B_{2r}(\bar x)\big)\int_{B_r(\bar x)}|(u-h)(z)|\,dz+c\,r^{-2s}\int_{B_r(\bar x)}|u-h||u-(u)_{B_r(\bar x)}|\,dz\notag
    \\
    &\leq c\,r^{n/2-2s}\tail\big(u-(u)_{B_r(\bar x)};B_{2r}(\bar x)\big)\bigg(\int_{B_r(\bar x)}|u-h|^2\,dz\bigg)^{1/2}\notag
    \\
    &\pushright{+\,c\,r^{-2s}\bigg(\int_{B_r(\bar x)}|u-h|^2\,dz\bigg)^{1/2}\,\bigg(\int_{B_r(\bar x)}|u-(u)_{B_r(\bar x)}|^2\,dz\bigg)^{1/2}\notag}\\
    &\leq c\,r^{n/2+1-2s}\tail\big(u-(u)_{B_r(\bar x)};B_{2r}(\bar x)\big)\,\bigg(\int_{B_r(\bar x)}|Du-Dh|^2\,dz\bigg)^{1/2}\notag
    \\
    &\pushright{+\, c\,r^{n/2+2(1-s)}\bigg(\int_{B_r(\bar x)}|Du-Dh|^2\,dz\bigg)^{1/2}\bigg(\mint_{B_{2r}(\bar x)}|Du|^2\,dz\bigg)^{1/2}\,,}
\end{align}
with $c=c(n,s,\max\{\nu,1\})$.
Combining \eqref{st:comp1}--\eqref{st:comp3} and \eqref{st:comp4} with \eqref{st:comp0}, and dividing both sides of the resulting inequality by $\displaystyle{r^{n/2}\Big(\int_{B_r(\bar x)}|Du-Dh|^2\dx \Big)^{1/2}}$ (which may be assumed to be nonzero) yields our desired estimate
\begin{multline*}
\bigg(\mint_{B_r(\bar x)}|Du-Dh|^2\dx \bigg)^{1/2}\leq  c\,\bigg\{{\big\|A\big(\cdot,u(\,\cdot\,)\big)-\bar A\big\|}_{L^\infty(B_r(\bar x))}+r^{2(1-s)}\bigg\}\bigg(\mint_{B_{2r}(\bar x)}|Du|^2\,dz\bigg)^{1/2}\\
+c\,r\bigg( \mint_{B_r(\bar x)}|f|^{2_*}\dx\bigg)^{1/2_*}+c\,r^{1-2s}\tail\big(u-(u)_{B_r(\bar x)};B_r(\bar x)\big)\,,
\end{multline*}
where $c=c(n, s,\max\{\nu,1\},\Lambda)$, where we also used \eqref{counter.monotonicity}, and that
\begin{equation*}
    \frac{n}{2_*}-\frac{n}{2}=\begin{cases}
        1\quad&\text{for $n\geq 3$}
        \\[2mm]
        \text{any number $>1$ }\quad&\text{for $n=2$}
    \end{cases};
\end{equation*}
this concludes the proof.
\end{proof}

Before proving the above-stated theorems on the gradient, we establish  a few preparatory lemmas. Using the comparison estimate \eqref{st:comp5}, we can prove the following excess decay estimate for $Du$.

\begin{proposition}[Excess decay estimate]
Under the notation and assumptions of Proposition \ref{prop:comparison}, we have
{\rm
\begin{multline}\label{st:comp7}
E\big(Du;B_\vrho(\bar x)\big)\leq c\bigg[\frac\vrho r+\Big( \frac{r}\vrho\Big)^{n/2}\Big[ {\big\|A(\cdot,u(\,\cdot\,))-\bar A\big\|}_{L^\infty(B_r(\bar x))}+r^{2(1-s)}\Big]\bigg] E\big(Du;B_{2r}(\bar x)\big)\\
+c\,\Big(\frac r{\vrho}\Big)^{n/2}\Big[ {\big\|A(\cdot,u(\,\cdot\,))-\bar A\big\|}_{L^\infty(B_r(\bar x))}+r^{2(1-s)}\Big]\big|{(Du)}_{B_{2r}(\bar x)}\big|\\
+c\,\Big( \frac r{\vrho}\Big)^{n/2}r\bigg(\mint_{B_r(\bar x)}|f|^{2_*}\dx\bigg)^{1/2_*}+c\,\Big(\frac r{\vrho}\Big)^{n/2}r^{1-2s}\tail\big(u-(u)_{B_r(\bar x)};B_r(\bar x)\big)\,,
\end{multline}
}
$\!\!$for every $0<\vrho\leq r\leq r_{x_0}/2$, where $c=c(\data,\max\{\nu,1\})$, and $E(Du;B_r(x))$ is the excess functional defined by \eqref{defL2:excess}.
\end{proposition}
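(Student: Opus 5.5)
The plan is the standard comparison argument. We interpose the $\bar A$-harmonic function $h$ from \eqref{new:eqv} between $Du$ and its averages. Fix $0<\vrho\leq r\leq r_{x_0}/2$ and let $h$ solve \eqref{new:eqv} in $B_r(\bar x)$. We split into two cases.

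\textbf{Case $\vrho\leq r/2$.} By \eqref{media} with $z=(Dh)_{B_\vrho(\bar x)}$ and the triangle inequality,
\[
E\big(Du;B_\vrho(\bar x)\big)\leq E\big(Dh;B_\vrho(\bar x)\big)+2\Big(\frac r\vrho\Big)^{n/2}\bigg(\mint_{B_r(\bar x)}|Du-Dh|^2\dx\bigg)^{1/2}.
\]
For the first term we use the constant-coefficient regularity for $\bar A$-harmonic maps (\cite[Chapter III]{giaq}, as in \eqref{utile1}), applied to the $\bar A$-harmonic map $h-(Du)_{B_r(\bar x)}\cdot x$. This gives the Campanato decay
\[
E\big(Dh;B_\vrho(\bar x)\big)\leq c\,\frac\vrho r\,E\big(Dh;B_{r}(\bar x)\big),
\]
which can also be obtained from $\|D^2h\|_{L^\infty(B_{r/2})}\leq c\,r^{-1}E(Dh;B_r)$. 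Then
\[
E(Dh;B_r)\leq E(Du;B_r)+2\Big(\mint_{B_r}|Du-Dh|^2\Big)^{1/2}
\qquad\text{and}\qquad
E(Du;B_r)\leq 2^{n/2}E(Du;B_{2r})
\]
by \eqref{media:nonconc}. The extra comparison term multiplied by $\vrho/r\leq 1$ is absorbed into the one already carrying the factor $(r/\vrho)^{n/2}\geq1$.

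\textbf{Case $r/2<\vrho\leq r$.} Here we simply use \eqref{media:nonconc} to get $E(Du;B_\vrho)\leq 2^{n}E(Du;B_{2r})$, which is controlled by $c(\vrho/r)E(Du;B_{2r})$ since $\vrho/r\geq1/2$.

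In both cases it remains to insert \eqref{st:comp5} for the comparison term. There we must split
\[
\Big(\mint_{B_{2r}}|Du|^2\Big)^{1/2}\leq E\big(Du;B_{2r}(\bar x)\big)+\big|(Du)_{B_{2r}(\bar x)}\big|,
\]
again by \eqref{media}. This produces exactly the second line of \eqref{st:comp7}, carrying the coefficient $\|A(\cdot,u)-\bar A\|_\infty+r^{2(1-s)}$, together with the $f$ and tail terms multiplied by $(r/\vrho)^{n/2}$. Collecting everything gives \eqref{st:comp7} with $c=c(\data,\max\{\nu,1\})$, since \eqref{st:comp5} carries that dependence and $\Lambda$ enters the harmonic estimates.

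The only point needing care is the decay estimate for $Dh$. It must be with respect to the excess and not $\|Dh\|$, which is why we subtract the affine map before applying the interior estimates. The rest is bookkeeping of the factors $(r/\vrho)^{n/2}$.
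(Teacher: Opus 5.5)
Your proposal is correct and follows the paper's route. The paper likewise combines the Campanato decay $E(Dh;B_\vrho(\bar x))\leq c\,(\vrho/r)\,E(Dh;B_r(\bar x))$ for the frozen $\bar A$-harmonic comparison map with the comparison bound \eqref{st:comp5}, enlarges $B_r(\bar x)$ to $B_{2r}(\bar x)$, and splits $\big(\mint_{B_{2r}(\bar x)}|Du|^2\dx\big)^{1/2}\leq E(Du;B_{2r}(\bar x))+|(Du)_{B_{2r}(\bar x)}|$; your case split at $\vrho=r/2$ and your derivation of the decay from the sup bound on $D^2h$ only spell out what the paper quotes from the standard constant-coefficient theory.
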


\begin{proof}
As $h\in W^{1,2}(B_r(\bar x))$ is a solution to a system with constant coefficients, a standard decay estimate is
\begin{equation*}
\mint_{B_\vrho(\bar x)}{\big|Dh-{(Dh)}_{B_\vrho(\bar x)}\big|}^2\dx   \leq c\Big(\frac{\vrho}r\Big)^2\mint_{B_r(\bar x)}{\big|Dh-{(Dh)}_{B_r(\bar x)}\big|}^2\dx\,,
\end{equation*}
for every $0<\vrho\leq r$, where $c=c(n,N,\Lambda)$, see \cite[Chapter III, Theorem 2.1 \& Remark 2.3]{giaq}. Using standard algebraic manipulations taking into account \eqref{media} and the subadditivity several times, the previous decay estimate  and standard manipulations yield
\begin{equation}\label{st:comp6}
\mint_{B_\vrho(\bar x)}{\big|Du-{(Du)}_{B_\vrho(\bar x)}\big|}^2\dx\leq c\,\Big(\frac{\vrho}r\Big)^2\mint_{B_r(\bar x)}{\big|Du-{(Du)}_{B_r(\bar x)}\big|}^2\dx+c\,\Big(\frac r{\vrho}\Big)^n \mint_{B_r(\bar x)}{|Du-Dh|}^2\dx\,,
\end{equation}
with $c=c(n,N,\Lambda)$ (see, for instance, the proof of Proposition 3.2 for $\beta=1$ in \cite{BCV}, although the problem considered there is different). Combining \eqref{st:comp6} with \eqref{st:comp5}, we get
\begin{align*}
E\big(Du;B_\vrho(\bar x)\big)&\leq  c\, \frac\vrho r\bigg(\mint_{B_{r}(\bar x)}{\big|Du-{(Du)}_{B_r(\bar x)}\big|}^2\dx \bigg)^{1/2}
\\
&\qquad+c\,\Big( \frac{r}\vrho\Big)^{n/2}\,\Big[ {\big\|A(\cdot,u(\,\cdot\,))-\bar A\big\|}_{L^\infty(B_r(\bar x))}+r^{2(1-s)}\Big]\bigg(\mint_{B_{2r}(\bar x)}|Du|^2\dx \bigg)^{1/2} \\
&\qquad\qquad+c\,\Big( \frac r{\vrho}\Big)^{n/2}r\bigg(\mint_{B_r(\bar x)}|f|^{2_*}\dx\bigg)^{1/2_*}+c\,\Big(\frac r{\vrho}\Big)^{n/2}r^{1-2s}\tail\big(u-(u)_{B_r(\bar x)};B_r(\bar x)\big)\,,
\end{align*}
for all $0<\vrho\leq r$; using \eqref{media} and the elementary inequality
\begin{equation*}
    \bigg(\mint_{B_{2r}(\bar x)}{|Du|}^2\dx\bigg)^{1/2}\leq c\,E\big(Du;B_{2r}(\bar x)\big)+c\,\big|{(Du)}_{B_{2r}(\bar x)}\big|
\end{equation*}
then yields our desired result \eqref{st:comp7}.
\end{proof}

Now we take advantage of the H\"older continuity of $u$ in order to refine the tail estimate: this is the content of the next proposition.

\begin{proposition}[Tail estimate for H\"older continuous functions]\label{prop:newtail}
Let $w\in C^{0,\beta}(B_{R}(x_0))\cap L^1_{2s}$ for some $\beta \in (0,1]$. Then for every $\bar x\in B_{R/2}(x_0)$, and every radius $0<r\leq R/4$, we have
\rm{
\begin{multline*}
r^{-2s}\tail\big(w-(w)_{B_r(\bar x)};B_r(\bar x)\big)\leq  c\,R^{-2s} \tail\big(w-(w)_{B_{R}(x_0)};B_{R}(x_0)\big)\\
+c\,[w]_{C^{0,\beta}(B_{R}(x_0))}\bigg[R^{\beta-2s}+\int_r^R\mu^{\beta-2s} \,\frac{d\mu}{\mu}\bigg]\,,
\end{multline*}
}
for a constant $c=c(n,s)$.
\end{proposition}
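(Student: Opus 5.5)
The plan is to reduce to a comparison between the small ball $B_r(\bar x)$ and an intermediate concentric ball centered at $\bar x$, and then to a comparison between that ball and $B_R(x_0)$. Throughout, the oscillation terms will be controlled by the H\"older seminorm of $w$ instead of by Poincar\'e-type bounds.

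\emph{Step 1: concentric decay around $\bar x$.} Since $\bar x\in B_{R/2}(x_0)$, we have $B_{R/2}(\bar x)\subseteq B_R(x_0)$. I apply \eqref{tail:1} with $\gamma=2s$, $\vrho=r$ and outer radius $R/4$, centered at $\bar x$; recall $\tail_{2s}=\tail$ by \eqref{relaz:tails}. After multiplying by $r^{-2s}$ this gives
\[
r^{-2s}\tail\big(w-(w)_{B_r(\bar x)};B_r(\bar x)\big)\le c\,(R/4)^{-2s}\tail\big(w-(w)_{B_{R/4}(\bar x)};B_{R/4}(\bar x)\big)+c\,(R/4)^{-2s}\mint_{B_{R/4}(\bar x)}|w-(w)_{B_{R/4}(\bar x)}|\dx+c\int_r^{R/4}\mu^{-2s}\mint_{B_\mu(\bar x)}|w-(w)_{B_\mu(\bar x)}|\dx\,\frac{d\mu}{\mu}.
\]
The powers combine exactly: $r^{-2s}\cdot r^{2s}(R/4)^{-2s}$ and $r^{-2s}\cdot r^{2s}\mu^{-2s}$. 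For every ball $B_\mu(\bar x)\subseteq B_R(x_0)$ the H\"older bound gives $\mint_{B_\mu(\bar x)}|w-(w)_{B_\mu(\bar x)}|\dx\le 2^\beta[w]_{C^{0,\beta}(B_R(x_0))}\mu^\beta$. Hence the second term is bounded by $c[w]_{C^{0,\beta}}R^{\beta-2s}$, and the integral term by $c[w]_{C^{0,\beta}}\int_r^R\mu^{\beta-2s}\frac{d\mu}{\mu}$.

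\emph{Step 2: from $B_{R/4}(\bar x)$ to $B_R(x_0)$.} Here I use the non-concentric estimate \eqref{tail:3} with $\gamma=2s$, $\vrho=R/4$, $x_1=\bar x$, $x_2=x_0$ and outer radius $R$. Since $|\bar x-x_0|<R/2$, the factor $(R/(R-|x_1-x_2|))^{n+2s}$ is at most $2^{n+2s}$, and the ratio $(R/\vrho)^n=4^n$. This yields
\[
\tail\big(w-(w)_{B_{R/4}(\bar x)};B_{R/4}(\bar x)\big)\le c\,\tail\big(w-(w)_{B_R(x_0)};B_R(x_0)\big)+c\mint_{B_R(x_0)}|w-(w)_{B_R(x_0)}|\dx.
\]
The last average is bounded by $c[w]_{C^{0,\beta}}R^\beta$. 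Multiplying by $(R/4)^{-2s}$ gives the terms $cR^{-2s}\tail(w-(w)_{B_R(x_0)};B_R(x_0))$ and $c[w]_{C^{0,\beta}}R^{\beta-2s}$.

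\emph{Combining and checking constants.} Putting the two steps together gives the claim with $c=c(n,s)$. The constants in \eqref{tail:1} and \eqref{tail:3} depend only on $n$ and $s$, and all the geometric ratios are numerical. The only point needing care is that the radii are admissible. We need $r\le R/4$ so that $B_r(\bar x)\subseteq B_{R/4}(\bar x)$; the inclusion $B_{R/4}(\bar x)\subseteq B_R(x_0)$ must hold strictly for \eqref{tail:3}; and every ball $B_\mu(\bar x)$ with $\mu\le R/4$ must lie inside $B_R(x_0)$, where the H\"older seminorm is available. All three follow from $|\bar x-x_0|<R/2$.

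If $r=R/4$, I skip Step 1 and apply Step 2 directly. I do not expect a real obstacle: the proof is bookkeeping of the exponents $r^{-2s}\cdot r^{2s}$.
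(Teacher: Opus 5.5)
Your proposal is correct and matches the paper's proof essentially step for step: you apply \eqref{tail:1} around $\bar x$ with outer radius $R/4$, bound the mean oscillations via the H\"older seminorm on balls contained in $B_R(x_0)$, and then use \eqref{tail:3} to pass from $B_{R/4}(\bar x)$ to $B_R(x_0)$, with the factors $2^{n+2s}$ and $4^n$ absorbed into $c(n,s)$. Your separate treatment of the endpoint $r=R/4$, where \eqref{tail:1} would need $B_\vrho\Subset B_r$, is a minor detail the paper leaves implicit.
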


\begin{proof}
By \eqref{tail:1}, we have for $c=c(n,s)$
\begin{multline*}
r^{-2s}\tail\big(w-(w)_{B_r(\bar x)};B_r(\bar x)\big)\leq c\,R^{-2s} \tail\big(w-(w)_{B_{R/4}(\bar x)};B_{R/4}(\bar x)\big)\\
+c\,R^{-2s}\mint_{B_{R/4}(\bar x)}{\big|w-(w)_{B_{R/4}(\bar x)}\big|}\dx+c\int_r^{R/4}\mu^{-2s}\mint_{B_\mu(\bar x)}{\big|w-(w)_{B_\mu(\bar x)}\big|}\dx\,\frac{d\mu}{\mu}\,.
\end{multline*}
By the $\beta$-H\"older continuity of $w$, and since $B_{R/4}(\bar x)\subseteq B_R(x_0)$, we have
\[
    \mint_{B_{R/4}(\bar x)}{\big|w-(w)_{B_{R/4}(\bar x)}\big|}\dx\leq [w]_{C^{0,\beta}(B_{R}(x_0))}R^\beta\,,
\]
and
\[
   \int_r^{R/4}\mu^{-2s}\mint_{B_\mu(\bar x)}\big|w-(w)_{B_\mu(\bar x)}\big|\dx\,\frac{d\mu}{\mu}  \leq  [w]_{C^{0,\beta}(B_{R}(x_0))}\int_r^R\mu^{\beta-2s} \,\frac{d\mu}{\mu}\,.
\]
As $B_{R/4}(\bar x)\subseteq B_{R}(x_0)$, and $R-|\bar x-x_0|\geq R/2$, we further estimate via \eqref{tail:3}
\begin{align*}
      \tail\big(w-(w)_{B_{R/4}(\bar x)};B_{R/4}(\bar x)\big) &\leq  c\tail\big(w-(w)_{B_{R}(x_0)};B_{R}(x_0)\big)+c\mint_{B_{R}(x_0)}\big|w-(w)_{B_{R}(x_0)}\big|\dx\notag\\
    &\leq  c\tail\big(w-(w)_{B_{R}(x_0)}; B_{R}(x_0)\big)+c\,[w]_{C^{0,\beta}(B_{R}(x_0))}\,R^\beta\,;
\end{align*}
merging the content of these displays, we get the desired estimate.
\end{proof}

\subsection{Lipschitz continuity of solutions} 
We are now ready to prove the Lipschitz regularity of solutions to \eqref{eq1}. The estimates established above provide the two ingredients needed for the iteration: the excess decay estimate \eqref{st:comp8}, which controls the oscillation of $Du$ at smaller scales in terms of its oscillation at larger scales, and the tail estimate of Proposition \ref{prop:newtail}, which allows us to handle the nonlocal contribution with sufficient precision by using the local H\"older estimate already obtained in Theorem \ref{partial.Holder}. We shall combine these estimates with a suitable choice of the H\"older exponent $\beta_0$ in order to absorb the lower-order terms and obtain a uniform bound for the gradient excess.

\begin{proof}[Proof of Theorem \ref{thm:lip}]
As in the previous subsections, we shall consider $\bar x\in B_{r_{x_0}/2}(x_0)$ and a radius $0<r\leq r_{x_0}/4$ so that  $B_{2r}(\bar x)\subset B_{r_{x_0}}(x_0)\Subset \Omega_u$. As remarked at the beginning of this section (see the discussion preceding equation~\eqref{usholder}), we are working under assumptions that are stronger than those leading to Theorem \ref{partial.Holder}; thus by \eqref{mod:cont.u}, \eqref{mod:cont.x} and the monotonicity of $\omegau$
\begin{align}\label{AAx:0}
{\big\|A(\cdot,u(\,\cdot\,))-\bar A\big\|}_{L^\infty(B_r(\bar x))}&\leq \sup_{x,y \in B_r(\bar x)} \big|A(x, u(x)) - A(y, u(y))\big|\notag \\
&\leq  \sup_{x,y \in B_r(\bar x)} \big|A(x, u(x)) - A(y, u(x))\big| +  \sup_{x,y \in B_r(\bar x)}\big|A(y, u(x)) - A(y, u(y))\big|  \notag\\
&\leq \Lambda \bigg[\omegax(r)+\omegau\Big(\sup_{x,y \in B_r(\bar x)}|u(y)-u(x)|\Big) \bigg]\,,
\end{align}
and then using that $u\in C^{0,\beta_0}(B_r(\bar x))$, we get
\begin{equation}\label{AAx:2}
    {\big\|A(\cdot,u(\,\cdot\,))-\bar A\big\|}_{L^\infty(B_r(\bar x))}\leq \Lambda\Big[ \omegax(r) + \omegau\big( [u]_{C^{0,\beta_0}(B_{r_{x_0}}(x_0))}r^{\beta_0}\big)\Big]\,.
\end{equation}
If we now define
\begin{equation}\label{def:omegabeta}
    \omega_{\beta_0}(r):=\omegax(r)+\omegau\big([u]_{C^{0,\beta_0}(B_{r_{x_0}}(x_0))}r^{\beta_0}\big)+r^{2(1-s)}\,,
\end{equation}
then combining \eqref{st:comp7} and \eqref{AAx:2} yields
\begin{multline}\label{st:comp8}
E\big(Du;B_\vrho(\bar x)\big)\leq c\bigg[ \frac\vrho r+\Big( \frac{r}{\vrho}\Big)^{n/2}\omega_{\beta_0}(r)\bigg] E\big(Du;B_{2r}(\bar x)\big)+c\,\Big(\frac r{\vrho}\Big)^{n/2}\omega_{\beta_0}(r)\big|{(Du)}_{B_{2r}(\bar x)}\big|\\
+c\,\Big( \frac r{\vrho}\Big)^{n/2}r\bigg(\mint_{B_r(\bar x)}|f|^{2_*}\dx\bigg)^{1/2_*}+c\,\Big(\frac r{\vrho}\Big)^{n/2}r^{1-2s}\tail\big(u-(u)_{B_r(\bar x)};B_r(\bar x)\big)\,,
\end{multline}
for all $0<\vrho\leq r\leq r_{x_0}/4$, where $c=c(\data,\max\{\nu,1\})$. 

\vs

Let us now consider a radius $\bar r\leq r_{x_0}/4$ (that we use as smallness parameter) and a parameter $\tau\in (0,1/16)$ to be chosen later and we specialize the previous estimate to points $\bar x\in B_{\bar r}(x_0)$. We build the shrinking sequence of radii $\vrho_i=\tau^i r$ for $r\leq \bar r$ and $i\in\N_0$, and subsequently set
\begin{equation}\label{def:neew}
   B_i\equiv B_{\vrho_i}(\bar x),\quad k_i\coloneqq\big|{(Du)}_{B_i}\big|\,.
\end{equation}
First we observe that for $m\in\N_0$, we have by telescopic summation (see \cite[Equation~(4.5)]{BCV})
\begin{equation}\label{kmi:1}
    k_{m+1}\leq k_0+\sum_{i=0}^m \mint_{B_{i+1}}\big|Du-{(Du)}_{B_i}\big|\dx\leq k_0+ \tau^{-n}\sum_{i=0}^m E(Du; B_i)\,,
\end{equation}
enlarging the domain of integration, using H\"older's inequality and the definition of $E(Du; B_i)$. Next, by applying \eqref{st:comp8} with the choice $\vrho=\vrho_{i+1}$ and $r=\vrho_i/2$, we obtain
\begin{multline}\label{Ai:temp}
E(Du; B_{i+1})\leq c\Big[\tau+\tau^{-n/2}\,\omega_{\beta_0}(\bar r) \Big]E(Du; B_i) \\
    + c\, \tau^{-n/2}\bigg[\omega_{\beta_0}(\vrho_{i})k_{i}+\vrho_{i}\bigg( \mint_{B_{i}}|f|^{2_*}\dx\bigg)^{1/2_*}+\vrho_{i}^{1-2s}\tail\big(u-(u)_{B_{\vrho_i/2}(\bar x)};B_{\vrho_i/2}(\bar x)\big)\bigg]
\end{multline}
where $c=c(\data,\max\{\nu,1\})$.   We now first choose  $\tau=\tau(\data,\max\{\nu,1\})\in (0,1/16)$, and then choose the radius $\bar r\leq r_{x_0}/4$, depending on $\data,\max\{\nu,1\}$ and also on $\omegax(\,\cdot\,)$, $\omegau(\,\cdot\,)$, $[u]_{C^{0,\beta_0}(B_{r_{x_0}}(x_0))}$ sufficiently small so that
\begin{equation}\label{lorichiamo}
    c\Big[\tau+\tau^{-n/2}\,\omega_{\beta_0}(\bar r) \Big]\leq \frac{1}{2}\,.
\end{equation}
In particular, recalling that $\beta_0=\beta_0(s)$ is fixed, by definition of $\omega_{\beta_0}$ in \eqref{def:omegabeta}, the monotonicity of $\omegau$ and the H\"older estimate \eqref{usholder}, the radius $\bar r$ depends on $\data,\max\{\nu,1\},\omegax(\,\cdot\,),\omegau(\,\cdot\,)$,  $\vrho_{x_0}$, ${\|I_{1,\chi}^f(\cdot,1)\|}_{L^\infty(\Omega)}$ and on an upper bound on $E(u;B_{\vrho_{x_0}}(x_0))$ and the tail term $\tail(u-(u)_{B_{\vrho_{x_0}}(x_0)};B_{\vrho_{x_0}}(x_0))$.

\vs

Next, by using \eqref{lorichiamo}, equation~\eqref{Ai:temp} becomes
\begin{align}\label{st:comp9}
E(Du; B_{i+1}) &\leq \frac12E(Du; B_i) + c\bigg[\omega_{\beta_0}(\vrho_{i})k_{i}+\vrho_{i}\bigg( \mint_{B_{i}}|f|^{2_*}\dx\bigg)^{1/2_*}+\vrho_{i}^{1-2s}\tail\big(u-(u)_{B_{\vrho_i/2}(\bar x)};B_{\vrho_i/2}(\bar x)\big)\bigg]\notag\\
    &\leq \frac12E(Du; B_i) + c\bigg[\omega_{\beta_0}(\vrho_{i})k_{i}+\vrho_{i}\bigg( \mint_{B_{i}}|f|^{2_*}\dx\bigg)^{1/2_*}+\vrho_{i}\Big[ r_{x_0}^{-2s}\tail\big(u-(u)_{B_{r_{x_0}}(x_0)};B_{r_{x_0}}(x_0)\big)\notag\\
    &\pushright{+\,\frac1{|\beta_0-2s|}\vrho_i^{\beta_0-2s}{[u]}_{C^{0,\beta_0}(B_{r_{x_0}}(x_0))}+\Big(1+\frac1{|\beta_0-2s|}\Big)r_{x_0}^{\beta_0-2s}{[u]}_{C^{0,\beta_0}(B_{r_{x_0}}(x_0))}\Big]\bigg]}
\end{align}
for all $i\in\mathbb N_0$, with $c=c(\data,\max\{\nu,1\})$. In the last line we used Proposition~\ref{prop:newtail} with $r=\vrho_i/2$ and $R=r_{x_0}$; moreover, since $\beta_0\neq 2s$, we computed
\[
\int_{\vrho_i/2}^{r_{x_0}} \mu^{\beta_0-2s}\,\frac{d\mu}{\mu} \leq \frac{r_{x_0}^{\beta_0-2s}+\vrho_i^{\beta_0-2s}}{|\beta_0-2s|}\,.
\] 
By summing \eqref{st:comp9} for $i\in\{0,1,\dots,m-1\}$ for $m\in\N$ and performing simple algebraic manipulations, similar to those from Proof of Theorem \ref{cont.t}, we get
\begin{multline*}
\sum_{i=0}^mE(Du; B_i)\leq E(Du; B_0)+ \frac{1}{2}\sum_{i=0}^{m}E(Du; B_i)+c\sum_{i=0}^{m-1} \omega_{\beta_0}(\vrho_i)k_i+c\sum_{i=0}^\infty \vrho_i\bigg( \mint_{B_i}|f|^{2_*}\dx\bigg)^{1/2_*}
\\
+c\,{[u]}_{C^{0,\beta_0}(B_{r_{x_0}}(x_0))}\sum_{i=0}^\infty\vrho_i^{1+\beta_0-2s} +c\Big[r_{x_0}^{-2s} \tail\big(u-(u)_{B_{r_{x_0}}(x_0)} ;B_{r_{x_0}}(x_0)\big)+r_{x_0}^{\beta_0-2s}{[u]}_{C^{0,\beta_0}(B_{r_{x_0}}(x_0))}\Big]\sum_{i=0}^\infty \vrho_i\,,
\end{multline*}
with $c=c(\data,\max\{\nu,1\})$. Reabsorbing the first term of the right-hand side, and using that 
\begin{equation*}
    \sum_{i=0}^\infty \vrho_i=\sum_{i=0}^\infty\tau^ir=c(\data,\max\{\nu,1\})r,\quad\text{and similarly}\quad \sum_{i=0}^\infty \vrho_i^{1+\beta_0-2s}=c(\data,\max\{\nu,1\})r^{1+\beta_0-2s}
\end{equation*}
 due to $\beta_0=\beta_0(s)$ and \eqref{cond:beta}, we find
\begin{multline}\label{st:comp11}
\sum_{i=0}^mE(Du; B_i)\leq 2E(Du; B_0)+c\sum_{i=0}^{m-1} \omega_{\beta_0}(\vrho_i)k_i+c\sum_{i=0}^\infty \vrho_i\bigg( \mint_{B_i}|f|^{2_*}\dx\bigg)^{1/2_*}\\
+c\,{[u]}_{C^{0,\beta_0}(B_{r_{x_0}}(x_0))}r\big[r^{\beta_0-2s}+r_{x_0}^{\beta_0-2s}\big]+c\,r\,r_{x_0}^{-2s} \tail\big(u-(u)_{B_{r_{x_0}}(x_0)}; B_{r_{x_0}}(x_0)\big)\,,
\end{multline}
with constant $c=c(\data,\max\{\nu,1\})$. Combining \eqref{st:comp11} and \eqref{kmi:1} with \eqref{est.potential.sum} for $\bar\jmath=0$, $\alpha=1$, we get
\begin{align}\label{iter:km}
k_{m+1}&\leq  k_0+c\,E\big(Du; B_r(\bar x)\big)+c\sum_{i=0}^{m-1} \omega_{\beta_0}(\vrho_i)k_i+c{\big\|{\bf I}^f_{1,\chi}(\,\cdot\,,r_{x_0})\big\|}_{L^\infty (B_{r_{x_0}}(x_0))}\notag\\
&\qquad\qquad\qquad+c\,{[u]}_{C^{0,\beta_0}(B_{r_{x_0}}(x_0))}r\big[\bar r^{\beta_0-2s}+r_{x_0}^{\beta_0-2s}\big]+c\,r\,r_{x_0}^{-2s}  \tail\big(u-(u)_{B_{r_{x_0}}(x_0)} ;B_{r_{x_0}}(x_0)\big)\notag\\
&\eqqcolon M_0+c\sum_{i=0}^{m-1} \omega_{\beta_0}(\vrho_i)\,k_i\,,
\end{align}
with the obvious definition for $M_0$, and with $c=c(\data,\max\{\nu,1\})$. We now claim that 
\begin{equation}\label{fin:claim}
k_m\leq 2M_0\quad \text{for all $m\in \N_0$}\,,
\end{equation}
provided that we choose the radius $\bar r>0$ sufficiently small.

\vs

We proceed by induction: the case $m=0$ is trivial by the definition of $M_0$. Let us suppose the claim is true for $m=0,1,\dots,m_0$ for some $m_0\in\N_0$, and prove it for $m_0+1$. By \eqref{iter:km} and the inductive hypothesis, 
\begin{align}\label{k:m0}
        k_{m_\smallzero+1}&\leq M_0+c\sum_{i=0}^{m_\smallzero} \omega_{\beta_0}(\vrho_i)k_i\leq M_0\Big[1+2\bar c\sum_{i=0}^\infty \omega_{\beta_0}(\vrho_i)\Big]\,,
\end{align}
with $\bar c=\bar c(\data,\max\{\nu,1\})$. Using the standard estimate \eqref{Dini.dyadic}, recalling that 
\[
\beta_0=\beta_0(s),\qquad \tau=\tau(\data,\max\{\nu,1\})\leq 1/16\,,
\]
by the expression of \eqref{def:omegabeta}, using \eqref{Dini.dyadic} and the Dini continuity of $\omegax(\,\cdot\,),\omegau(\,\cdot\,)$, we may reduce the size of $\bar r$ in such a way that
\begin{equation}\label{radius:chiamo}
\sum_{i=0}^\infty \omega_{\beta_0}(\vrho_i)\leq \frac{1}{\log(1/\tau)}\,\int_0^{2\bar r}\omegax(\vrho)\,\frac{d\vrho}{\vrho}+\frac{1}{\beta_0\log(1/\tau)}\,\int_0^{2[u]_{C^{0,\beta_0}(B_{r_{x_0}}(x_0))}\bar r^{\beta_0}}\omegau(\vrho)\,\frac{d\vrho}{\vrho}+ \frac{\bar r^{2(1-s)}}{1-\tau^{2(1-s)}}\leq  \frac{1}{2\bar c}\,.
\end{equation}
Combining \eqref{radius:chiamo} with \eqref{k:m0}, we deduce $k_{m_\smallzero+1}\leq 2M_0$, which proves the claim \eqref{fin:claim}.

\vs

Now, for all Lebesgue points $\bar x\in B_r(x_0)$ of $Du$, by \eqref{fin:claim} we have
\begin{equation}\label{almost:final}
    |Du(\bar x)|=\lim_{m\to \infty}k_m\leq 2M_0\qquad\implies \qquad {\|Du\|}_{L^\infty(B_{r}(x_0))}\leq 2M_0\,.
\end{equation}
Recalling \eqref{def:neew} and \eqref{media}, for all $\bar x\in B_r(x_0)$, we estimate 
\begin{equation*}
    k_0+E\big(Du;B_{r}(\bar x)\big)\leq c(n)\bigg(\mint_{B_{2r}(x_0)}|Du|^2\dx \bigg)^{1/2}
\end{equation*}
since $B_{r}(\bar x)\subset B_{2r}(x_0)$; we then insert the estimate \eqref{usholder}  into \eqref{almost:final},  expand the expression \eqref{iter:km} of $M_0$, and use that, since \eqref{dep:gamma} holds, $1+\beta_0-2s>0$ and $r\leq r_{x_0}\leq \vrho_{x_0}\leq 1$,
\[
r\big[r^{\beta_0-2s}+r_{x_0}^{\beta_0-2s}\big]\leq r^{1+\beta_0-2s}+r_{x_0}^{1+\beta_0-2s}\leq2
\]
and similarly
\[
   \vrho_{x_0}^{\gamma-\beta_0-2s}r\big[r^{\beta_0-2s}+r_{x_0}^{\beta_0-2s}\big]\leq2\vrho_{x_0}^{\gamma-4s} \leq 2\vrho_{x_0}^{-2s}\,.
\]
This yields
\begin{align*}
{\|Du\|}_{L^\infty(B_{r}(x_0))}&\leq c\,r^{-n/2}{\|Du\|}_{L^2(B_{2r}(x_0))}+c\,\vrho_{x_0}^{-n/2}{\|Du\|}_{L^2(B_{\vrho_{x_0}}(x_0))}
    \\
    &\qquad+c\,{\big\|{\bf  I}_{1,\chi}^f(\cdot,r_{x_0})\big\|}_{L^\infty(B_{r_{x_0}}(x_0))}+c\,r\,r_{x_0}^{-2s}  \tail\big(u-(u)_{B_{r_{x_0}}(x_0)} ;B_{r_{x_0}}(x_0)\big)
    \\
    &\qquad\qquad+c\,r\vrho_{x_0}^{-2s} \tail\big(u-(u)_{B_{\vrho_{x_0}}(x_0)};B_{\vrho_{x_0}}(x_0)\big)\,,
\end{align*}
with $c=c(\data,\max\{\nu,1\})$, and with $r\leq \bar r\leq r_{x_0}/4$ fulfilling \eqref{lorichiamo} and \eqref{radius:chiamo}. This completes the proof of \eqref{quant:lip} (with $\bar r_{x_0} := \bar r$).
\end{proof}

\begin{remark}
{\rm Taking advantage of the Lipschitz continuity of $u$, we can improve \eqref{st:comp8}. As we want to make use of Theorem \ref{thm:lip} in \eqref{AAx:0}, we start from points $\bar x\in B_{\bar r_{x_0}/2}(x_0)$ and radii $r\leq \bar r_{x_0}/4$ and get
\begin{equation}\label{AAx:3}
{\big\|A(\cdot,u(\,\cdot\,))-\bar A\big\|}_{L^\infty(B_r(\bar x))}\leq \Lambda\Big[\omegax(r)+\omegau\big({[u]}_{C^{0,1}(B_{\bar r_{x_0}}(x_0))}r\big)\Big]\,,
\end{equation}
with $\bar A$ as in \eqref{def:Ax}. Additionally,  we can apply Proposition \ref{prop:newtail} with $\beta=1$, thus getting
\begin{multline}\label{temp:liptail}
r^{1-2s}\tail\big(u-(u)_{B_r(\bar x)};B_r(\bar x)\big)\leq  c\,r\, \bar r_{x_0}^{-2s} \tail\big(u-(u)_{B_{\bar r_{x_0}}(x_0)} ;B_{\bar r_{x_0}}(x_0)\big)\\
+c\,r\bigg[\bar r_{x_0}^{1-2s}+\int_r^{\bar r_{x_0}}\mu^{1-2s} \,\frac{d\mu}{\mu}\bigg] {\|Du\|}_{L^\infty(B_{\bar r_{x_0}}(x_0))}\,.
\end{multline}
Substituting this into \eqref{st:comp7}, and taking into account that $\big|{(Du)}_{B_{2r}(\bar x)}\big|\leq {\|Du\|}_{L^\infty(B_{\bar r_{x_0}}(x_0))}$, gives
\begin{multline}\label{st:comp:lip}
E\big(Du;B_\vrho(\bar x)\big)\leq c\bigg[ \frac{\vrho}{r}+\Big(\frac{r}{\vrho}\Big)^{n/2}\omega_1(r)\bigg]E\big(Du;B_{2r}(\bar x)\big)+c\,\Big(\frac{r}{\vrho}\Big)^{n/2}\bigg\{\omega_1(r){\|Du\|}_{L^\infty(B_{\bar r_{x_0}}(x_0))}\\
+c\,r\bigg( \mint_{B_r(\bar x)}|f|^{2_*}\dx\bigg)^{1/2_*}+c\,r\, \bar r_{x_0}^{-2s} \tail\big(u-(u)_{B_{\bar r_{x_0}}(x_0)};B_{\bar r_{x_0}}(x_0)\big)\\
+c\,r\bigg[\bar r_{x_0}^{1-2s}+\int_r^{\bar r_{x_0}}\mu^{1-2s} \,\frac{d\mu}{\mu}\bigg]{\|Du\|}_{L^\infty(B_{\bar r_{x_0}}(x_0))}\bigg\}\,,
\end{multline}
for $0<\vrho\leq r\leq \bar r_{x_0}/4$, with $c=c(\data,\max\{\nu,1\})$, and where $\omega_1(\,\cdot\,)$ is defined in \eqref{def:omegabeta} with $\beta_0=1$, that is
\begin{equation*}
    \omega_1(r):=\omegax(r)+\omegau\big([u]_{C^{0,1}(B_{\bar r_{x_0}}(x_0))}r\big)+r^{2(1-s)}.
\end{equation*}
We recall that $u$ is the continuous representative of the Lebesgue class in $\Omega_u$.}
\end{remark}

\subsection{Gradient continuity of solutions} 
We prove here the continuity of the gradient under the additional assumption that the Riesz potential of the datum tends to zero locally uniformly as the radius vanishes. The proof is divided into two steps: first we show that the local excess of the gradient vanishes locally uniformly at small scales. Then, by summing the decay estimate \eqref{st:comp8} along a dyadic sequence of radii, we prove that the corresponding averages of the gradient form a Cauchy sequence; the conclusion then follows as in the proof of Theorem~\ref{cont.t}. Again, $x_0\in\Omega_u$ and $0<\bar r_{x_0}\leq r_{x_0}/4\leq \vrho_{x_0}/16$ are such that $B_{r_{x_0}}(x_0)\Subset \Omega_u$, $B_{2\vrho_{x_0}}(x_0)\Subset \Omega$ and the local estimate \eqref{quant:lip} holds.

\begin{proof}[Proof of Theorem \ref{thm:gradcont}]
{\em Step 1: A VMO-type result}. We first prove a local VMO-type estimate for $Du$, uniform in $B_{\bar r_{x_0}/4}(x_0)$. Specifically, we show that for every $\e\in(0,1)$, there exists a radius $r_\e\leq \bar r_{x_0}/4$ small enough and independent of $\bar x\in B_{\bar r_{x_0}/4}(x_0)$, such that
\begin{equation}\label{VMO}
E\big(Du;B_r(\bar x)\big)=\bigg(\mint_{B_r(\bar x)}{\big|Du-{(Du)}_{B_r(\bar x)}\big|}^2\dx \bigg)^{1/2}<\e
\end{equation}
for all $r\in(0, r_\e]$ and for all $\bar x\in B_{\bar r_{x_0}/4}(x_0)$. To this end, let us fix $R_\e>0$, to be determined in the course of the proof, and let $r>0$ be arbitrary with with $0<2r\leq R_\e \leq \bar r_{x_0}/2$. By this choice and \eqref{est.potential.sum}, we have
\begin{equation*}
    r\bigg( \mint_{B_r(\bar x)}|f|^{2_*}\dx\bigg)^{1/2_*} \leq c\,{\bf I}^f_{1,\chi}(\bar x,2r)\leq c\,{\bf I}^f_{1,\chi}(\bar x,R_\e) 
\end{equation*}
and clearly $E(Du; B_{2r}(\bar x))\leq 2{\|Du\|}_{L^\infty(B_{\bar r_{x_0}}(x_0))}$ for all $\bar x\in B_{\bar r_{x_0}/4}(x_0)$.  Let us now fix $\tau_\e\in (0,1)$ to be chosen later, 
and combine the last two estimates with \eqref{st:comp:lip}, choosing $\vrho=\tau_\e r$. Recalling the monotonicity of $\omega_1(\,\cdot\,)$ and that $r\leq R_\e\leq \bar r_{x_0}/2$, we thus obtain    
\begin{multline}\label{vmo:5}
E(Du; B_{\tau_\e r}(\bar x))
\leq \tilde c\bigg[\tau_\e+\tau_\e^{-n/2}\Big[\omega_1(R_\e)+R_\e \bar r_{x_0}^{1-2s}+\sup_{0<r\leq R_\e}r\int_r^{\bar r_{x_0}}\mu^{1-2s} \,\frac{d\mu}{\mu}\Big]\bigg]{\|Du\|}_{L^\infty(B_{\bar r_{x_0}}(x_0))}\\
+\tilde c\,\tau_\e^{-n/2}{\bf I}^f_{1,\chi}(\bar x,R_\e)+\tilde c\,R_\e\, \bar r_{{x_0}}^{-2s} \tail\big(u-(u)_{B_{\bar r_{x_0}}(x_0)}; B_{\bar r_{x_0}}(x_0)\big)\,,
\end{multline}
with $\tilde c$ depending on $\data$ and $\max\{\nu,1\}$. We can now fix $\tau_\e$ and $R_\e$: for $\e>0$ fixed, we choose  
\begin{equation}\label{fix:taue}
       \tau_\e=\frac{\e}{4\tilde c\,{\|Du\|}_{L^\infty(B_{\bar r_{x_0}}(x_0))}+1}\,,
\end{equation}
and then we select $R_\e>0$ independent of $\bar x$ such that
\[
\tau_\e^{-n/2}\Big[\omega_1(R_\e)+R_\e \bar r_{x_0}^{1-2s}+\sup_{0<r\leq R_\e}r\int_r^{\bar r_{x_0}}\mu^{1-2s} \,\frac{d\mu}{\mu}\Big]{\|Du\|}_{L^\infty(B_{\bar r_{x_0}}(x_0))}\leq  \frac{\e}{4\tilde c}
\]
(this is possible since $\sup_{0<r\leq R_\e}r\int_r^{\bar r_{x_0}}\mu^{1-2s} \,\frac{d\mu}{\mu}\searrow 0$ as $R_\e\to 0$) and
\[
R_\e\,\bar r_{x_0}^{-2s} \tail\big(u-(u)_{B_{\bar r_{x_0}}(x_0)} ;B_{\bar r_{x_0}}(x_0)\big)\leq  \frac{\e}{4\tilde c}\,.
\]
Moreover, thanks to our assumption \eqref{f:riesz1}, we may further reduce $R_\e$, independently of $\bar x$, in such a way that
\begin{equation}\label{fix:Ifeps}
    \tau_\e^{-n/2}{\bf I}^f_{1,\chi}(\bar x,R_\e)<\frac{\e}{4\tilde c}\,;
\end{equation}
merging the content of equations \eqref{fix:taue}--\eqref{fix:Ifeps} with \eqref{vmo:5}, we obtain
\[
E\big(Du; B_{\tau_\e r}(\bar x)\big)<\e\quad\text{for all $r\leq R_\e/2$\,, for all $\bar x\in B_{\bar r_{x_0}/4}(x_0)$}\,,
\]
that is \eqref{VMO} with $r_\e=\tau_\e R_\e/2$.

\vs

\noindent\textit{Step 2: $Du$ is continuous.} 
We let  $\bar x\in B_{\bar r_{x_0}/4}(x_0)$, and let $\tau\in (0,1/2)$ a parameter and $R_1\leq \bar r_{x_0}/2$  a radius both be determined later; we set, for $i\in\N$ $\vrho_i=\tau^iR_1$, $B_i(\bar x)=B_{\vrho_i}(\bar x)$. In order to prove that $Du$ is continuous, it suffices to show that 
\begin{equation}\label{claim:cauchy}
\bar x\mapsto    {\big\{{(Du)}_{B_i(\bar x)}\big\}}_{i\in\N_0}\text{ is a Cauchy sequence, uniformly in $B_{\bar r_{x_0}/4}(x_0)$}
\end{equation}
(see indeed the argument at the end of the proof of Theorem \ref{cont.t}, applied to $Du$).

\vs

 Let us prove \eqref{claim:cauchy}. First observe that, for $1\leq k<m$, using the definition of $\vrho_i$ and H\"older's inequality 
\begin{equation}\label{cauchy:1}
    \big|{(Du)}_{B_k(\bar x)}-{(Du)}_{B_m(\bar x)}\big| \leq \sum_{i=k}^{m-1}\mint_{B_{i+1}}\big|Du-{(Du)}_{B_i(\bar x)}\big|\dx\leq \tau^{-n}\sum_{i=k}^{m-1}E\big(Du;B_i(\bar x)\big)\,,
\end{equation}
see for instance \eqref{uk:cauchy}. We then make use of \eqref{st:comp:lip} with $\vrho=\vrho_{i+1}$ and $r=\vrho_i/2$ for $i\in\N_0$, together with $|{(Du)}_{B_i(\bar x)}|\leq {\|Du\|}_{L^\infty(B_{\bar r_{x_0}}(x_0))}$ and the fact that $\omega_1$ is monotone increasing, so that we find
\begin{align}\label{cauchy:2}
    E\big(Du;B_{i+1}(\bar x)\big)&\leq \tilde c\,\Big\{\tau+\tau^{-n/2}\omega_1(\vrho_i) \Big\}E\big( Du; B_i(\bar x)\big)+c\,\tau^{-n/2}\omega_1(\vrho_i){\|Du\|}_{L^\infty(B_{\bar r_{x_0}}(x_0))}\notag
     \\
     &\qquad+c\,\tau^{-n/2}\vrho_i\bigg( \mint_{B_i(\bar x)}|f|^{2_*}\dx\bigg)^{1/2_*}+c\,\tau^{-n/2}\vrho_i\,\bar r_{x_0}^{-2s}\, \tail\big(u-(u)_{B_{\bar r_{x_0}}(x_0)}; B_{\bar r_{x_0}}(x_0)\big)\notag
     \\
     &\qquad\qquad+c\,\tau^{-n/2}\vrho_i\, \bar r_{x_0}^{1-2s}{\|Du\|}_{L^\infty(B_{\bar r_{x_0}}(x_0))}+c\,\tau^{-n/2}\vrho_i\int_{\vrho_i/2}^{\bar r_{x_0}}\mu^{1-2s} \,\frac{d\mu}{\mu}\,{\|Du\|}_{L^\infty(B_{\bar r_{x_0}}(x_0))}\,,
\end{align}
where $c,\tilde c$ depend on $\data$ and $\max\{\nu,1\}$. We now fix $\tau=\tau(\data,\max\{\nu,1\})$  and then $R_1>0$, both independent of $\bar x$, such that 
\begin{equation*}
    \tilde c\,\Big\{\tau+\tau^{-n/2}\omega_1(R_1) \Big\}\leq \frac{1}{2}\,.
\end{equation*}
Combining this information with \eqref{cauchy:2}, using $\omega_1(\vrho_i)\leq \omega_1(R_1)$ and recalling that now $\tau$ depends on $\data$ gives
\begin{multline*}
    E\big(Du;B_{i+1}(\bar x)\big)\leq \frac{1}{2}\,E\big(Du;B_i(\bar x)\big)+c\,\omega_1(\vrho_i){\|Du\|}_{L^\infty(B_{\bar r_{x_0}}(x_0))}\\
    +c\,\vrho_i\bigg(\mint_{B_i(\bar x)}|f|^{2_*}\dx\bigg)^{1/2_*}+c\,\vrho_i\int_{\vrho_{i+1}}^{\bar r_{x_0}}\mu^{1-2s} \,\frac{d\mu}{\mu}\,{\|Du\|}_{L^\infty(B_{\bar r_{x_0}}(x_0))}\\
    +c\,\vrho_i\bar r_{x_0}^{-2s}\Big[\tail\big(u-(u)_{B_{\bar r_{x_0}}(x_0)} ;B_{\bar r_{x_0}}(x_0)\big)+\bar r_{x_0}{\|Du\|}_{L^\infty(B_{\bar r_{x_0}}(x_0))}\Big]\,.
\end{multline*}
We take the sum over $i=k,\dots,m$ and, after changing variables and enlarging summations on the right-hand side, we get
\begin{multline*}
\sum_{i=k+1}^{m+1} E\big(Du;B_i(\bar x)\big)\leq \frac{1}{2}\sum_{i=k}^{m}E\big(Du;B_i(\bar x)\big)+c{\|Du\|}_{L^\infty(B_{\bar r_{x_0}}(x_0))}\,\sum_{i=k}^\infty\,\omega_1(\vrho_i)\\
+c\sum_{i=k}^m\vrho_i\bigg( \mint_{B_i(\bar x)}|f|^{2_*}\dx\bigg)^{1/2_*}+c\,{\|Du\|}_{L^\infty(B_{\bar r_{x_0}}(x_0))} \sum_{i=k}^\infty\vrho_i\int_{\vrho_{i+1}}^{\bar r_{x_0}}\mu^{1-2s} \,\frac{d\mu}{\mu}\\
+c\,\bar r_{x_0}^{-2s}\Big[\tail\big(u-(u)_{B_{\bar r_{x_0}}(x_0)} ;B_{\bar r_{x_0}}(x_0)\big)+\bar r_{x_0}{\|Du\|}_{L^\infty(B_{\bar r_{x_0}}(x_0))}\Big]\sum_{i=k}^\infty\,\vrho_i\,;
\end{multline*}
reabsorbing the first term on the right-hand side, letting $m\to+\infty$, using the definition of the radii $\vrho_i$, the choice of $\tau=\tau(\data,\max\{\nu,1\})$ and the fact that $R_1\leq\bar r_{x_0}$, the above inequality entails
\begin{multline}\label{cauchy:4}
    \sum_{i=k}^\infty E\big(Du;B_i(\bar x)\big)\leq  2E\big( Du;B_k(\bar x)\big)+c\,{\|Du\|}_{L^\infty(B_{\bar r_{x_0}}(x_0))}\sum_{i=k}^\infty\omega_1(\vrho_i)+c\sum_{i=k}^\infty\vrho_i\bigg( \mint_{B_i(\bar x)}|f|^{2_*}\dx\bigg)^{1/2_*}\\
+\,c\,\tau^k\Big[\bar r_{x_0}^{1-2s} \tail\big(u-(u)_{B_{\bar r_{x_0}}(x_0)}; B_{\bar r_{x_0}}(x_0)\big)+\bar r_{x_0}^{2(1-s)}{\|Du\|}_{L^\infty(B_{\bar r_{x_0}}(x_0))}\Big]\\
+c\,{\|Du\|}_{L^\infty(B_{\bar r_{x_0}}(x_0))} \sum_{i=k}^\infty\vrho_i\int_{\vrho_{i+1}}^{\bar r_{x_0}}\mu^{1-2s} \,\frac{d\mu}{\mu}\,.
\end{multline}
Now we recall that, by \eqref{Dini.dyadic} ($\omega_1(\,\cdot\,)$ is indeed a modulus of continuity) and \eqref{est.potential.sum},
\[
    \sum_{i=k}^\infty\omega_1(\vrho_i) \leq c\int_0^{\vrho_{k-1}}\omega_1(t)\frac{dt}{t}\qquad\text{and}\qquad \sum_{i=k}^\infty\vrho_i\bigg( \mint_{B_i(\bar x)}|f|^{2_*}\dx\bigg)^{1/2_*}\leq c\,{\bf I}^f_{1,\chi}(\bar x,\vrho_{k-1})\,,
\]
and we notice that
\[
\sum_{i=k}^\infty\vrho_i\int_{\vrho_{i+1}}^{\bar r_{x_0}}\mu^{1-2s} \,\frac{d\mu}{\mu}\leq c(s,\tau)\tau^{k\min\{1,2(1-s)\}}\bar r_{x_0}^{2(1-s)}\Big[\log\frac{\bar r_{x_0}}{R_1}+k\Big]\,.
\]
Indeed, if $s\neq 1/2$, then
\[
\int_{\vrho_{i+1}}^{\bar r_{x_0}}\mu^{1-2s} \,\frac{d\mu}{\mu}=\frac{\bar r_{x_0}^{1-2s}-\vrho_{i+1}^{1-2s}}{1-2s}\leq c(s,\tau) \max\big\{\bar r_{x_0}^{1-2s},\vrho_i^{1-2s}\big\}
\]
so that, recalling that $\vrho_i\leq R_1\leq\bar r_{x_0}/2$ and that $2(1-s)>0$,
\[
\sum_{i=k}^\infty\vrho_i\int_{\vrho_{i+1}}^{\bar r_{x_0}}\mu^{1-2s} \,\frac{d\mu}{\mu}\leq c(s)\sum_{i=k}^\infty\vrho_i\max\big\{\bar r_{x_0}^{1-2s},\vrho_i^{1-2s}\big\}\leq c(s,\tau)\,\bar r_{x_0}^{2(1-s)}\max\big\{\tau^k,\tau^{2k(1-s)}\big\}\,;
\]
if $s=1/2$, then
\[
\int_{\vrho_{i+1}}^{\bar r_{x_0}} \mu^{-1}\,d\mu = \log\frac{\bar r_{x_0}}{\tau^{i+1}R_1} = \log\frac{\bar r_{x_0}}{R_1}+ (i+1)\log\frac1\tau\,, 
\]
so that
\[
\sum_{i=k}^\infty\vrho_i\int_{\vrho_{i+1}}^{\bar r_{x_0}}\mu^{-1} \,d\mu\leq c(\tau)R_1\sum_{i=k}^{\infty}\tau^i\Big(\log\frac{\bar r_{x_0}}{R_1}+ i+1\Big)\leq  c\,\bar r_{x_0}\tau^k\Big[\log\frac{\bar r_{x_0}}{R_1}+k+1\Big]\,.
\]
Inserting this information into \eqref{cauchy:4} yields
\begin{multline}\label{cauchy:4.5}
    \sum_{i=k}^\infty E\big(Du;B_i(\bar x)\big)\leq  2E\big(Du;B_k(\bar x)\big)+\tilde c\,{\|Du\|}_{L^\infty(B_{\bar r_{x_0}}(x_0))}\int_0^{\vrho_{k-1}}\omega_1(t)\,\frac{dt}{t}+\tilde c\,{\bf I}^f_{1,\chi}(\bar x,\vrho_{k-1})\\
    +c\,\bar r_{x_0}^{1-2s} \bigg[ \tau^k \tail\big(u-(u)_{B_{\bar r_{x_0}}(x_0)} ;B_{\bar r_{x_0}}(x_0)\big)+\tau^{k\min\{1,2(1-s)\}}\bar r_{x_0}\Big[\log\frac{\bar r_{x_0}}{R_1}+k\Big]{\|Du\|}_{L^\infty(B_{\bar r_{x_0}}(x_0))}\bigg]\,,
\end{multline}
with $c,\tilde c$ depending on $\data$ and $\max\{\nu,1\}$. By {\em Step 1}, see \eqref{VMO}, we have that
\[
    \lim_{k\to \infty} E\big(Du;B_k(\bar x)\big)=0\quad\text{uniformly in $\bar x\in B_{\bar r_{x_0}/4}(x_0)$}\,.
\]
Using this information, the Dini continuity of $\omega_1(\,\cdot\,)$, \eqref{f:riesz1}, and since $\tau=\tau(\data,\max\{\nu,1\})\in (0,1/16)$, from \eqref{cauchy:4.5} we deduce that for any $\e>0$ we may find $k_\e>0$, independent of $\bar x$, such that
\begin{equation*}
    \sum_{i=k}^\infty\sup_{\bar x\in B_{\bar r_{x_0}/4}(x_0)} E\big(Du;B_i(\bar x)\big)<\tau^{n}\e\quad \text{for all $k\geq k_\e$}\,;
\end{equation*}
substituting this estimate into \eqref{cauchy:1} gives
\begin{equation*}
    \big|{(Du)}_{B_k(\bar x)}-{(Du)}_{B_m(\bar x)}\big|<\e\quad \text{for all $k_\e\leq k<m$ and all $\bar x\in B_{\bar r_{x_0}/4}(x_0)$}\,,
\end{equation*}
so that \eqref{claim:cauchy} is proved, thus completing the proof.
\end{proof}

\subsection{Gradient H\"older continuity} Carefully tracking the dependences in the previous proof gives the proof of Theorem \ref{thm:c1b}, once the data are sufficiently regular.

\begin{proof}[Proof of Theorem \ref{thm:c1b}]
We fix an exponent \(\chi\) such that \(2_*<\chi<n\); since \(n<n/(1-\beta)\), this choice is admissible in \eqref{density.Marc} with \(m=n/(1-\beta)\). By \eqref{ass:f} and \eqref{density.Marc}, we have for $\bar x\in B_{r_{x_0}}(x_0)$ and $r\leq r_{x_0}$
\begin{equation}\label{f:mars}
    r\bigg( \mint_{B_r(\bar x)}|f|^{2_*}\dx\bigg)^{1/2_*}+r\bigg( \mint_{B_r(\bar x)}|f|^{\chi}\dx\bigg)^{1/\chi}\leq c(n,\beta)r^{\beta}{\|f\|}_{\M^{n/(1-\beta)}(B_r(\bar x))}\,,
\end{equation}
which in particular implies
\[
    {\big\|{\bf I}^f_{1,\chi}(\,\cdot\,,r_{x_0})\big\|}_{L^\infty(B_{r_{x_0}}(x_0))}\leq c(n,\beta){\|f\|}_{\M^{n/(1-\beta)}(B_{\vrho_{x_0}}(x_0))}\,;
\]
combining this inequality with \eqref{quant:lip}, the new Lipschitz continuity estimate \eqref{new:quantlip} follows. 

\vs

Let us now fix $\bar x\in B_{\bar r_{x_0}/4}(x_0)$, and let $r\leq \bar r_{x_0}/4$. Rewriting estimate \eqref{st:comp:lip} using \eqref{f:mars} and also $E\big(Du;B_{2r}(\bar x)\big)\leq 2{\|Du\|}_{L^\infty(B_{\bar r_{x_0}}(x_0))}$ due to the inclusion $B_{2r}(\bar x)\subset B_{\bar r_{x_0}}(x_0)$, yields 
\begin{multline}\label{st:comp:lip_2}
E\big(Du;B_\vrho(\bar x)\big)\leq c \,\frac{\vrho}{r}\,E\big(Du;B_{2r}(\bar x)\big)+c\,\Big(\frac{r}{\vrho}\Big)^{n/2}\bigg\{\omega_1(r){\|Du\|}_{L^\infty(B_{\bar r_{x_0}}(x_0))}+c\,r^\beta{\|f\|}_{\M^{n/(1-\beta)}(B_{\bar r_{x_0}}(x_0))}\\
+c\,r\, \bar r_{x_0}^{-2s} \tail\big(u-(u)_{B_{\bar r_{x_0}}(x_0)};B_{\bar r_{x_0}}(x_0)\big)+c\,r\bigg[\bar r_{x_0}^{1-2s}+\int_r^{\bar r_{x_0}}\mu^{1-2s} \,\frac{d\mu}{\mu}\bigg]{\|Du\|}_{L^\infty(B_{\bar r_{x_0}}(x_0))}\bigg\}\,,
\end{multline}
where we can redefine
\[
    \omega_1(r)=\Big[1+{\|Du\|}^\beta_{L^\infty(B_{\bar r_{x_0}}(x_0))}\Big]r^{\min\{\beta,2(1-s)\}}.
    \]
We now notice the elementary inequality
\begin{equation*}
r\left[ \bar r_{x_0}^{1-2s} + \int_r^{\bar r_{x_0}}\mu^{1-2s}\frac{d\mu}{\mu} \right] \leq c\, \big(1+\bar r_{x_0}^{1-2s}\big) r^{\min\{\beta,2(1-s)\}}.
\end{equation*}
valid for every $r\in(0,\bar r_{x_0}/4]$: the only non-trivial case in when $s=1/2$, and in this case we can estimate
\[
    r \int_r^{\bar{r}_{x_0}} \mu^{-1} \, d\mu = r \log \frac{\bar{r}_{x_0}}{r} \leq c(\beta)\bar r_{x_0}^{1-\beta} r^\beta
\]
as $\beta<1$. If we insert this into \eqref{st:comp:lip_2}, after multiplying both sides by $\vrho^{n/2}$ and factoring out the smallest power of $r$ appearing in the error terms, namely $r^{\min\{\beta,2(1-s)\}}\geq r$,  after a few algebraic manipulations, we are lead to
\begin{multline}\label{camp:caract}
        \bigg(\int_{B_\vrho(\bar x)}{\big|Du-{(Du)}_{B_{\vrho}(\bar x)}\big|}^2\dx \bigg)^{1/2}\\\leq  \widehat{C}r^{n/2+\min\{\beta,2(1-s)\}}+c\, \Big(\frac{\vrho}{r}\Big)^{n/2+1} \bigg(\int_{B_{2r}(\bar x)}{\big|Du-{(Du)}_{B_{2r}(\bar x)}\big|}^2\dx\bigg)^{1/2} \,,
\end{multline}
where we set
\begin{multline*}
     \widehat{C}=c(\data,\max\{\nu,1\},\beta)\Big[{\|Du\|}_{L^\infty(B_{\bar r_{x_0}}(x_0))}^{1+\beta}+\big(1+\bar r_{x_0}^{1-2s}\big){\|Du\|}_{L^\infty(B_{\bar r_{x_0}}(x_0))}\\
     +{\|f\|}_{\M^{n/(1-\beta)}(B_{\vrho_{x_0}}(x_0))}+\bar r_{x_0}^{-2s}\tail\big(u-(u)_{B_{\bar r_{x_0}}(x_0)} ;B_{\bar r_{x_0}}(x_0)\big)  +1 \Big]\,.
\end{multline*}
Therefore, by setting 
\begin{equation*}
    \phi(\vrho)\equiv \phi_{\bar x}(\vrho):=\bigg(\int_{B_\vrho(\bar x)}{\big|Du-{(Du)}_{B_\vrho(\bar x)}\big|}^2\dx \bigg)^{1/2}\,
\end{equation*}
 from \eqref{camp:caract} we have, absorbing the powers of $2$ into the constants,
\begin{equation}\label{camp:carat1}
    \phi(\vrho)\leq c\, \Big(\frac\vrho R\Big)^{n/2+1} \phi(R)+c\,\widehat{C}R^{n/2+\min\{\beta,2(1-s)\}}\,.
\end{equation}
for all $0<\vrho \leq R=2r\leq \bar r_{x_0}/2$, as the estimate is trivial for $\vrho\in (R/2,R]$. Additionally, by means of \eqref{media} it is immediate to verify that $\vrho\mapsto\phi(\vrho)$ is a monotone function.
This observation and \eqref{camp:carat1} allow us to apply a standard iteration lemma \cite[Lemma 2.1, Chapter III]{giaq}, which yields
\begin{equation*}
    \phi(r)\leq c\left[ \frac{\phi(\bar r_{x_0}/4)}{(\bar r_{x_0})^{n/2+\min\{\beta,2(1-s)\}}}+\widehat{C}\right] r^{n/2+\min\{\beta,2(1-s)\}}\,;
\end{equation*}
from this estimate and \eqref{media}, since $B_{\bar r_{x_0}/4}(\bar x)\subseteq B_{\bar r_{x_0}}(x_0)$, we get
\[
     E(Du;B_r(\bar x))\leq c\Big[\bar r_{x_0}^{-\min\{\beta,2(1-s)\}}\, E\big(Du;B_{\bar r_{x_0}}(x_0)\big) +\widehat{C}\Big]\,r^{\min\{\beta,2(1-s)\}}
\]
for all $0<r\leq \bar r_{x_0}/4$ and all $\bar x\in B_{\bar r_{x_0}/4}(x_0)$, where $c=c(\data,\max\{\nu,1\},\beta)$. This estimate then yields our desired result \eqref{C1:depend} by Campanato's characterization of H\"older continuity \cite[Chapter 3.1]{giaq}. 
\end{proof}
We now move on to the proof of Theorem \ref{thm:c1bfull}; to this end, we need to improve the exponent $2(1-s)$ appearing in the scale factor of the comparison estimate  \eqref{st:comp5}. This in turn boils down to improving the nonlocal estimates \eqref{est:nonlocal}, \eqref{st:comp3} and \eqref{st:comp4}, and this is where we will use the particular form of the nonlocal term $Q^\mathrm{nl}_\Phi$ given by \eqref{ass:phi2} and the newfound H\"older continuity of $Du$ obtained in Theorem \ref{thm:c1b}.

\vs 

Before proving Theorem \ref{thm:c1bfull}, we introduce the following notation and make a preliminary remark. Under assumption \eqref{ass:phi2}, we write $Q^\mathrm{nl}_\Phi=Q^\mathrm{nl}_{a}$.
\begin{remark}
    \rm{For any affine function $\ell:\R^n\to \R^N$, following \cite[Remark 3.4]{kns}, we show that
\begin{equation}\label{aff:zero}
    Q^\mathrm{nl}_a\ell=0\qquad \text{in $\R^n$}
\end{equation}
that holds in the $P.V.$ sense if $s\geq1/2$. In fact, write $\ell(x)=Mx+b$, for some matrix $M\in \R^{N\times n}$ and vector $b\in \R^N$.
Then, for all $x\in \R^n$, we have
\begin{align}\label{aff:uno}
    \int_{\R^n}a(x-y)\,\big(\ell(x)-\ell(y) \big)\,\frac{dy}{|y-x|^{n+2s}}&=\int_{\R^n}a(x-y)\,M(x-y)\,\frac{dy}{|y-x|^{n+2s}}
=\int_{\R^n}a(z)\,Mz\,\frac{dz}{|z|^{n+2s}}=0\,,
\end{align}
where the last equality stems from
\begin{equation*}
\int_{\R^n}a_{\a\b}(z)\,M_{\b j}z_j\,\frac{dz}{|z|^{n+2s}}=0\quad\text{for all $\a,\b=1,\dots,N$, $j=1,\dots,n$,}
\end{equation*}
since the integrand is odd by \eqref{ass:phi1}; these integrals are to be meant in the P.V. sense. From \eqref{aff:uno}, equation~\eqref{aff:zero} follows.
}
\end{remark}
 Let us now introduce the following shorthand notation. For $x_0\in \Omega$ and $\vrho>0$, we define the affine function
\begin{equation}\label{def:ellx}
    \ell_{x_0,\vrho}(x)\equiv\ell_{\vrho}(x):=(u)_{B_\vrho(x_0)}+{(Du)}_{B_\vrho(x_0)}\,(x-x_0),\qquad x\in \R^n\,.
\end{equation}
In the following, we will use several times the following inequality: for all $y\in \R^n$ and $0<\vrho\leq r$, we have
\begin{equation}\label{diff:affini}
     |\ell_{x_0,\vrho}-\ell_{x_0,r}|(x)\leq \Big(\frac{r}{\vrho} \Big)^{n}\mint_{B_r(x_0)}|u-\ell_{x_0,r}|\dx+\Big(\frac{r}{\vrho}\Big)^{n}|x-x_0|\mint_{B_r(x_0)}\big|Du-{(Du)}_{B_r(x_0)}\big|\dx\, .
\end{equation}
To prove it, without loss of generality we may assume that $x_0=0$, and we set $(u)_r=(u)_{B_r}$ and $\ell_\vrho=\ell_{x_0,\vrho}$; then observe that $\mint_{B_\vrho}{(Du)}_r\, x\dx=0$,  so we have
\begin{align*}
    |\ell_\vrho-\ell_r|(x)&=\left|\mint_{B_\vrho}\left[u(z)+Du(z)\,x-(u)_r-{(Du)}_r\,x \right]\,dz \right|\notag\\
   &= \left|\mint_{B_\vrho}\left[u(z)-(u)_r-{(Du)}_r z+Du(z)\,x-{(Du)}_r\,x \right]\,dz \right|\notag\\
     &\leq \mint_{B_\vrho} |u-\ell_r|(z)dz+\big|[{(Du)}_\vrho-{(Du)}_r]\,x\big|\notag\\
     &\leq  \Big(\frac{r}{\vrho} \Big)^{n}\mint_{B_r}|u-\ell_r|\,dz+\Big(\frac{r}{\vrho}\Big)^{n}|x|\mint_{B_r}|Du-{(Du)}_r|\,dz 
\end{align*}
and \eqref{diff:affini} follows. 

\begin{lemma}\label{lemma:affine}
Let $0<R_0\leq 1$, and let $u\in W^{1,1}(B_{R_0}(x_0);\R^N)\cap L^1_{2s}$; suppose that $s>1/2$. Then there exists a constant $c=c(n,N,s)$ such that
{\rm \begin{multline}\label{tailaffine:utile}
\tail\big(u-\ell_{x_0,r};B_{r}(x_0)\big)\leq c\,\left(\frac{r}{R} \right)^{2s}\tail\big(u-\ell_{x_0,R};B_R(x_0)\big)\\
+c\,\int_r^R\Big(\frac{r}{\mu}\Big)^{2s}\mint_{B_{\mu}(x_0)} |u-\ell_{x_0,\mu}|\dx\,\frac{d\mu}{\mu}+c\,r\,\int_r^R{\Big(\frac{r}{\mu}\Big)}^{2s-1}\mint_{B_{\mu}(x_0)} \big|Du-{(Du)}_{B_\mu(x_0)}\big|\dx\,\frac{d\mu}{\mu}\\
+c\,\left(\frac{r}{R}\right)^{2s}\mint_{B_R(x_0)} |u-\ell_{x_0,R}|\dx+c\left(\frac{r}{R}\right)^{2s}R\mint_{B_{R}(x_0)} \big|Du-{(Du)}_{B_R(x_0)}\big|\dx
\end{multline}}
for all $0<r\leq R\leq R_0$. In particular, by Poincar\'e's inequality, there holds
{\rm \begin{multline}\label{tailaffine1:utile}
    \tail\big(u-\ell_{x_0,r};B_{r}(x_0)\big)\leq c\left(\frac{r}{R} \right)^{2s}\tail\big(u-\ell_{x_0,R};B_R(x_0)\big)\\
    +c\,r\int_r^R{\Big(\frac{r}{\mu}\Big)}^{2s-1}\mint_{B_{\mu}(x_0)} \big|Du-{(Du)}_{B_\mu(x_0)}\big|\dx\,\frac{d\mu}{\mu}+c\left(\frac{r}{R}\right)^{2s}R\mint_{B_{R}(x_0)} \big|Du-{(Du)}_{B_R(x_0)}\big|\dx\,.
\end{multline}}
\end{lemma}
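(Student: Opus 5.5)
We will mimic the proof of \eqref{tail:1} (i.e.\ \cite[Lemma 3.2]{dmn}) with constants replaced by the affine maps $\ell_\mu:=\ell_{x_0,\mu}$. Take $x_0=0$, set $\ell_\mu=\ell_{0,\mu}$, and split $\R^n\setminus B_r$ into the annulus $B_R\setminus B_r$ and the exterior $\R^n\setminus B_R$. On $\R^n\setminus B_R$ we write $u-\ell_r=(u-\ell_R)+(\ell_R-\ell_r)$. The first piece gives $r^{2s}\int_{\R^n\setminus B_R}|u-\ell_R||y|^{-n-2s}dy=(r/R)^{2s}\tail(u-\ell_R;B_R)$. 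For the second piece, $\ell_R-\ell_r$ is affine and grows linearly in $|y|$. The hypothesis $s>1/2$ is used exactly here: it makes $\int_{\R^n\setminus B_R}(1+|y|)|y|^{-n-2s}dy$ finite, of order $R^{-2s}(1+R)$.

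The affine difference must be controlled dyadically, not in one step, since \eqref{diff:affini} with the ratio $(R/r)^n$ would be far too lossy. Take radii $\mu_j=2^{-j}R$ and $r\in(\mu_{k+1},\mu_k]$. Applying \eqref{diff:affini} to consecutive radii gives
\[
|\ell_{\mu_{j+1}}-\ell_{\mu_j}|(y)\le 2^n\Big(\mint_{B_{\mu_j}}|u-\ell_{\mu_j}|\,dx+|y|\mint_{B_{\mu_j}}|Du-(Du)_{B_{\mu_j}}|\,dx\Big)
\]
for every $y$, and similarly between $r$ and $\mu_k$. Summing over $j$ gives $|\ell_R-\ell_r|(y)\le c\sum_{j\le k}(a_j+|y|b_j)$, where $a_j,b_j$ are the two averages at scale $\mu_j$. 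Multiply by $r^{2s}\int_{\R^n\setminus B_R}\cdot\,|y|^{-n-2s}$ and use $(r/R)^{2s}\le (r/\mu_j)^{2s}$. The term $a_j$ then contributes $(r/\mu_j)^{2s}a_j$. The term $b_j$ contributes $r^{2s}R^{1-2s}b_j\le r(r/\mu_j)^{2s-1}b_j$, because $2s-1>0$ and $\mu_j\le R$.

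On the annulus we use the pieces $B_{\mu_j}\setminus B_{\mu_{j+1}}$ for $j<k$, plus a last piece near $r$. On each piece write $u-\ell_r=(u-\ell_{\mu_j})+(\ell_{\mu_j}-\ell_r)$. The first part contributes at most $r^{2s}\mu_j^{-n-2s}\int_{B_{\mu_j}}|u-\ell_{\mu_j}|\le c(r/\mu_j)^{2s}a_j$. For the second part, bound $\ell_{\mu_j}-\ell_r$ by the same telescoping sum $\sum_{i=j}^k(a_i+\mu_j b_i)$ on $B_{\mu_j}$. Then exchange the order of summation: $\sum_{j\le i}(r/\mu_j)^{2s}\le c(r/\mu_i)^{2s}$ since these are geometric sums, and likewise $\sum_{j\le i}r^{2s}\mu_j^{1-2s}\le c\,r(r/\mu_i)^{2s-1}$, again geometric since $2s-1>0$. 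Hence every affine-mismatch term is bounded by $c[(r/\mu_i)^{2s}a_i+r(r/\mu_i)^{2s-1}b_i]$.

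Finally, comparing averages on comparable balls turns the dyadic sums into the integrals $\int_r^R\cdots\frac{d\mu}{\mu}$ appearing in \eqref{tailaffine:utile}, with the $j=0$ endpoint terms giving the last two summands at scale $R$. This yields \eqref{tailaffine:utile}. The bound \eqref{tailaffine1:utile} then follows from the Poincar\'e-type estimate $\mint_{B_\mu}|u-\ell_\mu|\le c\mu\mint_{B_\mu}|Du-(Du)_{B_\mu}|$, which holds because $u-\ell_\mu$ has zero mean and gradient $Du-(Du)_{B_\mu}$. This turns the second integral into the third, using $(r/\mu)^{2s}\mu=r(r/\mu)^{2s-1}$. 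The main obstacle is the bookkeeping in the exchange of summations: one must verify that the geometric sums converge, which requires $2s>1$.
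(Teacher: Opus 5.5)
Your proposal is correct and follows essentially the paper's proof: the same splitting into $(r/R)^{2s}\tail(u-\ell_R;B_R)$, an exterior affine-mismatch term and an annular term, telescoping via \eqref{diff:affini} between comparable radii, exchanging the discrete sums using geometric series that converge because $2s>1$, and comparing the dyadic sums with the integrals $\int_r^R\cdots\frac{d\mu}{\mu}$. The only difference is cosmetic: you use $\mu_j=2^{-j}R$ with a leftover piece at $r$, whereas the paper writes $r=\gamma^kR$ with $\gamma\in(1/4,1/2]$ and treats the case $R/4\le r\le R$ separately.
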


\begin{proof}
By translation, we may assume that $x_0=0$ and set $\ell_\vrho=\ell_{0,\vrho}$. The proof follows the argument of \cite[Lemma~3.2]{dmn1}; since the replacement of the constant averages $(u)_{B_\rho}$ by the affine maps $\ell_\rho$ produces additional terms involving the oscillation of $Du$, we include the details. We split
\begin{align*}
    \tail\big(u-\ell_{r};B_{r}\big)\leq &\,r^{2s}\int_{\R^n\setminus B_R}|u-\ell_r|\,\frac{dx}{|x|^{n+2s}}+r^{2s}\int_{B_R\setminus B_r}|u-\ell_r|\,\frac{dx}{|x|^{n+2s}} 
    \\
    \leq & \left(\frac{r}{R} \right)^{2s}\tail\big(u-\ell_R;B_R\big)+r^{2s}\int_{\R^n\setminus B_R}|\ell_R-\ell_r|\,\frac{dx}{|x|^{n+2s}}+r^{2s}\int_{B_R\setminus B_r}|u-\ell_r|\,\frac{dx}{|x|^{n+2s}}.
\end{align*}
We call the last two integrals $T_1$ and $T_2$, respectively. We first study the case $r\leq R/4$. Hence, there exist $\gamma\in (1/4,1/2]$ and an integer $k\geq 2$ such that $r=\g^k R$. By telescoping  and exploiting that $\g\in (1/4,1/2]$ and  \eqref{diff:affini} multiple times, we get
\begin{align}\label{T1:tail}
    T_1&\leq  c\,r^{2s}\sum_{i=0}^{k-1} \int_{\R^n\setminus B_R}\big|\ell_{\g^{i}R}-\ell_{\g^{i+1} R}\big|\,\frac{dx}{|x|^{n+2s}}\notag
    \\
     &\leq  c\left(\frac{r}{R}\right)^{2s}\sum_{i=1}^{k-1}\mint_{B_{\g^{i}R}} |u-\ell_{\g^{i}R}|\dx+c\left(\frac{r}{R}\right)^{2s}R\,\sum_{i=1}^{k-1}\mint_{B_{\g^{i}R}} |Du-{(Du)}_{\g^{i}R}|\dx\notag
     \\
      &\pushright{+\,c\left(\frac{r}{R}\right)^{2s}\mint_{B_R} |u-\ell_{R}|\dx+c\,R\left(\frac{r}{R}\right)^{2s}\mint_{B_{R}} \big|Du-{(Du)}_{R}\big|\dx\,;}
\end{align}
we remark that above we used \eqref{tail.constant}, also replacing $2s$ with $2s-1$, which are admissible since $s>1/2$. Now we shorten
\begin{equation}\label{GH}
G(\rho):=\mint_{B_\rho}|u-\ell_\rho|\dx, \qquad H(\rho):=\mint_{B_\rho}\big|Du-{(Du)}_\rho\big|\dx  
\end{equation}
and we notice that for every $i\in\{1,\ldots,k-1\}$ and every $\mu\in[\gamma^iR,\gamma^{i-1}R]$, by \eqref{diff:affini}, \eqref{media} and the fact that $\gamma\in(1/4,1/2]$, we have $G(\gamma^iR)\leq c\,G(\mu)+c\,\mu H(\mu)$ and $H(\gamma^iR)\leq c\,H(\mu)$. Consequently,
\[
\Big(\frac rR\Big)^{2s} \sum_{i=1}^{k-1}G(\gamma^iR)\leq c\int_r^R \Big(\frac r\mu\Big)^{2s}G(\mu)\,\frac{d\mu}{\mu} + c\,r\int_r^R \Big(\frac r\mu\Big)^{2s-1}H(\mu)\,\frac{d\mu}{\mu}
\]
and, recalling that $2s>1$,
\[
\Big(\frac rR\Big)^{2s}R \sum_{i=1}^{k-1}H(\gamma^iR) \leq c\,r\int_r^R \Big(\frac r\mu\Big)^{2s-1}H(\mu)\,\frac{d\mu}{\mu}\,.
\]
Using the first estimate to bound the first term on the right-hand side in \eqref{T1:tail}, we have
\begin{multline}\label{T11:tail}
\left(\frac{r}{R}\right)^{2s}\sum_{i=1}^{k-1}\mint_{B_{\g^{i}R}} |u-\ell_{\g^{i}R}|\dx\\\leq  c\int_r^R\Big(\frac r\mu\Big)^{2s} \mint_{B_\mu}|u-\ell_\mu|\dx\,\frac{d\mu}{\mu} + c\,r\int_r^R\Big(\frac r\mu\Big)^{2s-1} \mint_{B_\mu}|Du-(Du)_\mu|\dx\,\frac{d\mu}{\mu}\,,
\end{multline}
while using the second one yields
\begin{equation}\label{T12:tail}
\left(\frac{r}{R}\right)^{2s}\,R\,\sum_{i=1}^{k-1}\mint_{B_{\g^{i}R}} \big|Du-{(Du)}_{\g^{i}R}\big|\dx\leq c\,r\int_r^R\Big(\frac r\mu\Big)^{2s-1}\mint_{B_\mu}\big|Du-{(Du)}_\mu\big|\dx\,\frac{d\mu}{\mu}
\end{equation}
with $c=c(n,N,s)$. Combining \eqref{T11:tail}--\eqref{T12:tail} with \eqref{T1:tail}, we get
\begin{multline*}
    T_1\leq  c\,\int_r^R\Big(\frac{r}{\mu}\Big)^{2s}\mint_{B_{\mu}} |u-\ell_{\mu}|\dx\,\frac{d\mu}{\mu}+c\,r\int_r^R\Big(\frac{r}{\mu}\Big)^{2s-1}\mint_{B_{\mu}}\big |Du-{(Du)}_{\mu}\big|\dx\,\frac{d\mu}{\mu}\\
    +c\,\Big(\frac{r}{R}\Big)^{2s}\mint_{B_R} |u-\ell_{R}|\dx+c\,\Big(\frac{r}{R}\Big)^{2s}R\mint_{B_{R}} \big|Du-{(Du)}_{R}\big|\dx\,,
\end{multline*}
  with $c=c(n,N,s)$.  Next we estimate $T_2$: by telescoping and using again that $\g\in (1/4,1/2]$, we find
\begin{align}\label{T21:tail}
    T_2\leq \, r^{2s}\sum_{i=0}^{k-1}\int_{B_{\g^{-i-1}r}\setminus B_{\g^{-i}r}}|u-\ell_r|\,\frac{dx}{|x|^{n+2s}} \leq c(n,s)\sum_{i=0}^{k}\g^{i2s}\mint_{B_{\g^{-i}r}}|u-\ell_r|\dx\,.
\end{align}
On the other hand, for $0\leq i\leq k$, by telescoping and using \eqref{diff:affini} and that $\g\in(1/4,1/2]$, we find
\begin{equation}\label{T22:tail}
    \mint_{B_{\g^{-i}r}}|u-\ell_r|\dx \leq  c\sum_{m=0}^i \mint_{B_{\g^{-m}r}}|u-\ell_{\g^{-m}r}|\dx+c\,\g^{-i}r\sum_{m=0}^i\mint_{B_{\g^{-m}r}}\big|Du-{(Du)}_{\g^{-m}r}\big|\dx\,,
\end{equation}
with $c=c(n,N)$. Merging the content of \eqref{T21:tail}--\eqref{T22:tail}, and using Fubini's theorem on discrete sums, we obtain
\begin{align}\label{t23:tail}
    T_2 & \leq c\sum_{i=0}^k\g^{i2s} \sum_{m=0}^i \mint_{B_{\g^{-m}r}}|u-\ell_{\g^{-m}r}|\dx+c\sum_{i=0}^k\g^{i(2s-1)}r \sum_{m=0}^i \mint_{B_{\g^{-m}r}}|Du-{(Du)}_{\g^{-m}r}|\dx\notag
    \\
    &\leq c\sum_{m=0}^k \mint_{B_{\g^{-m}r}}|u-\ell_{\g^{-m}r}|\dx \sum_{i=m}^\infty \g^{i2s}+ c\,r\,\sum_{m=0}^k \mint_{B_{\g^{-m}r}}\big|Du-{(Du)}_{\g^{-m}r}\big|\dx\sum_{i=m}^\infty \g^{i(2s-1)}\notag
    \\
    &\leq c\sum_{m=0}^k \g^{m2s} \mint_{B_{\g^{-m}r}}|u-\ell_{\g^{-m}r}|\dx+c\,r \sum_{m=0}^k\g^{m(2s-1)} \mint_{B_{\g^{-m}r}}\big|Du-{(Du)}_{\g^{-m}r}\big|\dx
\end{align}
with $c=c(n,N,s)$, since $\g\in (1/4,1/2]$ and $2s>1$, and thus we can majorize the geometric sums. Now we estimate as after \eqref{GH}: for $m=0,\ldots,k-1$, if $\mu\in[\gamma^{-m}r,\gamma^{-(m+1)}r]$ we again have $G(\gamma^{-m}r)\leq c\,G(\mu)+c\mu H(\mu)$ and $H(\gamma^{-m}r)\leq cH(\mu)$. Since $\gamma^{2sm}\approx (r/\mu)^{2s}$, $r\gamma^{(2s-1)m}\approx r (r/\mu)^{2s-1}$ for $\mu\in[\gamma^{-m}r,\gamma^{-(m+1)}r]$, we obtain
\begin{equation}\label{t24:tail}
\sum_{m=0}^k \g^{m2s}G(\gamma^{-m}r)\leq c\int_r^R \Big(\frac r\mu\Big)^{2s}G(\mu)\,\frac{d\mu}{\mu}+ c\,r\int_r^R\Big(\frac r\mu\Big)^{2s-1}H(\mu)\,\frac{d\mu}{\mu}+c\left(\frac rR\right)^{2s}G(R),
\end{equation}
and
\begin{equation}\label{T25:tail}
\sum_{m=0}^k\g^{m(2s-1)}H(\gamma^{-m}r) \leq c\int_r^R\Big(\frac r\mu\Big)^{2s-1}H(\mu)\,\frac{d\mu}{\mu}+c\left(\frac rR\right)^{2s-1}H(R).
\end{equation}
Inserting \eqref{t24:tail}--\eqref{T25:tail} into \eqref{t23:tail}, we obtain
\begin{multline*}
    T_2\leq  c\int_r^R\Big(\frac{r}{\mu} \Big)^{2s}\mint_{B_\mu}|u-\ell_\mu|\dx\,\frac{d\mu}{\mu}+c\,\left( \frac{r}{R}\right)^{2s}\mint_{B_R}|u-\ell_R|\dx\\
+c\,r\int_r^R\Big(\frac{r}{\mu} \Big)^{2s-1}\mint_{B_\mu}\big|Du-{(Du)}_\mu\big|\dx\,\frac{d\mu}{\mu}+c\,R\left(\frac{r}{R} \right)^{2s}\mint_{B_{R}}\big|Du-{(Du)}_{R}\big|\dx\,,
\end{multline*}
where $c=c(n,N,s)$. Merging the estimates for $T_1$ and $T_2$ proves \eqref{tailaffine:utile} when $r\leq R/4$.

\vs 

For $R/4\leq r\leq R$, using \eqref{diff:affini} we simply have
\begin{align*}
    T_1 & \leq c\mint_{B_R}|u-\ell_R|\dx+c\,r^{2s}\mint_{B_R}\big|Du-{(Du)}_R\big|\dx\int_{\R^n\setminus B_R}\frac{dx}{|x|^{n+2s-1}}
    \\
    &\leq c\left( \frac{r}{R}\right)^{2s}\mint_{B_R}|u-\ell_R|\dx+c\left( \frac{r}{R}\right)^{2s} R\mint_{B_R}\big|Du-{(Du)}_R\big|\dx\,,
\end{align*}
and
\begin{align*}
T_2&\leq c\, r^{-n}\int_{B_R\setminus B_r}|u-\ell_r|\dx\leq c\mint_{B_R}|u-\ell_R|\dx+c\mint_{B_R}|\ell_R-\ell_r|\dx\\
&\leq c\left(\frac rR\right)^{2s}\mint_{B_R}|u-\ell_R|\dx + c\left(\frac rR\right)^{2s}R\mint_{B_R}\big|Du-(Du)_R\big|\dx\,.
\end{align*}
\end{proof}

Now we move on to the proof of Theorem \ref{thm:c1bfull}.

\begin{proof}[Proof of Theorem~\ref{thm:c1bfull}]
Theorem~\ref{thm:c1b} already covers the case $s\leq 1/2$; we may therefore assume throughout the proof that $s>1/2$. The argument is a bootstrap on the differentiability exponent: Theorem~\ref{thm:c1b} indeed provides the initial regularity $Du\in C^{0,\beta}$ for every $\beta\leq 2(1-s)$. The special structure of the nonlocal term then allows, at each step, to use the already known H\"older regularity of $Du$ to improve the decay of the nonlocal tail by a further amount $2(1-s)$. This is due to the cancellation of affine functions in the frozen nonlocal operator, which allows the tail to be estimated in terms of the oscillation of $Du$, rather than of $u$ itself. All the estimates below are performed on the regular set $\Omega_u$ obtained in Theorem~\ref{thm:c1b}; the bootstrap does not change this set, but only improves the H\"older exponent of $Du$ on compact subsets of $\Omega_u$.

\vs 

We prove the theorem by induction; more precisely, we show that for every $k\in\N_0$, 
\begin{equation}\label{inindu}
\text{Theorem~\ref{thm:c1bfull} holds for every $\beta\in (0,\min\{2(k+1)(1-s),1\})$.} 
\end{equation}
 The statement for $k=0$ follows from Theorem~\ref{thm:c1b}, since it covers the range $0<\beta\leq 2(1-s)$.  Assume that \eqref{inindu} holds for some $k_0\in\N_0$ (such that $2(k_0+1)(1-s)<1$); let us prove it for $k_0+1$. Let $0<\beta<\min\{2(k_0+2)(1-s),1\}$; if $\beta<2(k_0+1)(1-s)$, then the conclusion is trivial by the inductive hypothesis; thus  we may assume that
\[
2(k_0+1)(1-s)\leq \beta<\min\{2(k_0+2)(1-s),1\}.
\]
Choose $\beta_0<2(k_0+1)(1-s)$ so close to $2(k_0+1)(1-s)$ that $\beta<\beta_0+2(1-s)$; we also choose $\beta_0\neq 2s-1$. Since the assumption $f\in\mathcal M^{n/(1-\beta)}_{\rm loc}$ implies $f\in\mathcal M^{n/(1-\beta_0)}_{\rm loc}$, we obtain $Du\in C^{0,\beta_0}_{\rm loc}(\Omega_u)$ and in particular, for every $x_0\in\Omega_u$, after setting $R_{x_0}=\bar r_{x_0}/32$,
\begin{equation}\label{temp:c1k0}
{[Du]}_{C^{0,\beta_0}(B_{4R_{x_0}}(x_0))}\leq C_{k_0} \,.
\end{equation}
Let now $\bar x\in B_{R_{x_0}/4}(x_0)$ and $0<r\leq R_{x_0}/32$ be fixed, and also let $\ell_{\bar x,2r}=\ell_{2r}$ be the affine function defined in \eqref{def:ellx}.

\vs 

We go back to estimate \eqref{st:comp0} and we improve the estimate for the term $(III)={\llangle Q^\mathrm{nl}_a u, u-h\rrangle}_{\mathrm{nl}}$; by using \eqref{aff:zero}, the linearity of $Q^\mathrm{nl}_a$, \eqref{ass:phi1}--\eqref{ass:phi2} and \eqref{v:extension}, we get
\begin{align}\label{new:nonloc0}
{\llangle Q^\mathrm{nl}_a u, u-h\rrangle}_{\mathrm{nl}} &= {\llangle Q^\mathrm{nl}_a (u-\ell_{2r}), u-h\rrangle}_{\mathrm{nl}}\notag\\
&= \int_{B_{2r}(\bar x)}\int_{B_{2r}(\bar x)}a(y-z)\big[(u-\ell_{2r})(y)-(u-\ell_{2r})(z) \big]\cdot \big[(u-h)(y)-(u-h)(z) \big]\,\frac{dy\,dz}{|y-z|^{n+2s}}\notag\\
&\qquad -2\int_{B_{r}(\bar x)}\int_{\R^n\setminus B_{2r}(\bar x)}a(y-z)\big[(u-\ell_{2r})(y)-(u-\ell_{2r})(z) \big]\cdot (u-h)(z)\,\frac{dy\,dz}{|y-z|^{n+2s}}\notag\\
&\eqqcolon (IV)_1+(IV)_2\,.
\end{align}
Using \eqref{ass:phi1}, \eqref{frac:emb} and \eqref{v:extension}, we find
\begin{align*}
    |(IV)_1| & \leq c\,[u-\ell_{2r}]_{W^{s,2}(B_{2r}(\bar x))}\,[u-h]_{W^{s,2}(B_{2r}(\bar x))}\\
&= c\,r^{2(1-s)}\bigg( \int_{B_{2r}(\bar x)}{\big|Du-{(Du)}_{B_{2r}(\bar x)}\big|}^2\dx\bigg)^{1/2}\bigg( \int_{B_{2r}(\bar x)}|Du-Dh|^2\dx\bigg)^{1/2}\notag
\\
&\leq c\,C_{k_0}\,r^{n/2+\beta_0+2(1-s)}\bigg( \int_{B_{r}(\bar x)}|Du-Dh|^2\dx\bigg)^{1/2}\,,
\end{align*}
with $c=c(n,N,s,\max\{\nu,1\})$, where in the last inequality we used \eqref{v:extension} and \eqref{temp:c1k0}. Then, by \eqref{ass:phi1}, \eqref{tail.constant}, \eqref{v:extension}, \eqref{zbarx}, H\"older's and Poincar\'e's inequalities (which are applicable as $(\ell_{2r})_{B_{2r}(\bar x)}=(u)_{B_{2r}(\bar x)}$) we get 
\begin{align*}
    |(IV)_2| \leq & \,c\,\int_{B_r(\bar x)}|(u-h)(z)|\,\bigg\{\int_{\R^n\setminus B_{2r}(\bar x)}\big(|u(y)-\ell_{2r}(y)|+|u(z)-\ell_{2r}(z)|\big)\,\frac{dy}{|y-\bar x|^{n+2s}} \bigg\}\,dz\notag
\\
    &=  c\,r^{-2s}\tail\big(u-\ell_{2r};B_{2r}(\bar x)\big) \int_{B_r(\bar x)}|(u-h)(z)|\dx+c\,r^{-2s}\int_{B_r(\bar x)}|u-h||u-\ell_{2r}|\dx\notag\\
    &\leq c\,r^{n/2-2s}\tail\big(u-\ell_{2r};B_{2r}(\bar x)\big)\,\bigg(\int_{B_r(\bar x)}|u-h|^2\dx\bigg)^{1/2}\notag\\
    &\pushright{+\,c\,r^{-2s}\bigg(\int_{B_r(\bar x)}|u-h|^2\dx\bigg)^{1/2} \bigg(\int_{B_{2r}(\bar x)}|u-\ell_{2r}|^2\dx\bigg)^{1/2}}\notag\\
    &\leq c\,r^{n/2+1-2s}\tail\big(u-\ell_{2r};B_{2r}(\bar x)\big)\bigg(\int_{B_r(\bar x)}|Du-Dh|^2\dx\bigg)^{1/2} \notag\\
    &\pushright{+\, c\,r^{n/2+2(1-s)}\,\bigg(\int_{B_r(\bar x)}|Du-Dh|^2\dx\bigg)^{1/2}\,\bigg(\mint_{B_{2r}(\bar x)}{\big|Du-{(Du)}_{B_{2r}(\bar x)}\big|}^2\dx\bigg)^{1/2}}\notag\\
    &\leq c\,r^{n/2+1-2s}\tail\big(u-\ell_{2r};B_{2r}(\bar x)\big)\,\bigg(\int_{B_r(\bar x)}|Du-Dh|^2\dx\bigg)^{1/2}\notag\\
    &\pushright{+\, c\,C_{k_0}\,r^{n/2+\beta_0+2(1-s)}\,\bigg(\int_{B_r(\bar x)}|Du-Dh|^2\dx\bigg)^{1/2}\,,}
\end{align*}
with $c=c(\data,\max\{\nu,1\})$, where in the last inequality we used \eqref{temp:c1k0}. Then, by using the tail property \eqref{tailaffine1:utile} and \eqref{temp:c1k0},  we get
\begin{align*}
    r^{-2s}&\tail\big(u-\ell_{2r};B_{2r}(\bar x)\big)\\
    & \leq c\,R_{x_0}^{-2s}\tail\big(u-\ell_{R_{x_0}};B_{R_{x_0}}(\bar x)\big)+c\int_r^{R_{x_0}}\mu^{1-2s}\mint_{B_\mu(\bar x)}\big|Du-{(Du)}_{B_\mu(\bar x)}\big|\dx\,\frac{d\mu}{\mu}\\
    &\pushright{+\,c\,R_{x_0}^{1-2s}\mint_{B_{R_{x_0}}(\bar x)}\big|Du-{(Du)}_{B_{R_{x_0}}(\bar x)}\big| \dx}\notag\\
    &\leq c\,R_{x_0}^{-2s}\tail\big(u-\ell_{R_{x_0}};B_{R_{x_0}}(\bar x)\big)+c\,C_{k_0}\int_r^{R_{x_0}}\mu^{1-2s+\beta_0}\,\frac{d\mu}{\mu}+c\,C_{k_0}R_{x_0}^{1-2s+\beta_0}\notag
     \\
    &\leq c\,R_{x_0}^{-2s}\tail\big(u-\ell_{R_{x_0}};B_{R_{x_0}}(\bar x)\big)+c\,C_{k_0}\big(R_{x_0}^{1-2s+\beta_0}+r^{1-2s+\beta_0}\big)\,,
\end{align*}
where $c=c(n,N,s)$. Moreover, by \eqref{tail:3} and since $s>1/2$, we estimate
\begin{align}\label{tttttail:4}
    R_{x_0}^{-2s}&\tail\big(u-\ell_{R_{x_0}};B_{R_{x_0}}(\bar x)\big)\notag\\
    &\leq  c\,R_{x_0}^{-2s}\tail\big(u-(u)_{B_{R_{x_0}}(\bar x)};B_{R_{x_0}}(\bar x)\big)+c\,\big|{(Du)}_{B_{R_{x_0}}(\bar x)}\big|\int_{\R^n\setminus B_{R_{x_0}}(\bar x)}\frac{dx}{|x-\bar x|^{n+2s-1}}\notag
    \\
    &\leq c\,R_{x_0}^{-2s}\tail\big(u-(u)_{B_{4R_{x_0}}(x_0)};B_{4R_{x_0}}(x_0)\big)+c\,R_{x_0}^{1-2s}{\|Du\|}_{L^\infty(B_{\bar r_{x_0}}(x_0))}
\end{align}
for all $\bar x\in B_{R_{x_0}}(x_0)$, where $c=c(n,N,s)$.
We merge the content of \eqref{new:nonloc0}--\eqref{tttttail:4}, \eqref{temp:liptail}, and \eqref{temp:c1k0}, obtaining
\begin{align}\label{st:comp100}
    \big| \llangle Q^\mathrm{nl}_a u, u-h\rrangle\big|  &\leq c\,r^{n/2+1}\bigg\{R_{x_0}^{-2s}\tail\big(u-(u)_{B_{4R_{x_0}}(x_0)};B_{4R_{x_0}}(x_0)\big)+R_{x_0}^{1-2s}{\|Du\|}_{L^\infty(B_{\bar r_{x_0}}(x_0))}\notag
    \\
    &\qquad\qquad\qquad\qquad{+\,c\,C_{k_0}\big(R_{x_0}^{1-2s+\beta_0}+r^{1-2s+\beta_0}\big)\bigg\}\bigg(\int_{B_r(\bar x)}|Du-Dh|^2\dx\bigg)^{1/2}}\notag\\
    &\leq  \widetilde{C}_{k_0}r^{n/2+\min\{1,\beta_0+2(1-s)\}}\bigg(\int_{B_r(\bar x)}|Du-Dh|^2\dx\bigg)^{1/2}
\end{align}
for all $\bar x\in B_{R_{x_0}}(x_0)$, and $0<r\leq R_{x_0}/32$, with the obvious definition for $\widetilde{C}_{k_0}$. We now combine  \eqref{st:comp0}--\eqref{st:comp2}, \eqref{st:comp100}, \eqref{f:mars} and performing the same algebraic manipulations we obtain the improved comparison estimate
\begin{align}\label{nuovo:comparison}
   \bigg(\int_{B_r(\bar x)}|Du-Dh|^2&\dx\bigg)^{1/2} \notag\\
   &\leq c\,r^{n/2}\Big\{{\|A(\cdot,u(\,\cdot\,))-\bar A\|}_{L^\infty(B_r(\bar x))}+\widetilde{C}_{k_0}r^{\min\{1,\beta_0+2(1-s)\}}\Big\}\bigg(\mint_{B_{2r}(\bar x)}|Du|^2\dx\bigg)^{1/2}\notag\\
   &\pushright{+\,c\,r^{n/2+\beta}{\|f\|}_{\mathcal{M}^{n/(1-\beta)}(B_{\vrho_{x_0}}(x_0))}}\notag\\
   &\leq c\,r^{n/2}\Big\{\big({\|Du\|}_{L^\infty(B_{\bar r_{x_0}}(x_0))}^\beta+1\big)r^\beta+\widetilde{C}_{k_0}r^{\min\{1,\beta_0+2(1-s)\}}\Big\}\,{\|Du\|}_{L^\infty(B_{R_{x_0}}(x_0))}\notag\\
   &\pushright{+\,c\,r^{n/2+\beta}{\|f\|}_{\mathcal{M}^{n/(1-\beta)}(B_{\vrho_{x_0}}(x_0))}}\notag\\
   &\leq  \widehat{C}_{k_0}r^{n/2+\beta}
\end{align}
for all $0< r\leq R_{x_0}/32$, where in the last line we used \eqref{AAx:3}, \eqref{omega:rb}, and the choice $\beta<\min\{1,\beta_0+2(1-s)\}$, with the obvious definition for $\widehat{C}_{k_0}$. Equation~\eqref{nuovo:comparison} together with \eqref{st:comp6} gives
\begin{equation}\label{ult:k0camp}
    \bigg(\int_{B_\vrho(\bar x)}|Du-{(Du)}_{B_{\vrho}(\bar x)}|^2\dx\bigg)^{1/2}\leq c\,\Big(\frac{\vrho}{r} \Big)^{n/2+1}\bigg(\int_{B_r(\bar x)}|Du-{(Du)}_{B_r(\bar x)}|^2\dx\bigg)^{1/2}+\widehat{C}_{k_0}r^{n/2+\beta}\,;
\end{equation}
this holds for all $\bar x\in B_{R_{x_0}}(x_0)$ and $0<\vrho\leq r\leq R_{x_0}/32$. Equation~\eqref{ult:k0camp} together with the standard iteration argument already used after \eqref{camp:carat1} imply
\begin{equation*}
    E\big(Du;B_r(\bar x)\big) \leq c\bigg[\frac{1}{R_{x_0}^\beta}E\big(Du;B_{R_{x_0}}(\bar x)\big)+\widehat{C}_{k_0} \bigg]r^{\beta}\leq c\bigg[\frac{1}{R_{x_0}^\beta}{\|Du\|}_{L^\infty(B_{\bar r_{x_0}}(x_0))}+\widehat{C}_{k_0}\bigg]r^{\beta}
\end{equation*}
for all $r\leq R_{x_0}$ and $\bar x\in B_{R_{x_0}}(x_0)$,  with $c=c(\data,\max\{\nu,1\},\beta)$. This estimate and Campanato's characterization of H\"older continuity prove that $Du\in C^{0,\beta}_{\rm loc}(B_{R_{x_0}}(x_0))$ with quantitative estimate \eqref{c1bfull}, and this completes the inductive step, and hence the proof.
\end{proof}

\subsection{Hierarchy of radii and admissible balls.} \label{hier} 
For the reader's convenience, we summarize the hierarchy of radii used throughout the preceding proofs: several local scales have been introduced at different stages of the argument, each with a specific role. The following recap wants to collect their mutual relations and the corresponding ball inclusions, so that the localization arguments above can be read without ambiguity. For an exhaustion of compact sets $K_i\Subset\Omega$ as defined at the beginning of the Chapter, we define the threshold
\[
\bar r_\beta(K_i) := \min\left\{ r_0,r_1,r_f(K_i),r_A(K_i), \frac14\operatorname{dist}(K_i,\partial\Omega) \right\}\,:
\]
this is the maximal scale on the compact set $K_i$ at which the smallness assumptions on the coefficients and on the datum are valid, and at which the structural restrictions involving $r_0$ and $r_1$ can be used. If $x_0\in\Omega_{u,\beta}$, then, by definition, there are $i=i(x_0)\in\mathbb N$ and $0<\varrho_{x_0}<\bar r_\beta(K_i)$ such that
\[
x_0\in K_i,\qquad B_{\varrho_{x_0}}(x_0)\Subset B_{2\varrho_{x_0}}(x_0)\Subset\Omega, \qquad \mathcal E_\gamma(u;B_{\varrho_{x_0}}(x_0))<\frac{\varepsilon_s}{2}\,.
\]
The radius $\varrho_{x_0}$ is the initial scale at which Proposition \ref{dec:one} is applied.  Next, by the continuity of $x\mapsto \mathcal E_\gamma(u;B_{\varrho_{x_0}}(x))$, we choose a radius $0<r_{x_0}\leq \varrho_{x_0}/8$ so small that
\[
B_{r_{x_0}}(x_0)\Subset \operatorname{int}(K_i),
\qquad
B_{\varrho_{x_0}}(\bar x)\Subset\Omega
\qquad\text{and}\qquad \mathcal E_\gamma(u;B_{\varrho_{x_0}}(\bar x)) < \frac{\varepsilon_s}{2}
\]
for every $\bar x\in B_{r_{x_0}}(x_0)$: this is the localization radius, as it allows us to apply Proposition \ref{dec:one} uniformly with center $\bar x\in B_{r_{x_0}}(x_0)$ and initial scale $\varrho_{x_0}$. In particular, for such centers,  $B_{\varrho_{x_0}}(\bar x)\subset B_{2\varrho_{x_0}}(x_0)\Subset\Omega$.

\vs

In the gradient estimates we shall further reduce the working scale. We choose $0<\bar r_{x_0}\leq r_{x_0}/4$, and this radius is the scale on which the Lipschitz estimate is obtained. It is chosen small enough to satisfy the smallness conditions required in the iteration/summation procedures, and it guarantees the inclusions $B_r(\bar x)\subset B_{2\bar r_{x_0}}(x_0)\subset B_{r_{x_0}}(x_0)$ if $\bar x\in B_{\bar r_{x_0}}(x_0)$ and $0<r\leq \bar r_{x_0}$, and $B_{2r}(\bar x)\subset B_{\bar r_{x_0}}(x_0)$ if $0<r\leq \bar r_{x_0}/4$ and  $\bar x\in B_{\bar r_{x_0}/2}(x_0)$. Finally, in the bootstrap arguments for higher gradient regularity Theorem \ref{thm:c1bfull}, we have to reduce the radius once more and choose $0<R_{x_0}\leq \bar r_{x_0}$. This is the scale on which the already established $C^{1,\beta_0}$ estimate is used as an input for the next step of the bootstrap. If $\bar x\in B_{R_{x_0}/4}(x_0)$ and  $0<r\leq R_{x_0}/8$, then $B_{2r}(\bar x)\subset B_{R_{x_0}/2}(x_0)\subset B_{R_{x_0}}(x_0) \subset B_{\bar r_{x_0}}(x_0)$. Thus the complete hierarchy of radii is
\[
0<R_{x_0}=\frac{\bar r_{x_0}}{32}\leq \bar r_{x_0}\leq \frac{r_{x_0}}4 \leq \frac{\varrho_{x_0}}{32} <\varrho_{x_0} <\bar r_\beta(K_i),
\]
and all balls appearing in the local and nonlocal estimates are contained in the region where the previously established estimates are available.

\section{Full regularity for \texorpdfstring{$u$-}{u-}independent local operators}\label{sec:everywhere}
Suppose that $A(x,u)\equiv A(x)$, where $A(\,\cdot\,)$ satisfies the assumptions of the main theorems. In this case, the modulus of continuity in the $u$-variable \eqref{mod:cont.u} is trivial, that is, $\omegau(\,\cdot\,) \equiv 0$. Returning to the approximate harmonicity result in Proposition~\ref{prop:a}, we note that the point $x_0$ and the radius $\vrho$ no longer need to satisfy the smallness assumption \eqref{excess:small}: this is because \eqref{excess:small} only enters through condition \eqref{chiamoomegau}, which in the current setting is automatically satisfied for any choice of $\e_0$. Consequently, for the remainder of the proofs, we may take  $\Omega_u \equiv \Omega$.

\vs

Let us now track dependence of the radii. First, to verify the assumptions of Theorems \ref{part.BMO} to \ref{thm:hold} and \ref{partial.Holder},  the radii have to satisfy the requirements of Paragraph \ref{general.construction}, except for the smallness condition \eqref{non:cancellare} and its uniform version \eqref{local.Egamma.bdd}. More precisely, if $x_0\in \Omega$, we may set $d={\rm dist}(x_0,\partial \Omega)$ and choose 
\[
\varrho_{x_0} = \min\left\{ r_0,r_1,r_f(\overline{B_{d/2}(x_0)}),r_A(\overline{B_{d/2}(x_0)}), \frac14\,d,\frac12 \right\}\,;
\]
we then set $r_{x_0}:=\varrho_{x_0}/4$. With this choice, for every $\bar x\in B_{r_{x_0}}(x_0)$, we have $B_{\varrho_{x_0}}(\bar x) \subset B_{2\varrho_{x_0}}(x_0) \Subset\Omega$. Thus the balls $B_{\varrho_{x_0}}(x)$, with
$x\in B_{r_{x_0}}(x_0)$, are all admissible, compare with Paragraph \ref{hier}. In this case the choice of $\varrho_{x_0}$ and $r_{x_0}$ depends only on ${\rm dist}(x_0,\partial \Omega)$, the structural radii $r_0,r_1$, and the local radii $r_f(\,\cdot\,),r_A(\,\cdot\,)$, but not on $u$. Additionally, in the case of Theorems \ref{thm:hold} and \ref{partial.Holder}, we can actually quantify $r_f(\,\cdot\,)$, as by \eqref{maximal:f} and \eqref{density.Marc}. Indeed, $r_f$ has to satisfy \eqref{ass.BMO.f0}, and in our case a sufficient condition can be obtained from the maximal quantity appearing in \eqref{maximal:f} in the case of Theorem \ref{partial.Holder}: we have, since $B_{r_{x_0}}(x_0)\subset B_{d/2}(x_0)$ and $\vrho_{x_0}\leq r_f(\overline{B_{d/2}(x_0)})=r_f$, if $r_f$ is smaller than $d/4$,
\[
\sup_{\substack{\bar x\in B_{r_{x_0}}(x_0)\\0<t\leq \vrho_{x_0}}} t^2\bigg(\mint_{B_t(\bar x)}|f|^\chi\dx \bigg)^{1/\chi}\leq \sup_{\substack{\bar x\in B_{d/2}(x_0)\\ 0<t\leq r_f}} t^2 \bigg(\mint_{B_t(\bar x)} |f|^\chi\,dx\bigg)^{1/\chi} \leq r_f^{\beta_0} \sup_{\substack{\bar x\in B_{d/2}(x_0)\\ 0<t\leq d/4}} \, t^{2-\beta_0} \bigg(\mint_{B_t(\bar x)} |f|^\chi\,dx\bigg)^{1/\chi}
\]
and the latter quantity is smaller than $\frac12\varepsilon_s(\data,\beta_0)$ -- and thus \eqref{small.nou.f} is ensured -- as soon as $r_f$ is small enough, with the dependences stated in Theorem \ref{thm:full-u-independent}. Similarly for Theorem \ref{thm:hold},
\[
\sup_{\substack{\bar x\in B_{r_{x_0}}(x_0)\\0<t\leq \vrho_{x_0}}} t^2\bigg(\mint_{B_t(\bar x)}|f|^\chi\dx \bigg)^{1/\chi}\leq c\,r_f^{\beta_0}{\|f\|}_{\M^{n/(2-\beta_0)}(B_{d/2}(x_0))}\,.
\]
For Theorems \ref{thm:lip}--\ref{thm:c1bfull}, the coefficient radius $r_A$ can also be quantified. Here the local $BMO$-smallness condition \eqref{hp:A} is no longer an additional restriction: it follows directly from the global modulus of continuity \eqref{mod:cont.x}. More precisely, if $0<r_A\leq d/4$, then
\[
\sup_{x\in B_{d/2}(x_0)} E_{r_A}(A;x) \leq \Lambda\,\omegax(r_A)<\delta_0(\data,\beta_0(s))\,,
\]
where $\delta_0(\data,\beta_0(s))$ comes from Proposition \ref{dec:one} and it corresponds to the choice of $\beta_0(s)$ in \eqref{cond:beta}, provided that $r_A\leq d/4$ is sufficiently small; in particular, $r_A$ depends only on ${\rm dist}(x_0,\partial\Omega)$, $\data$ and $\omegax(\,\cdot\,)$. Finally, we quantify $r_f$ in the settings of Theorems \ref{thm:lip} and \ref{thm:c1b}; again taking $r_f\leq d/4$, all balls $B_\varrho(x)$ with $x\in B_{d/2}(x_0)$ and $0<\varrho\leq r_f$ are contained in
$B_{3d/4}(x_0)\Subset\Omega$; thus
\[
\sup_{\substack{x\in B_{d/2}(x_0)\\0<t\leq r_f}} t^2\bigg(\mint_{B_t(x)} |f|^\chi\,dx\bigg)^{1/\chi} \leq c\,r_f \big\|{\bf I}^f_{1,\chi}(\,\cdot\,,d/4)\big\|_{L^\infty(B_{d/2}(x_0))}
\]
by \eqref{est.potential.onescale}, for Theorem \ref{thm:lip}. In order to have the latter quantity smaller than $\varepsilon_b(\data,\beta_0(s))$ and, in this way, have the datum smallness required in the preliminary
construction \eqref{ass.BMO.f0} satisfied (this is employed after \eqref{small.ass61}), it is sufficient to choose $r_f$ smaller than a constant depending on $d$, $\data$ and a bound on  $\|{\bf I}^f_{1,\chi}(\,\cdot\,,d/4)\|_{L^\infty(B_{d/2}(x_0))}$. In the setting of Theorems \ref{thm:c1b}--\ref{thm:c1bfull}, by \eqref{density.Marc}, we have
\[
\sup_{\substack{x\in B_{d/2}(x_0)\\0<t\leq r_f}} t^2\bigg(\mint_{B_t(x)} |f|^\chi\,dx\bigg)^{1/\chi} \leq c(\beta)\,r_f^{1+\beta} \|f\|_{\mathcal M^{n/(1-\beta)}(B_{3d/4}(x_0))}\,.
\]
Thus it is enough to choose $r_f\leq d/4$ so small, depending on $d$, $\data$, $\beta$ and any bound on $\|f\|_{\mathcal M^{n/(1-\beta)}(B_{3d/4}(x_0))}$, such that
\[
c(\beta)\,r_f^{1+\beta} \|f\|_{\mathcal M^{n/(1-\beta)}(B_{3d/4}(x_0))}<\varepsilon_b(\data,\beta_0(s))\,,
\]
for the same reason just explained. Finally, the radius $\bar r=\bar r_{x_0}$ has to satisfy the conditions \eqref{lorichiamo} and \eqref{radius:chiamo};  recalling that $\omegau(\,\cdot\,)\equiv 0$, it is then easy to check that $\bar r_{x_0}$ will depend only on the quantities stated in Theorem  \ref{thm:full-u-independent}; the very same reasoning applies to the radius $R_{x_0}=\bar r_{x_0}/8$ appearing in Theorem \ref{thm:c1bfull}.

\bigskip

 \par\noindent {\bf Acknowledgments.} The authors thank L. Brasco for several helpful suggestions on a preliminary version of the paper, which helped improve and clarify the presentation. Part of this work was carried out while C.A. Antonini held a postdoctoral fellowship of the National Institute for Advanced Mathematics (INdAM) at the University of Florence, and previously a postdoctoral position at the University of Parma. His work (and so this research) has been there supported by the University of Parma via the project ``Bando di Ateneo per la Ricerca 2024 -- Azione D''. C.A. Antonini's work was supported by the Italian Ministry of University and Research (MUR) through the FIS 2 project ``{\em SiGmA - Singularities in Geometric Analysis: Minimal Surfaces and Mean Curvature Flows}'', project code FIS-2023-02962 (CUP G53C25000120001). P. Baroni is supported by the European Research Council, through the ERC StG project NEW, nr.~101220121.

\bigskip

%

\end{document}